\newtheorem{definition}{Definition}[section]
\newtheorem{definition_theorem}[definition]{Definition-Theorem}
\theoremstyle{definition}
\newtheorem{remark}[definition]{Remark}
\theoremstyle{plain}
\newtheorem{theorem}[definition]{Theorem}
\newtheorem{lemma}[definition]{Lemma}
\newtheorem{proposition}[definition]{Proposition}
\newtheorem{corollary}[definition]{Corollary}
\renewcommand{\Re}{\operatorname{Re}}
\newcommand\abs[1]{\lvert #1 \rvert}
\newcommand*\bigcdot{ {\mathpalette\bigcdot@{.5}} }
\newcommand*\bigcdot@[2]{\mathbin{\vcenter{\hbox{\scalebox{#2}{$\m@th#1\bullet$}}}}}
\newcommand\NN{ \mathbb{N} }
\newcommand\RR{ \mathbb{R} }
\newcommand\cD{ \mathcal{D} }
\newcommand\cF{ \mathcal{F} }
\newcommand\cS{ \mathcal{S} }
\newcommand\cT{ \mathcal{T} }
\newcommand\cV{ \mathcal{V} }
\newcommand\sC{ \mathscr{C} }
\newcommand\sD{ \mathscr{D} }
\newcommand\pt{\operatorname{\{\ast\} } }
\newcommand\pre{ {\operatorname{pre}} }
\newcommand\Rp{ { \mathbb{R}_{>0} } }
\newcommand\str{\operatorname{star}}
\newcommand\Fun{ {\operatorname{Fun}} }
\newcommand\id{\operatorname{id}}
\newcommand\Id{\operatorname{Id}}
\newcommand\Ind{ {\operatorname{Ind} } }
\newcommand\Hom{\operatorname{Hom}}
\newcommand\PrLcs{  {  \operatorname{Pr}^{\operatorname{L} }_{\omega,st} } }
\newcommand\PrLst{  {  \operatorname{Pr}^{\operatorname{L} }_{st} } }
\newcommand\PrRst{  {  \operatorname{Pr}^{\operatorname{R} }_{st} } }
\newcommand\clmi[1]{  \underset{ {#1} }{\operatorname{colim}} }
\newcommand\lmi[1]{  \underset{ {#1} }{\lim} }
\newcommand\dMod{\operatorname{-Mod}}
\newcommand\fib{\operatorname{fib}}
\newcommand\st{   {\operatorname{st}} }
\newcommand\msh{\operatorname{\mu sh}}
\newcommand\ms{\operatorname{SS}}
\newcommand\msif{\operatorname{SS}^{\infty}}
\newcommand\Op{\operatorname{Op}}
\newcommand\Sh{\operatorname{Sh}}
\newcommand\sHom{\mathscr{H}om}
\newcommand\supp{ {\operatorname{supp}} }
\newcommand\wsh{\operatorname{\mathfrak{w} sh}}
\newcommand\WF{ {\mathcal{W} } }
\newcommand\wrap{\mathfrak{W}}
\newcommand\Coh{\operatorname{Coh}}
\newcommand\IndCoh{\operatorname{IndCoh}}
\newcommand\Perf{\operatorname{Perf}}
\newcommand\QCoh{\operatorname{QCoh}}
\newcommand\PrLV[1][\cV]{  {  \operatorname{Pr}^{\operatorname{L} }_{\cV,st} } }
\newcommand\SD[1]{ {\operatorname{D_{#1}  }  } }
\newcommand\VD[1]{ {\operatorname{D}_{#1}  } }
\newcommand\ND[1]{ {\operatorname{D}_{#1}^\prime  } }
\newcommand\WD[1]{ {\operatorname{D}_{#1}^w  } }
\newcommand\dT{ {\dot{T} } }
\newcommand{\SC}{\mathcal{C}}
\newcommand{\bR}{\mathbb{R}}
\begin{document}

\title[Duality, K\"unneth formulae, and integral transforms in microlocal geometry]{\textbf{Duality and kernels in microlocal geometry}\\\vspace{3mm} {\textbf{\footnotesize{-- An approach by contact isotopies --}}}} 

\date{}
\author{Christopher Kuo}
\address{Department of Mathematics, University of Southern California}
\email{chrislpkuo@berkeley.edu} 
\author{Wenyuan Li}
\address{Department of Mathematics, Northwestern University.}
\email{wenyuanli2023@u.northwestern.edu}
\maketitle

\begin{abstract}
    We study the dualizability of sheaves on manifolds with isotropic singular supports $\Sh_\Lambda(M)$ and microsheaves with isotropic supports $\msh_\Lambda(\Lambda)$ and obtain a classification result of colimit-preserving functors by convolutions of sheaf kernels. Moreover, for sheaves with isotropic singular supports and compact supports $\Sh_\Lambda^b(M)_0$, the standard categorical duality and Verdier duality are related by the wrap-once functor, which is the inverse Serre functor in proper objects, and we thus show that the Verdier duality extends naturally to all compact objects $\Sh_\Lambda^c(M)_0$ when the wrap-once functor is an equivalence, for instance, when $\Lambda$ is a full Legendrian stop or a swappable Legendrian stop.
\end{abstract}

\tableofcontents

\section{Introduction}

\subsection{Context and background}
This paper is the second in the series of study, along with \cite{Kuo-Li-spherical, Kuo-Li-Calabi-Yau}, on the non-commutative geometric framework in the setting of microlocal sheaf theory. We are interested in the category of sheaves arising from the symplectic geometry structure on the Lagrangian skeleton of the pair $(T^*M, \Lambda)$, where $\Lambda \subseteq S^*M$ is a subanalytic Legendrian subset in the ideal contact boundary $S^*M$ of the exact symplectic manifold $T^*M$.
The focus of this paper are duality and bimodules, in the forms of integral kernels, for sheaves and microsheaves.

Let $\Bbbk$ be a field of characteristic $0$ and $X$ be a proper scheme and $\omega_X$ be the dualizing sheaf. 
Then the classical Serre duality asserts that the following functor is an equivalence
$$\operatorname{D}_X: \Coh(X) \to \Coh(X)^{op}, \; F \mapsto \sHom(F, \omega_X).$$
One modern interpretation of this equality is that the $(\infty,1)$-category $\IndCoh(X)$ is self-dual \cite{Gaitsgory-IndCoh} when viewed as an object in the symmetric monoidal category $\PrLst$ of presentable $(\infty,1)$-categories whose symmetric monoidal structure is defined by Lurie in \cite{Lurie-HA}. Moreover, the Serre duality also asserts that for $G \in \Coh(X)$ and $F \in \Perf(X)$, there is an equivalence
$$\Hom(G,F \otimes \omega_X) = \Hom(F,G)^\vee$$
where the latter is the linear dual. A modern interpretation of this equality is that under the self duality of $\IndCoh(X)$ by Serre duality \cite{Preygel,Gaitsgory-IndCoh,Gaitsgory-Rozenblyum} and $\QCoh(X)$ by the naive duality \cite{Ben-Zvi-Francis-Nadler,Gaitsgory-Rozenblyum}, the functor 
$$\Psi_X^\vee: \QCoh(X) \to \IndCoh(X), \, F \mapsto F \otimes \omega_X$$
is the dual of the inclusion functor $\Psi_X: \IndCoh(X) \to \QCoh(X)$ \cite{Gaitsgory-IndCoh}. In other words, the Serre duality and the naive duality are related by the Serre functor $\Psi_X^\vee = (-) \otimes \omega_X$.

Many other examples of dualities have been studied from this viewpoint. In the setting of $\cD$-modules, the category of $\cD$-modules on a reasonable quasi-compact stack $U$ is self dual. Moreover, when $X$ is a miraculous stack \cite{DrinfeldGaitsgory}, the naive duality and Verdier duality are related by the pseudo-identity functor which is the inverse Serre functor
$$\mathrm{PsId}_{X,!}: \cD\text{-Mod}(X) \to \cD\text{-Mod}(X).$$
In the setting of (topological) sheaf theory, one example that has been studied is sheaves on universal cotruncative quasi-compact open substacks $U \subset Bun_G(\Sigma)$ with nilpotent singular supports $\Sh_{\mathcal{N}ilp}(U)$ \cite{Arinkin-Gaitsgory-Kazhdan-Raskin-Rozenblyum-Varshavsky}. The naive duality and Verdier duality are also related by the pseudo-identity functor or the inverse Serre functor
$$\mathrm{PsId}_{U,!}: \Sh_{\mathcal{N}ilp}(U) \to \Sh_{\mathcal{N}ilp}(U),$$
which extends to a miraculous functor from the co-version of sheaves on $Bun_G(\Sigma)$ to sheaves on $Bun_G(\Sigma)$ with nilpotent singular supports.

In fact, this viewpoint exhibits a clean connection between duality and the Fourier--Mukai transformation, which states that all colimit-preserving functors between quasi-coherent sheaves are given geometrically by convolutions \cite{Toen-Morita-theory}.
This is first studied in the algebro-geometric setting by Mukai \cite{Mukai} (thus sometimes referred as Fourier--Mukai), 
and later by Orlov \cite{Orlov}, To\"en \cite{Toen-Morita-theory} and others \cite{Ben-Zvi-Francis-Nadler,Preygel}. In the setting of sheaf theory, the Fourier--Mukai transformation states that all colimit-preserving functors are given geometrically by convolutions, i.e., the assignment
\begin{align*}
\Sh(X \times Y) &\xrightarrow{\sim} \Fun^L(\Sh(X), \Sh(Y)) \\
K &\mapsto \left(F \mapsto K \circ F \coloneqq {\pi_2}_! ( K \otimes \pi_1^* F) \right)
\end{align*}
is an equivalence.

We show that similar phenomenon holds in the microlocal sheaf setting, following the approach of Ben-Zvi--Nadler--Francis \cite{Ben-Zvi-Francis-Nadler}, Preygel \cite{Preygel} and Gaitsgory--Rozenblyum \cite{Gaitsgory-IndCoh,Gaitsgory-Rozenblyum} in the derived algebraic geometric setting.
The relevant $(\infty, 1)$-categories will have the form $\Sh_{\Lambda}(M)$ of sheaves on $M$ microsupported in a singular isotropic subset $\Lambda \subseteq S^* M$ at infinity. 
Here, we say a subanalytic set $X \subseteq S^*M$ is isotropic if it can be stratified by isotropic submanifolds.
While the situation on manifolds seems to be much easier than the one on non-quasi-compact stacks \cite{Arinkin-Gaitsgory-Kazhdan-Raskin-Rozenblyum-Varshavsky}, we emphasize that as we are dealing with arbitrary real subanalytic isotropics, interesting phenomenon will happen.

Such sheaf categories are closely related to Fukaya categories \cite{Nadler-Zaslow,Nad}. By the main result of \cite{Ganatra-Pardon-Shende3}, the $(\infty,1)$-category of sheaves are topological models of the wrapped Fukaya category, after taking Ind-completion:
$$\Sh_\Lambda(M) = \Ind\,\WF(T^*M, -\Lambda).$$
Therefore, under homological mirror symmetry \cite{KonHMS,AurouxAnti}, the microlocal sheaves should be thought of as the mirror to coherent sheaves. We emphasize however that this paper is purely sheaf-theoretic. We will make remarks on the relation of our results with Floer theory at the end of the introduction.

\subsection{Results and corollaries}
We follow the higher categorical convention in this paper. That is, unless specified, a category will mean an $(\infty,1)$-category.
We will also work in the real analytic setting so all manifolds are assumed to be real analytic and subobjects such as stratifications or isotropics are assumed to be subanalytic. 

We will consider the category $\Sh_{\widehat\Lambda}(M)$ of sheaves microsupported on conic subanalytic isotropic subsets $\widehat\Lambda \subseteq T^*M$. Write $\Lambda \subseteq S^*M$ for the quotient of the complement of $\widehat\Lambda$ away from zero section $\widehat\Lambda \setminus M$ by the $\bR_{>0}$-action. When $\widehat\Lambda$ contains the zero section $M$, this is equivalent to $\Sh_\Lambda(M)$ of sheaves microsupported on $\Lambda \subseteq S^*M$ at infinity. When $\widehat\Lambda$ has compact intersection with the zero section $M$ that contains all the bounded strata of $M \setminus \pi(\Lambda)$, this is equivalent to $\Sh_\Lambda(M)_0$ of compactly supported sheaves microsupported on $\Lambda \subseteq S^*M$ at infinity. 

Our first result is Fourier--Mukai property of sheaves with isotropic singular supports. Here, we use the notation $\Fun^L(-, -)$ for the category of colimit preserving functors.

\begin{theorem}\label{thm: fmt}
Let $M$ and $N$ be real analytic manifolds and $\widehat\Lambda \subseteq T^* M$, $\widehat\Sigma \subseteq T^* N$
be closed conic subanalytic singular isotropics.
Then, 
duality induces an equivalence
$$ \Sh_{-{\widehat\Lambda} \times {\widehat\Sigma}}(M \times N) = \Fun^L(\Sh_{\widehat\Lambda}(M),\Sh_{\widehat\Sigma}(N))$$
which is given by $K \mapsto (H \mapsto K \circ H)$ for $H \in \Sh_{\smash{\widehat\Sigma}}(N)$.
\end{theorem}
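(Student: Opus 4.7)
The plan is to deduce the equivalence from two formal ingredients: dualizability of $\Sh_{\widehat\Lambda}(M)$ as an object of $\PrLst$, and a Künneth identification of external tensor products. Specifically, the goal is to show that $\Sh_{\widehat\Lambda}(M)$ is dualizable with dual $\Sh_{-\widehat\Lambda}(M)$, and that the external tensor product
$$\Sh_{-\widehat\Lambda}(M) \otimes \Sh_{\widehat\Sigma}(N) \xrightarrow{\;\sim\;} \Sh_{-\widehat\Lambda \times \widehat\Sigma}(M \times N)$$
is an equivalence. Once both are granted, the equivalence of the theorem follows from the general identity $\Fun^L(\mathcal{C}, \mathcal{D}) \simeq \mathcal{C}^\vee \otimes \mathcal{D}$ in $\PrLst$, and the convolution description $K \mapsto (H \mapsto K \circ H)$ is recovered by unwinding the evaluation pairing via the projection formula.

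For the dualizability step, I would exhibit explicit evaluation and coevaluation functors. The evaluation should be, up to a shift by a dualizing object, $(F, G) \mapsto \Gamma(M; F \otimes G)$, whose well-definedness uses the singular support estimate $\ms(F \otimes G) \subseteq \ms(F) + \ms(G)$ together with the fact that $\widehat\Lambda + (-\widehat\Lambda)$ meets the zero section only where expected, since $\widehat\Lambda$ is isotropic. The coevaluation should pick out a kernel on $M \times M$ concentrated near the diagonal with singular support in $\widehat\Lambda \times (-\widehat\Lambda)$, constructed as the constant sheaf on the diagonal, wrapped by a small positive contact isotopy chosen to push its microsupport into $\widehat\Lambda \times (-\widehat\Lambda)$. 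Verifying the triangle identities then reduces to checking that this wrapping kernel acts as the identity on $\Sh_{\widehat\Lambda}(M)$, which is the kind of computation the contact-isotopy framework of the paper is designed to carry out.

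For the Künneth statement, I would leverage compact generation of $\Sh_{\widehat\Lambda}(M)$ (available for closed subanalytic isotropics) together with the universal property of $\otimes$ in $\PrLst$: it suffices to check that external tensor product sends pairs of compact generators to compact objects with the expected singular support, and that the Hom pairings split across the product. Finally, to identify the abstract equivalence with $K \mapsto \pi_{2!}(K \otimes \pi_1^*(-))$, I would trace the image of $K$ through the composite $\Sh_{-\widehat\Lambda \times \widehat\Sigma}(M \times N) \simeq \Sh_{-\widehat\Lambda}(M) \otimes \Sh_{\widehat\Sigma}(N) \simeq \Fun^L(\Sh_{\widehat\Lambda}(M), \Sh_{\widehat\Sigma}(N))$; the second step is induced by evaluation on the $M$-factor, and the projection formula converts it into the convolution integral. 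The principal obstacle is the dualizability step: because $\widehat\Lambda$ is allowed to be a genuinely singular subanalytic isotropic, one cannot invoke results for smooth conic Lagrangians and must instead produce the required kernels and verify the triangle identities using stratifications and carefully chosen contact isotopies, which is the thematic technical workhorse of the paper.
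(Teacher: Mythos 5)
Your top-level route is the same as the paper's: establish the K\"unneth equivalence and the standard duality $\Sh_{\widehat\Lambda}(M)^\vee \simeq \Sh_{-\widehat\Lambda}(M)$, invoke $\Fun^L(\sC,\sD)\simeq \sC^\vee\otimes\sD$, and unwind the evaluation via projection formula and base change to get $K\mapsto \pi_{2!}(K\otimes\pi_1^*(-))$. However, your construction of the duality data has a step that would fail. You propose the coevaluation to be the constant sheaf on the diagonal ``wrapped by a small positive contact isotopy chosen to push its microsupport into $\widehat\Lambda\times(-\widehat\Lambda)$.'' No such small (or indeed single compactly supported) isotopy exists in general: $\ms(1_\Delta)$ is the full conormal $N^*\Delta$, an entire copy of $T^*M$, and a short contact flow cannot squeeze it into $\widehat\Lambda\times(-\widehat\Lambda)$ unless $\widehat\Lambda$ is essentially everything. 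The correct unit is $\eta=\iota_{\widehat\Lambda\times-\widehat\Lambda}^*\Delta_*p^*$, i.e.\ the image of $1_\Delta$ under the left adjoint of the inclusion; when the zero section is present this is the colimit over \emph{all} positive wrappings (Equation~(\ref{for: large-wrappings})), not a perturbation. With this unit, your idea that ``the kernel acts as the identity'' is made precise by Lemma~\ref{lad&ker} ($\iota_{-\widehat X\times\widehat X}^*(1_\Delta)\circ F=\iota_{\widehat X}^*F$), but the triangle identities still require identifying $\id\otimes\,\epsilon$ and $\eta\otimes\id$ with geometric functors on $M\times M\times M$ (Lemma~\ref{ev}) and a Hom computation using the microsupport estimate for $\sHom(\pi_2^*F,\pi_1^!H)$ (Proposition~\ref{teholds}). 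Relatedly, your evaluation should be $p_!\Delta^*$, i.e.\ compactly supported sections of $F\otimes G$, not $\Gamma(M;F\otimes G)$: the counit must be colimit-preserving, sheaves here need not have compact support, and the companion pairing $p_*\Delta^!$ is precisely the one that fails to be a duality datum in general (Theorem~\ref{converse-statement-Verdier}), so the choice is not cosmetic.

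There is also a gap in your K\"unneth sketch: checking that $\boxtimes$ sends compact generators to compact objects with the right microsupport and that Hom pairings split only yields full faithfulness of $\Sh_{\widehat\Lambda}(M)\otimes\Sh_{\widehat\Sigma}(N)\to\Sh_{\widehat\Lambda\times\widehat\Sigma}(M\times N)$. The substantive point is essential surjectivity, i.e.\ that the target is generated by external products. The paper gets this by first proving the case of Whitney triangulations via $\Sh_{N^*\cS}(M)=\cS\dMod$ (Proposition~\ref{pd:ws}), then using that the quotient to the general isotropic is generated by corepresentatives of microstalks (Proposition~\ref{wdstopr}) together with the Thom--Sebastiani identity $\mu_{(x,\xi)}(F)\otimes\mu_{(y,\eta)}(G)=\mu_{(x,y,\xi,\eta)}(F\boxtimes G)$ (Proposition~\ref{prop:thom-sebastiani}), so that external products of microstalk corepresentatives corepresent microstalks on the product. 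Some argument of this kind is needed; it does not follow from compact generation and the universal property of $\otimes$ alone.
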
 

We will see in fact that the above theorem follows from the K\"unneth formula for sheaves with isotropic microsupports and the duality between $\Sh_{\widehat\Lambda}(M)$ and $\Sh_{-\widehat\Lambda}(M)$. We call this the standard duality, which is closer in relation with the naive duality of quasi-coherent sheaves \cite[Chapter II.3, Section 4.3.1]{Gaitsgory-Rozenblyum} (and the miraculous duality of automorphic sheaves \cite[Section 0.1.3]{Arinkin-Gaitsgory-Kazhdan-Raskin-Rozenblyum-Varshavsky}). We emphasize that the standard duality is not the Verdier duality.

\begin{theorem}[{The K\"unneth formula}]\label{pd:g}
Let $M$ and $N$ be real analytic manifolds and $\widehat\Lambda \subseteq T^* M$, $\widehat\Sigma \subseteq T^* N$
be closed conic subanalytic singular isotropics. 
Then there is an equivalence 
\begin{align*}
\Sh_{\widehat\Lambda}(M) \otimes \Sh_{\widehat\Sigma}(N) &= \Sh_{\widehat\Lambda \times \widehat\Sigma}(M \times N) \\
(F,G) &\mapsto F \boxtimes G.
\end{align*} 
\end{theorem}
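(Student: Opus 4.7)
The plan is to deduce the K\"unneth formula from two ingredients: (a) the absolute K\"unneth formula $\Sh(M) \otimes \Sh(N) \simeq \Sh(M \times N)$ for the full sheaf categories on manifolds, which is by now standard (e.g.\ via Lurie's framework for sheaves on locally compact Hausdorff spaces, or via the six-functor formalism developed by Volpe); and (b) the fact that $\Sh_{\widehat\Lambda}(M) \subseteq \Sh(M)$ is a full presentable subcategory closed under all colimits, so the inclusion is a morphism in $\PrL$ and admits a right adjoint.

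The key steps I would carry out are the following. The microsupport estimate $\ms(F \boxtimes G) \subseteq \ms(F) \times \ms(G)$ of Kashiwara--Schapira shows that the exterior product restricts to a functor
$$\boxtimes \colon \Sh_{\widehat\Lambda}(M) \times \Sh_{\widehat\Sigma}(N) \longrightarrow \Sh_{\widehat\Lambda \times \widehat\Sigma}(M \times N),$$
and by the universal property of tensor products in $\PrL$ this upgrades to a colimit-preserving functor
$$\Phi \colon \Sh_{\widehat\Lambda}(M) \otimes \Sh_{\widehat\Sigma}(N) \longrightarrow \Sh_{\widehat\Lambda \times \widehat\Sigma}(M \times N).$$
I would then place $\Phi$ in a commutative square of colimit-preserving functors over the absolute K\"unneth equivalence, with vertical arrows the fully faithful inclusions into the ambient $\Sh(M) \otimes \Sh(N) \simeq \Sh(M \times N)$. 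Tensoring the right adjoint of $\Sh_{\widehat\Lambda}(M) \hookrightarrow \Sh(M)$ with that of its $\widehat\Sigma$ counterpart produces a right adjoint to the left vertical inclusion, and full faithfulness of $\Phi$ reduces to identifying this tensor product of right adjoints with the right adjoint of $\Sh_{\widehat\Lambda \times \widehat\Sigma}(M \times N) \hookrightarrow \Sh(M \times N)$ under the absolute K\"unneth equivalence. Essential surjectivity amounts to showing that $\Sh_{\widehat\Lambda \times \widehat\Sigma}(M \times N)$ is generated under colimits by exterior products $F \boxtimes G$ with $F$ and $G$ of the correct microsupport.

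The main obstacle is precisely this last generation statement and, equivalently, the identification of the two right adjoints. In keeping with the paper's philosophy, I would realize the identity endofunctor on $\Sh_{\widehat\Lambda \times \widehat\Sigma}(M \times N)$ as a colimit of wrapping (or doubling) endofunctors indexed by positive contact isotopies in $T^*(M \times N)$ avoiding $\widehat\Lambda \times \widehat\Sigma$, and then show that a cofinal family of such isotopies can be chosen to be products of positive contact isotopies on each of $T^*M$ and $T^*N$. A product of two disjoining isotopies manifestly disjoins from the product of the singular isotropics, so the plausibility of such a cofinality is clear; the technical heart of the argument is to verify cofinality itself in the presence of singular---rather than smooth Legendrian---isotropics, which is exactly the input that the approach by contact isotopies is designed to supply.
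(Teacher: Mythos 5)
Your setup (restricting $\boxtimes$ via the microsupport estimate, and reducing full faithfulness to full faithfulness of the tensor of the two inclusions) is sound, up to one slip: the right adjoints $\iota_{\widehat\Lambda}^!$ are not colimit-preserving in general, so you cannot simply tensor them in $\PrLst$; since the inclusions preserve limits as well as colimits, you should tensor the \emph{left} adjoints instead, which is exactly the mechanism of Lemma \ref{ff;pr}. The genuine gap is the essential surjectivity step. You reduce it to the claim that product wrappings are cofinal among positive wrappings of $S^*(M\times N)$ away from $\widehat\Lambda\times\widehat\Sigma$, and you defer precisely this point; but this is the entire mathematical content of the theorem, and the claim is problematic already at the level of definitions. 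A pair of positive homogeneous Hamiltonians $H_1(x,\xi)$, $H_2(y,\eta)$ does not produce a contact isotopy at infinity of the product: the sum $H_1+H_2$ is homogeneous but fails to be $C^1$ along the loci $\{\xi=0\}$ and $\{\eta=0\}$ in $\dot T^*(M\times N)$, where its Hamiltonian vector field is discontinuous and strict positivity degenerates, and these loci cannot be avoided since they contain the portions $0_M\times\widehat\Sigma$ and $\widehat\Lambda\times 0_N$ of the stop whenever the conic isotropics meet the zero sections (always, in the case $\widehat\Lambda\supseteq 0_M$). So ``a product of two disjoining isotopies manifestly disjoins from the product'' is not yet a statement about wrappings of $S^*(M\times N)$, and after smoothing near these loci, dominating an arbitrary wrapping of the product (which may rotate in directions mixing the two factors) by a product-type one is an open-ended claim that neither this paper nor the Fukaya-categorical literature establishes; note also that the wrapping formula $\iota_\Lambda^*=\wrap_\Lambda^+$ you invoke is only available when $\widehat\Lambda$ contains the zero section, whereas the theorem concerns arbitrary closed conic subanalytic isotropics.

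The paper avoids any such cofinality statement. Its proof first treats Whitney triangulations combinatorially, using $\Sh_{N^*\cS}(M)=\cS\dMod$ and Proposition \ref{srep} to get Proposition \ref{pd:ws}; full faithfulness in general then follows from Lemma \ref{ff;pr} applied inside the triangulated ambient categories; and essential surjectivity is obtained by passing to left adjoints and compact objects, combining Proposition \ref{wdstopr} (the fiber of stop removal is compactly generated by microstalk corepresentatives at smooth Lagrangian points of the complement) with the Thom--Sebastiani theorem, Proposition \ref{prop:thom-sebastiani}, which identifies $D_{(x,\xi)}\boxtimes D_{(y,\eta)}$ as a corepresentative of the microstalk at $(x,y,\xi,\eta)$. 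If you wish to keep your outline, the cofinality claim should be replaced by this generation argument; equivalently, the commutation $\iota_{\widehat\Lambda\times\widehat\Sigma}^*(F\boxtimes G)\simeq \iota_{\widehat\Lambda}^*F\boxtimes\iota_{\widehat\Sigma}^*G$ that your scheme needs is established in the paper (Proposition \ref{prop:stopremoval-kunneth}) only as a \emph{consequence} of the K\"unneth formula, not as an input to it.
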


\begin{definition_theorem} \label{def-thm: canonical-duality}
Denote by $\Delta: M \hookrightarrow M \times M$ the diagonal, $p: M \rightarrow \{*\}$ the projection,
and $\iota_{-\widehat\Lambda \times \widehat\Lambda}^*: \Sh(M \times M) \rightarrow \Sh_{-\widehat\Lambda \times \widehat\Lambda}(M \times M)$ the left adjoint of the inclusion $\Sh_{-\widehat\Lambda \times \widehat\Lambda}(M \times M) \subset \Sh(M \times M)$.
Then the triple $(\Sh_{-\widehat\Lambda}(M),\epsilon,\eta)$ where
\begin{equation}
\begin{split}
\epsilon &= p_! \Delta^* : \Sh_{-\widehat\Lambda \times \widehat\Lambda}(M \times M) \rightarrow \cV \\
\eta &= \iota_{\widehat\Lambda \times -\widehat\Lambda}^* \Delta_* p^*
: \cV \rightarrow \Sh_{\widehat\Lambda \times -\widehat\Lambda}(M \times M)
\end{split}
\end{equation}
exhibits $\Sh_{-\widehat\Lambda}(M)$ as a dual of $\Sh_{\widehat\Lambda}(M)$.
As a consequence, there is an identification $\Sh_{-\widehat\Lambda}(M) = \Sh_{\widehat\Lambda}(M)^\vee$ 
and we call the induced duality $\SD{\widehat\Lambda}: \Sh_{-\widehat\Lambda}^c(M)^{op} \xrightarrow{\sim} \Sh_{\widehat\Lambda}^c(M)$
as the \textit{standard duality}. 
\end{definition_theorem}

The proof of Theorem \ref{pd:g} will be the focus of Section \ref{sec: Kunneth-sheaf} and the proof of Definition-Theorem \ref{def-thm: canonical-duality} will be the focus of Section \ref{sec: dual}. We also show in Section \ref{sec: geometric-dual} in Theorem \ref{thm: standard-dual-by-wrappings} that this standard dual admits a geometrical construction using wrapped sheaves, defined in \cite{Kuo-wrapped-sheaves}, when $\hat{\Lambda}$ contains the zero section. 
We point out that using the doubling construction,
we are able to deduce a K\"unneth formula and Fourier-Mukai property for microsheaves supported on singular isotropic subsets.
See Section \ref{sec: Kunneth-microsheaf} for details.
  
\begin{theorem} \label{thm: Kunneth-Fourier-Mukai-microsheaves}
Let $\Lambda \subseteq S^* M$ and $\Sigma \subseteq S^* N$ be compact isotropics. Then there are equivalences:
\begin{gather*}
\msh_\Lambda(\Lambda) \otimes \msh_\Sigma(\Sigma) = \msh_{\Lambda \times \Sigma \times \bR}(\Lambda \times \Sigma),\\
\Fun^L\left(\msh_\Lambda(\Lambda), \msh_\Sigma(\Sigma) \right) = \msh_{-\Lambda \times \Sigma \times \bR}(-\Lambda \times \Sigma).
\end{gather*}
\end{theorem}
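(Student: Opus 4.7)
The plan is to reduce both statements to the sheaf-theoretic Künneth formula (Theorem~\ref{pd:g}) and Fourier--Mukai equivalence (Theorem~\ref{thm: fmt}) via the doubling construction. For a compact isotropic $\Lambda \subseteq S^*M$, the doubling produces a compact isotropic $\widetilde{\Lambda} \subseteq S^*(M \times \bR)$ together with an equivalence $\msh_\Lambda(\Lambda) \simeq \Sh_{\widetilde{\Lambda}}(M \times \bR)_0$ (or a fully faithful embedding onto a distinguished subcategory), which transports microsheaf problems into genuine sheaf problems on an enlarged base. Granted this, the proof strategy is to apply the sheaf-theoretic Künneth and Fourier--Mukai equivalences to the doubled categories and then translate back.

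The key technical check is that the doubling construction is compatible with products. Concretely, for $\Lambda \subseteq S^*M$ and $\Sigma \subseteq S^*N$, one must establish that the doubling of $\Lambda \times \Sigma \times \bR \subseteq S^*(M \times N)$ in $S^*(M \times N \times \bR)$ matches, up to a contact isotopy followed by a Reeb-flow-invariant reduction along one $\bR$-direction, the product doubling $\widetilde{\Lambda} \times \widetilde{\Sigma} \subseteq S^*(M \times \bR \times N \times \bR)$. The appearance of the extra $\bR$-factor in the notation $\Lambda \times \Sigma \times \bR$ is forced by dimension: projectivizing the product of two conifications $\widehat{\Lambda} \times \widehat{\Sigma} \subseteq T^*(M \times N) \setminus 0$ introduces a free $\bR$-factor recording the ratio of scales, which is exactly the projective direction separating the isotropic $\Lambda \times \Sigma$ from a full Legendrian.

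With the product compatibility in hand, the Künneth equivalence follows from
\[
\msh_\Lambda(\Lambda) \otimes \msh_\Sigma(\Sigma) \simeq \Sh_{\widetilde{\Lambda}}(M \times \bR)_0 \otimes \Sh_{\widetilde{\Sigma}}(N \times \bR)_0 \simeq \Sh_{\widetilde{\Lambda} \times \widetilde{\Sigma}}(M \times N \times \bR \times \bR)_0,
\]
where the second equivalence is Theorem~\ref{pd:g}, and the right-hand side is then translated through the product-compatible doubling to $\msh_{\Lambda \times \Sigma \times \bR}(\Lambda \times \Sigma)$. The Fourier--Mukai statement is obtained by the same argument using Theorem~\ref{thm: fmt}, together with the observation that $\widetilde{-\Lambda}$ is contact-isotopic to $-\widetilde{\Lambda}$.

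The hard part will be the product compatibility described in the second paragraph. One must carefully track the two $\bR$-factors arising from doubling $M$ and $N$ separately and identify how Reeb invariance collapses them down to the single $\bR$-factor appearing in $\Lambda \times \Sigma \times \bR$. This relies on the contact-isotopy invariance of microsheaves established earlier in the paper, together with a careful verification that the compactly supported condition $(-)_0$ is preserved through each step --- using the compactness assumption on both $\Lambda$ and $\Sigma$ to ensure the doubled sides sit in the small categories to which Theorems~\ref{pd:g} and~\ref{thm: fmt} apply.
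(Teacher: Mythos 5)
Your high-level strategy — reduce to the sheaf-theoretic K\"unneth formula via the doubling construction — matches the paper's approach, and you correctly diagnose that the real difficulty is the compatibility of doubling with products, and that the $\bR$-factor in $\Lambda \times \Sigma \times \bR$ arises because $\widehat\Lambda \times \widehat\Sigma$ is only $\bR_{>0}^2$-conic, not diagonally conic. However, there are two problems.

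First, the specific doubling you invoke is not the one the paper uses. You describe a cylindrization $\widetilde{\Lambda} \subseteq S^*(M \times \bR)$ with an equivalence $\msh_\Lambda(\Lambda) \simeq \Sh_{\widetilde{\Lambda}}(M \times \bR)_0$. The paper's doubling (Theorem \ref{thm: doubling}) instead stays in $S^*M$: it maps $\msh_\Lambda(\Lambda)$ fully faithfully into $\Sh_{\Lambda_{-\epsilon} \cup \Lambda_\epsilon}(M)$ with identified essential image $\Sh_{\widehat\Lambda_{\cup,\epsilon}}(M)$, where $\Lambda_{\pm\epsilon}$ are small positive/negative contact push-offs. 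This is more than a cosmetic difference: it means the relevant K\"unneth statement is applied on $M \times N$, and the comparison between the product of doublings in $M$ and $N$ and the relative doubling of $\Lambda \times \Sigma \times \bR$ inside $M \times N$ is a concrete Legendrian isotopy problem (with U-shaped Lagrangian fillings $\Lambda \times \cup_{\pm\epsilon}$) rather than a ``reduction along one $\bR$-direction'' in $M \times N \times \bR \times \bR$.

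Second, and more seriously, the ``product compatibility'' you flag as ``the hard part'' is not actually established in your proposal — it is gestured at. This is precisely the content that the paper proves in Section \ref{sec:Kunneth-doubling}: Theorem \ref{thm: doubling-small-piece} identifies $\msh_{\Lambda \times \Sigma \times \bR}(\Lambda \times \Sigma)$ with a localization subcategory of $\Sh_{\widehat\Lambda_{\pm\epsilon} \times \widehat\Sigma_{\pm\epsilon}}(M \times N)$ by constructing an explicit U-shaped Legendrian doubling and verifying, via the wrapped-sheaves full-faithfulness criterion of \cite{Kuo-wrapped-sheaves}, that the relative doubling functor remains fully faithful after localizing; Theorem \ref{thm: doubling-product-piece} handles the mixed $\Lambda \times \widehat\Sigma$ case; and Proposition \ref{thm: doubling-mixed-piece} assembles these into the recollement structure that, together with Corollary \ref{product-of-quotient-categories} and Proposition \ref{prop:stopremoval-kunneth}, lets one identify the fiber of the localization with the tensor product $\msh_\Lambda(\Lambda) \otimes \msh_\Sigma(\Sigma)$. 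Without supplying this geometric input (or an equivalent), your argument does not close. The same gap propagates to the Fourier--Mukai statement, and the claim that $\widetilde{-\Lambda}$ is contact isotopic to $-\widetilde{\Lambda}$ is likewise asserted without proof.
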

\begin{remark}
For $\Lambda \subseteq S^* M$ and $\Sigma \subseteq S^* N$, $\Lambda \times \Sigma$ is not a Legendrian in $S^*(M \times N)$. Therefore, in the K\"unneth formula, we need to take microsheaves supported on the thickened subset $\Lambda \times \Sigma \times \bR \subseteq S^*M \times S^*N \times \bR \subseteq S^*(M \times N)$ and consider sections on $\Lambda \times \Sigma := \Lambda \times \Sigma \times 0$. See also \cite{Shende-h-principle,Nadler-Shende}.
\end{remark}

We also hope that the notion of duality from the categorical viewpoint will help us understand the classical Verdier duality 
\begin{align*}
\VD{M}: \Sh_{-\widehat\Lambda}^b(M)^{op} \xrightarrow{\sim} \Sh_{\widehat\Lambda}(M)^b, \;\;\;
F \mapsto \sHom(F, \omega_M)
\end{align*}
where $\omega_M$ is the dualizing sheaf of $M$ and $\Sh_{\widehat\Lambda}^b(M)$ the subcategory of $\Sh_{\smash{\widehat\Lambda}}(M)$ consisting of sheaves with perfect stalks, which is contained in the subcategory consisting of compact objects $\Sh_{\widehat\Lambda}^c(M)$.

When $\widehat\Lambda$ has compact intersection with the zero section, on $\Sh_{-\widehat\Lambda}^b(M)^{op}$ the Verdier duality $\VD{M}$
is given by $S_{\widehat\Lambda}^- \circ \SD{\widehat\Lambda} (-) \otimes \omega_M$ where $S_{\widehat\Lambda}^-$ is the negative wrap-once functor. 
When $S_{\widehat\Lambda}^-(-) \otimes \omega_M$ is invertible, it restricts to the Serre functor on $\Sh_{\widehat\Lambda}^b(M)$ by our first paper of the series \cite{Kuo-Li-spherical}. In this case, the Verdier duality $\VD{M}$ can be extended to an equivalence 
$\Sh_{-\widehat\Lambda}^c(M)^{op} \xrightarrow{\sim} \Sh_{\widehat\Lambda}^c(M)$,
which, by taking Ind-completion, provides another duality triple. We show that the converse is also true. 
We call this the Verdier duality, which is analogous to the Serre duality on ind-coherent sheaves \cite[Chapter II.3, Section 4.4.2]{Gaitsgory-Rozenblyum} (and Verdier duality on automorphic sheaves \cite[Section 0.2.1]{Arinkin-Gaitsgory-Kazhdan-Raskin-Rozenblyum-Varshavsky}). 

\begin{theorem}\label{converse-statement-Verdier}
Let $M$ be a connected manifold, $\widehat\Lambda \subseteq T^*M$ a subanalytic singular isotropic such that $\widehat\Lambda \cap M$ is compact, and denote by $\epsilon^V$ the colimit-preserving functor 
$$p_* \Delta^!: \Sh_{-\widehat\Lambda \times \widehat\Lambda}(M \times M) \rightarrow \cV.$$
There exists an object $\eta^V$, which we identify as a colimit-preserving functor 
$$\eta^V: \cV \rightarrow \Sh_{\widehat\Lambda \times -\widehat\Lambda}(M \times M),$$
such that the triple $(\Sh_{-\widehat\Lambda}(M), \epsilon^V,\eta^V)$ provides a duality data for $\Sh_{\widehat\Lambda}(M)$ in the sense of Definition \ref{smdual} in $\PrLst$ if and only if the functor $S_{\widehat\Lambda}^-$ or equivalently the left adjoint $S_{\widehat\Lambda}^+$ is invertible and the induced duality on $\Sh_{-\widehat\Lambda}^c(M)^{op} \xrightarrow{\sim} \Sh_{\widehat\Lambda}^c(M)$ restricts to the Verdier duality $\VD{M}$ on $\Sh_{\widehat\Lambda}^b(M)$. 
\end{theorem}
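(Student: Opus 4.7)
The plan is to use Fourier--Mukai (Theorem \ref{thm: fmt}) and the K\"unneth formula (Theorem \ref{pd:g}) to translate the existence of the duality data into a question about kernels on $M \times M$: the counit $\epsilon^V = p_* \Delta^!$ is colimit-preserving (since the assumption $\widehat\Lambda \cap M$ compact places us in the compactly supported regime where $p_* \simeq p_!$) and determines any unit $\eta^V$ uniquely if one exists, so the existence of the duality data is equivalent to the induced functor
$$\SD{\widehat\Lambda}^V : \Sh_{-\widehat\Lambda}^c(M)^{op} \longrightarrow \Sh_{\widehat\Lambda}^c(M), \qquad \Hom\bigl(\SD{\widehat\Lambda}^V(F), G\bigr) \simeq \epsilon^V(F \boxtimes G),$$
being an equivalence on compact objects.

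The key computation is to identify $\SD{\widehat\Lambda}^V$ on the perfect-stalk subcategory with Verdier duality. Using the closed-immersion formula $\Delta^! \simeq \Delta^* \otimes \omega_\Delta$ with $\omega_\Delta \simeq \omega_M^{-1}$ (on the smooth diagonal), one has $\Delta^!(F \boxtimes G) \simeq F \otimes G \otimes \omega_M^{-1}$, so for $F \in \Sh_{-\widehat\Lambda}^b(M)$ and $G \in \Sh_{\widehat\Lambda}^c(M)$,
$$\epsilon^V(F \boxtimes G) \simeq \Gamma_c(M; F \otimes G \otimes \omega_M^{-1}) \simeq \Hom\bigl(F^\vee \otimes \omega_M, G\bigr) = \Hom\bigl(\VD{M}(F), G\bigr),$$
using the reflexivity $\sHom(F^\vee, G) \simeq F \otimes G$ for perfect-stalk $F$ and the compact support of $F$. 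Hence $\SD{\widehat\Lambda}^V|_{\Sh^b} \simeq \VD{M}$. Combining this with the identification $\VD{M} \simeq S_{\widehat\Lambda}^- \circ \SD{\widehat\Lambda}(-) \otimes \omega_M$ from the first paper of the series \cite{Kuo-Li-spherical} and the fact that $\SD{\widehat\Lambda}$ is always an equivalence on compact objects (Definition-Theorem \ref{def-thm: canonical-duality}), we obtain both directions of the biconditional at once: $\SD{\widehat\Lambda}^V$ is an equivalence on $\Sh^c$ if and only if the extension of $S_{\widehat\Lambda}^- \otimes \omega_M$ (equivalently $S_{\widehat\Lambda}^-$, or by adjunction $S_{\widehat\Lambda}^+$) to an endofunctor of $\Sh_{\widehat\Lambda}(M)$ is invertible.

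To finish the $(\Leftarrow)$ direction explicitly: when $S_{\widehat\Lambda}^-$ is invertible, the Ind-completed Serre autoequivalence composed with $\SD{\widehat\Lambda}$ gives an equivalence $\Sh_{-\widehat\Lambda}^c(M)^{op} \xrightarrow{\sim} \Sh_{\widehat\Lambda}^c(M)$ that restricts to $\VD{M}$ on $\Sh^b$. Under Theorem \ref{thm: fmt} this corresponds to a kernel $K^V \in \Sh_{\widehat\Lambda \times -\widehat\Lambda}(M \times M)$, and setting $\eta^V$ to be the associated colimit-preserving functor $\cV \to \Sh_{\widehat\Lambda \times -\widehat\Lambda}(M \times M)$ furnishes the unit, the triangle identities reducing to the standard ones of Definition-Theorem \ref{def-thm: canonical-duality} twisted by the invertible Serre autoequivalence. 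The main obstacle is the six-functor identification $\epsilon^V(F \boxtimes G) \simeq \Hom(\VD{M}(F), G)$ of the second paragraph: it crucially uses the compactness of $\widehat\Lambda \cap M$ so that $p_* = p_!$ on the relevant sheaves, and requires careful tracking of the microlocal constraints through the formula $\Delta^!(F \boxtimes G) \simeq F \otimes G \otimes \omega_M^{-1}$; once this identification is in place, the rest is a formal consequence of the Serre functor formalism developed in \cite{Kuo-Li-spherical} and the Fourier--Mukai equivalence.
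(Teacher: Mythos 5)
The central gap is in the ``only if'' direction, which is the substantive half of the theorem. Your plan is to identify the pairing $\epsilon^V(F\boxtimes G)$ with $\Hom(\VD{M}(F),G)$ for $F$ with perfect stalks, combine this with $\VD{M}=S_{\widehat\Lambda}^-\circ\SD{\widehat\Lambda}(-)\otimes\omega_M$, and then claim ``both directions of the biconditional at once.'' But this identification lives only on $\Sh_{-\widehat\Lambda}^b(M)$, which is in general a \emph{proper} subcategory of $\Sh_{-\widehat\Lambda}^c(M)$ (compact generators such as microstalk corepresentatives need not have perfect stalks), and knowing $\SD{\widehat\Lambda}^V$ on $\Sh^b$ neither determines it on all compacts nor lets you conclude that $S_{\widehat\Lambda}^-$ preserves compacts or is invertible. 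The statement that would close this gap --- $p_*\Delta^!(F\boxtimes G)\simeq\Hom(S_{\widehat\Lambda}^-\SD{\widehat\Lambda}(F)\otimes\omega_M,G)$ for \emph{all} compact $F$ --- is exactly Lemma \ref{lem: Verdier-versus-canonical-dual}, and the paper proves it \emph{under the hypothesis} that $S_{\widehat\Lambda}^+$ is invertible, so invoking it in the forward direction is circular. The paper's actual proof of the forward implication is a different, unconditional kernel computation: using the GKS kernel $K(T_{-\epsilon})$ of a small negative push-off it shows that $\epsilon^V\otimes\id$ is convolution against $\sHom(K(T_{-\epsilon}),1_{M\times M})$, identifies (via Lemma \ref{lad&ker} and Remark \ref{rem:kernelinverse}) the localized kernel with the one computing $S_{\widehat\Lambda}^+(-)\otimes\omega_M^{-1}$, and then reads off from the two triangle identities, through the Fourier--Mukai classification of Theorem \ref{thm: fmt}, that any unit $\eta^V$ is a two-sided convolution inverse of that kernel, whence $S_{\widehat\Lambda}^+$ is invertible. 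Nothing playing this role appears in your argument; a repair along your categorical lines would require first proving, unconditionally, an identification such as $\epsilon^V(F\boxtimes G)\simeq\Hom(\SD{\widehat\Lambda}(F),S_{\widehat\Lambda}^+(G)\otimes\omega_M^{-1})$ on the whole category (by the perturbation trick), which is essentially the paper's computation in disguise.

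Two further problems. First, the formula $\Delta^!(F\boxtimes G)\simeq F\otimes G\otimes\omega_M^{-1}$ you use is not valid here: $\ms(F\boxtimes G)\subseteq-\widehat\Lambda\times\widehat\Lambda$ meets $N^*\Delta$ along a copy of $\widehat\Lambda$, so the non-characteristic condition fails precisely on the nose; one must either perturb $G$ by a small positive push-off $T_\epsilon$ (Proposition \ref{prop:perturbation}, as the paper does throughout) before trading $\Delta^!$ for $\Delta^*\otimes\omega_M^{-1}$, or, for $F$ constructible with perfect stalks, use \cite[Proposition 3.4.4]{KS} to write $F\boxtimes G\simeq\sHom(\pi_1^*\VD{M}(F),\pi_2^!G)$ and deduce $p_*\Delta^!(F\boxtimes G)\simeq\Hom(\VD{M}(F),G)$; so your $\Sh^b$ identification is correct but not by the argument written. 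Second, in the ``if'' direction you still owe the verification that the counit of the duality built from $S_{\widehat\Lambda}^-\circ\SD{\widehat\Lambda}(-)\otimes\omega_M$ is literally $p_*\Delta^!$ --- this is again Lemma \ref{lem: Verdier-versus-canonical-dual}, a genuine microlocal computation rather than a formal ``twist of the standard triangle identities''; and the opening reduction to ``$\SD{\widehat\Lambda}^V$ is an equivalence on compact objects'' should be run through the induced functor $\Sh_{-\widehat\Lambda}(M)\to\Sh_{\widehat\Lambda}(M)^\vee$ on the large categories, since corepresentability of $\epsilon^V(F\boxtimes-)$ by a \emph{compact} object is part of what has to be established, not an assumption.
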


Therefore, just like the algebraic setting, the Verdier duality on microlocal sheaves (which is analogous to the Serre duality on coherent sheaves), when well defined on the category of all compact objects, is related to the standard duality (which is analogous to the naive duality on quasi-coherent sheaves), by the inverse Serre functor on proper objects.

As shown in our first paper of the series \cite[Section 7]{Kuo-Li-spherical}, the wrap-once functor is not always an equivalence. Therefore, we can conclude that the Verdier duality cannot always be extended to a categorical duality. However, we also gave sufficient conditions for the wrap-once to be an equivalence \cite{Kuo-Li-spherical}, in which case the Verdier duality can be extended to a categorical duality:

\begin{corollary}\label{cor:verdier-swappable}
Let $M$ be a closed manifold, $\widehat\Lambda$ be a subanalytic conic isotropic subset and $\Lambda \subseteq S^*M$ is a full Legendrian stop or swappable Legendrian stop. Then the triple $(\Sh_{-\widehat\Lambda}(M), \epsilon^V,\eta^V)$ provides a duality data for $\Sh_{\widehat\Lambda}(M)$ in the sense of Definition \ref{smdual} in $\PrLst$.
\end{corollary}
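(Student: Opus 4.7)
The plan is to reduce the corollary directly to Theorem \ref{converse-statement-Verdier} combined with results from the first paper of the series. Since $M$ is closed, the intersection $\widehat\Lambda \cap M$ is automatically compact as a closed subset of a compact space, so the hypotheses of Theorem \ref{converse-statement-Verdier} are satisfied. That theorem then furnishes the key equivalence: the triple $(\Sh_{-\widehat\Lambda}(M), \epsilon^V, \eta^V)$ forms a duality datum in $\PrLst$ if and only if the negative wrap-once functor $S_{\widehat\Lambda}^-$ (equivalently, its left adjoint $S_{\widehat\Lambda}^+$) is an equivalence on $\Sh_{\widehat\Lambda}(M)$.

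It therefore suffices to verify invertibility of the wrap-once functor in each of the two cases. This is precisely what is established in the first paper of the series \cite{Kuo-Li-spherical}: the wrap-once functor $S_{\widehat\Lambda}^\pm$ is shown to be an equivalence whenever $\Lambda \subseteq S^*M$ is a full Legendrian stop, and whenever $\Lambda \subseteq S^*M$ is a swappable Legendrian stop. Plugging these invertibility statements into Theorem \ref{converse-statement-Verdier} yields the corollary.

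I expect the argument to be essentially immediate once the machinery of Theorem \ref{converse-statement-Verdier} is in place; the corollary is really a packaging of previously proved inputs. The only point requiring a small check is that the wrap-once functor appearing in Theorem \ref{converse-statement-Verdier} (defined in terms of the comparison between the standard and Verdier dualities) matches the wrap-once functor whose invertibility is proved in \cite{Kuo-Li-spherical} (characterized as the inverse Serre functor on $\Sh_{\widehat\Lambda}^b(M)$). Since both are constructed from the same positive contact isotopy that wraps once around $\Lambda$ at infinity, the identification is tautological, and no further argument is needed. The main conceptual obstacle, namely producing the sufficient condition on $\Lambda$ under which the wrap-once functor is an equivalence, has already been addressed in \cite{Kuo-Li-spherical} and is simply being invoked here.
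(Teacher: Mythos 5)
Your argument is correct and is essentially identical to the paper's (which disposes of this corollary in the remark concluding Section~\ref{sec:verdierdual}): invoke the "if" direction of Theorem~\ref{converse-statement-Verdier}, note that closedness of $M$ gives compactness of $\widehat\Lambda \cap M$, and import the invertibility of $S_\Lambda^+$ for full or swappable stops from \cite[Section 5.3]{Kuo-Li-spherical}. Your observation that the two occurrences of the wrap-once functor agree by construction (both being $\iota_{\widehat\Lambda}^* \circ \varphi_\epsilon$, cf.\ Remark~\ref{rmk: wrap-once}) is the right sanity check and closes the loop.
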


The above result provides a categorical approach to recover the Serre functor on certain categories of topological sheaves, for example, sheaves on the flag variety that are constructible with respect to Schubert stratification \cite{TiltingExercise}.

The above result is also connected to derived algebraic geometry in the sense of homological mirror symmetry. For toric mirror symmetry (coherent-constructible correspondence), one can show the following relation between the dualities on both sides, which essentially follows of \cite[Remark 12.11 \& 12.12]{KuwaCCC}.

\begin{corollary}
Consider toric stacks $X_\Sigma$ and the mirror Lagrangian skeleton $\Lambda_\Sigma \subseteq T^*T^n$. Under Kuwagaki's mirror functor \cite{KuwaCCC}
$$K_\Sigma: \IndCoh(X_\Sigma) \xrightarrow{\sim} \Sh_{\Lambda_\Sigma}(T^n),$$
the Serre duality on $\IndCoh(X_\Sigma)$ intertwines with Verdier duality on $\Sh_{\Lambda_\Sigma}(T^n)$ in Corollary \ref{cor:verdier-swappable}.
\end{corollary}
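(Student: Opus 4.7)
The strategy is to decompose each duality into a composition of a naive/standard duality with a Serre functor, and then verify the two factors intertwine separately under $K_\Sigma$. On the sheaf side, Theorem~\ref{converse-statement-Verdier} combined with Corollary~\ref{cor:verdier-swappable} presents the Verdier duality datum on $\Sh_{\Lambda_\Sigma}(T^n)$ as the composition of the standard duality $\SD{\Lambda_\Sigma}$ with the inverse Serre functor $S_{\Lambda_\Sigma}^-(-) \otimes \omega_{T^n}$. On the coherent side, the discussion recalled in the introduction presents Serre duality on $\IndCoh(X_\Sigma)$ as the composition of naive duality on $\QCoh(X_\Sigma)$ with the inverse Serre functor $\Psi_{X_\Sigma}^\vee = (-) \otimes \omega_{X_\Sigma}$. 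The statement is thereby reduced to two independent compatibilities of $K_\Sigma$.

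These two compatibilities are: (a) $K_\Sigma$ intertwines the naive duality on $\IndCoh(X_\Sigma)$ with the standard duality $\SD{\Lambda_\Sigma}$ on $\Sh_{\Lambda_\Sigma}(T^n)$; and (b) $K_\Sigma$ intertwines the Serre functor $(-) \otimes \omega_{X_\Sigma}$ with $S_{\Lambda_\Sigma}^-(-) \otimes \omega_{T^n}$. Both are extracted from \cite[Remark~12.11 \& 12.12]{KuwaCCC}. For (a), the coevaluation unit for naive duality on $\IndCoh(X_\Sigma)$ is $\Delta_* \cO_{X_\Sigma}$, while the coevaluation unit for $\SD{\Lambda_\Sigma}$ is the kernel $\eta$ of Definition-Theorem~\ref{def-thm: canonical-duality}; Kuwagaki's computation matches these. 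Combined with the Fourier--Mukai property (Theorem~\ref{thm: fmt}) and the K\"unneth formula (Theorem~\ref{pd:g}) of this paper, this kernel-level identification upgrades to an identification of duality data in $\PrLst$. For (b) the point is that on line bundles $K_\Sigma$ is explicit, so matching $\omega_{X_\Sigma}$ with the twisted wrap-once kernel reduces to Kuwagaki's monodromy computation around the FLTZ skeleton.

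A preliminary step is to ensure Corollary~\ref{cor:verdier-swappable} applies, i.e.\ that $\Lambda_\Sigma$ is a swappable or full Legendrian stop, which is known for the FLTZ--Kuwagaki skeleton. The main obstacle I anticipate lies in (a): one must verify the compatibility not only on unit objects but coherently, so that the evaluation $\epsilon = p_! \Delta^*$ on the sheaf side corresponds under $K_\Sigma \boxtimes K_\Sigma$ to the analogous trace pairing on the coherent side. Once that is in place, applying $K_\Sigma$ termwise to the factorisation of $\VD{T^n}$ provided by Theorem~\ref{converse-statement-Verdier} produces exactly the factorisation of Serre duality on $\IndCoh(X_\Sigma)$, completing the identification of the two dualities.
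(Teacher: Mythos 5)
Your overall architecture is the one the paper intends: the paper gives no independent argument for this corollary beyond citing \cite{KuwaCCC} (Remarks 12.11--12.12), together with the observation $K_\Sigma(-\otimes\omega_{X_\Sigma})=\kappa_\Sigma(-)$ and \cite{FLTZCCC} (Proposition 7.3), and your plan likewise defers the substantive mirror-side computations to those remarks while organizing them through the factorization of the Verdier duality datum supplied by Theorem \ref{converse-statement-Verdier} and Corollary \ref{cor:verdier-swappable}. So the skeleton is aligned with the paper.

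There is, however, a genuine gap in step (a) as you formulate it. The corollary concerns $\IndCoh(X_\Sigma)$ for toric stacks that need not be smooth --- this is exactly why $\IndCoh$ rather than $\QCoh$ appears, and it is the case treated in Kuwagaki's cited remarks. For such $X_\Sigma$ there is no ``naive duality on $\IndCoh(X_\Sigma)$'': the naive duality is a self-duality of $\QCoh(X_\Sigma)$ with unit $\Delta_*\cO_{X_\Sigma}$ (the unit of the Serre self-duality of $\IndCoh$ is $\Delta_*\omega_{X_\Sigma}$, not $\Delta_*\cO_{X_\Sigma}$), the functor $\sHom(-,\cO_{X_\Sigma})$ does not preserve $\Coh(X_\Sigma)$ --- which is the mirror of the compact objects $\Sh^c_{\Lambda_\Sigma}(T^n)$ on which $\SD{\Lambda_\Sigma}$ lives --- unless $X_\Sigma$ is Gorenstein, and $\omega_{X_\Sigma}$ need not be invertible, so neither the proposed matching of $\SD{\Lambda_\Sigma}$ with ``naive duality'' nor the factorization ``Serre $=$ naive $\circ\,(-\otimes\omega_{X_\Sigma})$'' parses beyond the smooth (or Gorenstein) case; your step (b), which matches $\omega_{X_\Sigma}$ ``on line bundles,'' has the same implicit restriction. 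The repair, consistent with the paper's pointers, is either to compare duality data directly --- the Verdier counit $p_*\Delta^!$ of Corollary \ref{cor:verdier-swappable} against the counit $\Gamma(X_\Sigma,\Delta^!(-))$ of the Serre self-duality of $\IndCoh(X_\Sigma)$, using compatibility of $K_\Sigma$ with the K\"unneth identifications --- or to run your two-factor argument only on proper objects, where $\Perf(X_\Sigma)$ corresponds to $\Sh^b_{\Lambda_\Sigma}(T^n)_0$ and \cite{FLTZCCC} applies, and then extend to all compact objects using the invertibility of $S^\pm_{\Lambda_\Sigma}$, which is precisely what Theorem \ref{converse-statement-Verdier} provides. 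Relatedly, both the standard and the Verdier duality pair $\Sh_{\Lambda_\Sigma}(T^n)$ with $\Sh_{-\Lambda_\Sigma}(T^n)$, so ``$K_\Sigma\boxtimes K_\Sigma$'' must be replaced by $K_\Sigma$ together with the mirror functor for $-\Lambda_\Sigma=\Lambda_{-\Sigma}$ (using $X_{-\Sigma}\cong X_\Sigma$); this bookkeeping is glossed in your plan and is where the antipode/sign errors typically enter.
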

\begin{remark}
Note that Kuwagaki's mirror functor $K_\Sigma$ and Fang--Liu--Treumann-Zaslow's mirror functor $\kappa_\Sigma$ are related by the Serre functor $K_\Sigma(- \otimes \omega_{X_\Sigma}) = \kappa_\Sigma(-)$ \cite[Remark 9.3]{KuwaCCC}, which is why under Fang--Liu--Treumann-Zaslow's mirror functor, the naive duality on perfect complexes intertwines with the Verdier dual on constructible sheaves \cite[Proposition 7.3]{FLTZCCC}.
\end{remark}

Finally, we briefly explain the implications of our Fourier--Mukai results for Fukaya categories. The K\"unneth formula is known for wrapped Fukaya categories. Indeed, Ganatra--Pardon--Shende \cite{Ganatra-Pardon-Shende2} and Gao \cite{Gao1} showed that
$$\Perf\WF(X, \Lambda) \otimes \Perf\WF(Y, \Sigma) = \Perf\WF(X \times Y, \mathfrak{c}_X \times \Sigma \cup_{\Lambda \times \Sigma \times \bR} \Lambda \times \mathfrak{c}_Y).$$
Then by abstract categorical arguments similar to Section \ref{sec: dual} and the observation that $\WF(X, \Lambda)^{op} = \WF(X^-, \Lambda)$ (where $X^-$ is the manifold $X$ with the negative symplectic form), it follows that
\begin{align*}
\Fun^{ex}(\WF(X, \Lambda), \Ind\,\WF(Y, \Sigma)) &= \Fun^L(\Ind\,\WF(X, \Lambda), \Ind\,\WF(Y, \Sigma)) \\
&= \Ind\,\WF(X^- \times Y, \mathfrak{c}_X \times \Sigma \cup_{\Lambda \times \Sigma \times \bR} \Lambda \times \mathfrak{c}_Y).
\end{align*}
Here, $\Fun^{ex}(-, -)$ means the category of exact functors. Our result provides a sheaf theoretic proof of the result when $X$ and $Y$ are cotangent bundles or Weinstein hypersurfaces in cotangent bundles.



\subsection*{Acknowledgement} We would like to thank Mohammed Abouzaid, Pramod Achar, Shaoyun Bai, Roger Casals, Laurent C\^ot\'e, Sheel Ganatra, Yuichi Ike, Emmy Murphy, Nick Rozenblyum, Germ\'an Stefanich, Vivek Shende, Pyongwon Suh, Alex Takeda, Dima Tamarkin, Harold Williams, and Eric Zaslow  
for helpful discussions. In particular, the new discussion regarding the relation between the notion of duality and wrapping owes its existence from the discussion with Harold Williams, with special instances observed in mirror symmetry.
CK was partially supported by NSF CAREER DMS-1654545, VILLUM FONDEN grant 37814, and also NSF grant DMS-1928930 when in residence at the SLMath during Spring 2024.


\section{Microlocal sheaf theory}

\subsection{Microsupport of sheaves}

Let $\cV$ be a compactly generated rigid symmetric monoidal category. Let $M$ be a smooth manifold and $\Sh(M)$ be the category of sheaves with coefficients in $\cV$. Following Kashiwara--Schapira \cite{KS} and Robalo--Schapira \cite{Robalo-Schapira}, for a sheaf $F \in \Sh(M)$, one can define a conic closed subset in the cotangent bundle $\ms(F) \subseteq T^*M$ called the singular support of $F$ and the corresponding closed subset in the cosphere bundle $\msif(F) \subseteq S^*M$ called the singular support at infinity of $F$.

For $\widehat X \subseteq T^*M$, we define $\Sh_{\widehat X}(M)$ to be the full subcategory of sheaves $F$ such that $\ms(F) \subseteq \widehat X$. For $X \subseteq S^*M$, we define $\Sh_X(M)$ to be the full subcategory of sheaves $F$ such that $\msif(F) \subseteq X$. 
The inclusion functor
$$\iota_{\widehat X*}: \Sh_{\widehat X}(M) \hookrightarrow \Sh(M)$$
is limit and colimit preserving by \cite[Proposition 3.4]{Guillermou-Viterbo}, and thus admits both left and right adjoint, which we denote by 
$\iota_{\widehat X}^*$ and $\iota_{\widehat X}^!$. In particular, the left adjoint functor is also colimit perserving.

Then we recall the definition of microsheaves following \cite{Gui,NadWrapped,Nadler-Shende,Kuo-Li-spherical}. First, define the presheaf
\begin{align*}
\msh^\pre: (\Op_{T^* M}^{\Rp})^{op} &\longrightarrow \st \\
\Omega &\longmapsto \Sh(M)/ \Sh_{\Omega^c}(M)
\end{align*}
where we restrict our attention to conic open sets $\Op_{T^* M}^{\Rp}$,
and the target $\st$ is the (large) category of stable categories with morphisms being exact functors.
We denote by $\msh$ its sheafification and refer it as the sheaf of microsheaves.
Note that $\msh |_{0_M} = \Sh$ as sheaves of categories on $M$.

We note that since $\msh$ is conic, $\msh |_{\dT^* M}$ descends naturally to a sheaf on $S^* M$, 
and we abuse the notation, denoting it by $\msh$ as well.

\begin{definition}
Fix a subanalytic isotropic subset $X \subseteq S^*M$. 
Let $\msh_X$ denote the subsheaf of $\msh$ which consists of objects microsupported in $X$ or $X \times \RR_{>0}$.
\end{definition}

We note that this sheaf coincides with the sheafification of the following 
subpresheaf $\msh^\pre_\Lambda$ of $\msh^\pre$ (where $\ms_\Omega(F) := \ms(F) \cap \Omega$):

\begin{align*}
\msh^\pre_\Lambda: (\Op_{T^* M}^{\Rp})^{op} &\longrightarrow \st \\
\Omega &\longmapsto \{F \in \msh^\pre(\Omega) \mid \ms_\Omega(F) \subseteq \Lambda \}
\end{align*}
Note that $\msh_X$ is a sheaf on $S^*M$ or $\dT^*M$ supported on $X$ or $X \times \bR_{>0}$,
and we will use the same notation $\msh_X$ to denote the corresponding sheaf on $X$ or ${X} \times \bR_{>0}$.

Since $\msh_X$ forms a sheaf, for open subsets $\Omega \subseteq \Omega^\prime$, there are natural restriction maps $\msh_X(\Omega^\prime) \rightarrow \msh_X(\Omega)$. 
In particular, we will refer to the restriction map associated to $\dT^* M \subseteq T^* M$ as
the microlocalization functor along $X$
$$m_X: \Sh_{\widehat X}(M) \rightarrow \msh_X(X).$$
Later, in Section \ref{ims}, we will see that the natural restriction functors admit both the left and the right adjoints when $X$ are isotropic subsets.

\subsection{Constructible sheaves}

Under some mild regularity assumptions, having an isotropic microsupport implies that the sheaf is constructible.

Recall that a \textit{stratification} $\cS$ of $X$ is a decomposition of $X$ into to a disjoint union of locally closed subset 
$\{ X_s \}_{ s \in \cS}$. 
In this paper, we work with stratifications which are locally finite, consist of subanalytic submanifolds, and satisfies the \textit{frontier
condition} that $\overline{X_s} \setminus X_s$ is a disjoint union of strata in $\cS$.
In this case, there is an ordering which is defined by $s \leq  t$ if and only if $X_t \subseteq \overline{X_s}$.
We also use $\str(s)$ to denote $\coprod_{t \leq s} X_t$, which is the smallest open set built out of the strata that
contain $s$, and we note that $s \leq  t$ if and only if $\str(s) \subseteq \str(t)$.

\begin{definition}
For a given stratification $\cS$,
a sheaf $F$ is said to be $\cS$-constructible if $F|_{X_s}$ is a local system for all
$s \in \cS$.
We denote the subcategory of $\Sh(X)$ consisting of such sheaves by $\Sh_{\cS}(X)$.
A sheaf $F$ is said to be constructible if $F$ is $\cS$-constructible for some stratification $\cS$.
\end{definition}
\begin{remark}
We do not impose any finiteness condition on the stalks of $F$. What we call constructible sheaves here corresponds to what Kashiwara--Schapira call weakly constructible sheaves \cite[Chapter 8]{KS}.
\end{remark}

We use $\cS \dMod$ to denote $\Fun(\cS^{op},\cV)$
and note that there is a canonical functor
\begin{align*}
\cS \dMod &\rightarrow \Sh_{\cS}(X) \\
1_s &\mapsto 1_{X_s}
\end{align*}
where $1_s \in \cS \dMod$ is the index functor representing $s$.
The following lemma provides a criterion when this functor is an isomorphism:

\begin{lemma}[{\cite[Lemma 4.2]{Ganatra-Pardon-Shende3}}]\label{lem:sheaves_by_representations}
Let $\Pi$ be a poset with a map to $Op_M$, and let $\cV[\Pi]$ denote its stabilization.
The following are equivalent
\begin{itemize}
\item $\Gamma(U;1_\cV) = 1_\cV$ for $U \in \Pi$ and $\Gamma(U;1_\cV) \xrightarrow{\sim} \Gamma(U \setminus V;1_\cV)$
whenever $U \not \subseteq V$.
\item The composition $\cV[\Pi] \rightarrow \cV[Op_M] \rightarrow \Sh(M)$ is fully faithful 
where the second map is given by the $!$-pushforward.
\end{itemize} 
\end{lemma}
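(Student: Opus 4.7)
The plan is to reduce the statement to a computation of mapping spaces between the images of the representable generators $1_U, 1_V \in \cV[\Pi]$ in $\Sh(M)$, and then to compare the resulting formulae case by case. Since $\cV[\Pi]$ is generated under colimits by the $1_U$'s, which are compact, and the functor $\cV[\Pi] \to \Sh(M)$ preserves colimits, fully faithfulness is equivalent to the assertion that for every pair $U, V \in \Pi$ the induced map
\[
\Hom_{\cV[\Pi]}(1_U, 1_V) \longrightarrow \Hom_{\Sh(M)}(j_{U!} 1_\cV,\, j_{V!} 1_\cV)
\]
is an equivalence, where $j_U : U \hookrightarrow M$ denotes the inclusion. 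On the source, the stabilization of a poset gives $\Hom_{\cV[\Pi]}(1_U, 1_V) = 1_\cV$ when $U \leq V$ in $\Pi$ (equivalently $U \subseteq V$ after the functor to $\Op_M$) and $0$ otherwise.

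For the target, I would use the adjunction $j_{U!} \dashv j_U^*$ together with base change for the open embeddings $U, V \hookrightarrow M$ to identify
\[
\Hom_{\Sh(M)}(j_{U!} 1_\cV,\, j_{V!} 1_\cV) \;=\; \Gamma(U;\, j_U^* j_{V!} 1_\cV) \;=\; \Gamma(U;\, j_{W!}^{U} 1_\cV),
\]
where $W = U \cap V$ and $j_W^{U} : W \hookrightarrow U$ is open. The recollement triangle $j_{W!}^U 1_\cV \to 1_U \to i_{(U \setminus W)*} 1_\cV$ in $\Sh(U)$, applied to $\Gamma(U; -)$, yields the fiber sequence
\[
\Hom_{\Sh(M)}(j_{U!} 1_\cV,\, j_{V!} 1_\cV) \longrightarrow \Gamma(U;\, 1_\cV) \longrightarrow \Gamma(U \setminus V;\, 1_\cV).
\]

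Now I would split into cases. If $U \subseteq V$, then $U \setminus V = \varnothing$ and the third term vanishes, so the mapping space computes $\Gamma(U; 1_\cV)$; this agrees with $\Hom_{\cV[\Pi]}(1_U, 1_V) = 1_\cV$ precisely under the first condition of the first bullet. If $U \not\subseteq V$, then $\Hom_{\cV[\Pi]}(1_U, 1_V) = 0$, and the mapping space on the right is the fiber of $\Gamma(U; 1_\cV) \to \Gamma(U \setminus V; 1_\cV)$, which vanishes exactly when this restriction map is an equivalence; this is the second condition. Both implications of the lemma follow: one bullet controls the behaviour on all pairs of representables, and hence fully faithfulness by the colimit-preserving reduction.

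The only non-routine step is the case analysis together with the identification of $\Hom$ in the stabilization $\cV[\Pi]$; verifying that the functor preserves colimits (and that representables are compact and generate) is formal from the construction of $\cV[\Pi]$, and base change for open embeddings is standard. The main obstacle, such as it is, is keeping careful track of the poset/topological distinction between $U \leq V$ in $\Pi$ and $U \subseteq V$ in $\Op_M$, which is implicit in how the map $\Pi \to \Op_M$ is used when interpreting the geometric conditions.
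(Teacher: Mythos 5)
The paper does not prove this lemma at all -- it is quoted verbatim from Ganatra--Pardon--Shende -- so there is no in-paper argument to compare against; your proposal reconstructs the standard argument of the cited source. The core of your computation is correct and is exactly the expected one: $\Hom_{\Sh(M)}(j_{U!}1_\cV, j_{V!}1_\cV) = \Gamma(U; j_U^* j_{V!}1_\cV)$ by adjunction, base change for open embeddings identifies $j_U^* j_{V!}1_\cV$ with $j^U_{W!}1_\cV$ for $W = U\cap V$, and the open-closed recollement triangle gives $\Hom = \mathrm{Fib}\bigl(\Gamma(U;1_\cV) \to \Gamma(U\setminus V;1_\cV)\bigr)$, after which the two bullet conditions are read off case by case. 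You are also right to flag the order-versus-inclusion point: the stated equivalence implicitly requires that $U\le V$ in $\Pi$ exactly when $U\subseteq V$ in $M$, since otherwise a pair with $U\subseteq V$ but $U\not\le V$ would have $\Hom_{\cV[\Pi]}(1_U,1_V)=0$ while the sheaf-side Hom is $\Gamma(U;1_\cV)=1_\cV$.

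The one step you pass over too quickly is the reduction of full faithfulness to the generators. Compactness of the representables $1_U$ in $\cV[\Pi]$ and colimit-preservation of the functor give you $\Hom(1_U,Y)=\operatorname{colim}_\alpha \Hom(1_U,Y_\alpha)$ for $Y=\operatorname{colim}_\alpha Y_\alpha$, but to conclude you also need $\Hom_{\Sh(M)}(j_{U!}1_\cV,-)=\Gamma(U;-)$ to commute with the corresponding colimits of sheaves, i.e.\ you need the images $j_{U!}1_\cV$ to be compact in (the colimit-closure of the image inside) $\Sh(M)$. This is not automatic: $j_{U!}1_\cV$ is typically not compact in $\Sh(M)$ -- already $1_{\bR}\in\Sh(\bR)$ fails to be compact, since $\Gamma(\bR;-)$ does not commute with filtered colimits such as $\operatorname{colim}_n 1_{[n,\infty)}=0$. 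So the implication ``equivalence on Homs of generators $\Rightarrow$ fully faithful'' is not formal at the presentable level; it needs either the reading of $\cV[\Pi]$ as the small stable category generated by $\Pi$ (where finite colimits suffice and the reduction is immediate), or an extra argument that sections over the $U\in\Pi$ commute with the relevant colimits -- which is what actually happens in the intended applications, e.g.\ for stars of a triangulation, where $\Gamma(\str(s);F)$ is a stalk on $\cS$-constructible sheaves and the subcategory generated by the images is closed under colimits. Supplying that observation (or restricting the claim accordingly) would close the only real gap; the rest of your argument, including both directions of the equivalence, is sound.
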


Since simplices are contractible, the above lemma implies the following proposition from the same paper.
\begin{proposition}[{\cite[Lemma 4.7]{Ganatra-Pardon-Shende3}}]
Let $\cS$ be triangulation of $M$. Then $\Sh_\cS(M) = \cS \dMod$.
\end{proposition}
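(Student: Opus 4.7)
The plan is to apply Lemma \ref{lem:sheaves_by_representations} to the poset $\Pi = \cS$ with the map $s \mapsto \str(s)$ to $Op_M$. Recall that under the paper's conventions $s \leq t$ iff $\str(s) \subseteq \str(t)$ (equivalently, $t$ is a face of $s$ in the simplicial sense). The two hypotheses of the lemma then translate into geometric statements about triangulations. For the first, each open star $\str(s)$ deformation retracts onto the barycenter $b_s$, so $\Gamma(\str(s); 1_\cV) = 1_\cV$. For the second, suppose $\str(s) \not\subseteq \str(t)$, equivalently $t$ is not a face of $s$. Either $s$ and $t$ have no common coface, in which case $\str(s) \cap \str(t) = \varnothing$ and $\str(s) \setminus \str(t) = \str(s)$ is contractible, or they have a smallest common coface $s \cup t$ strictly containing $s$, in which case $\str(s) \cap \str(t) = \str(s \cup t)$, and $\str(s) \setminus \str(s \cup t)$ is still star-shaped about $b_s$: for any $p$ in an open simplex $\sigma$ with $s \leq \sigma$ and $s \cup t \not\leq \sigma$, the straight segment from $p$ to $b_s$ stays inside the closed simplex $\sigma$, which is itself disjoint from $\str(s \cup t)$ since $s \cup t$ is not a face of $\sigma$. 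Hence $\Gamma(\str(s); 1_\cV) \xrightarrow{\sim} \Gamma(\str(s) \setminus \str(t); 1_\cV)$.

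Granting Lemma \ref{lem:sheaves_by_representations}, the composition $\cV[\cS] \to \cV[Op_M] \to \Sh(M)$ sending $1_s$ to the $!$-extension $1_{\str(s)}$ is fully faithful. Its essential image lies in $\Sh_\cS(M)$ because each $\str(s)$ is a union of strata. After cocompletion we obtain a fully faithful colimit-preserving functor $\cS\dMod \to \Sh_\cS(M)$.

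For essential surjectivity, I would show that the sheaves $\{1_{\str(s)}\}_{s \in \cS}$ generate $\Sh_\cS(M)$ under colimits: any $F \in \Sh_\cS(M)$ can be recovered as $F \simeq \colim_{s \in \cS^{op}} \Gamma(\str(s); F) \otimes 1_{\str(s)}$. This is checked stalkwise on each stratum $X_t$, using the observations that $X_t \subseteq \str(s)$ iff $t \leq s$, that $\str(s) \cap X_t$ is connected when nonempty, and that $F|_{\str(s) \cap X_t}$ is constant with value $\Gamma(\str(s); F)$ by $\cS$-constructibility and contractibility of $\str(s)$. Combined with fully faithfulness, this yields the desired equivalence $\cS\dMod \simeq \Sh_\cS(M)$.

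The main obstacle I expect is the verification of condition (ii) of Lemma \ref{lem:sheaves_by_representations}, namely the cohomological triviality of $\str(s) \setminus \str(t)$ for $1_\cV$ whenever $\str(s) \not\subseteq \str(t)$. The simplicial structure is essential here: both the combinatorial identity $\str(s) \cap \str(t) = \str(s \cup t)$ and the convex geometry of simplices enter the star-shaped argument, and neither generalizes to arbitrary stratifications, which is precisely why the proposition is restricted to triangulations.
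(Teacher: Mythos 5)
Your proof is correct and follows the same strategy as the paper, which simply invokes Lemma \ref{lem:sheaves_by_representations} with the remark that simplices are contractible and defers to \cite[Lemma 4.7]{Ganatra-Pardon-Shende3} for the details. You correctly identify that the substantive content hiding behind the paper's one-liner is not the contractibility of the stars $\str(s)$ themselves but the relative contractibility of $\str(s) \setminus \str(t)$, and your star-shaped argument using $\str(s) \cap \str(t) = \str(s \cup t)$ is precisely the right way to supply it. Two small cosmetic remarks: in the clause "for any $p$ in an open simplex $\sigma$ with $s \leq \sigma$" you have silently reversed the paper's ordering convention (under which $s \leq \sigma$ would mean $\sigma$ is a face of $s$), though the intended meaning is clear from context; and in the essential-surjectivity sketch, the stalk of $\Gamma(\str(s);F) \otimes 1_{\str(s)}$ at $p \in X_t$ is $F_{b_s}$ rather than $F_p$ when $s \neq t$, so the claim that "$F|_{\str(s)\cap X_t}$ is constant with value $\Gamma(\str(s);F)$" is not literally right—the colimit over faces $s$ of $t$ instead collapses to $F_{b_t} = F_p$ because $t$ is the terminal object of that subposet.
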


Recall a stratification is called a \textit{triangulation} if $X = \abs{K}$ is a realization of some simplicial complex $K$
and $\cS \coloneqq \{ \abs{\sigma} \mid \sigma \in K \}$ is given by the simplexes of $K$.
Since simplexes are contractible, the conditions in the above lemma are satisfied by triangulations.
Let $N^*_\infty (X_s)$ be the conormal bundle of the locally closed submanifold $X_s$
We use the notation $N^* \cS \coloneqq \cup_{s \in \cS} N^* (X_s)$ and call it the conormal of the stratification.
In general, $\Sh_{N^* \cS}(M)$ and $\Sh_\cS(M)$ can be different \cite[Example 2.52]{Kuo-wrapped-sheaves}.
Nevertheless, they coincide when the stratification is Whitney:

\begin{definition}
We say a stratification $\cS = \{X_s\}$ is Whitney if for any $X_s \subseteq \overline{X_t}$,
any sequence $x_n \in X_t$ and $y_n \in X_s$ both converging to $x$,
if the sequence of lines $\overleftrightarrow{x_n y_n}$ converges to $l$ and the sequence $T_{x_n} X_t$ converges to $\tau$,
then $\tau \supseteq l$.
\end{definition}

\begin{proposition}[{\cite[Proposition 8.4.1]{KS}, \cite[Proposition 4.8]{Ganatra-Pardon-Shende3}}]
For a Whitney stratification $\cS$ of a $C^1$ manifold $M$, we have $\Sh_{\cS}(M) = 
\Sh_{N^* \cS}(M)$ (i.e.~having microsupport contained in $N^*\cS$ is equivalent to being
$\cS$-constructible).
\end{proposition}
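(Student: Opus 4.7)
The plan is to prove the two inclusions $\Sh_\cS(M) \subseteq \Sh_{N^*\cS}(M)$ and $\Sh_{N^*\cS}(M) \subseteq \Sh_\cS(M)$ separately, with the Whitney condition entering in distinct (but parallel) ways. The essential geometric input in both directions is the following consequence of Whitney condition (b): if $X_s \subseteq \overline{X_t}$ and $\xi \in T^*_x M$ at a point $x \in X_s$ arises as a limit $\xi = \lim \xi_n$ of covectors $\xi_n \in N^* X_t |_{x_n}$ with $x_n \in X_t$ tending to $x$, then $\xi \in N^* X_s|_x$. Indeed, passing to a subsequence so that $T_{x_n} X_t \to \tau$, condition (b) (applied with $y_n = x$) forces $T_x X_s \subseteq \tau$, and since $\xi \perp \tau$ we get $\xi|_{T_x X_s} = 0$. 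In particular this proves the standard fact that $N^*\cS$ is closed in $T^* M$ and that its fiber over any stratum $X_s$ is contained in $N^*X_s|_{X_s}$.

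For the inclusion $\Sh_{N^*\cS}(M) \subseteq \Sh_\cS(M)$, I would fix $F$ with $\ms(F) \subseteq N^*\cS$ and a stratum $X_s$, and show that $F|_{X_s}$ is locally constant. By the observation above, $\ms(F) \cap T^*M|_{X_s} \subseteq N^* X_s|_{X_s}$, so every covector in the microsupport at a point of $X_s$ vanishes on $T X_s$. The non-characteristic propagation theorem of Kashiwara--Schapira (see \cite[\S 5.4]{KS}) then implies that $F|_{X_s}$ is locally constant, so $F \in \Sh_\cS(M)$.

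For the reverse inclusion $\Sh_\cS(M) \subseteq \Sh_{N^*\cS}(M)$, I would first observe that the subcategory of sheaves with microsupport in the closed set $N^*\cS$ is closed under small colimits (by \cite[Proposition 3.4]{Guillermou-Viterbo} and the closedness of $N^*\cS$). It therefore suffices to verify the inclusion on a generating family of $\Sh_\cS(M)$. A convenient choice is the collection $\{(i_s)_! L\}$ where $i_s : X_s \hookrightarrow M$ runs over strata and $L$ over local systems on $X_s$. Using the standard bounds on microsupport under open and closed embeddings and an induction along the stratification ordering (treating top-dimensional strata first, then smaller ones via the triangle $(i_s)_! (i_s)^* F \to F \to (j_s)_* (j_s)^* F$ for $j_s$ the open complement), one reduces to checking that the microsupport contribution arising from the frontier $\overline{X_s}\setminus X_s$ lies in $N^*\cS$; this is exactly guaranteed by the limit statement in the first paragraph applied to the conormals of $X_s$ approaching lower strata $X_{s'} \subseteq \overline{X_s}\setminus X_s$.

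The main obstacle will be executing the direction $\Sh_\cS(M) \subseteq \Sh_{N^*\cS}(M)$ cleanly: one must carefully control the microsupport of $(i_s)_! L$ near the boundary of $X_s$, since in general the conormal of the closure of a subanalytic stratum can produce covectors not in $N^*X_s$. The Whitney condition is what prevents these new covectors from leaving $N^*\cS$, and the argument must be organized by induction on the dimension of strata (or equivalently on the stratification poset) so that at each step one only has to match conormal limits of higher strata against conormals of lower strata, which is precisely what Whitney (b) provides.
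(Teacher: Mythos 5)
Your outline goes the same route as the cited proofs: extract the consequence of the Whitney condition that $N^*\cS$ is closed with fiber over each $X_s$ contained in $N^*X_s$, then run the two inclusions separately. Your first paragraph is exactly the right geometric input (and is what [KS] call the $\mu$-condition, deduced from Whitney (b) in their 8.3.20), and your third paragraph --- reduction to the generators $(i_s)_! L$, closure of $\Sh_{N^*\cS}(M)$ under colimits, induction along the stratification poset, and Whitney (b) bounding the boundary conormal --- is the correct strategy for $\Sh_\cS(M) \subseteq \Sh_{N^*\cS}(M)$.

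The gap is in your second paragraph. The deduction ``$\ms(F) \cap T^*M|_{X_s} \subseteq N^*X_s$ plus non-characteristic propagation implies $F|_{X_s}$ locally constant'' is not valid, for two reasons. First, this is precisely the \emph{characteristic} case of restriction to $X_s$: $N^*X_s = T^*_{X_s}M$ is the locus a non-characteristic hypothesis is supposed to avoid, so the restriction estimates of [KS, \S5.4, \S6.4] give you no bound on $\ms(F|_{X_s})$. Second, the conclusion is simply false with only the hypothesis over $X_s$: take $M = \RR^2$, $X_s = \{y = 0\}$, $F = 1_{\{y > x^2\}}$; then $\ms(F) \cap T^*M|_{X_s}$ is the half-line $\{(0,0;0,\eta) : \eta \geq 0\} \subseteq N^*X_s$, yet $F|_{X_s} = 1_{\{x=0\}}$ is a skyscraper, not locally constant. (Of course here $\ms(F) \not\subseteq N^*\cS$ for the horizontal stratification --- the point is that your stated implication used only the bound \emph{over} $X_s$.) What the references actually do is use the full hypothesis $\ms(F) \subseteq N^*\cS$ on a \emph{neighborhood} of a point $x \in X_s$, together with Whitney (a) to control how the conormals $N^*X_t$ of nearby higher strata degenerate into $N^*X_s$, and then a Morse-theoretic/deformation argument (the non-characteristic deformation lemma [KS, Prop.~2.7.2] applied to sub-level sets or half-balls transverse to $X_s$) to show stalks of $F$ are constant along $X_s$. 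That neighborhood argument is where the Whitney condition genuinely works in this direction, and it is the part your sketch omits.
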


Combining with the comment on triangulations, we obtain a simple description of sheaves microsupported in $N^*_\infty \cS$ 
for some $C^1$ Whitney triangulation $\cS$.

\begin{proposition}[{\cite[Proposition 4.19]{Ganatra-Pardon-Shende3}}]\label{mc=cc}
Let $\cS$ be a $C^1$ Whitney triangulation. 
Then there is an equivalence 
\begin{align*}
\Sh_{N^* \cS}(M) &= \cS \dMod \\
1_{X_s} &\leftrightarrow 1_s
\end{align*}
where $1_s$ is the indicator which is defined by 
$$1_s(t) =
\begin{cases}
1, &t \leq s. \\
0, &\text{otherwise}.
\end{cases}
$$
In particular, the category $\Sh_{N^* \cS}(M)$ is compactly generated and its
compact objects $\Sh_{N^* \cS}^c(M)$ are given by sheaves with compact support and perfect stalks $\Sh_{N^* \cS}^b(M)_0$.
\end{proposition}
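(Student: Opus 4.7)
The plan is to combine the two equivalences established just above with a standard analysis of compact objects in the resulting functor category.

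First I would observe that, since $\cS$ is a $C^1$ Whitney triangulation, the preceding two propositions apply in sequence: the Whitney condition yields $\Sh_{N^*\cS}(M) = \Sh_\cS(M)$, while the triangulation condition (applied via Lemma \ref{lem:sheaves_by_representations}, using that the stars $\str(s)$ and the complements $\str(s)\setminus\str(t)$ are contractible) gives $\Sh_\cS(M) = \cS\dMod$. Composing produces the desired equivalence. The generator matching $1_{X_s} \leftrightarrow 1_s$ is pinned down by Yoneda: under the equivalence the costalk $\Hom(1_{X_s},\cF)$ on the sheaf side agrees with $\Hom(1_s, F_\cS) = F_\cS(s)$ on the poset-representation side, forcing $1_s$ to correspond to the $!$-extension $1_{X_s} = (i_s)_! 1_\cV$ where $i_s: X_s \hookrightarrow M$ is the locally closed inclusion.

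Next I would address compact generation. Since $\cS$ is a locally finite poset and $\cV$ is compactly generated, the functor category $\cS\dMod = \Fun(\cS^{op},\cV)$ is presentable and compactly generated by the objects $1_s \otimes c$ as $s$ ranges over $\cS$ and $c$ over a set of compact generators of $\cV$; compactness follows from the Yoneda identification $\Hom(1_s \otimes c, F_\cS) = \Hom_\cV(c, F_\cS(s))$ together with the fact that evaluation at $s$ commutes with filtered colimits.

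Finally I would identify the compact objects. Any compact object is a retract of a finite colimit of $1_s \otimes c$'s, so its values $F_\cS(s)$ must be compact in $\cV$ and must vanish outside a finite subset of strata. Under the equivalence with $\Sh_{N^*\cS}(M)$ the first condition translates to perfect stalks and the second to compact support (valid since the triangulation is locally finite), which identifies $\Sh_{N^*\cS}^c(M)$ with $\Sh_{N^*\cS}^b(M)_0$. The step I expect to be most delicate is the generator matching: Lemma \ref{lem:sheaves_by_representations} produces most directly the correspondence $1_s \leftrightarrow (j_{\str(s)})_! 1_\cV$, so obtaining the stated pairing with $1_{X_s}$ requires either reworking the equivalence in its costalk normalization or decomposing $(j_{\str(s)})_! 1_\cV$ along the stratification of $\str(s)$ into pieces indexed by $\{1_{X_t}\}_{t \leq s}$ and verifying that the induced change of basis is a valid internal equivalence of $\cS\dMod$.
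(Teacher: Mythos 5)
Your approach is the same one the paper implicitly takes: the result is stated there as a citation of Ganatra--Pardon--Shende, with the text preceding the proposition ("Combining with the comment on triangulations, we obtain a simple description...") indicating exactly your first step of composing Proposition \ref{mc=cc}'s two predecessors (Whitney $\Rightarrow \Sh_{N^*\cS}(M) = \Sh_\cS(M)$, triangulation $\Rightarrow \Sh_\cS(M) = \cS\dMod$). Your compact-object analysis is also sound: local finiteness of the triangulation makes each indicator $1_s$ finitely supported in $\cS$, so retracts of finite colimits of $1_s\otimes c$ have compact values and finite poset-support, which translate to perfect stalks and compact sheaf-support respectively.

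One caution on the generator matching, which you correctly single out as the delicate point: the ``pinned down by Yoneda'' sentence in your first paragraph does not actually settle the issue, because it presupposes that the inverse functor $\Sh_\cS(M)\to\cS\dMod$ is the costalk functor $\cF\mapsto\big(s\mapsto\Gamma(X_s;i_s^!\cF)\big)$. What Lemma \ref{lem:sheaves_by_representations} hands you directly is the $!$-pushforward along $s\mapsto\str(s)$, whose right adjoint is sections on stars, giving $1_s\leftrightarrow 1_{\str(s)}$ rather than $1_{X_s}$. So the reconciliation you sketch at the end is not optional --- it is the content of the generator claim. The correct route is the one you hint at: on $\str(s)$, the constant sheaf $1_{\str(s)}$ admits a finite filtration (e.g.~by codimension of strata) with subquotients among $\{1_{X_t}\}_{t\le s}$, and this triangular change of basis shows the two families generate the same subcategory, after which one checks that the assignment $1_s\mapsto 1_{X_s}$ is the one compatible with the costalk pairing. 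That verification is done in \cite{Ganatra-Pardon-Shende3} and is the only part of your proposal left at the level of a plan rather than a proof.
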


\subsection{Isotropic microsupport}\label{ims}
We say a subset $\widehat\Lambda \subseteq T^* M$ is isotropic if it can be stratified by isotropic submanifolds.
A standard class of isotropic subsets are given by the conormal $N^* \cS$ of a stratification $\cS$ which we study 
in the last section.
Assume $M$ is real analytic and we recall that a general isotropic subset which satisfies a decent regularity condition
are bounded by isotropics of this form.

\begin{definition}
A subset $Z$ of $M$ is said to be subanalytic at $x$ if there exists an open set $U \ni x$, 
compact manifolds $Y_j^i$ $(i = 1, 2, 1 \leq j \leq N)$ and analytic morphisms
$f_j^i: Y_j^i \rightarrow M$ such that 
$$Z \cap U = U \cap \bigcup_{j=1}^N (f_j^1(Y_j^1) \setminus f_j^2(Y_j^2)).$$
We say $Z$ is subanalytic if $Z$ is subanalytic at $x$ for all $x \in M$.
\end{definition}

\begin{lemma}[{\cite[Corollary 8.3.22]{KS}}]
Let $\widehat\Lambda$ be a closed subanalytic conic isotropic subset of $T^* M$.
Then there exists a $C^\omega$ Whitney stratification $\cS$ such that $\widehat\Lambda \subseteq N^* \cS$.
\end{lemma}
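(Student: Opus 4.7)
The plan is to reduce the claim to two standard facts about subanalytic sets: (i) any subanalytic set admits a subanalytic $C^\omega$ Whitney stratification, and (ii) given finitely many subanalytic subsets of $M$, one can find a single such stratification compatible with all of them. The geometric content that still needs an argument is that a conic isotropic is \emph{forced} into a conormal by the conic isotropic condition, once the base stratification is fine enough.

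First, I would stratify $\widehat\Lambda$ itself. Since $\widehat\Lambda$ is closed, subanalytic, and conic, there exists a locally finite decomposition $\widehat\Lambda = \bigsqcup_j \Lambda_j$ into subanalytic $C^\omega$ submanifolds which are conic (invariant under the $\mathbb{R}_{>0}$-action) and such that each $\Lambda_j$ is isotropic (this is just the hypothesis that $\widehat\Lambda$ is stratified by isotropics, possibly after refinement). By generic smoothness of subanalytic maps, refining further if needed, I may assume that $\pi\colon T^*M \to M$ has constant rank on each $\Lambda_j$, so $\pi(\Lambda_j)$ is a subanalytic subset of $M$. Now, applying the simultaneous existence theorem for subanalytic Whitney stratifications (see \cite[Theorem 8.3.20]{KS}), I obtain a $C^\omega$ Whitney stratification $\cS = \{X_s\}$ of $M$ which is compatible with the collection $\{\pi(\Lambda_j)\}_j$, meaning each $\pi(\Lambda_j)$ is a union of strata of $\cS$.

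Next, the main geometric step: I claim that for every $j$ and every stratum $X_s \subseteq \pi(\Lambda_j)$, the part of $\Lambda_j$ lying over $X_s$ is contained in $N^*X_s$. Let $(x,\xi) \in \Lambda_j$ with $x \in X_s$, and let $\alpha$ denote the canonical Liouville $1$-form on $T^*M$, so that $\alpha_{(x,\xi)}(V) = \xi(d\pi(V))$. Because $\Lambda_j$ is conic and isotropic, $\alpha|_{\Lambda_j} \equiv 0$: indeed, if $R$ is the Euler vector field generating the $\mathbb{R}_{>0}$-action, then $R$ is tangent to $\Lambda_j$, $\alpha(R) = \langle \xi, 0\rangle = 0$ on $R$ itself (since $d\pi(R)=0$), and for any $V$ tangent to $\Lambda_j$ one has $d\alpha(V,R) = 0$ by isotropy while $L_R \alpha = \alpha$, from which a short manipulation gives $\alpha(V) = 0$. (This is the standard fact that $\alpha$ vanishes on any conic isotropic.) Consequently, for all $V \in T_{(x,\xi)}\Lambda_j$, we have $\xi(d\pi V) = 0$, i.e.~$d\pi(T_{(x,\xi)}\Lambda_j) \subseteq \ker \xi$. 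But since $\pi$ has constant rank on $\Lambda_j$ and $X_s$ is an open subset of the smooth locus of $\pi(\Lambda_j)$, this image surjects onto $T_x X_s$. Therefore $\xi$ annihilates $T_xX_s$, which means $(x,\xi) \in N^*X_s$. Summing over $j$ and $s$ yields $\widehat\Lambda \subseteq \bigcup_s N^*X_s = N^*\cS$, as required.

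The main obstacle is purely bookkeeping: verifying that one can simultaneously arrange the stratifications of $\widehat\Lambda \subset T^*M$ and of $M$ to be subanalytic, $C^\omega$, Whitney, and mutually compatible via $\pi$. This is exactly what the subanalytic stratification theorems in \cite[Chapter 8]{KS} are designed to produce, so I would simply invoke them; the geometric content reduces to the one-line computation $\alpha|_{T\Lambda_j} = 0$ together with the rank argument for $d\pi$.
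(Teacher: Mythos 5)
Your overall strategy is the right one and, as far as one can tell, the same one the cited reference [KS, Corollary~8.3.22] uses: reduce to the identity $\alpha|_{\widehat\Lambda}=0$ on the smooth strata of the conic isotropic, then pass through the projection. Your verification of $\alpha|_{\Lambda_j}\equiv 0$ via $L_R\alpha=\alpha$, $\iota_R\alpha=0$, and isotropy is correct, and the idea of choosing $\cS$ compatible with $\{\pi(\Lambda_j)\}_j$ is what the stratification machinery of [KS, \S 8.3] is for. The paper itself does not reprove this; it simply cites the reference, so there is no ``paper proof'' to compare against beyond the reference's.

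There is, however, a genuine gap in your main geometric step. You claim that for \emph{every} stratum $X_s\subseteq\pi(\Lambda_j)$ and every $(x,\xi)\in\Lambda_j$ over $x\in X_s$, the containment $T_xX_s\subseteq d\pi\bigl(T_{(x,\xi)}\Lambda_j\bigr)$ holds, and you justify it by saying that ``$X_s$ is an open subset of the smooth locus of $\pi(\Lambda_j)$.'' That justification only applies to the top-dimensional strata of $\pi(\Lambda_j)$. Since $\pi(\Lambda_j)$ is in general only an immersed (possibly singular, possibly self-intersecting) image of a constant-rank map, the stratification $\cS$ necessarily contains lower-dimensional strata of $\pi(\Lambda_j)$, and for a point $(x,\xi)\in\Lambda_j$ with $x$ in such a lower stratum the containment $T_xX_s\subseteq d\pi(T_{(x,\xi)}\Lambda_j)$ is not automatic from compatibility alone. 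So as written the argument proves $\widehat\Lambda\subseteq N^*\cS$ only over the open dense strata of each $\pi(\Lambda_j)$.

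The gap is fillable, and there are two standard ways to close it. One is to use the Whitney~(a) condition that you have already required of $\cS$: for $(x,\xi)\in\Lambda_j$ with $x\in X_t$ a lower stratum, the constant-rank hypothesis makes $\pi^{-1}(X_s)\cap\Lambda_j$ dense in $\Lambda_j$ for a top stratum $X_s$ of $\pi(\Lambda_j)$, so one can choose $(x_n,\xi_n)\in\Lambda_j$ with $x_n\in X_s$ and $(x_n,\xi_n)\to(x,\xi)$; the top-stratum argument gives $\xi_n\perp T_{x_n}X_s$, passing to a convergent subsequence of tangent planes gives $\xi\perp\tau$ for $\tau=\lim T_{x_n}X_s$, and Whitney~(a) then gives $T_xX_t\subseteq\tau$, hence $\xi\perp T_xX_t$. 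The other way is to iterate the refinement: after producing $\cS$, replace each $\Lambda_j$ by the pieces $\Lambda_j\cap\pi^{-1}(X_s)$, refine again to restore constant rank, choose a new $\cS$, and observe that the relevant dimensions drop so the process terminates; then each piece maps submersively onto a single stratum and the surjectivity you want is automatic. Either fix should be stated explicitly; without it the argument does not yet cover all of $\widehat\Lambda$.
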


Combining with the above lemma, we obtain a microlocal criterion for a sheaf $F$ with
subanalytic microsupport being constructible:

\begin{proposition}[{\cite[Theorem 8.4.2]{KS}}]
Let $F \in \Sh(M)$ and assume $\ms(F)$ is subanalytic. 
Then $F$ is constructible if and only if $\ms(F)$ is a singular isotropic. 
\end{proposition}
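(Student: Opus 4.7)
The plan is to treat the two implications separately, using in each direction the two structural ingredients already set up in the section: (i) the equivalence $\Sh_\cS(M) = \Sh_{N^*\cS}(M)$ for Whitney stratifications, and (ii) the fact (Corollary 8.3.22 of Kashiwara--Schapira, quoted just above) that every closed subanalytic conic isotropic $\widehat\Lambda \subseteq T^*M$ is contained in $N^*\cS$ for some $C^\omega$ Whitney stratification $\cS$.

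For the ``only if'' direction, suppose $F$ is constructible, so $F \in \Sh_\cS(M)$ for some subanalytic stratification $\cS$. Since our stratifications are required to be locally finite and consist of subanalytic submanifolds, a standard result on subanalytic sets lets me refine $\cS$ to a subanalytic $C^\omega$ Whitney stratification $\cS'$, and $F$ is still $\cS'$-constructible. Applying the Whitney proposition recalled above gives $\ms(F) \subseteq N^*\cS'$. Each stratum's conormal bundle $N^*(X_s)$ is a subanalytic (conic) Lagrangian in $T^*M$, so the union $N^*\cS'$ is a subanalytic singular isotropic, and hence so is its closed subset $\ms(F)$.

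For the ``if'' direction, suppose $\ms(F)$ is a subanalytic singular isotropic. By the quoted Kashiwara--Schapira corollary, there exists a $C^\omega$ Whitney stratification $\cS$ of $M$ with $\ms(F) \subseteq N^*\cS$. Then $F \in \Sh_{N^*\cS}(M)$, and by the Whitney proposition this category equals $\Sh_\cS(M)$, so $F$ is $\cS$-constructible and in particular constructible.

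The only nontrivial input here is the existence of a Whitney (in fact $C^\omega$) refinement adapted to $\ms(F)$, which is the hard part of the Kashiwara--Schapira package and has already been cited. Given that, the argument is a direct bookkeeping with the two propositions of the previous subsection: the forward direction is essentially ``stratify and take conormals'' and the reverse direction is ``conormalize the isotropic and invoke the microlocal-to-constructible equivalence.'' I expect no further obstacle; the statement is really a packaging of Proposition ``$\Sh_{\cS} = \Sh_{N^*\cS}$'' together with the lemma on subanalytic conic isotropics.
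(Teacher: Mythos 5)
Your argument is correct and is essentially the route the paper has in mind: the paper states this result as a citation of \cite[Theorem 8.4.2]{KS} and frames it precisely as the combination of the two preceding ingredients you use, namely the equivalence $\Sh_{\cS}(M)=\Sh_{N^*\cS}(M)$ for Whitney stratifications and the existence (KS, Corollary 8.3.22, together with the refinement of a subanalytic stratification to a Whitney one) of a $C^\omega$ Whitney stratification whose conormal bounds a given closed conic subanalytic isotropic. The only point stated without justification is that a closed subanalytic subset of $N^*\cS'$ is again a singular isotropic, but this is standard (refine a subanalytic stratification of $\ms(F)$ to be compatible with the conormals $N^*(X_s)$, so each stratum lies in a conic Lagrangian) and does not affect correctness.
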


Another feature of subanalytic geometry is that relatively compact subanalytic sets form an o-minimal structure.
Thus, one can apply the result of \cite{Czapla} to refine a $C^p$ Whitney stratification to a Whitney triangulation, for $1 \leq p < \infty$.

\begin{lemma}\label{cbs}
Let $\widehat\Lambda$ be a closed subanalytic conic isotropic subset in $T^* M$.
Then there exists a $C^1$-Whitney triangulation $\cS$ such that $\widehat\Lambda \subseteq N^* \cS$.
\end{lemma}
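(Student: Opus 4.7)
The plan is to combine the preceding lemma with the refinement result of Czapla cited in the paragraph just before Lemma \ref{cbs}. First, I would invoke the immediately preceding lemma (Kashiwara--Schapira, Corollary 8.3.22) to produce a $C^\omega$ Whitney stratification $\cS_0$ of $M$, with subanalytic strata, such that $\widehat\Lambda \subseteq N^*\cS_0$. Since $C^\omega \Rightarrow C^1$, this is in particular a $C^1$ Whitney stratification.

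Next, since relatively compact subanalytic sets form an o-minimal structure on $M$, I would apply Czapla's theorem (in the form quoted above, valid for any $1\le p<\infty$) to refine $\cS_0$ to a $C^1$ Whitney triangulation $\cS = \{\,\lvert \sigma\rvert\,\}_{\sigma\in K}$ of $M$ whose open simplices are subanalytic submanifolds. By the standard formulation of triangulation theorems in the subanalytic/o-minimal category, such a refinement can be chosen so that every stratum of $\cS$ lies in a single stratum of $\cS_0$.

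Finally, I would verify the microsupport inclusion $\widehat\Lambda \subseteq N^*\cS$. For any open simplex $\lvert\sigma\rvert \in \cS$ contained in a stratum $Y\in\cS_0$, the inclusion of submanifolds $\lvert\sigma\rvert\hookrightarrow Y$ gives $T\lvert\sigma\rvert\subseteq TY|_{\lvert\sigma\rvert}$, and passing to annihilators yields $N^*Y|_{\lvert\sigma\rvert}\subseteq N^*\lvert\sigma\rvert$. Taking the union over all simplices of $\cS$ gives $N^*\cS_0 \subseteq N^*\cS$, hence
\[
\widehat\Lambda \;\subseteq\; N^*\cS_0 \;\subseteq\; N^*\cS,
\]
which is the desired containment.

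The main obstacle I foresee is not the conormal bookkeeping, which is automatic, but the globalization of Czapla's triangulation theorem from the relatively compact o-minimal setting to the (possibly non-compact) real analytic manifold $M$. The fix is standard: exhaust $M$ by a countable locally finite family of relatively compact subanalytic open sets, triangulate successively while respecting $\cS_0$ and the previously built triangulation on the overlap, and appeal to the local nature of both the Whitney condition and the microsupport condition to conclude. Since we only need compatibility of the resulting stratification with $\cS_0$ stratum-by-stratum, the full rigidity of a globally coherent simplicial complex structure is not required, and this local patching argument suffices.
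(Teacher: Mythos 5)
Your proposal is correct and follows essentially the same route as the paper, which (without writing out a separate proof) combines the preceding lemma of Kashiwara--Schapira with Czapla's o-minimal refinement of a $C^p$ Whitney stratification to a $C^1$ Whitney triangulation; your extra steps — the inclusion $N^*\cS_0 \subseteq N^*\cS$ for a refining stratification and the exhaustion argument handling non-compact $M$ — are exactly the routine details the paper leaves implicit.
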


Combining the above two results, we conclude:
\begin{theorem}\label{extri}
Let $F \in \Sh(M)$ and assume $\ms(F)$ is a subanalytic singular isotropic.
Then $F$ is $\cS$-constructible for some $C^1$-Whitney triangulation $\cS$. 
\end{theorem}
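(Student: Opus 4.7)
The plan is to deduce this as a direct combination of Lemma \ref{cbs} with the Kashiwara--Schapira/Ganatra--Pardon--Shende equivalence $\Sh_\cS(M) = \Sh_{N^*\cS}(M)$ for $C^1$-Whitney stratifications, both of which appear immediately above the statement. Essentially, the only thing to do is verify that the hypotheses feed into one another cleanly and that triangulations are a permissible target.

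First I would set $\widehat\Lambda := \ms(F)$ and verify it meets the hypotheses of Lemma \ref{cbs}. Closedness and $\mathbb{R}_{>0}$-conicity are intrinsic to the microsupport, while the remaining two properties, being subanalytic and singular isotropic, are exactly our assumptions on $F$. Applying Lemma \ref{cbs} produces a $C^1$-Whitney triangulation $\cS$ of $M$ such that $\ms(F) \subseteq N^*\cS$.

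Next I would invoke the equivalence $\Sh_{N^*\cS}(M) = \Sh_\cS(M)$ cited from \cite[Proposition 8.4.1]{KS} and \cite[Proposition 4.8]{Ganatra-Pardon-Shende3}, which holds because a triangulation is in particular a stratification and the assumed triangulation is Whitney. Since $\ms(F) \subseteq N^*\cS$, we have $F \in \Sh_{N^*\cS}(M) = \Sh_\cS(M)$, which is precisely the statement that $F$ is $\cS$-constructible.

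There is no genuine obstacle at this step: the theorem is a repackaging of inputs already proved just above. The substantive content lives in Lemma \ref{cbs}, where one must refine the $C^\omega$-Whitney stratification supplied by \cite[Corollary 8.3.22]{KS} to a $C^1$-Whitney \emph{triangulation}; that refinement is where Czapla's triangulation theorem for o-minimal structures is actually used, together with the fact that relatively compact subanalytic subsets form an o-minimal structure. Once that lemma is in hand, the current theorem is immediate.
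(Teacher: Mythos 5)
Your proof is correct and is essentially the route the paper takes: the theorem is stated in the paper as a one-line ``combining the above two results,'' and the substantive ingredients are exactly Lemma \ref{cbs} (producing a $C^1$-Whitney triangulation $\cS$ with $\ms(F)\subseteq N^*\cS$) together with the Whitney criterion $\Sh_{N^*\cS}(M)=\Sh_\cS(M)$. If anything, your proposal is slightly more precise than the paper's wording: the paper's ``above two results'' nominally points to the proposition quoted from \cite[Theorem 8.4.2]{KS} plus Lemma \ref{cbs}, but Theorem 8.4.2 by itself only yields constructibility with respect to \emph{some} stratification, whereas the stronger assertion that $F$ is constructible with respect to the specific Whitney triangulation $\cS$ produced by Lemma \ref{cbs} is exactly what the earlier Proposition (\cite[Proposition 8.4.1]{KS}, \cite[Proposition 4.8]{Ganatra-Pardon-Shende3}) delivers, as you use. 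This is a minor matter of bookkeeping and the content is the same.
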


Collectively, sheaves with the same subanalytic isotropic microsupport form a category with nice finiteness properties.
Let $\widehat\Lambda$ be a subanalytic conic isotropic in $T^* M$.
By picking a Whitney triangulation $\cS$ such that $\widehat\Lambda \subseteq N^* \cS$.
The fact that the inclusion $\Sh_{\widehat\Lambda}(M) \subseteq \Sh_{N^* \cS}(M) = \cS \dMod $
preserves both limits and colimits implies the following finiteness conditions:

\begin{proposition}[{\cite[Corollary 4.21]{Ganatra-Pardon-Shende3}}]\label{prop:stopremoval}
Let $\widehat\Lambda \subseteq T^*M$ be a subanalytic conic isotropic subset.
Then $\Sh_{\widehat\Lambda}(M)$ is compactly generated.
If $\widehat\Lambda \subseteq \widehat\Lambda^\prime$ is an inclusion of subanalytic singular isotropics,
then the left adjoint of $\Sh_{\widehat\Lambda}(M) \hookrightarrow \Sh_{\widehat\Lambda^\prime}(M)$, i.e.,
$\Sh_{\widehat\Lambda^\prime}(M) \twoheadrightarrow \Sh_{\widehat\Lambda}(M)$ preserves compact objects.
\end{proposition}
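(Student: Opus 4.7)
The plan is to reduce to the combinatorial setting of $\cS\dMod$ via a suitable triangulation and then invoke formal properties of Bousfield localizations between compactly generated presentable stable categories. By Lemma \ref{cbs} applied to $\widehat\Lambda'$, I would first choose a $C^1$-Whitney triangulation $\cS$ of $M$ with $\widehat\Lambda' \subseteq N^*\cS$; the hypothesis $\widehat\Lambda \subseteq \widehat\Lambda'$ then automatically gives $\widehat\Lambda \subseteq N^*\cS$. By Proposition \ref{mc=cc} the ambient category $\Sh_{N^*\cS}(M)$ is identified with $\cS\dMod = \Fun(\cS^{op},\cV)$ and is compactly generated, with compact generators given by the indicator sheaves $1_{X_s}$ tensored with the compact generators of $\cV$.

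Next, by the cited result \cite[Proposition 3.4]{Guillermou-Viterbo}, the inclusion $\Sh_{\widehat X}(M) \hookrightarrow \Sh(M)$ preserves both limits and colimits for any conic closed $\widehat X \subseteq T^*M$. Restricting, each map in the chain of fully faithful inclusions $\Sh_{\widehat\Lambda}(M) \hookrightarrow \Sh_{\widehat\Lambda'}(M) \hookrightarrow \Sh_{N^*\cS}(M)$ preserves colimits (in particular filtered colimits) between presentable stable categories, and so admits a left adjoint by the adjoint functor theorem; each such left adjoint is a Bousfield localization. The key formal principle I would invoke is that the left adjoint of a filtered-colimit-preserving functor between compactly generated presentable stable categories automatically sends compact objects to compact objects.

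Applying this to the left adjoint $\iota_{\widehat\Lambda}^{*}: \Sh_{N^*\cS}(M) \to \Sh_{\widehat\Lambda}(M)$ of the outer inclusion shows that each $\iota_{\widehat\Lambda}^{*}(1_{X_s})$ is compact; these generate $\Sh_{\widehat\Lambda}(M)$ because, by the adjunction together with full faithfulness of the inclusion, one has $\Hom(\iota_{\widehat\Lambda}^{*}(1_{X_s}), F) = \Hom(1_{X_s}, F)$ for every $F \in \Sh_{\widehat\Lambda}(M)$, and these Hom spaces detect the vanishing of $F$ in the ambient $\Sh_{N^*\cS}(M)$. This proves compact generation. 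For the second claim, the same principle applied directly to the colimit-preserving inclusion $\Sh_{\widehat\Lambda}(M) \hookrightarrow \Sh_{\widehat\Lambda'}(M)$ shows its left adjoint preserves compact objects. There is essentially no serious obstacle here: the argument is purely formal once Lemma \ref{cbs}, Proposition \ref{mc=cc}, and the colimit preservation of the ambient inclusion are in hand. The only point requiring a moment of care is correctly invoking the standard equivalence between preservation of filtered colimits by a right adjoint and preservation of compact objects by its left adjoint.
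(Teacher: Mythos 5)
Your proof is correct and follows exactly the approach the paper sketches in the sentence preceding the proposition: embed into $\Sh_{N^*\cS}(M) \cong \cS\dMod$ via Lemma \ref{cbs} and Proposition \ref{mc=cc}, then use that the colimit-preserving inclusion has a compact-object-preserving left adjoint because a left adjoint preserves compacts precisely when its right adjoint preserves filtered colimits. The paper itself does not give a proof but cites \cite[Corollary 4.21]{Ganatra-Pardon-Shende3}, and your argument supplies the expected details of that same reduction.
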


One can describe the fiber of $\Sh_{\widehat\Lambda^\prime}(M) \twoheadrightarrow \Sh_{\widehat\Lambda}(M)$ as follows. Let $(x,\xi) \in \widehat\Lambda$ be a smooth Lagrangian point. 
Up to a shift, there is a microstalk functor $\mu_{(x,\xi)}:\Sh_{\widehat\Lambda}(M) \rightarrow \cV$,
which admits descriptions by sub-level sets of functions whose differential is transverse to $\widehat\Lambda$ 
\cite[Proposition 7.5.3]{KS} \cite[Theorem 4.11]{Ganatra-Pardon-Shende3}. 
For a sheaf $F$ with $\ms(F) \subseteq \widehat\Lambda^\prime$, we have $\ms(F) \subseteq \widehat\Lambda$ if and only if $\mu_{(x,\xi)}(F) = 0$ for any smooth Lagrangian point $(x, \xi)$ in the complement. 
Since $\Sh_{\widehat\Lambda}(M) \hookrightarrow \Sh_{\widehat\Lambda^\prime}(M)$ admits left and right adjoints, one can conclude that $\mu_{(x,\xi)}$ also admits left and right adjoints.

By applying the left adjoint to the generator $1_\cV \in \cV$, we see that it is
tautologically corepresented by a compact object $\mu_{(x,\xi)}^l (1_\cV) \in \Sh_{\widehat\Lambda}^c(M)$.
Furthermore, when $\widehat\Lambda \subseteq \widehat\Lambda^\prime$ and $(x,\xi) \in \widehat\Lambda^\prime$,
the corepresentative in $\Sh_{\widehat\Lambda^\prime}^c(M)$ 
is sent under $\Sh_{\widehat\Lambda^\prime}(M) \twoheadrightarrow \Sh_{\widehat\Lambda}(M)$ to a similar corepresentative 
in $\Sh_{\widehat\Lambda}^c(M)$ and,
they are tautologically sent to the zero object when  $(x,\xi)$ is a smooth point in the complement.
The converse is also true:

\begin{proposition}[{\cite[Theorem 4.14]{Ganatra-Pardon-Shende3}}]\label{wdstopr}
Let $\widehat\Lambda \subseteq \widehat\Lambda^\prime \subseteq T^*M$ be closed subanalytic conic isotropics and let
${\sD^\mu}({\widehat\Lambda^\prime,\widehat\Lambda})$ denote the fiber of the canonical left adjoint functor
$\Sh_{\widehat\Lambda^\prime}(M) \twoheadrightarrow \Sh_{\widehat\Lambda}(M)$.
Then ${\sD^\mu}({\widehat\Lambda^\prime,\widehat\Lambda})$ 
is compactly generated by the corepresentatives of the microstalk functors $\mu_{(x,\xi)}$ for smooth Lagrangian points 
$(x,\xi) \in \widehat\Lambda^\prime \setminus \widehat\Lambda$.
\end{proposition}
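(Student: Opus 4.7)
The strategy is to verify that the proposed corepresentatives lie inside the fiber and jointly detect zero there, which for compactly generated stable presentable categories suffices to give a compact generating set.

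First I would show that each $\mu_{(x,\xi)}^l(1_\cV)$, for a smooth Lagrangian point $(x,\xi) \in \widehat\Lambda' \setminus \widehat\Lambda$, lies in the fiber $\sD^\mu(\widehat\Lambda', \widehat\Lambda)$. Using the adjunction $\mu_{(x,\xi)}^l \dashv \mu_{(x,\xi)}$ together with the fact that $\mu_{(x,\xi)}(G) = 0$ for every $G \in \Sh_{\widehat\Lambda}(M)$ (since $\ms(G) \subseteq \widehat\Lambda$ and $(x,\xi) \notin \widehat\Lambda$), we get
$$\Hom_{\Sh_{\widehat\Lambda'}(M)}(\mu_{(x,\xi)}^l(1_\cV), G) = \Hom_\cV(1_\cV, \mu_{(x,\xi)}(G)) = 0$$
for every such $G$. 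By the universal property of the left adjoint $L$ to the inclusion $\Sh_{\widehat\Lambda}(M) \hookrightarrow \Sh_{\widehat\Lambda'}(M)$, this forces $L(\mu_{(x,\xi)}^l(1_\cV)) = 0$, so the corepresentative indeed belongs to the fiber. Its compactness was already established in the discussion preceding the proposition, using that $\mu_{(x,\xi)}$ admits both adjoints (and in particular preserves filtered colimits).

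Next I would establish joint detection of zero. Suppose $F \in \sD^\mu(\widehat\Lambda', \widehat\Lambda)$ satisfies $\Hom(\mu_{(x,\xi)}^l(1_\cV), F) = 0$ for every smooth Lagrangian $(x,\xi) \in \widehat\Lambda' \setminus \widehat\Lambda$. The same adjunction re-reads this as $\mu_{(x,\xi)}(F) = 0$ at every such point. Invoking the microlocal criterion recalled in the paragraph preceding the proposition, this forces $\ms(F) \subseteq \widehat\Lambda$, i.e.\ $F \in \Sh_{\widehat\Lambda}(M)$. But $L$ is the identity on the essential image of the fully faithful inclusion, so $F = L(F) = 0$, since by hypothesis $F$ lies in the fiber.

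Finally, I would appeal to the standard fact that in a compactly generated presentable stable category, any collection of compact objects that jointly detects zero is a compact generating family. Applied to $\sD^\mu(\widehat\Lambda', \widehat\Lambda)$ (which is presentable and stable as the fiber of a colimit-preserving functor between such categories, by Proposition \ref{prop:stopremoval}) and to our set of corepresentatives, this yields the claim. The main subtlety is the bookkeeping of adjunctions in the first two steps: one must carefully use that vanishing of $\Hom$ into the full subcategory $\Sh_{\widehat\Lambda}(M)$ is what characterizes the kernel of $L$, and then translate vanishing against the corepresentatives into microstalk vanishing so that the microlocal criterion can be applied. The microlocal input itself (the characterization of singular support containment via microstalks at smooth Lagrangian points) is taken as input from \cite{KS,Ganatra-Pardon-Shende3}.
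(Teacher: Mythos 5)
The paper does not prove this proposition; it is stated as a citation to \cite[Theorem 4.14]{Ganatra-Pardon-Shende3}, so there is no in-paper argument to compare against. On its own merits your argument is correct and is essentially the standard one (and, to the best of my recollection, matches the route taken in the cited source): you verify that the microstalk corepresentatives land in the kernel of the stop-removal localization and are compact there, then use the microlocal characterization of $\ms(F) \subseteq \widehat\Lambda$ via vanishing of microstalks at smooth Lagrangian points to show these corepresentatives jointly detect zero, and finally invoke the standard ``compact objects detecting zero generate'' criterion for presentable stable categories. Two small points worth spelling out, both of which you implicitly handle: first, compactness of $\mu_{(x,\xi)}^l(1_\cV)$ in the fiber (not just in the ambient $\Sh_{\widehat\Lambda'}(M)$) uses that the fiber is a localizing full subcategory, so its colimits are computed in $\Sh_{\widehat\Lambda'}(M)$; second, the step ``$\Hom(\mu^l(1_\cV), F) = 0$ for all shifts implies $\mu(F) = 0$'' is tacitly using that $1_\cV$ compactly generates $\cV$, which is the paper's standing convention (it is what makes the phrase ``the corepresentative $\mu_{(x,\xi)}^l(1_\cV)$'' in the preceding discussion meaningful).
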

\begin{remark}\label{rem:stalk-corepresentative}
As explained in \cite[Section 4.4]{Ganatra-Pardon-Shende3}, when $\xi = 0$, the microstalk functor $\mu_{(x, \xi)}$ is simply the stalk functor, and the above result also applies.
\end{remark}

When $\Lambda$ is a subanalytic isotropic subset, the above results plus the microlocal cut-off lemma \cite[Proposition 6.1.3]{KS} then implies that $\msh^\pre_\Lambda$ in fact takes value in the category of compactly generated stable categories $\PrLcs$,
whose morphisms are given by functors which admit both the left and the right adjoints;
therefore, its sheafification in $\st$ coincides with the sheafification in $\PrLcs$ \cite{Kuo-Li-spherical}.
In other words, for $\Omega \subseteq \Omega^\prime$, the restriction maps $\msh_\Lambda(\Omega^\prime) \rightarrow \msh_\Lambda(\Omega)$ admit both left and right adjoints. 
In particular, for the microlocalization functor along $\Lambda$
$$m_\Lambda: \Sh_{\widehat\Lambda}(M) \rightarrow \msh_\Lambda(\Lambda),$$
we denote its left and right adjoint by $m_\Lambda^l$ and $m_\Lambda^r$. These left adjoints preserve compact objects. Furthermore, in this case, $\msh_\Lambda$ defines a constructible sheaf on $\Lambda$ by using the microlocal cut-off lemma again \cite[Proposition 6.1.3]{KS}.

Consider the microlocalization functor
$$m_{\widehat\Omega}: \Sh_{\widehat\Lambda^\prime}(M) \to \msh_{\widehat\Lambda^\prime}(\widehat\Omega),$$
where $\widehat\Omega = \widehat\Lambda^\prime \setminus \widehat\Lambda$. When $\widehat\Omega$ is a small neighbourhood at a smooth Lagrangian point, by constructibility of $\msh_\Lambda$, this is just the microstalk functor. For a sheaf $F$ with $\ms(F) \subseteq \widehat\Lambda^\prime$, we know $\ms(F) \subseteq \widehat\Lambda$ if and only if $m_{\widehat\Omega}(F) = 0$. This leads to the following proposition:

\begin{proposition}\label{prop:stop-removal-microlocalization}
Let $\widehat\Lambda \subseteq \widehat\Lambda^\prime \subseteq T^*M$ be subanalytic conic isotropic subsets and let $\widehat\Omega = \widehat\Lambda^\prime \setminus \widehat\Lambda$. Then
the fiber ${\sD^\mu}({\widehat\Lambda^\prime,\widehat\Lambda})$ of the canonical left adjoint functor
$\Sh_{\widehat\Lambda^\prime}(M) \twoheadrightarrow \Sh_{\widehat\Lambda}(M)$
is the essential image of the left adjoint of microlocalziation functor $m_{\widehat\Omega}^l: \msh_{\widehat\Lambda^\prime}(\widehat\Omega) \to \Sh_{\widehat\Lambda^\prime}(M)$.
\end{proposition}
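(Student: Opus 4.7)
The plan is to establish the two inclusions separately, with the compact generation result of Proposition \ref{wdstopr} handling the nontrivial direction.

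First, the inclusion ``essential image of $m_{\widehat\Omega}^l \subseteq \sD^\mu(\widehat\Lambda',\widehat\Lambda)$'' is a formal adjunction computation. Recall $\sD^\mu(\widehat\Lambda',\widehat\Lambda)$ is the fiber of $\iota_{\widehat\Lambda}^*: \Sh_{\widehat\Lambda'}(M) \to \Sh_{\widehat\Lambda}(M)$, equivalently the left orthogonal of $\Sh_{\widehat\Lambda}(M)$ in $\Sh_{\widehat\Lambda'}(M)$. For any $H \in \msh_{\widehat\Lambda'}(\widehat\Omega)$ and $G \in \Sh_{\widehat\Lambda}(M)$, the adjunction $m_{\widehat\Omega}^l \dashv m_{\widehat\Omega}$ gives
$$\Hom_{\Sh_{\widehat\Lambda'}(M)}(m_{\widehat\Omega}^l(H), G) = \Hom_{\msh_{\widehat\Lambda'}(\widehat\Omega)}(H, m_{\widehat\Omega}(G)),$$
and since $\ms(G) \subseteq \widehat\Lambda$ is disjoint from $\widehat\Omega = \widehat\Lambda' \setminus \widehat\Lambda$, we have $m_{\widehat\Omega}(G) = 0$, so this Hom vanishes.

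For the reverse inclusion, I plan to show each compact generator of $\sD^\mu(\widehat\Lambda',\widehat\Lambda)$ listed in Proposition \ref{wdstopr} lies in the essential image, and then extend by the colimit-preservation of $m_{\widehat\Omega}^l$. For a smooth Lagrangian point $(x,\xi) \in \widehat\Omega$, the constructibility of $\msh_{\widehat\Lambda'}$ on $\widehat\Omega$ (recorded in the paragraph preceding the statement) identifies the microstalk with the stalk of the microsheaf at $(x,\xi)$, so the microstalk functor factors as
$$\mu_{(x,\xi)} = (-)_{(x,\xi)} \circ m_{\widehat\Omega}: \Sh_{\widehat\Lambda'}(M) \to \msh_{\widehat\Lambda'}(\widehat\Omega) \to \cV.$$
Passing to left adjoints yields $\mu_{(x,\xi)}^l(1_\cV) = m_{\widehat\Omega}^l\bigl(i_{(x,\xi)!}(1_\cV)\bigr)$, where $i_{(x,\xi)!}$ denotes the left adjoint of the stalk functor (concretely, extension by zero of a locally constant section on a small contractible neighborhood in the top-dimensional Lagrangian stratum containing $(x,\xi)$). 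Each compact generator thus lies in the essential image of $m_{\widehat\Omega}^l$.

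To conclude, any $F \in \sD^\mu(\widehat\Lambda',\widehat\Lambda)$ can by Proposition \ref{wdstopr} be written as a colimit $F = \colim_i \mu_{(x_i,\xi_i)}^l(1_\cV)$ of such compact generators, and since $m_{\widehat\Omega}^l$ preserves colimits,
$$F = \colim_i m_{\widehat\Omega}^l\bigl(i_{(x_i,\xi_i)!}(1_\cV)\bigr) = m_{\widehat\Omega}^l\Bigl(\colim_i i_{(x_i,\xi_i)!}(1_\cV)\Bigr),$$
exhibiting $F$ as an object in the essential image of $m_{\widehat\Omega}^l$. The main conceptual step is the factorization of the microstalk through microlocalization at smooth Lagrangian points together with the existence of the left adjoint to the stalk functor on $\msh_{\widehat\Lambda'}(\widehat\Omega)$; both follow from the constructibility of the microsheaf, and once they are in hand the rest is routine compact-generation bookkeeping.
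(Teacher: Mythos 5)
Your argument has the right skeleton — two inclusions, with the adjunction computation handling the easy direction and Proposition \ref{wdstopr} supplying the compact generators for the hard one — but the final ``routine bookkeeping'' is where a genuine gap sits. Compact generation of $\sD^\mu(\widehat\Lambda^\prime,\widehat\Lambda)$ by the corepresentatives $\mu_{(x,\xi)}^l(1_\cV)$ means that the smallest localizing subcategory containing them is all of $\sD^\mu$; it does \emph{not} literally hand you a presentation $F = \colim_i \mu_{(x_i,\xi_i)}^l(1_\cV)$ valued in the generators. More importantly, even granting some colimit presentation by objects of the form $m_{\widehat\Omega}^l(i_{(x_i,\xi_i)!}1_\cV)$, the step $\colim_i m_{\widehat\Omega}^l(\cdot) = m_{\widehat\Omega}^l(\colim_i \cdot)$ requires the colimit \emph{diagram} — whose morphisms live in $\Sh_{\widehat\Lambda^\prime}(M)$ — to lift to a diagram in $\msh_{\widehat\Lambda^\prime}(\widehat\Omega)$. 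That lifting needs $m_{\widehat\Omega}^l$ to be fully faithful, which you never establish; without it, the essential image of $m_{\widehat\Omega}^l$ need not be closed under colimits, and the reverse inclusion doesn't follow.

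The fix is to record that $m_{\widehat\Omega}^l$ is fully faithful, after which its essential image is a localizing subcategory and the compact generators force the reverse inclusion. Full faithfulness is a consequence of the recollement structure already laid out in the text: $m_{\widehat\Omega}$ has both a left and a right adjoint, and the microsupport estimate gives $\ker(m_{\widehat\Omega}) = \Sh_{\widehat\Lambda}(M)$, so $\Sh_{\widehat\Lambda}(M) \hookrightarrow \Sh_{\widehat\Lambda^\prime}(M) \to \msh_{\widehat\Lambda^\prime}(\widehat\Omega)$ is a recollement and $m_{\widehat\Omega}^l$, the left adjoint of the quotient, is automatically fully faithful. In fact once you notice this, the paper's intended (unwritten) argument is simply the recollement formalism: the fiber of the left adjoint $\iota_{\widehat\Lambda}^*$ of the inclusion equals the essential image of the left adjoint $m_{\widehat\Omega}^l$ of the quotient, by general nonsense, with no appeal to compact generators at all. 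Your route through Proposition \ref{wdstopr} is a legitimate alternative once the full-faithfulness point is made explicit, but as written the last step does not quite close.
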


\begin{remark}\label{rem:polarization}
When $\Lambda \subseteq S^* M$ is a high codimensional isotropic subset, microsheaves supported on $\Lambda$ would be zero. However, we thicken the isotropic subset by a Lagrangian section $U_\Lambda$ in the normal bundle and take microsheaves supported on $U_\Lambda$. This is equivalent to choosing a (stable) polarization. We will use this viewpoint in Section \ref{sec: Kunneth-microsheaf}. See also \cite{Shende-h-principle,Nadler-Shende}.
\end{remark}

\section{The K\"unneth formulae}

We prove the K\"unneth formula for sheaves and microsheaves, Theorem \ref{pd:g} and (1) of Theorem \ref{thm: Kunneth-Fourier-Mukai-microsheaves} in this section.
We begin with recalling general facts about products of compactly generated categories and basic properties of constructible sheaves which will be needed in the proofs.
We refer to \cite{Kuo-wrapped-sheaves, Kuo-Li-spherical} for more detailed reviews of microlocal sheaf theory.
 

\subsection{Tensor products of categories}

Denote by $\PrLst$ the (large) category of presentable stable categories whose morphisms are given by colimit-preserving functors. 
Compactly generated categories in $\PrLst$ form a subcategory $\PrLcs$, and it is equivalent to $\st_\omega$, the category of idempotent complete small stable categories whose morphisms are given by exact functors by taking compact objects,
\begin{align*}
\PrLcs &\xrightarrow{\sim} \st_\omega \\
\sC &\mapsto C\coloneqq \sC^c.
\end{align*} 
The inverse map of this identification is given by taking Ind-completion $C \mapsto \Ind (C)$.
There is a symmetric monoidal structure $\otimes$ on $\PrLst$ \cite{Lurie-HA, Hoyois-Scherotzke-Sibilla}, and the following lemma implies that it restricts to a symmetric monoidal structure on $\PrLcs$, which further induces a symmetric monoidal structure $\otimes$ on $\st$ by sending $(C, D)$ to $\left( \Ind(C) \otimes \Ind(D) \right)^c$.

\begin{lemma}[{\cite[Proposition 7.4.2]{Gaitsgory-Rozenblyum}}]\label{pgc,PrLst}
Assume that $\sC$ and $\sD$ are compactly generated stable categories over $\cV$.
\begin{enumerate}
\item The tensor product $\sC \otimes \sD$ is compactly generated by objects of the form
$c_0 \otimes d_0$ with $c_0 \in C$ and $d_0 \in D$.
\item For $c_0$, $d_0$ as above, and $c \in \sC$, $d \in \sD$, we have a canonical isomorphism
$$ \Hom_\sC(c_0,c) \otimes \Hom_\sD(d_0,d) = \Hom_{\sC \otimes \sD}(c_0 \otimes d_0, c \otimes d).$$
\end{enumerate}
\end{lemma}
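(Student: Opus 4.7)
The plan is to exploit the symmetric-monoidal equivalence $\Ind : \st_\omega \xrightarrow{\sim} \PrLcs$ recalled immediately before the lemma, under which the tensor product in $\PrLcs$ corresponds to the tensor product in $\st_\omega$. Setting $C = \sC^c$ and $D = \sD^c$, this means $\sC \otimes \sD = \Ind(C \otimes D)$, so everything reduces to understanding the small tensor product $C \otimes D$.

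For part (1), I would use that $C \otimes D$ is, by construction, the idempotent-complete stable $\cV$-linear category generated by pure tensors $c_0 \otimes d_0$ with $c_0 \in C$, $d_0 \in D$; that is, every object of $C \otimes D$ is obtained from such pure tensors by finitely many shifts, finite colimits, and retracts. Ind-completing and using that compact generators of a small stable category yield compact generators of its Ind-completion, the pure tensors $\{c_0 \otimes d_0\}$ form a set of compact generators for $\sC \otimes \sD$.

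For part (2), fix compact $c_0 \in C$ and $d_0 \in D$, and view both sides as functors of $(c,d) \in \sC \times \sD$. Both sides are colimit-preserving in $c$ and in $d$: the left-hand side because $c_0 \otimes d_0$ is a compact object of $\sC \otimes \sD$ (by part (1)) and because the symmetric monoidal structure on $\PrL$ makes the bilinear pairing $\sC \times \sD \to \sC \otimes \sD$ colimit-preserving in each slot, and the right-hand side because $\Hom_\sC(c_0, -)$ and $\Hom_\sD(d_0, -)$ are colimit-preserving by compactness while $\otimes_\cV$ distributes over colimits in each variable. Writing arbitrary $c = \colim_\alpha c_\alpha$ and $d = \colim_\beta d_\beta$ as filtered colimits of compact objects, the claim reduces to the case when both $c$ and $d$ are themselves compact. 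In that compact--compact setting, the equality is precisely the defining property of the mapping objects in $C \otimes D$: the bilinear functor $C \times D \to C \otimes D$ determined by $(c_0, d_0) \mapsto c_0 \otimes d_0$ is the universal $\cV$-linear bilinear functor, which pins down $\Hom_{C \otimes D}(c_0 \otimes d_0, c_1 \otimes d_1) = \Hom_C(c_0,c_1) \otimes \Hom_D(d_0,d_1)$.

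The main obstacle is the compact--compact Hom identity, which is the real engine of the argument; once it is established, part (2) follows mechanically by colimit continuity in each variable. Conceptually, this identity is built into the construction of the tensor product in $\st_\omega$ (or equivalently, into the universal property of the tensor product of presentable categories applied to the Yoneda embedding), so the whole proof is ultimately a bookkeeping exercise after one commits to the symmetric-monoidal equivalence $\Ind : \st_\omega \xrightarrow{\sim} \PrLcs$.
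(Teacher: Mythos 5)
The paper does not actually prove this lemma; it is quoted verbatim from Gaitsgory--Rozenblyum (Chapter 1, Proposition 7.4.2), so there is no internal proof to compare against. Your sketch reconstructs the standard argument behind that citation and is essentially sound: under the equivalence $\Ind\colon \st_\omega \xrightarrow{\sim} \PrLcs$ one has $\sC \otimes \sD = \Ind(C \otimes D)$, the pure tensors $c_0 \otimes d_0$ of compacts are compact and generate, and part (2) reduces, by compactness of $c_0$, $d_0$, $c_0 \otimes d_0$ and colimit-continuity of the pairing in each variable, to the case where $c$ and $d$ are themselves compact.

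The one soft spot is your claim that the compact--compact Hom identity is ``precisely the defining property'' of $C \otimes D$. The universal property only classifies functors that are exact (resp.\ colimit-preserving) in each variable out of $C \times D$; it does not by itself compute mapping objects between pure tensors. To pin that down one needs an explicit model of the tensor product, e.g.\ $\sC \otimes \sD \simeq \Fun^{ex}(C^{op}, \sD)$ (equivalently $\Fun^L(\Ind(C^{op}), \sD)$), under which $c_0 \otimes d_0$ corresponds to the functor $\Hom_C(-, c_0) \otimes d_0$; a Yoneda-type computation, together with compactness of $d_0$ and the fact that $\cV$ is compactly generated by dualizable objects, then yields $\Hom_{\sC \otimes \sD}(c_0 \otimes d_0, c \otimes d) = \Hom_\sC(c_0, c) \otimes \Hom_\sD(d_0, d)$. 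That identification-plus-computation is exactly the content of the cited result, so your outline has the right shape, but the ``engine'' step requires this model rather than the bare universal property.
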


We will need this lemma concerning the full-faithfulness of the tensors of functors.
\begin{lemma}\label{ff;pr}
If the functors  $f_i: \sC_i \rightarrow \sD_i$ for $i = 1, 2$ in $\PrLst$ are fully faithful, 
then their tensor product  $f_1 \otimes f_2: \sC_1 \otimes \sC_2 \rightarrow \sD_1 \otimes \sD_2$
is fully faithful if one of the following condition is satisfied:
\begin{enumerate}
\item The functor $f_i$ admits a left adjoint.
\item The right adjoint of $f_i$ is colimit-preserving.
\end{enumerate}
\end{lemma}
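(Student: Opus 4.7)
My plan is to reduce the two-variable statement to a one-variable statement and then exploit the fact that adjunctions internal to a symmetric monoidal $(\infty,1)$-category can be tensored with any object.

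\medskip

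\textbf{Step 1: Reduction to tensoring with the identity.} I would first write
\[
f_1 \otimes f_2 \;=\; (f_1 \otimes \id_{\sD_2}) \circ (\id_{\sC_1} \otimes f_2),
\]
so it suffices to show that for $f: \sC \to \sD$ in $\PrLst$ satisfying either of the two hypotheses, the functor $f \otimes \id_\sE$ is fully faithful for any $\sE \in \PrLst$ (since fully faithful functors are closed under composition). Note that both hypotheses are preserved under the relevant restrictions: $f_i$ individually satisfies (1) or (2), and $\id_{\sD_j}$ trivially satisfies both.

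\medskip

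\textbf{Step 2: Characterization of fully faithfulness via adjunctions.} The key input is that a morphism $g$ in $\PrLst$ which participates in an adjunction $(g^L, g)$ is fully faithful iff the counit $g^L g \to \id$ is an equivalence, and dually $g$ is fully faithful iff the unit $\id \to g^R g$ is an equivalence whenever $(g, g^R)$ is an adjoint pair. Since $f: \sC \to \sD$ is colimit-preserving, by the adjoint functor theorem for presentable categories it always admits a right adjoint $f^R$ at the level of underlying $(\infty,1)$-categories; the content of conditions (1) and (2) is whether $f$ is a \emph{right} adjoint in $\PrLst$, or whether its right adjoint $f^R$ already lives in $\PrLst$.

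\medskip

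\textbf{Step 3: Tensoring adjunctions.} The main conceptual input is the standard fact that in a symmetric monoidal $(\infty,1)$-category (here $\PrLst$ with the Lurie tensor product), any internal adjunction $(L, R)$ tensors with an object $\sE$ to yield an adjunction $(L \otimes \id_\sE,\, R \otimes \id_\sE)$, whose unit and counit are the tensor products of the original unit and counit with $\id_\sE$. I would cite this from \cite{Lurie-HA} or deduce it formally from the $2$-functoriality of $(-) \otimes \sE$.

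With this in hand, in case (1) the adjunction $(f^L, f)$ lives in $\PrLst$ (since $f^L$ as a left adjoint automatically preserves colimits, and $f$ preserves colimits by assumption), so tensoring yields an adjunction $(f^L \otimes \id_\sE, f \otimes \id_\sE)$ whose counit is $(f^L f) \otimes \id_\sE \to \id_{\sC \otimes \sE}$; this is an equivalence because $f^L f \to \id_\sC$ is. In case (2) the adjunction $(f, f^R)$ lives in $\PrLst$, so tensoring yields $(f \otimes \id_\sE, f^R \otimes \id_\sE)$ with unit $\id_{\sC \otimes \sE} \to (f^R f) \otimes \id_\sE$ an equivalence for the same reason. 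In either case $f \otimes \id_\sE$ is fully faithful.

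\medskip

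\textbf{Main obstacle.} The only non-formal ingredient is the claim in Step 3 that adjunctions in $\PrLst$ tensor to adjunctions, together with the identification of the resulting unit/counit as the tensor of the original ones. This is a standard but genuinely $\infty$-categorical statement; the rest of the argument is bookkeeping of adjunctions and unit/counit equivalences.
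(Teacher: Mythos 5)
Your proposal is correct and follows essentially the same route as the paper: reduce to showing $f \otimes \id_{\sE}$ is fully faithful via the decomposition of $f_1 \otimes f_2$, tensor the relevant adjunction $(f^L,f)$ or $(f,f^R)$ with $\id_\sE$, and conclude from the fact that the (co)unit of the tensored adjunction is the tensor of the original (co)unit, hence still an equivalence. Your unit/counit phrasing is, if anything, slightly more precise than the paper's wording (the paper speaks of the tensored left adjoint being ``surjective,'' which should really be read as ``counit an equivalence''), but the underlying argument is identical.
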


\begin{proof}
We prove (1) and leave (2) to the reader.
We first note that since $f_1 \otimes f_2 = (\id_{\sD_1} \otimes f_2) \circ (f_1 \otimes \id_{\sC_2})$. 
It is sufficient to prove the case when $f_2 = \id_{\sC_2}$.
Denote by $f_1^l: \sD_1 \rightarrow \sC_1$ the left adjoint of $f_1$.
We note that since for any $Y \in \sC_1$,
$$ \Hom(f_1^l f_1 X, Y) = \Hom(f_1 X, f_1 Y) = \Hom(X, Y),$$
the left adjoint $f_1^l$ is surjective.
Now we notice that being surjective and being a left adjoint are both preserved under $(-) \otimes \id_{\sC_2}$.
Thus the right adjoint $f_1 \otimes \id_{\sC_2}$ is fully-faithful since it has a surjective left adjoint
by a similar argument as above.
\end{proof}

The following Lemma holds more generally but we will apply it in the special case when $\SC = \PrLst$ and $\PrRst$. (When $\SC$ is stable, this is known as the octahedral identity.)

\begin{lemma} \label{lem: second-isom}
Let $\SC$ be a category with finite limits and consider the commutative diagram in $\SC$:
$$
\begin{tikzpicture}
\node at (0,1.6) {$A$};
\node at (2,1.6) {$C$};
\node at (0,0) {$B$};
\node at (2,0) {$D$};

\draw [->, thick] (0.4,1.6) -- (1.7,1.6) node [midway, above] {$\alpha^\prime$};
\draw [->, thick] (0.4,0) -- (1.7,0) node [midway, above] {$\alpha$};

\draw [->, thick] (0,1.3) -- (0,0.3) node [midway, right] {$\beta^\prime$}; 
\draw [->, thick] (2,1.3) -- (2,0.3) node [midway, right] {$\beta$};
\end{tikzpicture}
$$ 
Then $\mathrm{Fib}(\mathrm{Fib}(\alpha) \to \mathrm{Fib}(\alpha^\prime)) = \mathrm{Fib}(\mathrm{Fib}(\beta) \to \mathrm{Fib}((\beta^\prime)) = \mathrm{Fib}(A \to (B \times_D C))$.
\end{lemma}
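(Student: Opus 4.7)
The plan is to introduce the pullback $P := B \times_D C$ with projections $\pi_B : P \to B$ and $\pi_C : P \to C$, use the universal property of $P$ together with the commutativity of the given square to produce a canonical comparison map $\gamma : A \to P$ satisfying $\pi_C \circ \gamma = \alpha'$ and $\pi_B \circ \gamma = \beta'$, and then show directly that $\mathrm{Fib}(\gamma)$ coincides with both iterated fibers in the statement. This reduces the double equality to two symmetric applications of one basic principle.

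The basic principle is the \emph{fiber-of-a-composition sequence}: for any composable pair $X \xrightarrow{f} Y \xrightarrow{g} Z$ in a pointed category with finite limits, pasting the pullback square defining $\mathrm{Fib}(g)$ onto the one defining $\mathrm{Fib}(g \circ f)$ produces a fiber sequence
\[
\mathrm{Fib}(f) \longrightarrow \mathrm{Fib}(g \circ f) \longrightarrow \mathrm{Fib}(g),
\]
or equivalently $\mathrm{Fib}(f) \simeq \mathrm{Fib}\bigl(\mathrm{Fib}(g \circ f) \to \mathrm{Fib}(g)\bigr)$. I would apply this to the factorization $A \xrightarrow{\gamma} P \xrightarrow{\pi_C} C$ of $\alpha'$, which identifies $\mathrm{Fib}(\gamma)$ with $\mathrm{Fib}\bigl(\mathrm{Fib}(\alpha') \to \mathrm{Fib}(\pi_C)\bigr)$. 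Because $P$ is a pullback, the pasting lemma further identifies $\mathrm{Fib}(\pi_C)$ with $\mathrm{Fib}(\alpha)$ (the fiber of a base change equals the fiber of the original map), giving the first claimed equality.

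The symmetric argument applied to $A \xrightarrow{\gamma} P \xrightarrow{\pi_B} B$, whose composite is $\beta'$, together with $\mathrm{Fib}(\pi_B) \simeq \mathrm{Fib}(\beta)$, yields $\mathrm{Fib}(\gamma) \simeq \mathrm{Fib}\bigl(\mathrm{Fib}(\beta') \to \mathrm{Fib}(\beta)\bigr)$, the second equality. The whole argument rests on iterated applications of the pasting lemma for pullbacks, which is formal in any category with finite limits, so there is no real technical obstacle; the only bookkeeping step is unwinding the functoriality of $\mathrm{Fib}$ along the square so that the induced map between fibers matches the one in the statement. As the authors note, in the stable setting this is exactly the octahedral identity, and the same argument applies verbatim in the intended cases $\SC = \PrLst$ and $\PrRst$.
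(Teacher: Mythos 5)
Your argument is correct, and it proves the intended statement (note that the natural maps of fibers go $\mathrm{Fib}(\alpha') \to \mathrm{Fib}(\alpha)$ and $\mathrm{Fib}(\beta') \to \mathrm{Fib}(\beta)$, as in your write-up; the arrows in the displayed statement are written in the opposite, and strictly speaking non-existent, direction). Your route differs from the paper's. The paper pads the square to a three-by-three grid with zero objects and invokes Fubini for limits: taking the total limit rows-first gives one iterated fiber, columns-first gives the other, and a diagram simplification identifies the total limit with $\mathrm{Fib}(A \to B \times_D C)$. You instead introduce the comparison map $\gamma: A \to B \times_D C$ and reduce everything to two standard facts in a pointed category with finite limits: the fiber sequence $\mathrm{Fib}(f) \to \mathrm{Fib}(g \circ f) \to \mathrm{Fib}(g)$ of a composite (itself a pasting of pullbacks, via $\mathrm{Fib}(g\circ f) \simeq X \times_Y \mathrm{Fib}(g)$), and invariance of fibers under base change, giving $\mathrm{Fib}(\pi_C) \simeq \mathrm{Fib}(\alpha)$ and $\mathrm{Fib}(\pi_B) \simeq \mathrm{Fib}(\beta)$. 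Both arguments are purely formal and apply verbatim in $\PrLst$ and $\PrRst$; yours is more modular, isolating $\mathrm{Fib}(\gamma)$ as the common value and making the symmetry of the two iterated fibers transparent, while the paper's Fubini computation avoids having to check the compatibility you correctly flag, namely that the composite $\mathrm{Fib}(\alpha') \to \mathrm{Fib}(\pi_C) \simeq \mathrm{Fib}(\alpha)$ agrees with the map induced by the square (which follows from functoriality of $\mathrm{Fib}$ applied to the coherence datum of the square, since $\beta' = \pi_B \circ \gamma$ and $\beta \circ \pi_C = \alpha \circ \pi_B$). Also note that, as you do, one should assume the ambient category is pointed (it is in the intended applications) for the fibers to be defined at all.
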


\begin{proof}
Limits commutate with limits and taking consecutive limits is the same as taking the total colimits. More precisely, the first equality, also known as the third isomorphism theorem, can be obtained by consider the following diagram as in \cite[Lemma 2.11]{Kuo-wrapped-sheaves}:
$$
\begin{tikzpicture}

\node at (0,3.2) {$A$};
\node at (2,3.2) {$C$};
\node at (4,3.2) {$0$};
\node at (0,1.6) {$B$};
\node at (2,1.6) {$D$};
\node at (4,1.6) {$0$};
\node at (0,0) {$0$};
\node at (2,0) {$0$};
\node at (4,0) {$0$};

\draw [<-, thick] (1.5,3.2) -- (0.3,3.2) node {$ $};
\draw [<-, thick] (2.5,3.2) -- (3.6,3.2) node {$ $};
\draw [<-, thick] (1.5,1.6) -- (0.3,1.6) node {$ $};
\draw [<-, thick] (2.5,1.6) -- (3.6,1.6) node {$ $};
\draw [<-, thick] (1.5,0) -- (0.3,0) node {$ $};
\draw [<-, thick] (2.5,0) -- (3.6,0) node {$ $};

\draw [<-, thick] (0,1.3) -- (0,0.3) node {$ $};
\draw [<-, thick] (2,1.9) -- (2,2.9) node {$ $};
\draw [<-, thick] (4,1.3) -- (4,0.3) node {$ $}; 
\draw [<-, thick] (0,1.9) -- (0,2.9) node {$ $};
\draw [<-, thick] (2,1.3) -- (2,0.3) node {$ $};
\draw [<-, thick] (4,1.9) -- (4,2.9) node {$ $}; 

\end{tikzpicture}
$$ 
That is, taking the limits first on the rows and then the resulting three-term column diagram produces $\mathrm{Fib}(\mathrm{Fib}(\alpha) \to \mathrm{Fib}(\alpha^\prime))$, and first on the columns and then the resulting three-term column diagram produces $ \mathrm{Fib}(\mathrm{Fib}(\beta) \to \mathrm{Fib}((\beta^\prime))$.
Then, to see that the total limit is given by $\mathrm{Fib}(A \to (B \times_D C))$, we notice that redundant vertices, edges, and faces can be discarded or added without changing the limit:

$$
\begin{tikzpicture}

\node at (0,3.2) {$A$};
\node at (2,3.2) {$C$};
\node at (4,3.2) {$0$};
\node at (0,1.6) {$B$};
\node at (2,1.6) {$D$};
\node at (4,1.6) {$0$};
\node at (0,0) {$0$};
\node at (2,0) {$0$};
\node at (4,0) {$0$};

\draw [->, thick] (0.3,3.2) -- (1.5,3.2) node {$ $};
\draw [->, thick] (3.6,3.2) -- (2.5,3.2) node {$ $};
\draw [->, thick] (0.3,1.6) -- (1.5,1.6) node {$ $};
\draw [->, thick] (3.6,1.6) -- (2.5,1.6) node {$ $};
\draw [->, thick] (0.3,0) -- (1.5,0) node {$ $};
\draw [->, thick] (3.6,0) -- (2.5,0) node {$ $};

\draw [<-, thick] (0,1.3) -- (0,0.3) node {$ $};
\draw [<-, thick] (2,1.9) -- (2,2.9) node {$ $};
\draw [<-, thick] (4,1.3) -- (4,0.3) node {$ $}; 
\draw [<-, thick] (0,1.9) -- (0,2.9) node {$ $};
\draw [<-, thick] (2,1.3) -- (2,0.3) node {$ $};
\draw [<-, thick] (4,1.9) -- (4,2.9) node {$ $}; 

\node at (5,1.6) {$\sim$};







\node at (6,3.2) {$A$};
\node at (8,3.2) {$A$};
\node at (10,3.2) {$A$};
\node at (6,1.6) {$B$};
\node at (8,1.6) {$D$};
\node at (10,1.6) {$C$};
\node at (6,0) {$0$};
\node at (8,0) {$0$};
\node at (10,0) {$0$};

\draw [->, thick] (6.3,1.6) -- (7.5,1.6) node {$ $};
\draw [->, thick] (9.6,1.6) -- (8.5,1.6) node {$ $};
\draw [->, thick] (6.3,0) -- (7.5,0) node {$ $};
\draw [->, thick] (9.6,0) -- (8.5,0) node {$ $};
\draw [double equal sign distance, thick] (7.5,3.2) -- (6.3,3.2) node {$ $};
\draw [double equal sign distance, thick] (8.5,3.2) -- (9.6,3.2) node {$ $};

\draw [<-, thick] (6,1.3) -- (6,0.3) node {$ $};
\draw [<-, thick] (8,1.9) -- (8,2.9) node {$ $};
\draw [<-, thick] (10,1.3) -- (10,0.3) node {$ $}; 
\draw [<-, thick] (6,1.9) -- (6,2.9) node {$ $};
\draw [<-, thick] (8,1.3) -- (8,0.3) node {$ $};
\draw [<-, thick] (10,1.9) -- (10,2.9) node {$ $}; 

\node at (9,2.4) {$\circlearrowleft$};
\node at (7,2.4) {$\circlearrowleft$};

\end{tikzpicture}
$$ 
Here $``\sim"$ indicates the fact the both diagrams have the same limit.
Now, taking the limit of the rows in the first diagram produces the limit $\mathrm{Fib}(A \to (B \times_D C))$.
\end{proof}

\begin{corollary} \label{product-of-quotient-categories}
Let $\sC_0 \hookrightarrow \sC$ and $\sD_0 \hookrightarrow \sD$ be inclusions in both $\PrLst$ and $\PrRst$ and denote the fibers by $\overline{\sC}$ and $\overline{\sD}$.
Then, the category $\overline{\sC} \otimes \overline{\sD}$ is the fiber, both in $\PrLst$ and $\PrRst$, of the map
$$\langle \sC_0 \otimes \sD, \sC \otimes \sD_0 \rangle \hookrightarrow \sC \otimes \sD.$$
\end{corollary}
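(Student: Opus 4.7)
The plan is to apply Lemma~\ref{lem: second-isom} to the commutative square with vertices $\sC_0 \otimes \sD_0$, $\sC \otimes \sD_0$, $\sC_0 \otimes \sD$, $\sC \otimes \sD$ and the tensored inclusions as edges (or, equivalently, to the dual square of left, respectively right, adjoint localization functors). I first verify using Lemma~\ref{ff;pr} that each edge of the square is fully faithful: since each factor inclusion $\sC_0 \hookrightarrow \sC$ and $\sD_0 \hookrightarrow \sD$ lies in both $\PrLst$ and $\PrRst$, it admits both left and right adjoints, so either hypothesis of Lemma~\ref{ff;pr} applies after tensoring.

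The central categorical input is that $(-) \otimes \sE \colon \PrLst \to \PrLst$, being a left adjoint in the closed symmetric monoidal structure on $\PrLst$, preserves colimits and in particular the localization sequence $\sC_0 \hookrightarrow \sC \twoheadrightarrow \overline{\sC}$. Tensoring with $\sE$ therefore identifies the fiber (in the sense of the excerpt, namely the reflective quotient) of each horizontal edge with $\overline{\sC} \otimes (-)$, and of each vertical edge with $(-) \otimes \overline{\sD}$. Consequently the iterated fiber of the square, taken in either order, is $\overline{\sC} \otimes \overline{\sD}$.

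I then invoke Lemma~\ref{lem: second-isom} to conclude that this iterated fiber equals the total fiber of the square. To identify the total fiber with the fiber of the canonical inclusion $\langle \sC_0 \otimes \sD, \sC \otimes \sD_0\rangle \hookrightarrow \sC \otimes \sD$, I use the composite-localization identity: the composition $\sC \otimes \sD \twoheadrightarrow \overline{\sC} \otimes \sD \twoheadrightarrow \overline{\sC} \otimes \overline{\sD}$ of the two tensored reflective quotients has kernel generated by $\sC_0 \otimes \sD$ (the kernel of the first map) together with the preimage of $\overline{\sC} \otimes \sD_0$ (the kernel of the second). By surjectivity of $\sC \otimes \sD_0 \twoheadrightarrow \overline{\sC} \otimes \sD_0$, this preimage is exactly $\langle \sC_0 \otimes \sD, \sC \otimes \sD_0\rangle$. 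The identical argument with right adjoints (or via the duality $\PrRst \simeq (\PrLst)^{op}$) yields the result in $\PrRst$.

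The main obstacle I expect is reconciling the abstract iterated-fiber identification from Lemma~\ref{lem: second-isom} with the concrete subcategory $\langle \sC_0 \otimes \sD, \sC \otimes \sD_0 \rangle$ generated by the two summands. This reconciliation relies on the fact that reflective subcategory inclusions in $\PrLst$ furnish simultaneously fiber and cofiber sequences, so that iterated fibers can be computed as iterated Verdier quotients; once this stable excision principle is in place, the remainder of the argument is formal bookkeeping with localization sequences.
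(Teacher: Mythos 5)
Your proposal is correct and takes essentially the same route as the paper: the paper's entire proof consists of applying Lemma \ref{lem: second-isom} to the square $A=\sC\otimes\sD$, $B=\sC\otimes\sD_0$, $C=\sC_0\otimes\sD$, $D=\sC_0\otimes\sD_0$ (with the tensored localization functors as edges), leaving implicit exactly the bookkeeping you spell out — full faithfulness of the tensored inclusions via Lemma \ref{ff;pr}, preservation of the localization sequences under $\otimes$, and the identification of the total fiber with the fiber over $\langle \sC_0\otimes\sD,\ \sC\otimes\sD_0\rangle$. The one caveat is that the lemma must indeed be applied to the square of adjoint localizations, as in your parenthetical, rather than to the tensored inclusions themselves, whose fibers are trivial because they are fully faithful; your subsequent computation does use the correct square.
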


\begin{proof}
Apply the above Lemma \ref{lem: second-isom} to $A = \sC \otimes \sD$, $B = \sC \otimes \sD_0$, $C = \sC_0 \otimes \sD$, and $D = \sC_0 \otimes \sD_0$.
\end{proof}

\subsection{K\"unneth formula for sheaves} \label{sec: Kunneth-sheaf}
Now we consider pairs of the form $(M,\widehat\Lambda)$ and $(N, \widehat\Sigma)$ where $M$, $N$ are manifolds and $\widehat\Lambda \subseteq T^* M$, $\widehat\Sigma \subseteq T^*N$ are singular conic isotropic.
We can form the product pair $(M \times N, \widehat\Lambda \times \widehat\Sigma)$.
The main proposition of this subsection is the following compatibility statement between this geometric product and the product structure we recall earlier:

Let $\cS$ and $\cT$ be triangulations of $M$ and $N$.
We note that, although the product stratification $\cS \times \cT$ are no longer a triangulation, it still satisfies the conditions in Lemma \ref{lem:sheaves_by_representations}.
Thus the following slight generalization of \cite[Lemma 4.7]{Ganatra-Pardon-Shende3} holds:
\begin{proposition}\label{srep}
Let $\cS$ and $\cT$ be triangulations of $M$ and $N$. Then $\Sh_{\cS \times \cT}(M \times N) = (\cS \times \cT) \dMod$.
Here we denote by $\cS \times \cT$ the product stratification.
\end{proposition}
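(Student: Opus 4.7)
The strategy is to apply Lemma~\ref{lem:sheaves_by_representations} to the poset $\Pi = \cS \times \cT$ with product order, mapping $(s,t) \mapsto \str(s) \times \str(t) \in Op_{M \times N}$. For the first hypothesis, open stars in a triangulation are contractible, so $\Gamma(\str(s); 1_\cV) = \Gamma(\str(t); 1_\cV) = 1_\cV$; K\"unneth for the constant sheaf then gives $\Gamma(\str(s) \times \str(t); 1_\cV) = 1_\cV$.

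For the second hypothesis, suppose $(s,t) \not\leq (s',t')$, equivalently $\str(s) \times \str(t) \not\subseteq \str(s') \times \str(t')$. Write $U_1 = \str(s)$, $U_2 = \str(t)$, $V_1 = \str(s')$, $V_2 = \str(t')$. Then
$$U_1 \times U_2 \setminus V_1 \times V_2 = \bigl((U_1 \setminus V_1) \times U_2\bigr) \cup \bigl(U_1 \times (U_2 \setminus V_2)\bigr)$$
is an open cover with intersection $(U_1 \setminus V_1) \times (U_2 \setminus V_2)$. The hypothesis forces $U_1 \not\subseteq V_1$ or $U_2 \not\subseteq V_2$; in the nontrivial factor, the single-triangulation case \cite[Lemma 4.7]{Ganatra-Pardon-Shende3} gives $\Gamma(U_i \setminus V_i; 1_\cV) = 1_\cV$, while in the trivial factor the complement equals $U_i$. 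Combining with K\"unneth, each of the three pieces of the cover has global sections $1_\cV$, and the Mayer--Vietoris fiber sequence $\fib(1_\cV \oplus 1_\cV \xrightarrow{(a,b) \mapsto a - b} 1_\cV) = 1_\cV$ yields $\Gamma(U_1 \times U_2 \setminus V_1 \times V_2; 1_\cV) = 1_\cV$. The natural restriction $\Gamma(U_1 \times U_2; 1_\cV) \to \Gamma(U_1 \times U_2 \setminus V_1 \times V_2; 1_\cV)$ is then an equivalence between two copies of $1_\cV$.

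Lemma \ref{lem:sheaves_by_representations} therefore provides a fully faithful embedding $(\cS \times \cT)\dMod \hookrightarrow \Sh(M \times N)$ whose generators are the $!$-extensions $j_{\str(s) \times \str(t), !}(1_\cV)$. These are $(\cS \times \cT)$-constructible, so the essential image lies in $\Sh_{\cS \times \cT}(M \times N)$. Conversely, they compactly generate $\Sh_{\cS \times \cT}(M \times N)$ (via the usual filtration by strata), and the embedding preserves colimits, so the essential image is all of $\Sh_{\cS \times \cT}(M \times N)$.

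The main obstacle is the Mayer--Vietoris verification above: one must correctly decompose the set-theoretic complement and invoke K\"unneth for the constant sheaf. All else reduces cleanly to the single-factor case from \cite{Ganatra-Pardon-Shende3}.
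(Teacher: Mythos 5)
Your proof is correct and takes essentially the same route as the paper, which simply observes that the product stratification $\cS \times \cT$ satisfies the hypotheses of Lemma \ref{lem:sheaves_by_representations} and then runs the same generation argument as in \cite[Lemma 4.7]{Ganatra-Pardon-Shende3}; you merely make explicit the K\"unneth/Mayer--Vietoris verification that the paper leaves implicit. One small imprecision: when the other factor has $U_i \subseteq V_i$ its complement $U_i \setminus V_i$ is empty (not $U_i$), so the cover degenerates to the single piece $(U_j \setminus V_j) \times U_i$, whose sections are still $1_\cV$; the conclusion is unaffected.
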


Thus when $\widehat\Lambda = N^* \cS$ and $\widehat\Sigma = N^* \cT$ are given by Whitney triangulations, one can check directly that $(\cS \dMod) \otimes (\cT \dMod) = (\cS \times \cT) \dMod$, and we can conclude the following special case.
\begin{proposition}\label{pd:ws}
Let $\cS$ and $\cT$ Whitney triangulations of $M$ and $N$. There is an equivalence
$$\boxtimes: \Sh_{N^* \cS}(M) \otimes \Sh_{N^* \cT}(N) \xrightarrow{\sim} \Sh_{N^* (\cS \times \cT)}(M \times N)$$
sending $1_{\str(s)} \otimes 1_{\str(t)}$ to $1_{\str(s) \times \str(t)}$.
\end{proposition}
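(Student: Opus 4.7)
The plan is to reduce the statement to a purely combinatorial identity about presheaf categories on posets, leveraging Proposition \ref{mc=cc} and Proposition \ref{srep}.

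First I would apply Proposition \ref{mc=cc} on each factor to identify $\Sh_{N^*\cS}(M) = \cS\dMod$ and $\Sh_{N^*\cT}(N) = \cT\dMod$, sending $1_{\str(s)} \leftrightarrow 1_s$ and $1_{\str(t)} \leftrightarrow 1_t$. On the target side, since $\cS \times \cT$ is a product of triangulations, although not itself a triangulation, the open stars $\str(s) \times \str(t)$ are still products of contractible opens and hence satisfy the hypothesis of Lemma \ref{lem:sheaves_by_representations}; thus Proposition \ref{srep} yields $\Sh_{N^*(\cS \times \cT)}(M \times N) = (\cS \times \cT)\dMod$, with $1_{\str(s) \times \str(t)} \leftrightarrow 1_{(s,t)}$. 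Note here that $N^*(\cS \times \cT)$ is the conormal of the Whitney stratification $\cS \times \cT$ (products of Whitney strata are Whitney), so having microsupport in it is equivalent to being constructible with respect to $\cS \times \cT$.

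Under these identifications the external tensor product $\boxtimes$ corresponds to the obvious map
\[
\mu: (\cS\dMod) \otimes (\cT\dMod) \longrightarrow (\cS \times \cT)\dMod, \qquad 1_s \otimes 1_t \longmapsto 1_{(s,t)}.
\]
Thus the proposition reduces to showing $\mu$ is an equivalence. I would apply Lemma \ref{pgc,PrLst}: the source is compactly generated by objects of the form $(1_s \otimes v_1) \otimes (1_t \otimes v_2)$ with $v_i \in \cV^c$, and for two such generators the mapping space computes as
\[
\Hom(1_s, 1_{s'}) \otimes \Hom(1_t, 1_{t'}) \otimes \Hom(v_1, v_1') \otimes \Hom(v_2, v_2').
\]
By Yoneda in $\Fun(\cS^{op}, \cV)$, $\Hom_{\cS\dMod}(1_s, 1_{s'}) = 1_{s'}(s)$, which is $1_\cV$ when $s \leq s'$ and $0$ otherwise; similarly for $\cT$. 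Hence this product equals $\Hom(v_1 \otimes v_2, v_1' \otimes v_2')$ exactly when $(s,t) \leq (s',t')$ in the product poset, matching $\Hom_{(\cS \times \cT)\dMod}(1_{(s,t)} \otimes (v_1 \otimes v_2), 1_{(s',t')} \otimes (v_1' \otimes v_2'))$. Since $\mu$ sends compact generators to compact generators and induces isomorphisms on hom spaces between them, it is fully faithful and essentially surjective on generators, hence an equivalence.

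The main obstacle I expect is verifying the Yoneda-type hom computation in Step 2 in the setting of $\cV$-valued functors on posets, where one needs to ensure the tensor product in $\PrLst$ (taken relative to $\cV$) correctly factorizes the hom spaces as above; this is exactly what Lemma \ref{pgc,PrLst} supplies, but one must also confirm that the product stratification $\cS \times \cT$ genuinely satisfies the representability hypothesis of Lemma \ref{lem:sheaves_by_representations} (i.e.\ the relative cohomology vanishing for products of stars and their complements), which is a straightforward Mayer--Vietoris argument since $\str(s) \times \str(t)$ is contractible and $\str(s) \times \str(t) \setminus (\str(s') \times \str(t'))$ deformation retracts onto a contractible piece whenever $(s,t) \not\leq (s',t')$.
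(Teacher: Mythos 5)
Your proof is correct and follows essentially the same route the paper indicates: reduce both sides to presheaf categories on posets via Propositions~\ref{mc=cc} and~\ref{srep}, then verify $(\cS\dMod) \otimes (\cT\dMod) \simeq (\cS \times \cT)\dMod$ by comparing Hom-spaces between compact generators using Lemma~\ref{pgc,PrLst} and the Yoneda lemma. The paper only asserts this last identity as ``one can check directly''; you supply that check.
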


\begin{proof}[Proof of Theorem \ref{pd:g}]
To deduce the general case from the triangulation case,
pick a Whitney triangulation $\cS$ of $M$ and $\cT$ of $N$ such that $\widehat\Lambda \subseteq N^*\cS$ and $\widehat\Sigma \subseteq N^* \cT$ 
and consider the following diagram:
$$
\begin{tikzpicture}
\node at (0,2) {$\Sh_{\widehat\Lambda}(M) \otimes \Sh_{\widehat\Sigma}(N)$};
\node at (7,2) {$\Sh_{\widehat\Lambda \times \widehat\Sigma}(M \times N)$};
\node at (0,0) {$\Sh_{N^* \cS}(M) \otimes \Sh_{N^* \cT}(N)$};
\node at (7,0) {$\Sh_{N^* \cS \times N^* \cT}(M \times N)$};

\draw [->, thick] (1.8,2) -- (5.4,2) node [midway, above] {$\boxtimes$};
\draw [double equal sign distance, thick] (2.2,0) -- (5.1,0) node [midway, below] {$\boxtimes$};

\draw [right hook->, thick] (0,1.7) -- (0,0.3) node [midway, left] {$ $}; 
\draw [right hook->, thick] (7,1.7) -- (7,0.3) node [midway, right] {$ $};
\end{tikzpicture}
$$ 
The fully-faithfulness of the vertical functor on the left is implied by Lemma \ref{ff;pr}.
Since the diagram commutes, the horizontal map on the upper row is also fully-faithful.
Pass to the left adjoints and restrict to compact objects,
the equivalence for the general case will be implied by Proposition \ref{wdstopr} 
and the proposition cited below,
whose counterpart in the Fukaya setting is discussed in a more general situation 
in \cite[Section 6]{Ganatra-Pardon-Shende2}.
\end{proof}

\begin{proposition}\label{prop:thom-sebastiani}
Let $(x,\xi) \in N^* \cS$ and $(y,\eta) \in N^* \cT$.
We denote by $D_{(x,\xi)}$ and $D_{(y,\eta)}$ corepresentatives of the microstalk
functors at $(x,\xi)$ and $(y,\eta)$.
Then $D_{(x,\xi)} \boxtimes D_{(y,\eta)}$ corepresents the microstalk at $(x,y,\xi,\eta)$.
\end{proposition}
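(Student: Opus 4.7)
The plan is to verify the corepresenting property via the Yoneda lemma on the compact generators of the product category. By Proposition \ref{pd:ws} applied to the triangulations $\cS$ and $\cT$, the category $\Sh_{N^* \cS \times N^* \cT}(M \times N)$ is compactly generated by external products $F \boxtimes G$ with $F \in \Sh_{N^* \cS}^c(M)$ and $G \in \Sh_{N^* \cT}^c(N)$. Both functors $\Hom(D_{(x,\xi)} \boxtimes D_{(y,\eta)}, -)$ and $\mu_{(x,y,\xi,\eta)}$ are colimit-preserving (the former tautologically, and the latter by the existence of a left adjoint corepresentative, cf.\ Proposition \ref{wdstopr}), so it suffices to produce a natural equivalence on such external products.

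For the left-hand side, I note that $D_{(x,\xi)}$ and $D_{(y,\eta)}$ are compact, so by Lemma \ref{pgc,PrLst}(1) their external product is a compact object of $\Sh_{N^* \cS}(M) \otimes \Sh_{N^* \cT}(N)$. Applying Lemma \ref{pgc,PrLst}(2) gives
\[
\Hom\bigl(D_{(x,\xi)} \boxtimes D_{(y,\eta)},\, F \boxtimes G\bigr)
\;\simeq\; \Hom(D_{(x,\xi)}, F) \otimes \Hom(D_{(y,\eta)}, G)
\;\simeq\; \mu_{(x,\xi)}(F) \otimes \mu_{(y,\eta)}(G).
\]
The proposition therefore reduces to the Thom--Sebastiani formula for microstalks, namely that $\mu_{(x,y,\xi,\eta)}(F \boxtimes G) \simeq \mu_{(x,\xi)}(F) \otimes \mu_{(y,\eta)}(G)$ naturally in $F$ and $G$.

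To establish this factorization, I would use the test-function description of microstalks from \cite[Proposition 7.5.3]{KS} and \cite[Theorem 4.11]{Ganatra-Pardon-Shende3}. Choose smooth functions $\phi$ near $x$ with $\phi(x) = 0$, $d\phi(x) = \xi$, and $\psi$ near $y$ with $\psi(y) = 0$, $d\psi(y) = \eta$, each transverse to the relevant conormals. Then the sum $(x',y') \mapsto \phi(x') + \psi(y')$ is a valid test function at $(x,y,\xi,\eta)$ with respect to $N^*(\cS \times \cT)$, computing $\mu_{(x,y,\xi,\eta)}(F \boxtimes G)$ as a local sections-with-support functor of $F \boxtimes G$. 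The main technical step — and the one I expect to be the obstacle — is identifying this local sections-with-support along $\{\phi+\psi \geq 0\}$ with the tensor product of its one-variable counterparts along $\{\phi \geq 0\}$ and $\{\psi \geq 0\}$. Since the microstalk is independent of the test function provided its differential and transversality class are fixed, one can interpolate between $\phi+\psi$ and a product-type test function via a family $\phi + t\psi$ (or deform the half-space to the corner $\{\phi \geq 0\} \times \{\psi \geq 0\}$), and throughout the deformation the microsupport transversality persists so that the sections remain invariant. On the corner, the factorization is immediate from the definition of the external product on a product neighborhood. The Fukaya-categorical analogue of this Thom--Sebastiani statement is precisely the content of \cite[Section 6]{Ganatra-Pardon-Shende2}, and one can alternatively transport it sheaf-side through the equivalence of \cite{Ganatra-Pardon-Shende3}.
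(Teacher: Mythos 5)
Your reduction is exactly the paper's: using uniqueness of corepresentatives and the K\"unneth/Hom formula (Lemma \ref{pgc,PrLst}) to reduce the proposition to the factorization $\mu_{(x,y,\xi,\eta)}(F \boxtimes G) \simeq \mu_{(x,\xi)}(F) \otimes \mu_{(y,\eta)}(G)$ for constructible $F, G$. Where you diverge is in how that factorization is handled, and this is where there is a genuine gap. The paper does not reprove it: it invokes the Thom--Sebastiani theorem (Massey; Sch\"urmann, Theorem 1.2.2), adding only a remark explaining why the complex-analytic proofs, which begin by trading vanishing cycles $\phi_f$ for $\Gamma_{\{\Re f \geq 0\}}(-)|_{f^{-1}(0)}$, go through for real ($\RR$-)constructible sheaves.

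Your sketch of a direct proof does not survive scrutiny at the step you yourself flag as the obstacle. First, the interpolation $\phi + t\psi$ does not compute the microstalk at $(x,y,\xi,\eta)$ for $t \neq 1$: the differential at $(x,y)$ is $(\xi, t\eta)$, a different covector, so ``independence of the test function with fixed differential'' does not apply; you would instead need local constancy of the microstalk along the smooth Lagrangian stratum, with care at $t=0$ where $(\xi,0)$ need not be a smooth point. Second, and more seriously, the claim that ``throughout the deformation the microsupport transversality persists'' when passing from the half-space $\{\phi+\psi \geq 0\}$ to the corner $\{\phi \geq 0\} \times \{\psi \geq 0\}$ is false: since zero covectors lie in conormals of strata, points such as $(x, y', \xi, 0)$ with $\psi(y')>0$ belong to $N^*\cS \times N^*\cT \supseteq \ms(F \boxtimes G)$, and these are exactly the conormal directions of the corner's faces. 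So the deformation is characteristic, and no soft non-characteristic deformation lemma will identify the two computations --- this is precisely the nontrivial content of Thom--Sebastiani. Finally, the ``immediate'' corner factorization $\Gamma_{Z \times W}(F \boxtimes G) \simeq \Gamma_Z F \boxtimes \Gamma_W G$ (and its compatibility with stalks) is itself a K\"unneth-with-supports statement requiring constructibility, not a formal consequence of the definition of $\boxtimes$, especially with general rigid coefficients $\cV$. The clean fix is to do what the paper does: cite the Thom--Sebastiani theorem and justify its validity in the real constructible setting, rather than attempt the deformation argument. (Your alternative of transporting the statement through the Fukaya-side K\"unneth of Ganatra--Pardon--Shende would also work in principle but defeats the purely sheaf-theoretic purpose here.)
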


\begin{proof}
By Proposition \ref{wdstopr}, it's sufficient to show that for $F \in \Sh_\cS(M)$ and $G \in \Sh_\cT(N)$, there is an equivalence
$$ \mu_{(x,\xi)}(F) \boxtimes \mu_{(y,\eta)}(G) = \mu_{(x,y,\xi,\eta)}(F \boxtimes G)$$
since corepresentative are unique.
This is the Thom-Sebastiani theorem whose proof in the relevant setting can be found in for example
\cite[Sebastiani-Thom Isomorphism]{D.Massey} or \cite[Theorem 1.2.2]{Schurmann}.
\end{proof}

\begin{remark}
As explained in Remark \ref{rem:stalk-corepresentative}, when $\xi = 0$, the microstalk functors $\mu_{(x,\xi)}$ are simply stalk functors $i_x^*$, and the above result in particular applies as $i_x^*(F) \boxtimes i_y^*(G) = i_{(x,y)}^*(F \boxtimes G)$.
\end{remark}

\begin{remark}
We remark that the theorem is stated as compatibility between vanishing cycles
with exterior products $\boxtimes$ in the setting of complex manifold.
The proof, however, holds in our case since vanishing cycles $\phi_f(F)$ are traded with
$\Gamma_{\{ \Re f \geq 0\} }(F) |_{f^{-1}(0)}$ at the beginning of the proof in for example \cite{D.Massey}.  
Furthermore the various computations performed there, for example,
$$ f^* (\VD{Y}(F) ) \cong \VD{X}( f^! F),$$
for a real analytic map $f: X \rightarrow Y$ and Verdier dualities $\VD{X}$ and $\VD{Y}$, require only $\RR$-constructibility. 
\end{remark}

In fact, note that the localization functors in Proposition \ref{prop:stopremoval} are compatible with the K\"unneth formula. More precisely, we have the following statement:

\begin{proposition}\label{prop:stopremoval-kunneth}
Let $\widehat\Lambda \subseteq \widehat\Lambda^\prime \subseteq T^*M$ and $\widehat\Sigma \subseteq \widehat\Lambda^\prime \subseteq T^*N$ be subanalytic conic isotropic subsets. Then there is a commutative diagram
    \[\xymatrix@C=25mm@R=12mm{
    \Sh_{\widehat\Lambda^\prime}(M) \otimes \Sh_{\widehat\Sigma^\prime}(N) \ar[r]^{\sim} \ar[d]_{\iota_{\widehat\Lambda}^* \otimes \,\iota_{\widehat\Sigma}^*} & \Sh_{\widehat{\Lambda}^\prime \times \widehat{\Sigma}^\prime}(M \times N) \ar[d]^{\iota_{\widehat\Lambda \times \widehat\Sigma}^*} \\
    \Sh_{\widehat\Lambda}(M) \otimes \Sh_{\widehat\Sigma}(N) \ar[r]^{\sim} & \Sh_{\widehat\Lambda \times \widehat\Sigma}(M \times N).
    }\]
\end{proposition}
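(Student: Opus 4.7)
The plan is to pass to right adjoints. Each vertical arrow in the stated square is the left adjoint of a fully-faithful, colimit-preserving inclusion (the colimit-preservation of the inclusions is recalled at the start of Section 2.1 of the paper). By the standard $(\infty,1)$-categorical equivalence $\Fun^L(\sC, \sD) \simeq \Fun^R(\sD, \sC)^{op}$, which is compatible with horizontal composition of adjoint squares, commutativity of the displayed diagram is equivalent to commutativity of the diagram of right adjoints
\[
\xymatrix@C=25mm@R=12mm{
\Sh_{\widehat\Lambda^\prime}(M) \otimes \Sh_{\widehat\Sigma^\prime}(N) \ar[r]^{\sim} & \Sh_{\widehat\Lambda^\prime \times \widehat\Sigma^\prime}(M \times N) \\
\Sh_{\widehat\Lambda}(M) \otimes \Sh_{\widehat\Sigma}(N) \ar[u]^{\iota_{\widehat\Lambda *} \otimes \,\iota_{\widehat\Sigma *}} \ar[r]^{\sim} & \Sh_{\widehat\Lambda \times \widehat\Sigma}(M \times N) \ar[u]_{\iota_{\widehat\Lambda \times \widehat\Sigma *}}.
}
\]

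All four functors in this square preserve colimits, so by Lemma \ref{pgc,PrLst} it suffices to verify the commutativity on the compact generators $F \otimes G$ with $F \in \Sh_{\widehat\Lambda}^c(M)$ and $G \in \Sh_{\widehat\Sigma}^c(N)$. The horizontal K\"unneth equivalences of Theorem \ref{pd:g} are given by $(F, G) \mapsto F \boxtimes G$, and both vertical inclusions act as the identity on the underlying objects of $\Sh(M)$, $\Sh(N)$, and $\Sh(M \times N)$ respectively. Therefore both compositions send $F \otimes G$ to the same exterior product $F \boxtimes G \in \Sh_{\widehat\Lambda^\prime \times \widehat\Sigma^\prime}(M \times N)$, completing the verification.

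There is essentially no substantive obstacle here beyond the mild $(\infty,1)$-categorical bookkeeping of passing between a square of left adjoints and the associated square of right adjoints in $\PrLst$, which is used implicitly elsewhere in the paper. With that in hand, the proposition reduces to a tautological functoriality statement for the K\"unneth equivalence of Theorem \ref{pd:g} with respect to inclusions of subanalytic singular isotropic subsets.
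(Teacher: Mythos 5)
Your proof is correct and takes essentially the same route as the paper: the paper likewise reduces to commutativity of the square of right adjoints (the inclusions), observing that $\iota_{\widehat\Lambda \times \widehat\Sigma *}(F \boxtimes G) = \iota_{\widehat\Lambda *}(F) \boxtimes \iota_{\widehat\Sigma *}(G) = F \boxtimes G$, and then passes to left adjoints. The only cosmetic difference is that you check the right-adjoint square on compact generators via Lemma \ref{pgc,PrLst}, whereas the paper notes directly that the inclusions, being full subcategory inclusions compatible with $\boxtimes$, commute with the K\"unneth equivalence on the nose.
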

\begin{proof}
    Under the K\"unneth formula, we have $\iota_{\widehat\Lambda \times \widehat\Sigma*}(F \boxtimes G) = \iota_{\widehat\Lambda*}(F) \boxtimes \,\iota_{\widehat\Sigma*}(G) = F \boxtimes G$. Taking the left adjoints then gives the commutative diagram.
\end{proof}

\subsection{K\"unneth functor and the doubling}\label{sec:Kunneth-doubling}

We would like to deduce the K\"unneth formula for microsheaves by reducing it to the case of sheaves.
We will obtain the statement from their sheaf theoretic equivalents by using the doubling trick. 

For a subanalytic isotropic subset $\Lambda \subseteq S^*M$, in this section, we define the conic isotropic subset
\begin{align*}
\widehat\Lambda = M \cup (\Lambda \times \bR_+).
\end{align*}
Recall that a contact flow $\varphi: S^*M \times I \to S^*M$ is called positive if $\alpha(\partial_t\varphi_t) \geq 0$. We set $\Lambda_{\epsilon}, \Lambda_{-\epsilon} \subseteq S^* M$ to be any positive and negative contact push-off of $\Lambda$ that displaces the isotropic subset. In this section, we will adopt the notation that $\Lambda_{\pm\epsilon} = \Lambda_{-\epsilon} \cup \Lambda_{\epsilon}$ and respectively $\widehat\Lambda_{\pm\epsilon} = \widehat\Lambda_{-\epsilon} \cup \widehat\Lambda_{\epsilon}$. 

First, 
using the doubling functor in \cite{Kuo-Li-spherical}, we identify $\msh_\Lambda(\Lambda)$ and $\msh_\Sigma(\Sigma)$ as sheaves microsupported on the doubling:

\begin{theorem}[{\cite[Theorem 4.1 \& Proposition 6.3]{Kuo-Li-spherical}}] \label{thm: doubling} 
Let $\Lambda \subseteq S^*M$ be a compact subanalytic isotropic subset. There is a fully faithful functor
$$m_\Lambda^l: \msh_\Lambda(\Lambda) \hookrightarrow \Sh_{\Lambda_{-\epsilon} \cup \Lambda_\epsilon}(M)$$
which induces a recollement that gives the localization sequence in $\PrLst$, in the sense of \cite[Definition 3.2]{Hoyois-Scherotzke-Sibilla}, 
$$\msh_\Lambda(\Lambda) \hookrightarrow \Sh_{\Lambda_{-\epsilon} \cup \Lambda_\epsilon}(M) \twoheadrightarrow \Sh_{\Lambda_\epsilon}(M),$$
and the essential image of $m_\Lambda^l$ is the category $\Sh_{\widehat\Lambda_{\cup,\epsilon}}(M)$, where
$$\widehat\Lambda_{\cup,\epsilon} = \big((\Lambda_{-\epsilon} \cup \Lambda_\epsilon) \times \bR_+\big) \cup \bigcup\nolimits_{-\epsilon \leq s \leq \epsilon}\pi(\Lambda_s).$$
\end{theorem}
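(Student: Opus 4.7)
The plan is to produce the functor $m_\Lambda^l$ as the left adjoint of the microlocalization $m_\Lambda \colon \Sh_{\widehat\Lambda}(M) \to \msh_\Lambda(\Lambda)$, realize it geometrically through a contact doubling, and then pin down its essential image through microsupport estimates. The existence of the left adjoint is automatic from the discussion in Section \ref{ims}: since $\Lambda$ is subanalytic isotropic, $\msh_\Lambda$ takes values in $\PrLcs$ and all restriction maps admit left adjoints. The real content is the geometric and microlocal description.

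My first step is to build the doubling. I would pick a positive contact Hamiltonian isotopy $\varphi_t \colon S^*M \to S^*M$ ($t \in [-\epsilon,\epsilon]$) that displaces $\Lambda$ through $\Lambda_s = \varphi_s(\Lambda)$, apply Guillermou--Kashiwara--Schapira sheaf quantization to obtain a sheaf kernel $K \in \Sh(M \times M \times \bR)$, and convolve it with an object representing the microsheaf $F \in \msh_\Lambda(\Lambda)$ (which, locally, one can present by a front projection perturbation). The output is a sheaf on $M \times [-\epsilon,\epsilon]$ whose singular support traces the family $\{\Lambda_s\}$; restricting to the endpoints $s = \pm\epsilon$ and forming the appropriate cone gives an object of $\Sh_{\Lambda_{-\epsilon} \cup \Lambda_\epsilon}(M)$. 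By construction, its microsupport at $\Lambda_\epsilon$ agrees with $F$ (under the canonical identification $\msh_{\Lambda_\epsilon}(\Lambda_\epsilon) \simeq \msh_\Lambda(\Lambda)$ coming from the isotopy) while its microsupport at $\Lambda_{-\epsilon}$ gives a shifted/dualized copy. By the microsupport estimate for sheaf quantization, the full microsupport is contained in $\widehat\Lambda_{\cup,\epsilon}$, since the front projection of the isotopy trace is exactly $\bigcup_{-\epsilon \leq s \leq \epsilon}\pi(\Lambda_s)$ and the fiber directions contributed lie in $(\Lambda_{-\epsilon} \cup \Lambda_\epsilon) \times \bR_+$.

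Next, fully faithfulness. I would verify that the unit $\id \Rightarrow m_\Lambda \circ m_\Lambda^l$ is an equivalence by computing microstalks at smooth Lagrangian points of $\Lambda_\epsilon$: the GKS trace of a microsheaf tautologically recovers the microsheaf after microlocalizing at the endpoint, so this holds by inspection on generators. For the localization sequence, note that the quotient functor $\Sh_{\Lambda_{-\epsilon} \cup \Lambda_\epsilon}(M) \twoheadrightarrow \Sh_{\Lambda_\epsilon}(M)$ of Proposition \ref{prop:stopremoval} has fiber generated, by Proposition \ref{wdstopr}, by microstalk corepresentatives at smooth Lagrangian points of $\Lambda_{-\epsilon}$; these are precisely the images under $m_\Lambda^l$ of microstalk corepresentatives on $\Lambda$, matching $\msh_\Lambda(\Lambda)$ with the fiber and giving the recollement.

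The main obstacle, and where the geometry of the doubling really enters, is pinning down the essential image as the full subcategory $\Sh_{\widehat\Lambda_{\cup,\epsilon}}(M)$, not merely $\Sh_{\Lambda_{-\epsilon} \cup \Lambda_\epsilon}(M)$. One direction (containment of the image in $\Sh_{\widehat\Lambda_{\cup,\epsilon}}(M)$) is the microsupport estimate above. For the reverse direction, the key point is that the family of front projections $\bigcup_{-\epsilon \leq s \leq \epsilon}\pi(\Lambda_s)$ sweeps out exactly the zero-section piece of $\widehat\Lambda_{\cup,\epsilon}$, so any sheaf with microsupport in $\widehat\Lambda_{\cup,\epsilon}$ whose microlocalization at $\Lambda_\epsilon$ vanishes must be built from the doubling kernel; this is verified by checking that its image under the quotient to $\Sh_{\Lambda_\epsilon}(M)$ is zero and invoking the recollement just established. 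Compact generation (Proposition \ref{prop:stopremoval}) reduces this last check to microstalk corepresentatives, where the matching was already performed.
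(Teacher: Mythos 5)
Your overall strategy --- realize $m_\Lambda^l$ geometrically via a GKS-type trace of a small isotopy, match microstalk corepresentatives to obtain the recollement, and pin down the essential image by microsupport estimates --- is the right shape, and it is consonant with what the cited paper (\cite{Kuo-Li-spherical}, which this paper does not reprove but merely quotes) actually does.  There are, however, two points where your proposal asserts exactly what needs to be proved, and these are the technical core of the statement.

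First, the full faithfulness.  You say the unit $\mathrm{id}\Rightarrow m_\Lambda\circ m_\Lambda^l$ is an equivalence ``by inspection on generators,'' because ``the GKS trace of a microsheaf tautologically recovers the microsheaf after microlocalizing at the endpoint.''  This conflates what is true formally with what requires geometry.  Proposition~\ref{prop:stop-removal-microlocalization}, which holds for an \emph{arbitrary} pair $\widehat\Lambda\subseteq\widehat\Lambda'$, already says the fiber of the stop-removal functor equals the essential image of $m^l_{\widehat\Omega}$ --- but it says nothing about $m^l_{\widehat\Omega}$ being an embedding.  In general the left adjoint of microlocalization is \emph{not} fully faithful; the full faithfulness is a special feature of the doubling geometry, and it is precisely this extra content that Theorem~\ref{thm: doubling} provides (and that the rest of Section~\ref{sec:Kunneth-doubling} uses as an input, via the criterion in \cite[Section~5.2]{Kuo-wrapped-sheaves}).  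The paper's own later arguments for the product analogues, Theorems~\ref{thm: doubling-small-piece} and~\ref{thm: doubling-product-piece}, illustrate what a complete argument looks like: one factors $m^l = \iota^*\circ w[-1]$ through a relative doubling functor $w$, constructs an explicit Legendrian isotopy placing the doubled support in a small conic neighborhood of the target skeleton, and then invokes the full-faithfulness criterion for localizations \cite[Theorem~5.15]{Kuo-wrapped-sheaves}.  Your proposal skips all of this.

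Second, the essential image.  You assert that any $F$ with $\ms(F)\subseteq\widehat\Lambda_{\cup,\epsilon}$ has vanishing image under $\Sh_{\Lambda_{-\epsilon}\cup\Lambda_\epsilon}(M)\twoheadrightarrow\Sh_{\Lambda_\epsilon}(M)$, ``verified by checking that its image under the quotient is zero and invoking the recollement just established.''  This is circular: the claim that $\iota_{\Lambda_\epsilon}^*F=0$ is not a microsupport observation, and it is not furnished by the recollement --- it is the thing to be proved.  What is actually needed is a positive-wrapping ``shrinking'' argument: because $\ms(F)$ lives over the thin region $\bigcup_{-\epsilon\leq s\leq\epsilon}\pi(\Lambda_s)$ and has conormal directions only over $\Lambda_{\pm\epsilon}$, the Reeb flow supported away from $\Lambda_\epsilon$ shrinks $\supp F$ to the empty set in the colimit defining $\iota_{\Lambda_\epsilon}^*$ (compare with the $M=\bR$, $\Lambda = \{(0,1)\}$ toy case, where $F=1_{[-\epsilon,\epsilon)}$ and $\colim_{a\to\epsilon^-}1_{[a,\epsilon)}=0$).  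This is a genuine geometric step; your write-up names the phenomenon but does not supply the argument.

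A smaller issue: ``convolve $K$ with an object representing the microsheaf $F\in\msh_\Lambda(\Lambda)$'' presupposes a global sheaf-theoretic representative for $F$.  Microsheaves have local presentations by fronts but not canonical global lifts to $\Sh(M)$ --- producing such a lift with prescribed microsupport is essentially what $m_\Lambda^l$ is for, so this cannot be used to \emph{define} the doubling. The correct order is: get $m_\Lambda^l$ abstractly as a left adjoint, then identify it with a concretely constructed doubling functor $w_\Lambda$, and derive the microsupport estimate from the construction of $w_\Lambda$, not from a hypothetical representative of $F$.
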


Using the above Proposition, we can deduce a fully faithful embedding of the product of microsheaves into the product of sheaves.

\begin{proposition}\label{prop:kunneth-micro-image}
Let $\Lambda \subseteq S^*M$ and $\Sigma \subseteq S^*N$ be a compact subanalytic isotropic subset. Then there is a fully faithful embedding
$$\msh_{\Lambda}(\Lambda) \otimes \msh_{\Sigma}(\Sigma) \hookrightarrow \Sh_{\Lambda_{\pm\epsilon}}(M) \otimes \Sh_{\Sigma_{\pm\epsilon}}(N) \xrightarrow{\sim} \Sh_{\widehat\Lambda_{\pm\epsilon} \times \widehat\Sigma_{\pm\epsilon}}(M \times N),$$ 
where the essential image is $\Sh_{\widehat\Lambda_{\cup,\epsilon} \times \widehat\Sigma_{\cup,\epsilon}}(M \times N)$.
\end{proposition}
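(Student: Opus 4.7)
The plan is to bootstrap from the sheaf-theoretic K\"unneth formula of Theorem~\ref{pd:g} by doubling each factor. First, I would apply Theorem~\ref{thm: doubling} separately to $\Lambda \subseteq S^*M$ and $\Sigma \subseteq S^*N$ to obtain colimit-preserving fully faithful doubling functors
\[
m_\Lambda^l : \msh_\Lambda(\Lambda) \hookrightarrow \Sh_{\Lambda_{\pm\epsilon}}(M), \qquad
m_\Sigma^l : \msh_\Sigma(\Sigma) \hookrightarrow \Sh_{\Sigma_{\pm\epsilon}}(N),
\]
with essential images $\Sh_{\widehat\Lambda_{\cup,\epsilon}}(M)$ and $\Sh_{\widehat\Sigma_{\cup,\epsilon}}(N)$. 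Their right adjoints are the microlocalization functors $m_\Lambda$, $m_\Sigma$, which in the subanalytic isotropic setting admit further right adjoints $m_\Lambda^r$, $m_\Sigma^r$ by the discussion preceding Proposition~\ref{prop:stop-removal-microlocalization}. Hence the right adjoints of $m_\Lambda^l$ and $m_\Sigma^l$ preserve colimits, and Lemma~\ref{ff;pr}(2) applies to guarantee that the tensor
\[
m_\Lambda^l \otimes m_\Sigma^l : \msh_\Lambda(\Lambda) \otimes \msh_\Sigma(\Sigma) \hookrightarrow \Sh_{\Lambda_{\pm\epsilon}}(M) \otimes \Sh_{\Sigma_{\pm\epsilon}}(N)
\]
remains fully faithful. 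Composing with the sheaf K\"unneth equivalence of Theorem~\ref{pd:g} applied to the pair $(\widehat\Lambda_{\pm\epsilon}, \widehat\Sigma_{\pm\epsilon})$ then yields the embedding into $\Sh_{\widehat\Lambda_{\pm\epsilon} \times \widehat\Sigma_{\pm\epsilon}}(M \times N)$ claimed in the statement.

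To identify the essential image, I would use that $m_\Lambda^l \otimes m_\Sigma^l$ is colimit-preserving, so its image is the smallest cocomplete full subcategory containing every $F \boxtimes G$ with $F \in \Sh_{\widehat\Lambda_{\cup,\epsilon}}(M)$ and $G \in \Sh_{\widehat\Sigma_{\cup,\epsilon}}(N)$. By Lemma~\ref{pgc,PrLst}, the tensor $\Sh_{\widehat\Lambda_{\cup,\epsilon}}(M) \otimes \Sh_{\widehat\Sigma_{\cup,\epsilon}}(N)$ is compactly generated precisely by such exterior products of compact objects, and applying Theorem~\ref{pd:g} a second time to $(\widehat\Lambda_{\cup,\epsilon}, \widehat\Sigma_{\cup,\epsilon})$ identifies this tensor product with $\Sh_{\widehat\Lambda_{\cup,\epsilon} \times \widehat\Sigma_{\cup,\epsilon}}(M \times N)$. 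Finally, Proposition~\ref{prop:stopremoval-kunneth} ensures that the inclusion $\Sh_{\widehat\Lambda_{\cup,\epsilon}}(M) \otimes \Sh_{\widehat\Sigma_{\cup,\epsilon}}(N) \hookrightarrow \Sh_{\Lambda_{\pm\epsilon}}(M) \otimes \Sh_{\Sigma_{\pm\epsilon}}(N)$ is intertwined under the K\"unneth equivalence with the inclusion of $\Sh_{\widehat\Lambda_{\cup,\epsilon} \times \widehat\Sigma_{\cup,\epsilon}}(M \times N)$ into $\Sh_{\widehat\Lambda_{\pm\epsilon} \times \widehat\Sigma_{\pm\epsilon}}(M \times N)$.

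The main obstacle I anticipate lies in the image identification: pinning down that the essential image inside the product category is exactly the subcategory cut out by the product isotropic $\widehat\Lambda_{\cup,\epsilon} \times \widehat\Sigma_{\cup,\epsilon}$. The containment of the image in that subcategory is a microsupport estimate, since $\ms(F \boxtimes G) \subseteq \ms(F) \times \ms(G)$, but the reverse inclusion requires matching compact generators on both sides. Here the Thom--Sebastiani identification of exterior microstalks (Proposition~\ref{prop:thom-sebastiani}), combined with Proposition~\ref{prop:stop-removal-microlocalization} and the fact that $m_\Lambda^l$, $m_\Sigma^l$ preserve compact objects, is what guarantees that corepresentatives of microstalks at smooth Lagrangian points $(x,y,\xi,\eta) \in \widehat\Lambda_{\cup,\epsilon} \times \widehat\Sigma_{\cup,\epsilon}$ arise as exterior products of microstalk corepresentatives on each factor, closing the loop.
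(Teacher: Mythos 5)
Your argument is correct and takes essentially the route the paper intends: Proposition \ref{prop:kunneth-micro-image} is presented as a direct consequence of Theorem \ref{thm: doubling}, with full faithfulness of the tensor functor supplied by Lemma \ref{ff;pr} and the essential image pinned down by applying the K\"unneth equivalence of Theorem \ref{pd:g} a second time to the doubled isotropics $\widehat\Lambda_{\cup,\epsilon}$ and $\widehat\Sigma_{\cup,\epsilon}$, together with the compatibility of the inclusions as in Proposition \ref{prop:stopremoval-kunneth}. The only cosmetic point is that you could equally invoke condition (1) of Lemma \ref{ff;pr}: since the essential image $\Sh_{\widehat\Lambda_{\cup,\epsilon}}(M)$ is a microsupport-cut full subcategory whose inclusion preserves limits and colimits and hence admits a left adjoint, you do not need to identify the right adjoint of the doubling functor with microlocalization, and the closing appeal to Thom--Sebastiani is already subsumed in citing Theorem \ref{pd:g} for the smaller isotropics.
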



Second, using the relative doubling functor in \cite{Kuo-Li-spherical}, we identify $\msh_{\Lambda \times \Sigma \times \bR}(\Lambda \times \Sigma)$ as sheaves microsupported on the product of the doubling. Here, we fix the embedding of $\Lambda \times \Sigma \times \bR \subseteq S^*M \times S^*N \times \bR \subseteq S^*(M \times N)$ by considering the conic subset $(\Lambda \times \bR_{>0}) \times (\Sigma \times \bR_{>0}) \subseteq \dT^*(M \times N)$ quotient by the diagonal $\bR_{>0}$-action (this is equivalent to fixing a polarization; see Remark \ref{rem:polarization}). The main theorem we prove will be the following:

\begin{theorem}\label{thm: doubling-small-piece}
Let $\Lambda \subseteq S^*M$ and $\Sigma \subseteq S^*N$ be compact subanalytic isotropic subsets. There is a fully faithful functor
$$m_{\Lambda \times \Sigma}^l: \msh_{\Lambda \times \Sigma \times \bR}(\Lambda \times \Sigma) \hookrightarrow \Sh_{\widehat\Lambda_{\pm\epsilon} \times \widehat\Sigma_{\pm\epsilon}}(M \times N)$$
which induces a localization sequence
$$\msh_{\Lambda \times \Sigma \times \bR}(\Lambda \times \Sigma) \hookrightarrow \Sh_{\widehat\Lambda_{\pm\epsilon} \times \widehat\Sigma_{\pm\epsilon}}(M \times N) \twoheadrightarrow \Sh_{\widehat\Lambda_{\pm\epsilon} \times \widehat\Sigma_{\pm\epsilon} \setminus (\Lambda \times \Sigma \times \bR \times \bR_+)}(M \times N).$$
\end{theorem}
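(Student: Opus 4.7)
The plan is to combine the single-factor doubling from Theorem \ref{thm: doubling} with the K\"unneth formula for sheaves (Theorem \ref{pd:g}). First I would apply Theorem \ref{thm: doubling} separately to $\Lambda$ and to $\Sigma$, obtaining fully faithful doubling embeddings $m_\Lambda^l: \msh_\Lambda(\Lambda) \hookrightarrow \Sh_{\widehat\Lambda_{\pm\epsilon}}(M)$ and $m_\Sigma^l: \msh_\Sigma(\Sigma) \hookrightarrow \Sh_{\widehat\Sigma_{\pm\epsilon}}(N)$, each fitting into a localization sequence whose quotient is $\Sh_{\widehat\Lambda_\epsilon}(M)$ and $\Sh_{\widehat\Sigma_\epsilon}(N)$ respectively. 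By Lemma \ref{ff;pr} combined with the K\"unneth identification of Proposition \ref{prop:kunneth-micro-image}, the tensor product
$$m_\Lambda^l \otimes m_\Sigma^l: \msh_\Lambda(\Lambda) \otimes \msh_\Sigma(\Sigma) \hookrightarrow \Sh_{\widehat\Lambda_{\pm\epsilon} \times \widehat\Sigma_{\pm\epsilon}}(M \times N)$$
is fully faithful.

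Next I would obtain the localization sequence by applying Corollary \ref{product-of-quotient-categories} to the two factor-wise localizations. This realizes $\msh_\Lambda(\Lambda) \otimes \msh_\Sigma(\Sigma)$ as the fiber of the surjection from $\Sh_{\widehat\Lambda_{\pm\epsilon} \times \widehat\Sigma_{\pm\epsilon}}(M \times N)$ onto the subcategory generated by $\langle \Sh_{\widehat\Lambda_\epsilon \times \widehat\Sigma_{\pm\epsilon}}, \Sh_{\widehat\Lambda_{\pm\epsilon} \times \widehat\Sigma_\epsilon}\rangle$, where I use Proposition \ref{prop:stopremoval-kunneth} to see that the quotient maps in each factor are compatible with K\"unneth. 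A direct geometric argument then identifies this subcategory with $\Sh_{\widehat\Lambda_{\pm\epsilon} \times \widehat\Sigma_{\pm\epsilon} \setminus (\Lambda \times \Sigma \times \bR \times \bR_+)}(M \times N)$: sheaves with microsupport in $\widehat\Lambda_{\pm\epsilon} \times \widehat\Sigma_{\pm\epsilon}$ avoiding the product of positive (or negative) pushoffs are precisely those whose microsupport lies in one of the two factor complements.

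The main obstacle is the identification of the domain, namely showing that $\msh_\Lambda(\Lambda) \otimes \msh_\Sigma(\Sigma)$ agrees with $\msh_{\Lambda \times \Sigma \times \bR}(\Lambda \times \Sigma)$, so that the composition $m_\Lambda^l \otimes m_\Sigma^l$ is the left adjoint of the expected microlocalization functor $m_{\Lambda \times \Sigma}$. Since $\Lambda \times \Sigma$ is not itself a Legendrian in $S^*(M \times N)$, one must work with the thickening $\Lambda \times \Sigma \times \bR$ (equivalently, a polarization choice, see Remark \ref{rem:polarization}), and verify that the intrinsic microlocalization functor factors through the tensor of the single-factor microlocalizations under the K\"unneth identification. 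This compatibility should follow from the universal property of the fiber in $\PrLst$ together with the constructibility of $\msh_{\Lambda \times \Sigma \times \bR}$ on $\Lambda \times \Sigma \times \bR$ (which guarantees that the restriction from $\Lambda \times \Sigma \times \bR$ down to $\Lambda \times \Sigma$ admits left adjoints); uniqueness of left adjoints then forces the two functors to coincide.
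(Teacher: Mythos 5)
Your proposal identifies the right ingredients but misses the central geometric step of the paper's argument, and the two places where you wave your hands are exactly where the hard work lives.

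First, the identification $\langle \Sh_{\widehat\Lambda_\epsilon \times \widehat\Sigma_{\pm\epsilon}}, \Sh_{\widehat\Lambda_{\pm\epsilon} \times \widehat\Sigma_\epsilon}\rangle = \Sh_{\widehat\Lambda_{\pm\epsilon} \times \widehat\Sigma_{\pm\epsilon} \setminus (\Lambda \times \Sigma \times \bR \times \bR_+)}(M \times N)$ that you dismiss as "a direct geometric argument" is Proposition \ref{thm: doubling-mixed-piece} in the paper, and its proof is not routine: a sheaf microsupported in a union $(\widehat\Lambda_\epsilon \times \widehat\Sigma_{\pm\epsilon}) \cup (\widehat\Lambda_{\pm\epsilon} \times \widehat\Sigma_\epsilon)$ has no a priori reason to decompose into pieces from the two factor subcategories. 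The paper establishes this via Theorem \ref{thm: doubling-product-piece}, which itself requires the same geometric machinery (U-shape fillings, small-neighborhood isotopy, and the wrapping full-faithfulness criterion of \cite[Theorem 5.15]{Kuo-wrapped-sheaves}) needed for the theorem you are trying to prove. You cannot treat it as an input.

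Second, and more fundamentally, you flag the domain identification $\msh_\Lambda(\Lambda) \otimes \msh_\Sigma(\Sigma) \simeq \msh_{\Lambda \times \Sigma \times \bR}(\Lambda \times \Sigma)$ as "the main obstacle," but your proposed resolution — "the universal property of the fiber ... together with uniqueness of left adjoints" — is circular. To invoke uniqueness of left adjoints you would first need to know that $m_\Lambda^l \otimes m_\Sigma^l$ and $m_{\Lambda \times \Sigma}^l$ are left adjoint to the same functor out of the same domain, but the identification of domains is precisely the K\"unneth formula for microsheaves, which the paper deduces \emph{from} Theorem \ref{thm: doubling-small-piece} (see the proof of Theorem \ref{thm: Kunneth-Fourier-Mukai-microsheaves}(1)). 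In contrast, the paper's proof of the present theorem is entirely independent of that K\"unneth formula: it explicitly builds a relative Legendrian doubling of $\Lambda \times \Sigma \times \bR$ by gluing $U$-shape Lagrangian fillings of the factors, shows this model is contact-isotopic (away from $\widehat\Lambda_{\pm\epsilon} \times \widehat\Sigma_{\pm\epsilon}$) to the standard relative doubling $\hat T_{-\delta}(\Lambda \times \Sigma \times \overline{\bR}) \cup \hat T_\delta(\Lambda \times \Sigma \times \overline{\bR})$ and lies in an arbitrarily small neighbourhood of $\widehat\Lambda_{\pm\epsilon} \times \widehat\Sigma_{\pm\epsilon}$, then applies the relative doubling functor (Proposition \ref{thm:relative-doubling}) and the wrapping full-faithfulness criterion to conclude. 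That concrete symplecto-geometric construction carries the whole proof, and it is absent from your proposal.
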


First, we need to identify $\msh_{\Lambda \times \Sigma \times \bR}(\Lambda \times \Sigma)$ with the section of $\msh_{\widehat\Lambda_{\cup,\epsilon} \times \widehat\Sigma_{\cup, \epsilon}}$ on the open subset $(\Lambda_{-\epsilon} \times \bR_{>0}) \times (\Sigma_{-\epsilon} \times \bR_{>0})$ quotient by the diagonal $\bR_{>0}$-action, which is contactomorphic to $(\Lambda \times \bR_{>0}) \times (\Sigma \times \bR_{>0})$ quotient by the diagonal $\bR_{>0}$-action.

\begin{lemma}\label{lem:contact-trans}
Let $\Lambda \times \Sigma \times \bR \subseteq S^{*}(M \times N)$ be identified with the subanalytic isotropic subset at infinity of $(\Lambda_{-\epsilon} \times \bR_{>0}) \times (\Sigma_{-\epsilon} \times \bR_{>0}) \subseteq \widehat\Lambda_{\pm\epsilon} \times \widehat\Sigma_{\pm\epsilon}$. Then there is an equivalence
$$\msh_{\Lambda \times \Sigma \times \bR}(\Lambda \times \Sigma) = \msh_{\widehat\Lambda_{\pm\epsilon} \times \widehat\Sigma_{\pm\epsilon}}(\Lambda_{-\epsilon} \times \Sigma_{-\epsilon}) = \msh_{\widehat\Lambda_{\pm\epsilon} \times \widehat\Sigma_{\pm\epsilon}}(\Lambda_{-\epsilon} \times \Sigma_{-\epsilon} \times \bR).$$
\end{lemma}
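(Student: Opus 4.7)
The plan is to prove both equalities as bookkeeping statements relying on three properties of $\msh$ recalled earlier in the paper: \emph{(i)} $\msh$ is conic on $\dT^*(M \times N)$ and descends to a sheaf on $S^*(M \times N)$; \emph{(ii)} locality, i.e.\ $\msh_X(\Omega)$ depends only on $X \cap \Omega$; and \emph{(iii)} invariance of $\msh$ under contact isotopies.

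For the second equality, $\Lambda_{-\epsilon} \times \Sigma_{-\epsilon}$ is being viewed as the conic subset $(\Lambda_{-\epsilon} \times \bR_{>0}) \times (\Sigma_{-\epsilon} \times \bR_{>0}) \subseteq \dT^*M \times \dT^*N \subseteq \dT^*(M \times N)$, whose quotient under the diagonal $\bR_{>0}$-scaling on $\dT^*(M \times N)$ is exactly $\Lambda_{-\epsilon} \times \Sigma_{-\epsilon} \times \bR \subseteq S^*(M \times N)$, with $\bR$ parametrizing the log-ratio of the two cotangent scales. The equality then follows from \emph{(i)}.

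For the first equality I would argue in two steps. First, by \emph{(ii)}, in a small conic neighborhood of $(\Lambda_{-\epsilon} \times \bR_{>0}) \times (\Sigma_{-\epsilon} \times \bR_{>0})$ the set $\widehat\Lambda_{\pm\epsilon} \times \widehat\Sigma_{\pm\epsilon}$ reduces to this single component: the zero-section pieces $M \subset \widehat\Lambda_{\pm\epsilon}$ and $N \subset \widehat\Sigma_{\pm\epsilon}$ sit in the zero locus of one cotangent factor, and the other contact push-offs $\Lambda_\epsilon \times \bR_{>0}$ and $\Sigma_\epsilon \times \bR_{>0}$ are disjoint from $\Lambda_{-\epsilon} \times \bR_{>0}$ and $\Sigma_{-\epsilon} \times \bR_{>0}$ by the displacement hypothesis. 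Second, I would choose contact isotopies $\phi_s: S^*M \to S^*M$ and $\psi_s: S^*N \to S^*N$ taking $\Lambda$ to $\Lambda_{-\epsilon}$ and $\Sigma$ to $\Sigma_{-\epsilon}$; their symplectic lifts to $\dT^*M$ and $\dT^*N$ are $\bR_{>0}$-equivariant, so the product symplectic flow on $\dT^*M \times \dT^*N$ is $(\bR_{>0})^2$-equivariant, in particular invariant under the diagonal $\bR_{>0}$-action, and hence descends to a contactomorphism of the relevant open subset of $S^*(M \times N)$ sending $\Lambda \times \Sigma \times \bR$ to $\Lambda_{-\epsilon} \times \Sigma_{-\epsilon} \times \bR$. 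Property \emph{(iii)} then yields the desired equivalence, with the convention $\Lambda \times \Sigma := \Lambda \times \Sigma \times \{0\}$ from the preceding Remark interpreting the evaluation as sections on a tubular neighborhood in the Legendrian, which agrees with sections on the whole conic subset by \emph{(i)}.

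The main substantive point I expect to need to justify carefully is that the product of contact isotopies genuinely descends to a well-defined contactomorphism on the relevant open subset of $S^*(M \times N)$, i.e.\ the compatibility between the two separate cotangent scalings and the diagonal $\bR_{>0}$-quotient used to form $S^*(M \times N)$. Once this is in hand, the remainder of the argument is a formal application of \emph{(i)}--\emph{(iii)}.
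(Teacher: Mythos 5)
Your treatment of the first equality is essentially the paper's argument: reduce by locality of $\msh$ to the component $(\Lambda_{-\epsilon} \times \bR_{>0}) \times (\Sigma_{-\epsilon} \times \bR_{>0})$ near the relevant locus, then apply invariance of $\msh$ under contact isotopy; your extra care about why the product of the two push-off isotopies descends to a contactomorphism of the relevant open subset of $S^*(M \times N)$ is a point the paper leaves implicit, and it is the right justification. The gap is in how you handle the slice-versus-$\bR$-family comparison, i.e. the second equality and the final step where you interpret sections on $\Lambda \times \Sigma := \Lambda \times \Sigma \times \{0\}$. You claim these follow from conicity of $\msh$ (your property \emph{(i)}). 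But $\msh$ is conic only for the \emph{diagonal} $\bR_{>0}$-scaling on $\dT^*(M \times N)$, and that action acts trivially on the $\bR$-coordinate of $\Lambda_{-\epsilon} \times \Sigma_{-\epsilon} \times \bR$: the log-ratio of the two cotangent scales is invariant under simultaneous rescaling. So conicity identifies sections over the conic set $(\Lambda_{-\epsilon} \times \bR_{>0}) \times (\Sigma_{-\epsilon} \times \bR_{>0})$ with sections over the whole family $\Lambda_{-\epsilon} \times \Sigma_{-\epsilon} \times \bR$, but it gives no relation between sections over (a small neighborhood of) the slice $\Lambda_{-\epsilon} \times \Sigma_{-\epsilon} \times \{0\}$ and sections over the whole $\bR$-family — which is exactly what the middle term of the lemma means and what the second equality asserts. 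Reading the middle term as "sections over the conic subset" makes that equality a definition rather than a statement, and it does not repair the left equality, where you again invoke \emph{(i)} to pass from a tubular neighborhood of the slice to the whole family.

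The missing ingredient is the one the paper supplies as its first step: the restriction of $\msh_{\widehat\Lambda_{\pm\epsilon} \times \widehat\Sigma_{\pm\epsilon}}$ to $\Lambda_{-\epsilon} \times \Sigma_{-\epsilon} \times \bR$ is locally constant along the $\bR$-direction, because the microstalk functors are locally constant along $\bR$ (the support is the product cone $(\Lambda_{-\epsilon} \times \bR_{>0}) \times (\Sigma_{-\epsilon} \times \bR_{>0})$, invariant under translation in the ratio direction, and $\msh$ is constructible on this Legendrian with $\bR$-invariant strata). Local constancy plus connectedness of $\bR$ is what lets sections over the closed slice (computed as a colimit over open neighborhoods) agree with sections over the full $\bR$-family. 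This is an easy statement, but it is not a consequence of diagonal conicity; if you want to phrase it as an equivariance, it is equivariance under the anti-diagonal rescaling $(\xi, \eta) \mapsto (t\xi, t^{-1}\eta)$, which is additional structure coming from the product form of the support, not from property \emph{(i)}.
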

\begin{proof}
First, since the microstalk functors along $\Lambda_{-\epsilon} \times \Sigma_{-\epsilon} \times \bR$ are locally constant along $\bR$, the microsheaves along $\Lambda_{-\epsilon} \times \Sigma_{-\epsilon} \times \bR$ are locally constant along $\bR$. Thus, we know that
$$\msh_{\widehat\Lambda_{\pm\epsilon} \times \widehat\Sigma_{\pm\epsilon}}(\Lambda_{-\epsilon} \times \Sigma_{-\epsilon} \times \bR) = \msh_{\widehat\Lambda_{\pm\epsilon} \times \widehat\Sigma_{\pm\epsilon}}(\Lambda_{-\epsilon} \times \Sigma_{-\epsilon}).$$
Then, since microsheaves form a sheaf of categories and is invariant under contact isotopies \cite[Theorem 7.2.1]{KS}, for $\Lambda_{-\epsilon} \times \Sigma_{-\epsilon} \times \bR \subseteq \widehat\Lambda_{\pm\epsilon} \times \widehat\Sigma_{\pm\epsilon}$, we know that
$$\msh_{\Lambda \times \Sigma \times \bR}(\Lambda \times \Sigma) = \msh_{\Lambda_{-\epsilon} \times \Sigma_{-\epsilon}}(\Lambda_{-\epsilon} \times \Sigma_{-\epsilon}) = \msh_{\widehat\Lambda_{\pm\epsilon} \times \widehat\Sigma_{\pm\epsilon}}(\Lambda_{-\epsilon} \times \Sigma_{-\epsilon}).$$
This therefore completes the proof.
\end{proof}

Then, we state the relative doubling theorem in this setting. For an open subset $\Omega \subseteq \widehat\Lambda_{\pm\epsilon} \times \widehat\Sigma_{\pm\epsilon}$, we recall the convention in \cite[Section 4.6]{Kuo-Li-spherical} that a non-negative contact flow $T_t: S^*(M \times N) \to S^*(M \times N)$ is supported on it if it is supported on an open subset $\tilde{\Omega} \subseteq S^*(M \times N)$ such that
$$\tilde{\Omega} \cap \widehat\Lambda_{\cup,\epsilon} \times \widehat\Sigma_{\cup, \epsilon} = \Omega.$$

First, we consider the image of microsheaves on the open subset $\Lambda \times \Sigma$ in the product of the doubling:
$$\msh_{\widehat\Lambda_{\pm\epsilon} \times \widehat\Sigma_{\pm\epsilon}}(\Lambda \times \Sigma) \rightarrow \Sh_{\widehat\Lambda_{\pm\epsilon} \times \widehat\Sigma_{\pm\epsilon}}(M \times N).$$
Relative doubling functor on $\Lambda \times \Sigma \times \bR$ gives an explicit characterization of this functor.

\begin{proposition}[{\cite[Theorem 4.47]{Kuo-Li-spherical}}]\label{thm:relative-doubling}
Let $\hat T_t$ be a non-negative contact flow supported on the open set $\Lambda \times \Sigma \times \bR$. Then for $\delta > 0$ sufficiently small, there is a fully faithful functor
$$w_{\Lambda \times \Sigma}: \msh_{\widehat\Lambda_{\pm\epsilon} \times \widehat\Sigma_{\pm\epsilon}}(\Lambda_{-\epsilon} \times \Sigma_{-\epsilon}) \hookrightarrow \Sh_{\hat T_{-\delta}(\Lambda \times \Sigma \times \overline{\bR}) \cup \hat T_\delta(\Lambda \times \Sigma \times \overline{\bR})}(M \times N),$$
where $\overline{\bR}:=[-\infty, +\infty]$ is the closure of $\mathbb{R}$. Moreover, we have $m_{\Lambda \times \Sigma}^l = \iota_{\widehat\Lambda_{\pm\epsilon} \times \widehat\Sigma_{\pm\epsilon}}^* \circ w_{\Lambda \times \Sigma}[-1]$.
\end{proposition}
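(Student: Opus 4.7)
The plan is to adapt the basic doubling functor of Theorem~\ref{thm: doubling} to the relative setting where the ambient isotropic is $\Lambda \times \Sigma \times \bR \subseteq S^*(M \times N)$. By Lemma~\ref{lem:contact-trans} the source category $\msh_{\widehat\Lambda_{\pm\epsilon} \times \widehat\Sigma_{\pm\epsilon}}(\Lambda_{-\epsilon} \times \Sigma_{-\epsilon})$ is identified with $\msh_{\Lambda \times \Sigma \times \bR}(\Lambda \times \Sigma)$, so it suffices to produce the functor on the latter.

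First I would invoke Guillermou--Kashiwara--Schapira sheaf quantization applied to the non-negative contact flow $\hat T_t$ supported on a neighborhood of $\Lambda \times \Sigma \times \bR$: this lifts the flow to a family of convolution kernels $K_t$ on $M \times N$ whose microsupport is controlled by the graph of $\hat T_t$. For a microsheaf $F$, pick any lift $\tilde F \in \Sh(M\times N)$ (for instance using the non-relative $m^l$-embedding for the doubled pieces) and set
\[
w_{\Lambda \times \Sigma}(F) := \operatorname{cone}\bigl(K_{-\delta} \circ \tilde F \longrightarrow K_\delta \circ \tilde F\bigr)[-1].
\]
The non-negativity of $\hat T_t$ supplies the natural transformation between the two terms, and for $\delta$ small enough the displacement $\hat T_{\pm\delta}(\Lambda \times \Sigma) \cap (\Lambda \times \Sigma) = \varnothing$ makes the construction independent of the choice of lift.

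Second I would verify the microsupport estimate. By the GKS propagation theorem each $K_{\pm\delta} \circ \tilde F$ is microsupported on $\hat T_{\pm\delta}(\Lambda \times \Sigma \times \overline{\bR})$, the closure $\overline{\bR}$ arising from the compactified asymptotic behaviour of the flow along the $\bR$-factor. Taking the cone then gives a sheaf microsupported on the union $\hat T_{-\delta}(\Lambda \times \Sigma \times \overline{\bR}) \cup \hat T_\delta(\Lambda \times \Sigma \times \overline{\bR})$. Full faithfulness follows from a Hom computation using the displacement property together with the adjunction between $\iota^*$ and the inclusion to cancel the contributions from directions not in $\Lambda \times \Sigma$, yielding
\[
\Hom\bigl(w_{\Lambda\times\Sigma}(F), w_{\Lambda\times\Sigma}(G)\bigr) = \Hom_{\msh}(F, G).
\]
The identity $m_{\Lambda \times \Sigma}^l = \iota^*_{\widehat\Lambda_{\pm\epsilon} \times \widehat\Sigma_{\pm\epsilon}} \circ w_{\Lambda \times \Sigma}[-1]$ is then built into the construction: microlocalizing the cone recovers $F$, and the $[-1]$ shift matches the one introduced in the definition of $w_{\Lambda\times\Sigma}$.

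The main obstacle is the precise microsupport control at the ``ends'' of the $\bR$-factor, which is what forces the target to involve $\overline{\bR}$ rather than $\bR$. Tracking how the GKS kernel propagates microsupport as the parameter travels to $\pm\infty$ along $\bR$, and verifying that the union of the two displaced copies is genuinely realized (and not strictly smaller), is the technical heart of the argument and is where the relative setting differs substantively from the non-relative doubling of Theorem~\ref{thm: doubling}.
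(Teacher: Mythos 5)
Two preliminary points. First, the paper itself does not prove this proposition: it is imported verbatim from \cite[Theorem 4.47]{Kuo-Li-spherical}, so there is no in-paper argument to compare against; what you have written is an attempt to reprove the relative doubling theorem itself. Second, your overall shape -- quantize the non-negative flow by GKS kernels and define $w_{\Lambda\times\Sigma}(F)$ as the cone (up to shift) of the continuation map $K_{-\delta}\circ\tilde F\to K_{\delta}\circ\tilde F$ for a lift $\tilde F$ -- is the right kind of construction, consistent with how the doubling functors in the companion paper are built.

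However, there is a genuine gap precisely at the step you identify as the technical heart, and your stated mechanism for it is backwards. GKS only gives $\msif(K_{\pm\delta}\circ\tilde F)\subseteq \hat T_{\pm\delta}(\msif(\tilde F))$, and any lift $\tilde F$ of a microsheaf in $\msh_{\widehat\Lambda_{\pm\epsilon}\times\widehat\Sigma_{\pm\epsilon}}(\Lambda_{-\epsilon}\times\Sigma_{-\epsilon})$ has $\msif(\tilde F)$ contained in the \emph{entire} doubled product isotropic coming from $\widehat\Lambda_{\pm\epsilon}\times\widehat\Sigma_{\pm\epsilon}$, not in $\Lambda\times\Sigma\times\bR$. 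So the claim that each term $K_{\pm\delta}\circ\tilde F$ individually is microsupported on $\hat T_{\pm\delta}(\Lambda\times\Sigma\times\overline{\bR})$ is false; each term carries the full displaced doubled support. What actually forces the target support is the opposite phenomenon: outside the (ambient) support of the flow the kernels agree microlocally with $1_\Delta$, so the continuation map is a microlocal isomorphism there, and only the \emph{cone} has microsupport confined to $\hat T_{-\delta}(\Lambda\times\Sigma\times\overline{\bR})\cup\hat T_{\delta}(\Lambda\times\Sigma\times\overline{\bR})$. The same localization statement -- that the cone depends only on the microlocal restriction of $\tilde F$ to the flow's support -- is what you need for independence of the lift; ``displacement for small $\delta$'' does not give it (a non-negative flow supported in an open set fixes the boundary of that set, so it does not displace the relevant isotropic from itself there). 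Finally, the two substantive conclusions of the statement, full faithfulness of $w_{\Lambda\times\Sigma}$ and the identity $m_{\Lambda\times\Sigma}^l=\iota^*_{\widehat\Lambda_{\pm\epsilon}\times\widehat\Sigma_{\pm\epsilon}}\circ w_{\Lambda\times\Sigma}[-1]$, are asserted rather than argued: ``microlocalizing the cone recovers $F$'' would at best give that $m_{\Lambda\times\Sigma}\circ\iota^*\circ w_{\Lambda\times\Sigma}[-1]$ is the identity, which does not identify $\iota^*\circ w_{\Lambda\times\Sigma}[-1]$ with the \emph{left adjoint} of $m_{\Lambda\times\Sigma}$; an adjunction/Hom computation is required, and that computation (together with the microsupport localization above) is exactly the content of the cited Theorem 4.47 which this paper takes as input.
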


Consider the relative doubling functor composed with the localization functor
$$\msh_{\widehat\Lambda_{\pm\epsilon} \times \widehat\Sigma_{\pm\epsilon}}(\Lambda_{-\epsilon} \times \Sigma_{-\epsilon}) \hookrightarrow \Sh_{\hat T_{-\delta}(\Lambda \times \Sigma \times \overline{\bR}) \cup \hat T_\delta(\Lambda \times \Sigma \times \overline{\bR})}(M \times N) \twoheadrightarrow \Sh_{\widehat\Lambda_{\pm\epsilon} \times \widehat\Sigma_{\pm\epsilon}}(M \times N).$$
We already know that the first functor is fully faithful, so it suffices to show that the second functor is fully faithful.

Recall that when we consider the category $\Sh_{\Lambda}(M) = \Sh_{\widehat\Lambda}(M)$, it is shown by the first author \cite{Kuo-wrapped-sheaves} that the left (resp.~right) adjoint of $\iota_{\Lambda*}: \Sh_{\Lambda}(M) \hookrightarrow \Sh(M)$ is given by colimit (resp.~limit) of positive (resp.~negative) contact push-offs that are supported away from $\Lambda$ (they are also called wrappings). More precisely, for any contact isotopy $\Phi: S^* M \times I \rightarrow S^* M$ with time-1 flow $\varphi: S^*M \to S^*M$, where $I$ is an open interval containing $0$, there exists a unique Guillermou--Kashiwara--Schapira sheaf kernel $K(\Phi) \in \Sh(M \times M \times I)$ that restricts to $K(\varphi) \in \Sh(M \times M)$ \cite[Theorem 3.7]{Guillermou-Kashiwara-Schapira}, which induces an equivalence functor
$$K(\varphi) \circ (-): \Sh(M) \to \Sh(M), \;\;\; F \mapsto \varphi(F) := K(\varphi) \circ F.$$
We will also use the notation $F^\varphi$ for $\varphi(F) = K(\varphi) \circ F$. When $\Phi$ is positive, there exists a continuation map $F \to F^\varphi$. Then the left and the right adjoint of $\iota_{\Lambda*}$ are given as follows:
\begin{equation}\label{for: large-wrappings}
\iota_\Lambda^*(F) = \wrap_\Lambda^+(F) = \clmi{\varphi \in W(T^*M \setminus \Lambda)}F^\varphi, \; \iota_\Lambda^!(F) = \wrap_\Lambda^-(F) = \lmi{\varphi \in W(T^*M \setminus \Lambda)}F^{\varphi^{-1}}.
\end{equation}
Here, we use $W^+( S^* M \setminus \Lambda)$ 
to represent positive isotopies compactly supported away from $\Lambda$. 
We will try to use the full faithfulness criterion in \cite[Section 5.2]{Kuo-wrapped-sheaves}.

In order to understand the wrapping functors, we will need to understand the symplectic geometry of the doubling. The following construction will be important in the proof of Theorem \ref{thm: doubling-small-piece}. Let $\Lambda \subseteq S^*M$ be a singular Legendrian subset, we can define the $U$-shape Lagrangian filling $\Lambda \times \cup_{\epsilon,\epsilon'}$ of the double copy $\Lambda_{\epsilon} \cup \Lambda_{\epsilon'}$ as follows. Let $f: (\epsilon, \epsilon') \to \bR_{>0}$ be a smooth function such that $f(s) \to +\infty$ when $s \to \epsilon$ or $\epsilon'$. 
\begin{equation}\label{eq:u-shape-filling}
\Lambda \times \cup_{\epsilon,\epsilon'} = \{(x, r\xi) \mid (x, \xi) \in \Lambda_s \subseteq S^*M, r = f(s) \in \bR_{>0}\}.
\end{equation}
Under the Liouville flow in $T^*M$, $\Lambda \times \cup_{\pm\epsilon}$ can be sent to an arbitrary small neighbourhood of 
$\widehat\Lambda_{\pm\epsilon}$.

As a running example of Theorem \ref{thm: doubling-small-piece}, the reader may consider the case $M = N = \bR$, $\Lambda = \{(0, -1)\}$ and $\Sigma = \{(0, 1)\} \subseteq S^*\bR$, as illustrated in Figure \ref{fig:double-small-piece}. When we apply the relative doubling functor in Proposition \ref{thm:relative-doubling}, we will see the Legendrian on the left of the figure, given by a standard Legendrian unknot in $S^*\mathbb{R}^2$. One can see that it is isotopic to the Legendrian on the right of the figure which consists of two pieces, each of which is a U-shape Lagrangian filling of the two points $\Lambda_{\pm\epsilon}$ and respectively $\Sigma_{\pm\epsilon}$. That is contained in a small neighbourhood of $\widehat\Lambda_{\pm\epsilon} \times \widehat\Sigma_{\pm\epsilon}$, where the full faithfulness criterion in \cite[Section 5.2]{Kuo-wrapped-sheaves} applies.

\begin{proof}[Proof of Theorem \ref{thm: doubling-small-piece}]
    We consider the Lagrangian subset $\widehat\Lambda_{\pm\epsilon} \times \widehat\Sigma_{\pm\epsilon}$. We will show that there exists some particularly nice choice of the relative Legendrian doubling $\hat T_{-\delta}(\Lambda \times \Sigma \times \overline{\bR}) \cup \hat T_\delta(\Lambda \times \Sigma \times \overline{\bR})$ that is contained in a small neighbourhood of $\widehat\Lambda_{\pm\epsilon} \times \widehat\Sigma_{\pm\epsilon}$. 

    We construct that particular relative doubling as follows. Note that we have a decomposition
    \begin{align*}
    \widehat\Lambda_{\pm\epsilon} \times \widehat\Sigma_{\pm \epsilon} = (M \times N) \cup \big( \big((\widehat\Lambda_{\pm\epsilon} \times \Sigma_{\pm \epsilon}\big) \cup \big(\Lambda_{\pm \epsilon} \times \widehat\Sigma_{\pm\epsilon}) \big) \big) \times \bR_+.
    \end{align*}
    First, consider $\widehat\Lambda_{\pm\epsilon} \times \Sigma_{\pm\epsilon}$ with a small neighbourhood $D^*M \times U(\Sigma_{\pm\epsilon})$ where $D^*M$ is the disk cotangent bundle of $M$. Construct the standard U-shape Lagrangian filling $\Lambda \times \cup_{\pm\epsilon}$ of $\Lambda_{\pm\epsilon}$ inside the disk bundle $D^*M$ as in Equation (\ref{eq:u-shape-filling}). Under the Liouville flow on $D^*M$, we know that it can be sent to a small neighbourhood of $\widehat\Lambda_{\pm\epsilon}$. Now we have an isotropic subset
    $$(\Lambda \times \cup_{\pm\epsilon}) \times \Sigma_{\pm\epsilon} \subseteq D^*M \times U(\Sigma_{\pm\epsilon}).$$
    Next, consider $\Lambda_{\pm\epsilon} \times \widehat\Sigma_{\pm\epsilon}$. We can similarly construct an isotropic
    $$\Lambda_{\pm\epsilon} \times (\Sigma \times \cup_{\pm\epsilon}) \subseteq U(\Lambda_{\pm\epsilon}) \times D^*N.$$
    Their boundary are equal to $\Lambda_{\pm\epsilon} \times \Sigma_{\pm\epsilon}$ and hence can be glued together which defines a Legendrian doubling. See Figure \ref{fig:double-small-piece}.

\begin{figure}
    \centering
    \includegraphics[width=0.7\textwidth]{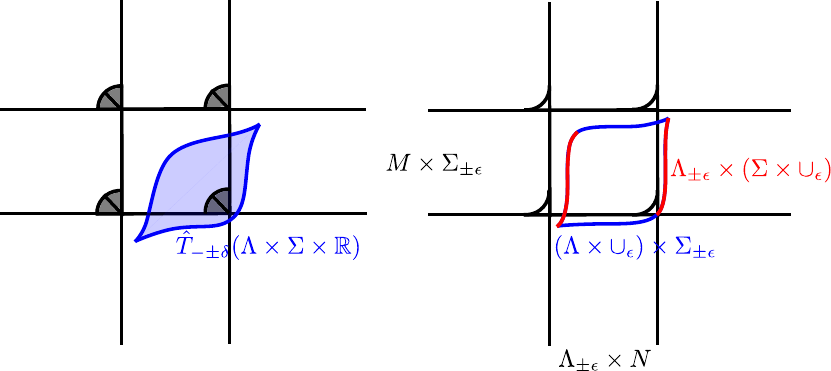}
    \caption{Let $M = N = \bR$, $\Lambda = \{(0, -1)\}$ and $\Sigma = \{(0, 1)\} \subseteq S^*\bR$. The figure on the left is the relative doubling construction along $\Lambda \times \Sigma \times \bR$. The figure on the right is the union of $\Lambda_{\pm\epsilon} \times (\Sigma \times \cup_\epsilon)$ (in blue) and $(\Lambda \times \cup_\epsilon) \times \Sigma_{\pm\epsilon}$ (in red), which is now contained in a small neighbourhood of $\widehat\Lambda_{\pm\epsilon} \times \widehat\Sigma_{\pm\epsilon}$.}
    \label{fig:double-small-piece}
\end{figure}

    We claim that this Legendrian doubling is contact isotopic to $\hat T_{-\delta}(\Lambda \times \Sigma \times \overline{\bR}) \cup \hat T_\delta(\Lambda \times \Sigma \times \overline{\bR})$ in the complement of $\widehat\Lambda_{\pm\epsilon} \times \widehat\Sigma_{\pm\epsilon}$. First, note that the two branches $\Lambda_\epsilon \times (\Sigma \times \cup_{\pm\epsilon})$ and $\Lambda_{-\epsilon} \times (\Sigma \times \cup_{\pm\epsilon})$ are connected through the isotopy $\Lambda_s \times (\Sigma \times \cup_{\pm\epsilon})$; the Lagrangian filling $\Lambda \times \cup_{\pm\epsilon}$ of $\Lambda_\epsilon \cup \Lambda_{-\epsilon}$ and the Lagrangian filling $\Lambda \times \cup_{s,-\epsilon}$ of $\Lambda_s \cup \Lambda_{-\epsilon}$. Therefore, we have a Legendrian isotopy
    $$(\Lambda_\epsilon \times (\Sigma \times \cup_{\pm\epsilon})) \cup ((\Lambda \times \cup_{\pm\epsilon}) \times \Sigma) \cong (\Lambda_s \times (\Sigma \times \cup_{\pm\epsilon})) \cup ((\Lambda \times \cup_{s,-\epsilon}) \times \Sigma) \cong \Lambda_{-\epsilon} \times (\Sigma \times \cup_{\pm\epsilon}).$$
    This implies that the Legendrian doubling is isotopic to double copies of the Legendrian $\Lambda_{-\epsilon} \times (\Sigma \times \cup_{\pm\epsilon})$ (in other words, the standard Legendrian unknot times $\Lambda_{-\epsilon} \times (\Sigma \times \cup_{\pm\epsilon})$).
    Second, note that we also have a Legendrian isotopy
    $$\Lambda_{-\epsilon} \times (\Sigma \times \cup_{\pm\epsilon}) \cong \Lambda_{-\epsilon} \times (\Sigma \times \cup_{t,-\epsilon}) \cong \Lambda_{-\epsilon} \times \Sigma_{-\epsilon} \times \bR.$$
    This implies that the Legendrian doubling is isotopic to double copies of the Legendrian $\Lambda_{-\epsilon} \times \Sigma_{-\epsilon} \times \bR$ (in other words, the standard Legendrian unknot times $\Lambda_{-\epsilon} \times \Sigma_{-\epsilon} \times \bR$). Therefore,
    $$\big((\Lambda \times \cup_{-\epsilon}) \times \Sigma_{\pm\epsilon} \big) \cup \big(\Lambda_{\pm\epsilon} \times (\Sigma \times \cup_{\pm\epsilon}) \big) \cong \hat T_{-\delta}(\Lambda \times \Sigma \times \overline{\bR}) \cup \hat T_\delta(\Lambda \times \Sigma \times \overline{\bR}).$$
    
    Consider the composition in Proposition \ref{thm:relative-doubling}
    $$m_{\Lambda \times \Sigma}^l = \iota_{\widehat\Lambda_{\pm\epsilon} \times \widehat\Sigma_{\pm\epsilon}}^* \circ w_{\Lambda \times \Sigma \times \bR}[-1] = \wrap_{\widehat\Lambda_{\pm\epsilon} \times \widehat\Sigma_{\pm\epsilon}}^+ \circ w_{\Lambda \times \Sigma \times \bR}[-1].$$
    The localization functor can be characterized in terms of wrapping. Hence the fact that the singular support of the relative doubling is contained in an arbitrary small neighbourhood of $ \widehat\Lambda_{\pm\epsilon} \times \widehat\Sigma_{\pm\epsilon}$ at infinity means that the localization or wrapping functor is fully faithful by \cite[Theorem 5.15]{Kuo-wrapped-sheaves}.

    Proposition \ref{prop:stop-removal-microlocalization} says that the fiber of the functor
    $$\iota^*_{\widehat\Lambda_{\pm\epsilon} \times \widehat\Sigma_{\pm\epsilon} \setminus (\Lambda \times \Sigma \times \bR \times \bR_+)}: \Sh_{\widehat\Lambda_{\pm\epsilon} \times \widehat\Sigma_{\pm\epsilon}}(M \times N) \twoheadrightarrow \Sh_{\widehat\Lambda_{\pm\epsilon} \times \widehat\Sigma_{\pm\epsilon} \setminus (\Lambda \times \Sigma \times \bR \times \bR_+)}(M \times N)$$
    is the essential image of the left adjoint of the microlocalization functor
    $$m_{\Lambda_{-\epsilon} \times \Sigma_{\Lambda_\epsilon}}^l: \msh_{\widehat\Lambda_{\pm\epsilon} \times \widehat\Sigma_{\pm\epsilon}}(\Lambda_{-\epsilon} \times \Sigma_{-\epsilon}) \to \Sh_{\widehat\Lambda_{\pm\epsilon} \times \widehat\Sigma_{\pm\epsilon}}(M \times N).$$
    Then the result follows from Lemma \ref{lem:contact-trans}.
\end{proof}

Using the same technique, we can also identify $\msh_{\widehat\Lambda \times \widehat\Sigma}(\Lambda \times \widehat\Sigma)$ as sheaves microsupported on the product of the doubling. 

\begin{theorem}\label{thm: doubling-product-piece}
Let $\Lambda \subseteq S^*M$ and $\Sigma \subseteq S^*N$ be a compact subanalytic isotropic subset. There is a fully faithful functor
$$m_{\Lambda \times \widehat\Sigma}^l: \msh_{\widehat\Lambda \times \widehat\Sigma}(\Lambda \times \widehat\Sigma) \hookrightarrow \Sh_{\widehat\Lambda_{\pm\epsilon} \times \widehat\Sigma}(M \times N)$$
which induces a localization sequence
$$\msh_{\widehat\Lambda \times \widehat\Sigma}(\Lambda \times \widehat\Sigma) \hookrightarrow \Sh_{\widehat\Lambda_{\pm\epsilon} \times \widehat\Sigma}(M \times N) \twoheadrightarrow \Sh_{\widehat\Lambda_{\epsilon} \times \widehat\Sigma}(M \times N).$$
\end{theorem}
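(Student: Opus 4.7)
The plan is to mirror the proof of Theorem \ref{thm: doubling-small-piece} closely, but with the considerable simplification that the relative doubling is performed only in the $\Lambda$-direction while $\widehat\Sigma$ plays a passive role. First I would prove a contact-isotopy reduction analogous to Lemma \ref{lem:contact-trans}: since $\Lambda_{-\epsilon}$ is a contact push-off of $\Lambda$, since $\widehat\Sigma$ is invariant, and since microsheaves form a contact-isotopy invariant sheaf of categories on the cosphere bundle, one obtains the identification
$$\msh_{\widehat\Lambda \times \widehat\Sigma}(\Lambda \times \widehat\Sigma) \;=\; \msh_{\widehat\Lambda_{\pm\epsilon} \times \widehat\Sigma}(\Lambda_{-\epsilon} \times \widehat\Sigma).$$

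Next I would apply the relative doubling functor of Proposition \ref{thm:relative-doubling} to the open subset $\Lambda \times \widehat\Sigma$, driven by a non-negative contact flow $\hat T_t$ supported near $\Lambda \times \widehat\Sigma$ which acts only in the $\Lambda$-direction (acting as identity on the $\widehat\Sigma$-factor). This produces a fully faithful embedding $w_{\Lambda \times \widehat\Sigma}: \msh_{\widehat\Lambda_{\pm\epsilon} \times \widehat\Sigma}(\Lambda_{-\epsilon} \times \widehat\Sigma) \hookrightarrow \Sh_{T}(M \times N)$ into sheaves microsupported on a Legendrian $T$ obtained as the one-sided doubling, with $m_{\Lambda \times \widehat\Sigma}^l = \iota_{\widehat\Lambda_{\pm\epsilon} \times \widehat\Sigma}^* \circ w_{\Lambda \times \widehat\Sigma}[-1]$. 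I would then show, by an argument parallel to the geometric step of Theorem \ref{thm: doubling-small-piece}, that $T$ is Legendrian-isotopic in the complement of $\widehat\Lambda_{\pm\epsilon} \times \widehat\Sigma$ to the Lagrangian $(\Lambda \times \cup_{\pm\epsilon}) \times \widehat\Sigma$, where $\Lambda \times \cup_{\pm\epsilon}$ is the U-shape filling of Equation \ref{eq:u-shape-filling}. Since the Liouville flow on $T^*M$ squeezes $\Lambda \times \cup_{\pm\epsilon}$ into an arbitrarily small neighbourhood of $\widehat\Lambda_{\pm\epsilon}$, the product $(\Lambda \times \cup_{\pm\epsilon}) \times \widehat\Sigma$ lands in a small neighbourhood of $\widehat\Lambda_{\pm\epsilon} \times \widehat\Sigma$. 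No gluing of two U-shapes is needed, which is where the present argument is strictly simpler than that of Theorem \ref{thm: doubling-small-piece}.

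Combining these two steps with the wrapping full-faithfulness criterion \cite[Theorem 5.15]{Kuo-wrapped-sheaves}, the localization functor $\iota_{\widehat\Lambda_{\pm\epsilon} \times \widehat\Sigma}^* = \wrap_{\widehat\Lambda_{\pm\epsilon} \times \widehat\Sigma}^+$ is fully faithful when restricted to the essential image of $w_{\Lambda \times \widehat\Sigma}$, yielding the full faithfulness of $m_{\Lambda \times \widehat\Sigma}^l$. The localization sequence is then a direct consequence of Proposition \ref{prop:stop-removal-microlocalization}: the complement of $\Lambda_{-\epsilon} \times \widehat\Sigma$ inside $\widehat\Lambda_{\pm\epsilon} \times \widehat\Sigma$ is exactly $\widehat\Lambda_\epsilon \times \widehat\Sigma$, and the essential image of the left adjoint $m^l_{\Lambda_{-\epsilon} \times \widehat\Sigma}$ is the fiber of the localization onto $\Sh_{\widehat\Lambda_\epsilon \times \widehat\Sigma}(M \times N)$.

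The main obstacle I anticipate is verifying that the relative doubling apparatus of Proposition \ref{thm:relative-doubling}, which in the cited form is stated for compact isotropics, goes through with $\widehat\Sigma$ in place of $\Sigma \times \bR$ even though $\widehat\Sigma$ is non-compact (it contains the zero section and extends along $\Sigma \times \bR_{>0}$). Since the supporting contact flow is chosen to act trivially in the $\widehat\Sigma$-direction, the construction of the Guillermou--Kashiwara--Schapira kernel and the wrapping estimates can be carried out locally in $\widehat\Sigma$ and then assembled; this reduction and the verification that the fully-faithfulness criterion still applies at infinity on the $\widehat\Sigma$-factor is the only genuinely new technical input required.
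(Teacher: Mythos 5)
Your proposal is correct and takes essentially the same route as the paper: identify the microsheaf category by contact invariance, apply the relative doubling functor of Proposition \ref{thm:relative-doubling-mix}, isotope the doubled Legendrian into an arbitrarily small neighbourhood of $\widehat\Lambda_{\pm\epsilon} \times \widehat\Sigma$ so that the full-faithfulness criterion of \cite[Theorem 5.15]{Kuo-wrapped-sheaves} applies, and deduce the localization sequence from Proposition \ref{prop:stop-removal-microlocalization}. The only difference is presentational: your single model $(\Lambda \times \cup_{\pm\epsilon}) \times \widehat\Sigma$ has at contact infinity exactly the two pieces $\Lambda_{\pm\epsilon} \times \widehat\Sigma$ and $(\Lambda \times \cup_{\pm\epsilon}) \times \Sigma$ that the paper glues (only one U-shape occurs in the paper's proof of this theorem, not two), so your ``no gluing needed'' simplification is a repackaging of the same Legendrian rather than a genuinely different argument.
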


First, we have the relative doubling functor on $\Lambda \times \widehat\Sigma$.

\begin{proposition}[{\cite[Theorem 4.47]{Kuo-Li-spherical}}]\label{thm:relative-doubling-mix}
Let $\hat T_t$ be a non-negative contact flow supported on the open set $\Lambda \times \widehat\Sigma$. Then for $\delta > 0$ sufficiently small, there is a fully faithful functor
$$w_{\Lambda \times \widehat\Sigma}: \msh_{\widehat\Lambda \times \widehat\Sigma}(\Lambda \times \widehat\Sigma) \hookrightarrow \Sh_{\hat T_{-\delta}(\Lambda \times \widehat\Sigma) \cup \hat T_\delta(\Lambda \times \widehat\Sigma)}(M \times N).$$
Moreover, there is an equivalence of functors $m_{\Lambda_{-\epsilon} \times \widehat\Sigma}^l = \iota_{\widehat\Lambda_{\pm\epsilon} \times \widehat\Sigma}^* \circ w_{\Lambda \times \widehat\Sigma}[-1]$.
\end{proposition}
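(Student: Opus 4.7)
The plan is to reduce this to the general relative doubling theorem in \cite[Theorem 4.47]{Kuo-Li-spherical}, applied to the open subset $\Lambda \times \widehat\Sigma$ of the singular conic isotropic $\widehat\Lambda \times \widehat\Sigma \subseteq T^*(M \times N)$. The strategy mirrors that of Proposition \ref{thm:relative-doubling}, with the simplification that only the first factor needs to be doubled: since $\Lambda \subseteq \widehat\Lambda$ is precisely the locus away from the zero section of $T^*M$, the product $\Lambda \times \widehat\Sigma$ is open in $\widehat\Lambda \times \widehat\Sigma$ with complement $M \times \widehat\Sigma$, and any non-negative contact flow supported on $\Lambda \times \widehat\Sigma$ displaces $\Lambda$ in the first factor while fixing $\widehat\Sigma$.

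First I would verify that the setup of the general relative doubling applies. For $\delta > 0$ sufficiently small, the isotropic $\hat T_{-\delta}(\Lambda \times \widehat\Sigma) \cup \hat T_\delta(\Lambda \times \widehat\Sigma)$ consists of two disjoint copies of $\Lambda \times \widehat\Sigma$ in the complement of the fixed locus, giving the relevant doubled Legendrian. Invoking the general theorem then produces the fully faithful functor $w_{\Lambda \times \widehat\Sigma}$ directly. The image lies in sheaves microsupported on the doubled Legendrian, whose singular support at infinity is contained in $\widehat\Lambda_{\pm\epsilon} \times \widehat\Sigma$ after a Liouville rescaling, in complete analogy with the argument in the proof of Theorem \ref{thm: doubling-small-piece}.

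For the second claim, the equivalence $m_{\Lambda_{-\epsilon} \times \widehat\Sigma}^l = \iota_{\widehat\Lambda_{\pm\epsilon} \times \widehat\Sigma}^* \circ w_{\Lambda \times \widehat\Sigma}[-1]$, I would argue by universal property. The functor $\iota^* \circ w[-1]$ lands in $\Sh_{\widehat\Lambda_{\pm\epsilon} \times \widehat\Sigma}(M \times N)$ and its composition with the microlocalization $m_{\Lambda_{-\epsilon} \times \widehat\Sigma}$ at the negative push-off is computed by reading off the microstalks of the doubling along $\Lambda_{-\epsilon} \times \widehat\Sigma$, which recovers the identity on $\msh_{\widehat\Lambda \times \widehat\Sigma}(\Lambda \times \widehat\Sigma)$ after the shift, using Lemma \ref{lem:contact-trans} (translated to this setting) to identify the two microsheaf categories. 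Combined with the full faithfulness of $w$ and standard Hom-computations using the adjunction $\iota^* \dashv \iota_*$, this verifies the required adjunction and forces the equality with $m_{\Lambda_{-\epsilon} \times \widehat\Sigma}^l$.

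The main obstacle, as in the analogous Proposition \ref{thm:relative-doubling}, is not the categorical manipulation but rather the contact-geometric input: one must confirm that the general relative doubling theorem of \cite{Kuo-Li-spherical} applies verbatim to the product situation with the mixed open subset $\Lambda \times \widehat\Sigma$ (only open in the first factor, conic-closed in the second), and that the shift $[-1]$ has the correct sign convention. Once those points are checked, the remainder of the proof is a direct transcription of the argument that produced Proposition \ref{thm:relative-doubling}, since no new symplectic or sheaf-theoretic input is required.
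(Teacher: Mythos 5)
Your proposal takes essentially the same route as the paper: Proposition~\ref{thm:relative-doubling-mix} is stated in the paper as a direct citation of \cite[Theorem~4.47]{Kuo-Li-spherical}, with no independent proof supplied, and your plan is likewise to reduce to that general relative-doubling theorem applied to the open subset $\Lambda \times \widehat\Sigma$. One small caution: your remark that the image lies in $\Sh_{\widehat\Lambda_{\pm\epsilon} \times \widehat\Sigma}$ after Liouville rescaling is the content of the separate Theorem~\ref{thm: doubling-product-piece}, not part of what Proposition~\ref{thm:relative-doubling-mix} itself asserts, so it need not be established here.
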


Now, as a running example of Theorem \ref{thm: doubling-product-piece}, the reader may again consider the case $M = N = \bR$, $\Lambda = \{(0, -1)\}$ and $\Sigma = \{(0, 1)\} \subseteq S^*\bR$, as illustrated in Figure \ref{fig:double-mixed-piece}. When we apply the relative doubling functor in Proposition \ref{thm:relative-doubling}, we will see the Legendrian on the left of the figure. One can see that it is isotopic to the Legendrian on the right of the figure which consists of two copies of $\widehat\Sigma$, joined by a U-shape Lagrangian filling of the two points $\Lambda_{\pm\epsilon}$. That is contained in a small neighbourhood of $\widehat\Lambda_{\pm\epsilon} \times \widehat\Sigma$, where the full faithfulness criterion in \cite[Section 5.2]{Kuo-wrapped-sheaves} applies.

\begin{proof}[Proof of Theorem \ref{thm: doubling-product-piece}]
    Similar to the proof of Theorem \ref{thm: doubling-small-piece}, we will show that there exists some nice choice of the relative Legendrian doubling $\hat T_{-\delta}(\Lambda_{\epsilon} \times \widehat\Sigma) \cup \hat T_\delta(\Lambda_{\epsilon} \times \widehat\Sigma)$ that is contained in a small neighbourhood of $\widehat\Lambda_{\pm\epsilon} \times \widehat\Sigma$. 

    Consider $\widehat\Lambda_{\pm\epsilon} \times \Sigma$ with a neighbourhood $D^*M \times U(\Sigma)$. Construct the standard U-shape Lagrangian filling $\Lambda \times \cup_{\pm\epsilon}$ of $\Lambda_{\pm \epsilon}$ in the disk bundle $D^*M$ as in Equation (\ref{eq:u-shape-filling}). Under the Liouville flow on $D^*M$, it can be sent to a small neighbourhood of $\widehat\Lambda_{\pm\epsilon}$. The we consider the isotropic subset
    $$\Lambda_{\pm\epsilon} \times \widehat\Sigma \subseteq U(\Lambda_{\pm\epsilon}) \times D^*N, \;\; (\Lambda \times \cup_{\pm\epsilon}) \times \Sigma \subseteq D^*M \times U(\Sigma).$$
    Gluing them together defines a Legendrian doubling. See Figure \ref{fig:double-mixed-piece}. The Legendrian doubling is contact isotopic to $\hat T_{-\delta}(\Lambda_{\epsilon} \times \widehat\Sigma) \cup \hat T_\delta(\Lambda_{\epsilon} \times \widehat\Sigma)$ in the complement of $\widehat\Lambda_{\pm\epsilon} \times \widehat\Sigma$, as the Legendrians $(\Lambda_\epsilon \times \widehat\Sigma) \cup ((\Lambda \times \cup_{\pm\epsilon}) \times \Sigma)$ and $\Lambda_{-\epsilon} \times \widehat\Sigma$ are isotopic through $(\Lambda_s \times \widehat\Sigma) \cup ((\Lambda \times \cup_{-\epsilon,s}) \times \Sigma)$, where $\Lambda \times \cup_{-\epsilon,s}$ is the Lagrangian filling of $\Lambda_s \cup \Lambda_{-\epsilon}$.

\begin{figure}
    \centering
    \includegraphics[width=0.6\textwidth]{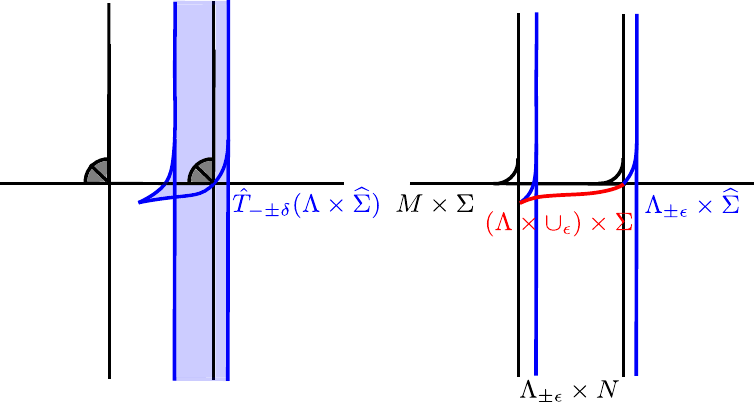}
    \caption{Let $M = N = \bR$, $\Lambda = \{(0, -1)\}$ and $\Sigma = \{(0, 1)\} \subseteq S^*\bR$. The figure on the left is the relative doubling construction along $\Lambda \times \widehat\Sigma$. The figure on the right is the construction that glues together $\Lambda_{\pm\epsilon} \times \widehat\Sigma$ (in blue) and $(\Lambda \times \cup_\epsilon) \times \Sigma$ (in red), which is now contained in a small neighbourhood of $\widehat\Lambda_{\pm\epsilon} \times \widehat\Sigma$.}
    \label{fig:double-mixed-piece}
\end{figure}
    
    Consider the composition in Theorem \ref{thm:relative-doubling-mix}
    $$m_{\Lambda_{\epsilon} \times \widehat\Sigma}^l = \iota_{\widehat\Lambda_{\pm\epsilon} \times \widehat\Sigma}^* \circ w_{\Lambda \times \widehat\Sigma}[-1] = \wrap_{\widehat\Lambda_{\pm\epsilon} \times \widehat\Sigma}^+ \circ w_{\Lambda \times \widehat\Sigma}[-1].$$
    Hence the fact that the singular support of the relative doubling is contained in an arbitrary small neighbourhood of $\widehat\Lambda_{\pm\epsilon} \times \widehat\Sigma$ at infinity means that the localization or wrapping functor is fully faithful by \cite[Theorem 5.15]{Kuo-wrapped-sheaves}.
\end{proof}

\subsection{K\"unneth formula for microsheaves} \label{sec: Kunneth-microsheaf}
Using the results in Section \ref{sec:Kunneth-doubling}, we will now prove the K\"unneth formula for microsheaves.

Theorem \ref{thm: doubling} and \ref{thm: doubling-small-piece} implies that we have two fully faithful embeddings (using the product of doubling and the relative doubling of the product):
\begin{gather*}
\msh_{\Lambda}(\Lambda) \otimes \msh_{\Lambda}(\Sigma) \hookrightarrow \Sh_{\widehat\Lambda_{\pm\epsilon}}(M) \otimes \Sh_{\widehat\Sigma_{\pm\epsilon}}(N), \\
\msh_{\Lambda \times \Sigma \times \bR}(\Lambda \times \Sigma) \hookrightarrow \Sh_{\widehat\Lambda_{\pm\epsilon} \times \widehat\Sigma_{\pm\epsilon}}(M \times N).
\end{gather*}
Our goal in this subsection is to show that there essential images agree under K\"unneth formula for sheaves, which will then imply the K\"unneth formula for microsheaves with isotropic microsupports.

First, however, we will show a simpler case, namely, the K\"unneth formula between microsheaves and sheaves, by comparing the fully faithful functors in Theorem \ref{thm: doubling} and Theorem \ref{thm: doubling-product-piece}.

\begin{theorem}\label{thm: kunneth-sheaf-microsheaf}
    Let $\Lambda \subseteq S^*M, \Sigma \subseteq S^*N$ be compact subanalytic isotropic subsets. Then there is an equivalence
    $$\msh_\Lambda(\Lambda) \otimes \Sh_{\widehat\Sigma}(N) = \msh_{\Lambda \times \widehat\Sigma}(\Lambda \times \widehat\Sigma).$$
\end{theorem}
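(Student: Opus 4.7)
The plan is to reduce the mixed K\"unneth formula to the sheaf K\"unneth formula (Theorem \ref{pd:g}) by means of the doubling trick. The starting point is the localization sequence from Theorem \ref{thm: doubling}:
\begin{equation*}
\msh_\Lambda(\Lambda) \hookrightarrow \Sh_{\widehat\Lambda_{\pm\epsilon}}(M) \twoheadrightarrow \Sh_{\widehat\Lambda_{\epsilon}}(M).
\end{equation*}
I would tensor this sequence on the right by $\Sh_{\widehat\Sigma}(N)$ in $\PrLst$. Because $-\otimes \Sh_{\widehat\Sigma}(N)$ is bicocontinuous, it preserves cofiber sequences, and by Lemma \ref{ff;pr} it preserves the full faithfulness of the kernel inclusion (whose right adjoint, being the inclusion of a full subcategory cut out by microsupport, is colimit-preserving). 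This produces a localization sequence
\begin{equation*}
\msh_\Lambda(\Lambda) \otimes \Sh_{\widehat\Sigma}(N) \hookrightarrow \Sh_{\widehat\Lambda_{\pm\epsilon}}(M) \otimes \Sh_{\widehat\Sigma}(N) \twoheadrightarrow \Sh_{\widehat\Lambda_{\epsilon}}(M) \otimes \Sh_{\widehat\Sigma}(N).
\end{equation*}

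The next step is to apply the sheaf K\"unneth formula (Theorem \ref{pd:g}) to identify the two right-hand terms with $\Sh_{\widehat\Lambda_{\pm\epsilon} \times \widehat\Sigma}(M \times N)$ and $\Sh_{\widehat\Lambda_{\epsilon} \times \widehat\Sigma}(M \times N)$ respectively. By Proposition \ref{prop:stopremoval-kunneth}, the quotient map $\iota_{\widehat\Lambda_\epsilon}^* \otimes \id$ is intertwined under these identifications with the standard localization $\iota_{\widehat\Lambda_\epsilon \times \widehat\Sigma}^*$. The outcome is a localization sequence
\begin{equation*}
\msh_\Lambda(\Lambda) \otimes \Sh_{\widehat\Sigma}(N) \hookrightarrow \Sh_{\widehat\Lambda_{\pm\epsilon} \times \widehat\Sigma}(M \times N) \xrightarrow{\iota_{\widehat\Lambda_\epsilon \times \widehat\Sigma}^*} \Sh_{\widehat\Lambda_{\epsilon} \times \widehat\Sigma}(M \times N).
\end{equation*}
On the other hand, Theorem \ref{thm: doubling-product-piece} directly produces a second localization sequence
\begin{equation*}
\msh_{\Lambda \times \widehat\Sigma}(\Lambda \times \widehat\Sigma) \hookrightarrow \Sh_{\widehat\Lambda_{\pm\epsilon} \times \widehat\Sigma}(M \times N) \xrightarrow{\iota_{\widehat\Lambda_\epsilon \times \widehat\Sigma}^*} \Sh_{\widehat\Lambda_{\epsilon} \times \widehat\Sigma}(M \times N)
\end{equation*}
with the same middle term and the same quotient functor. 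Since a localization sequence in $\PrLst$ determines its kernel up to canonical equivalence, comparing the fibers of these two sequences yields the desired identification $\msh_\Lambda(\Lambda) \otimes \Sh_{\widehat\Sigma}(N) = \msh_{\Lambda \times \widehat\Sigma}(\Lambda \times \widehat\Sigma)$, and one checks that the equivalence is implemented on compact generators by $F \otimes G \mapsto F \boxtimes G$.

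The main obstacle is the verification that tensoring by $\Sh_{\widehat\Sigma}(N)$ genuinely carries the recollement of Theorem \ref{thm: doubling} to a recollement, i.e., that the resulting kernel embedding remains fully faithful and has the expected essential image. The full faithfulness is handled by Lemma \ref{ff;pr} (applied using the left adjoint $m_\Lambda^l$ guaranteed by the recollement), and the identification of the essential image reduces to the compatibility of standard localizations with the K\"unneth equivalence provided by Proposition \ref{prop:stopremoval-kunneth}. A minor bookkeeping point, easily handled by the uniqueness of left adjoints, is to confirm that the quotient functor appearing after tensoring literally agrees with the standard localization $\iota_{\widehat\Lambda_\epsilon \times \widehat\Sigma}^*$ used in Theorem \ref{thm: doubling-product-piece}.
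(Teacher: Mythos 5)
Your proposal is correct and follows essentially the same route as the paper: tensor the doubling recollement of Theorem \ref{thm: doubling} with $\Sh_{\widehat\Sigma}(N)$, pass through the sheaf K\"unneth formula and the compatibility of stop-removal localizations, and compare with the localization sequence of Theorem \ref{thm: doubling-product-piece}. The only (inessential) difference is that the paper pins down both kernels as the subcategory generated by corepresentatives of microstalk functors along $\Lambda_{-\epsilon}\times\widehat\Sigma$ via Proposition \ref{prop:stop-removal-microlocalization}, whereas you match the two quotient functors and invoke uniqueness of the fiber, which amounts to the same identification.
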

\begin{proof}
    Consider the doubling construction in Theorem \ref{thm: doubling}, we have a recollement induced by the inclusion functor $m_{\Lambda}^l \otimes\,\id = m_\Lambda^l \otimes m_{\widehat\Sigma}^l$:
    $$\msh_{\Lambda}(\Lambda) \otimes \Sh_{\widehat\Sigma}(N) \hookrightarrow \Sh_{\widehat\Lambda_{\pm\epsilon}}(M) \otimes \Sh_{\widehat\Sigma}(N) \twoheadrightarrow \Sh_{\widehat\Lambda_\epsilon}(M) \otimes \Sh_{\widehat\Sigma}(N).$$
    By K\"unneth formula Theorem \ref{pd:g} and Proposition \ref{prop:stop-removal-microlocalization}, the essential image of the inclusion is the corepresentatives of microstalk functors on $\Lambda_{-\epsilon} \times \widehat\Sigma$. Consider Theorem \ref{thm: doubling-product-piece}, we also have a recollement induced by the inclusion functor $m_{\Lambda \times \widehat\Sigma}^l$:
    $$\msh_{\widehat\Lambda \times \widehat\Sigma}(\Lambda \times \widehat\Sigma) \hookrightarrow \Sh_{\widehat\Lambda_{\pm\epsilon} \times \widehat\Sigma}(M \times N) \twoheadrightarrow \Sh_{\widehat\Lambda_{\epsilon} \times \widehat\Sigma}(M \times N).$$
    By Proposition \ref{prop:stop-removal-microlocalization}, we know that the essential image is equal to the corepresentatives of microstalk functors on $\Lambda_{-\epsilon} \times \widehat\Sigma$. Thus we get the isomorphism.
\end{proof}

Then, we prove the K\"unneth formula for microsheaves. Basically, in the product category of sheaves $\Sh_{\widehat\Lambda_\epsilon \times \widehat\Sigma_{\pm\epsilon}}(M \times N)$, using the recollement on each factor, we can find objects that come from the product of sheaves on both factors, from the product of sheaves and microsheaves, and finally from the product of microsheaves on both factors. The following proposition analyzes the objects that come from the product of sheaves and microsheaves.  See Figure \ref{fig:kunneth-microsheaf}.

\begin{figure}[h]
    \centering
    \includegraphics[width=0.9\textwidth]{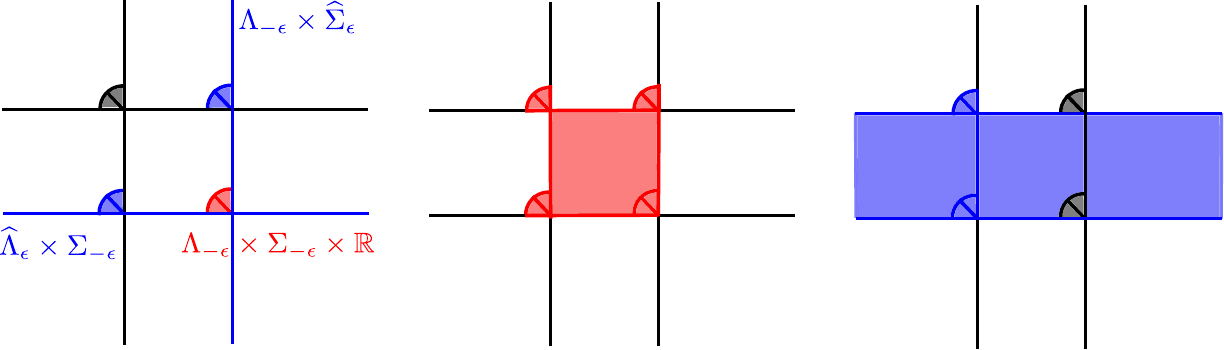}
    \caption{Different subcategories in $\Sh_{\widehat\Lambda_\epsilon \times \widehat\Sigma_{\pm\epsilon}}(M \times N)$ that come from the recollements. The red piece is the essential image of the microstalk corepresentatives on $\Lambda \times \Sigma \times \bR$. The blue pieces are the essential images of the microstalk corepresentatives of $\Lambda_{-\epsilon} \times \widehat\Sigma_{\epsilon}$ and $\widehat\Lambda_{\epsilon} \times \Sigma_{-\epsilon}$.}
    \label{fig:kunneth-microsheaf}
\end{figure}

\begin{proposition}\label{thm: doubling-mixed-piece}
    Let $\Lambda \subseteq S^*M, \Sigma \subseteq S^*N$ be compact subanalytic isotropic subsets. Then the left adjoint of microlocalization on $\Lambda_{-\epsilon} \times \widehat\Sigma_\epsilon$ is a fully faithful embedding
    $$m_{\Lambda \times \widehat\Sigma_\epsilon}^l: \msh_{\widehat\Lambda_{\pm\epsilon} \times \widehat\Sigma_{\pm\epsilon}}(\Lambda_{-\epsilon} \times \widehat\Sigma_\epsilon) \hookrightarrow \Sh_{(\widehat\Lambda_\epsilon \times \widehat\Sigma_{\pm\epsilon}) \cup (\widehat\Lambda_{\pm\epsilon} \times \widehat\Sigma_\epsilon)}(M \times N).$$
    In particular, there is a recollement of the form
    \begin{align*}
    \msh_{\widehat\Lambda_{\pm\epsilon} \times \widehat\Sigma_{\pm\epsilon}}(\Lambda_{-\epsilon} \times \widehat\Sigma_{\epsilon}) \hookrightarrow \Sh_{(\widehat\Lambda_\epsilon \times \widehat\Sigma_{\pm\epsilon}) \cup (\widehat\Lambda_{\pm\epsilon} \times \widehat\Sigma_\epsilon)}(M \times N) \twoheadrightarrow \Sh_{\widehat\Lambda_\epsilon \times \widehat\Sigma_{\pm\epsilon}}(M \times N).
    \end{align*}
    For the left adjoint of microlocalization on $\widehat\Lambda_\epsilon \times \Sigma_{-\epsilon}$, similar statement holds. In particular, 
    $$\Sh_{(\widehat\Lambda_\epsilon \times \widehat\Sigma_{\pm\epsilon}) \cup (\widehat\Lambda_{\pm\epsilon} \times \widehat\Sigma_\epsilon)}(M \times N) = \langle \Sh_{\widehat\Lambda_\epsilon \times \widehat\Sigma_{\pm\epsilon}}(M \times N), \Sh_{\widehat\Lambda_{\pm\epsilon} \times \widehat\Sigma_\epsilon}(M \times N) \rangle.$$
\end{proposition}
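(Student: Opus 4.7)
The plan is to repeat the argument from Theorem \ref{thm: doubling-product-piece} almost verbatim, with $\widehat\Sigma$ replaced by $\widehat\Sigma_\epsilon$ and the ambient stop enlarged to $\widehat\Lambda_{\pm\epsilon} \times \widehat\Sigma_{\pm\epsilon}$. First, I apply a version of the relative doubling of \cite[Theorem 4.47]{Kuo-Li-spherical} (the analogue of Proposition \ref{thm:relative-doubling-mix}) along the open subset $\Lambda_{-\epsilon} \times \widehat\Sigma_\epsilon$ of the contact boundary, producing a fully faithful functor
\begin{equation*}
w_{\Lambda \times \widehat\Sigma_\epsilon}: \msh_{\widehat\Lambda_{\pm\epsilon} \times \widehat\Sigma_{\pm\epsilon}}(\Lambda_{-\epsilon} \times \widehat\Sigma_\epsilon) \hookrightarrow \Sh_{\hat T_{-\delta}(\Lambda_{-\epsilon} \times \widehat\Sigma_\epsilon) \cup \hat T_\delta(\Lambda_{-\epsilon} \times \widehat\Sigma_\epsilon)}(M \times N),
\end{equation*}
with factorization $m_{\Lambda \times \widehat\Sigma_\epsilon}^l = \iota^* \circ w_{\Lambda \times \widehat\Sigma_\epsilon}[-1]$, where $\iota^* = \wrap^+$ is the wrapping localization onto $\Sh_{(\widehat\Lambda_\epsilon \times \widehat\Sigma_{\pm\epsilon}) \cup (\widehat\Lambda_{\pm\epsilon} \times \widehat\Sigma_\epsilon)}(M \times N)$.

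Next, I build an explicit geometric model of the relative doubling whose singular support lies in an arbitrarily small neighbourhood of $\widehat\Lambda_{\pm\epsilon} \times \widehat\Sigma_\epsilon$. Imitating the construction in Theorem \ref{thm: doubling-product-piece}, glue together the pushed copies $\Lambda_{\pm\epsilon} \times \widehat\Sigma_\epsilon$ with the U-shape piece $(\Lambda \times \cup_{\pm\epsilon}) \times \Sigma_\epsilon$, where $\Lambda \times \cup_{\pm\epsilon}$ is the U-shape Lagrangian filling from Equation (\ref{eq:u-shape-filling}), which can be made to sit inside a small neighbourhood of $\widehat\Lambda_{\pm\epsilon}$. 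A Legendrian isotopy identical to that of Theorem \ref{thm: doubling-product-piece} — sliding $\Lambda_\epsilon \times \widehat\Sigma_\epsilon \cup (\Lambda \times \cup_{\pm\epsilon}) \times \Sigma_\epsilon$ through $\Lambda_s \times \widehat\Sigma_\epsilon \cup (\Lambda \times \cup_{s,-\epsilon}) \times \Sigma_\epsilon$ down to $\Lambda_{-\epsilon} \times \widehat\Sigma_\epsilon$ — identifies this model with $\hat T_{-\delta}(\Lambda_{-\epsilon} \times \widehat\Sigma_\epsilon) \cup \hat T_\delta(\Lambda_{-\epsilon} \times \widehat\Sigma_\epsilon)$ in the complement of the stop. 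The full faithfulness criterion of \cite[Theorem 5.15]{Kuo-wrapped-sheaves} then gives that $\iota^*$ is fully faithful on the essential image of $w_{\Lambda \times \widehat\Sigma_\epsilon}$, producing the asserted fully faithful embedding $m_{\Lambda \times \widehat\Sigma_\epsilon}^l$.

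The recollement is then immediate from Proposition \ref{prop:stop-removal-microlocalization}: the fiber of the wrapping localization $\iota^*_{\widehat\Lambda_\epsilon \times \widehat\Sigma_{\pm\epsilon}}$ is the essential image of the left adjoint of microlocalization on the removed open set $\Lambda_{-\epsilon} \times \widehat\Sigma_\epsilon$, which we have just identified with the image of $m_{\Lambda \times \widehat\Sigma_\epsilon}^l$. The symmetric statement for $\widehat\Lambda_\epsilon \times \Sigma_{-\epsilon}$ follows by swapping the roles of $M$ and $N$. For the final identity, the explicit geometric model shows that the essential image of $m_{\Lambda \times \widehat\Sigma_\epsilon}^l$ is contained in $\Sh_{\widehat\Lambda_{\pm\epsilon} \times \widehat\Sigma_\epsilon}(M \times N)$; paired with the quotient $\Sh_{\widehat\Lambda_\epsilon \times \widehat\Sigma_{\pm\epsilon}}(M \times N)$ from the recollement, any object in the ambient category fits into a fiber sequence with ends in these two subcategories, yielding the generation statement.

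The main technical obstacle is verifying that the U-shape Legendrian isotopy of Theorem \ref{thm: doubling-product-piece} can be carried out in the complement of the enlarged stop $\widehat\Lambda_{\pm\epsilon} \times \widehat\Sigma_{\pm\epsilon}$ rather than only the smaller $\widehat\Lambda_{\pm\epsilon} \times \widehat\Sigma$: the additional $\Sigma$-pushoffs $\Sigma_{\pm\epsilon}$ must be avoided throughout the isotopy, which is routine once the U-shape in the $\Lambda$-factor is confined to a small enough neighbourhood of $\widehat\Lambda_{\pm\epsilon}$, but requires checking carefully.
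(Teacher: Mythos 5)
Your route is genuinely different from the paper's. The paper proves this proposition purely categorically: it applies Theorem \ref{thm: doubling-product-piece} with the doubled isotropic $\Sigma_{\pm\epsilon}$ in place of $\Sigma$ to get the fully faithful embedding $\msh_{\widehat\Lambda_{\pm\epsilon} \times \widehat\Sigma_{\pm\epsilon}}(\Lambda_{-\epsilon} \times \widehat\Sigma_{\pm\epsilon}) \hookrightarrow \Sh_{\widehat\Lambda_{\pm\epsilon} \times \widehat\Sigma_{\pm\epsilon}}(M \times N)$, then runs a diagram chase: the fibers $\sD^\mu(\Lambda_{-\epsilon} \times \Sigma_{-\epsilon}, -)$ on both levels are compactly generated by microstalk corepresentatives along $\Lambda_{-\epsilon} \times \Sigma_{-\epsilon}$ (Propositions \ref{prop:stopremoval} and \ref{wdstopr}), the induced functor on fibers is an equivalence, and since the horizontal quotients are localizations the induced functor on quotients $\msh_{\widehat\Lambda_{\pm\epsilon}\times\widehat\Sigma_{\pm\epsilon}}(\Lambda_{-\epsilon}\times\widehat\Sigma_\epsilon) \to \Sh_{(\widehat\Lambda_\epsilon \times \widehat\Sigma_{\pm\epsilon}) \cup (\widehat\Lambda_{\pm\epsilon} \times \widehat\Sigma_\epsilon)}(M \times N)$ is fully faithful. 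You instead redo the geometry: a relative doubling along $\Lambda_{-\epsilon}\times\widehat\Sigma_\epsilon$, an explicit U-shape model near $\widehat\Lambda_{\pm\epsilon}\times\widehat\Sigma_\epsilon$, and the full-faithfulness criterion of \cite[Theorem 5.15]{Kuo-wrapped-sheaves}. This is essentially the strategy the paper's own remark after the proposition says is available, so for the embedding and the recollement your approach is viable, though the isotopy check you flag (avoiding the extra component $\widehat\Lambda_\epsilon\times\widehat\Sigma_{-\epsilon}$) is a real piece of work you would owe, not a footnote.

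The concrete gap is in your last step. You deduce the generation statement from the claim that the essential image of $m^l_{\Lambda\times\widehat\Sigma_\epsilon}$ lies in $\Sh_{\widehat\Lambda_{\pm\epsilon}\times\widehat\Sigma_\epsilon}(M\times N)$, and you justify this by the geometric model. But the model only controls the microsupport of $w_{\Lambda\times\widehat\Sigma_\epsilon}(F)$, i.e.\ \emph{before} the wrapping localization; $m^l$ is $\iota^*\circ w[-1]$, and the wrapped colimit is a priori only microsupported in the full stop $(\widehat\Lambda_\epsilon \times \widehat\Sigma_{\pm\epsilon}) \cup (\widehat\Lambda_{\pm\epsilon} \times \widehat\Sigma_\epsilon)$. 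Saying the wrapping is confined to a small neighbourhood of $\widehat\Lambda_{\pm\epsilon}\times\widehat\Sigma_\epsilon$ does not suffice either, since every neighbourhood of $\widehat\Lambda_{\pm\epsilon}\times\widehat\Sigma_\epsilon$ at infinity meets $\widehat\Lambda_\epsilon\times\widehat\Sigma_{-\epsilon}$ in directions accumulating onto $\widehat\Lambda_\epsilon\times 0_N$, so a neighbourhood-plus-stop estimate does not pin the microsupport inside $\widehat\Lambda_{\pm\epsilon}\times\widehat\Sigma_\epsilon$. The containment is true, but it is exactly the nontrivial content here: the paper obtains it (and the generation statement) by comparing the recollement just constructed with the recollement of Theorem \ref{thm: doubling-product-piece} for $\Sh_{\widehat\Lambda_{\pm\epsilon}\times\widehat\Sigma_\epsilon}(M\times N)$, observing that the two fiber subcategories coincide (both generated by the same microstalk corepresentatives along $\Lambda_{-\epsilon}\times\Sigma_{-\epsilon}$), which yields a pull-back square of sheaf categories and hence $\Sh_{(\widehat\Lambda_\epsilon \times \widehat\Sigma_{\pm\epsilon}) \cup (\widehat\Lambda_{\pm\epsilon} \times \widehat\Sigma_\epsilon)}(M \times N) = \langle \Sh_{\widehat\Lambda_\epsilon \times \widehat\Sigma_{\pm\epsilon}}(M \times N), \Sh_{\widehat\Lambda_{\pm\epsilon} \times \widehat\Sigma_\epsilon}(M \times N)\rangle$. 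To complete your argument you would need either this comparison of recollements or a genuine forward-stopped argument showing the positive wrapping of objects microsupported near $\widehat\Lambda_{\pm\epsilon}\times\widehat\Sigma_\epsilon$ terminates on that part of the stop.
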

\begin{proof}
    Consider $\Lambda_{-\epsilon} \times \widehat\Sigma_{\epsilon}$ as a subset in $\widehat\Lambda_{\pm\epsilon} \times \widehat\Sigma_{\pm\epsilon}$. By Theorem \ref{thm: doubling-product-piece}, the left adjoint of microlocalization is fully faithful
    \begin{align*}
    m_{\Lambda_{-\epsilon} \times \widehat\Sigma_{\epsilon} \subseteq \widehat\Lambda_{\pm\epsilon} \times \widehat\Sigma_{\pm\epsilon}}^l: \msh_{\widehat\Lambda_{\pm\epsilon} \times \widehat\Sigma_{\pm\epsilon}}(\Lambda_{-\epsilon} \times \widehat\Sigma_{\pm\epsilon}) \hookrightarrow \Sh_{\widehat\Lambda_{\pm\epsilon} \times \widehat\Sigma_{\pm\epsilon}}(M \times N).
    \end{align*}
    Then, consider $\Lambda_{-\epsilon} \times \widehat\Sigma_{\epsilon}$ as a subset in $(\widehat\Lambda_\epsilon \times \widehat\Sigma_{\pm\epsilon}) \cup (\widehat\Lambda_{\pm\epsilon} \times \widehat\Sigma_\epsilon)$. By Proposition \ref{wdstopr}, we have a commutative diagram
    $$\resizebox{\textwidth}{!}{\xymatrix{
    \sD^\mu(\Lambda_{-\epsilon} \times \Sigma_{-\epsilon}, \Lambda_{-\epsilon} \times \widehat\Sigma_{\epsilon})\, \ar@{^{(}->}[r] \ar[d]^{\rotatebox{90}{$\sim$}} & \msh_{\widehat\Lambda_{\pm\epsilon} \times \widehat\Sigma_{\pm\epsilon}}(\Lambda_{-\epsilon} \times \widehat\Sigma_{\pm\epsilon}) \ar@{^{(}->}[d] \ar@{->>}[r] & \msh_{\widehat\Lambda_{\pm\epsilon} \times \widehat\Sigma_{\epsilon}}(\Lambda_{-\epsilon} \times \widehat\Sigma_{\epsilon}) \ar@{^{(}->}[d] \\
    \sD^\mu(\Lambda_{-\epsilon} \times \Sigma_{-\epsilon}, \widehat\Lambda_{\pm\epsilon} \times \widehat\Sigma_{\pm\epsilon})\, \ar@{^{(}->}[r] & \Sh_{\widehat\Lambda_{\pm\epsilon} \times \widehat\Sigma_{\pm\epsilon}}(M \times N) \ar@{->>}[r] & \Sh_{(\widehat\Lambda_\epsilon \times \widehat\Sigma_{\pm\epsilon}) \cup (\widehat\Lambda_{\pm\epsilon} \times \widehat\Sigma_\epsilon)}(M \times N).
    }}$$
    Here, by Proposition \ref{prop:stopremoval}, $\sD^\mu(\Lambda_{-\epsilon} \times \Sigma_{-\epsilon}, \Lambda_{-\epsilon} \times \widehat\Sigma_{\epsilon})$ and respectively $\sD^\mu(\Lambda_{-\epsilon} \times \Sigma_{-\epsilon}, \widehat\Lambda_{\pm\epsilon} \times \widehat\Sigma_{\pm\epsilon})$ are compactly generated by the corepresentatives of microstalks along $\Lambda_{-\epsilon} \times \Sigma_{-\epsilon}$. Since the left horizontal functors and middle vertical functor are both fully faithful, we know that the left vertical functor is an isomorphism. Then, since the right horizontal functors are localizations, we can conclude that the right vertical functor has to be fully faithful. Therefore, we get the fully faithful embedding. The recollement follows from Proposition \ref{prop:stop-removal-microlocalization}. 
    
    Finally, consider the recollemont in Theorem \ref{thm: doubling-product-piece} which fits into the following diagram
    $$\xymatrix{
    \msh_{\widehat\Lambda_{\pm\epsilon} \times \widehat\Sigma_{\pm\epsilon}}(\Lambda_{-\epsilon} \times \widehat\Sigma_{\epsilon}) \ar@{^{(}->}[r] \ar[d]^{\rotatebox{90}{$\sim$}}  & \Sh_{(\widehat\Lambda_\epsilon \times \widehat\Sigma_{\pm\epsilon}) \cup (\widehat\Lambda_{\pm\epsilon} \times \widehat\Sigma_\epsilon)}(M \times N) \ar@{->>}[r] \ar@{->>}[d]  & \Sh_{\widehat\Lambda_\epsilon \times \widehat\Sigma_{\pm\epsilon}}(M \times N)  \ar@{->>}[d]  \\
    \msh_{\widehat\Lambda_{\pm\epsilon} \times \widehat\Sigma_{\pm\epsilon}}(\Lambda_{-\epsilon} \times \widehat\Sigma_{\epsilon}) \ar@{^{(}->}[r] & \Sh_{\widehat\Lambda_{\pm\epsilon} \times \widehat\Sigma_\epsilon}(M \times N) \ar@{->>}[r] & \Sh_{\widehat\Lambda_\epsilon \times \widehat\Sigma_\epsilon}(M \times N).
    }$$
    Since fiber categories are identical, we can get a pull-back square of sheaf categories, where the functors are left adjoints of the inclusions. This shows the last statement.
\end{proof}

\begin{remark}
    Similar to Theorem \ref{thm: doubling-small-piece}, we can show that there exists some particularly nice choice of the relative Legendrian doubling $\hat{T}_{-\epsilon}(\widehat\Lambda \times \Sigma) \cup \hat{T}_{\epsilon}(\widehat\Lambda \times \Sigma)$ that is contained in a small neighbourhood in $(\widehat\Lambda_\epsilon \times \widehat\Sigma_{\pm\epsilon}) \cup (\widehat\Lambda_{\pm\epsilon} \times \widehat\Sigma_\epsilon)$.
\end{remark}

\begin{proof}[Proof of Theorem \ref{thm: Kunneth-Fourier-Mukai-microsheaves}~(1)]
By Theorem \ref{thm: doubling}, there are localization sequences
$$\msh_\Lambda(\Lambda) \hookrightarrow \Sh_{\Lambda_{-\epsilon} \cup \Lambda_\epsilon}(M) \twoheadrightarrow \Sh_{\Lambda_\epsilon}(M), \; \msh_\Sigma(\Sigma) \hookrightarrow \Sh_{\Sigma_{-\epsilon} \cup \Sigma_\epsilon}(N) \twoheadrightarrow \Sh_{\Sigma_\epsilon}(N).$$
By Corollary \ref{product-of-quotient-categories}, $\msh_\Lambda(\Lambda) \otimes \msh_\Sigma(\Sigma)$ is then the fiber of the functor
$$\langle \iota_{\Lambda_{\epsilon}}^* \otimes \id, \id \otimes \,\iota_{\Sigma_{\epsilon}}^*\, \rangle : \Sh_{\Lambda_{\pm\epsilon}}(M) \otimes \Sh_{\Sigma_{\pm\epsilon}}(N) \twoheadrightarrow \langle \Sh_{\Lambda_{\pm\epsilon}}(M) \otimes \Sh_{\Sigma_\epsilon}(N), \Sh_{\Lambda_\epsilon}(M) \otimes \Sh_{\Sigma_{\pm\epsilon}}(N) \rangle.$$
By the K\"unneth formula for sheaves and Proposition \ref{prop:stopremoval-kunneth},
we know that the functor is the localization functor
$$
\langle \iota_{\widehat\Lambda_\epsilon \times \widehat\Sigma_{\pm\epsilon}}^*, \iota_{\widehat\Lambda_{\pm\epsilon} \times \widehat\Sigma_\epsilon}^* \rangle : \Sh_{\widehat\Lambda_{\pm\epsilon} \times \widehat\Sigma_{\pm\epsilon}}(M \times N) \twoheadrightarrow \langle \Sh_{\widehat\Lambda_\epsilon \times \widehat\Sigma_{\pm\epsilon}}(M \times N), 
\Sh_{\widehat\Lambda_{\pm\epsilon} \times \widehat\Sigma_\epsilon}(M \times N) \rangle.
$$
Using Proposition \ref{thm: doubling-mixed-piece}, this is equivalent to the following localization functor:
$$\iota_{(\widehat\Lambda_\epsilon \times \widehat\Sigma_{\pm\epsilon}) \cup (\widehat\Lambda_{\pm\epsilon} \times \widehat\Sigma_\epsilon)}^*: \Sh_{(\widehat\Lambda_{\pm\epsilon} \times \widehat\Sigma_{\pm\epsilon})}(M \times N) \twoheadrightarrow \Sh_{(\widehat\Lambda_\epsilon \times \widehat\Sigma_{\pm\epsilon}) \cup (\widehat\Lambda_{\pm\epsilon} \times \widehat\Sigma_\epsilon)}(M \times N).$$
The complement is $(\widehat\Lambda_{\pm\epsilon} \times \widehat\Sigma_{\pm\epsilon}) \setminus (\widehat\Lambda_\epsilon \times \widehat\Sigma_{\pm\epsilon}) \cup (\widehat\Lambda_{\pm\epsilon} \times \widehat\Sigma_\epsilon) = (\Lambda \times \Sigma \times \bR) \times \bR_+$. Consider the left adjoints of the inclusion. Using Proposition \ref{prop:stop-removal-microlocalization}, we know that the fiber of the localization is the image of
$$m_{\Lambda \times \Sigma \times \bR}^l: \msh_{\widehat\Lambda_{\cup,\epsilon} \times \widehat\Sigma_{\cup, \epsilon}}(\Lambda \times \Sigma \times \bR) \to \Sh_{\widehat\Lambda_{\cup,\epsilon} \times \widehat\Sigma_{\cup,\epsilon}}(M \times N).$$
By Theorem \ref{thm: doubling-small-piece}, this is fully faithful. Hence we have
$$\msh_\Lambda(\Lambda) \otimes \msh_\Sigma(\Sigma) = \msh_{\widehat\Lambda_{\pm\epsilon} \times \widehat\Sigma_{\pm\epsilon}}(\Lambda \times \Sigma \times \bR).$$
which shows the K\"unneth formula for microsheaves with isotropic supports.
\end{proof}

\begin{proposition}\label{prop:double-product-equiv}
Let $\Lambda \subseteq S^*M, \Sigma \subseteq S^*N$ be compact subanalytic isotropic subsets. Then the left adjoint of microlocalization on $\Lambda \times \Sigma \times \bR$ induces an equivalence
$$m_{\Lambda \times \Sigma \times \bR}^l: \msh_{\widehat\Lambda \times \widehat\Sigma}(\Lambda \times \Sigma) \xrightarrow{\sim} \Sh_{\widehat\Lambda_{\cup,\epsilon} \times \widehat\Sigma_{\cup,\epsilon}}(M \times N). $$
Similarly, we have $m_{\Lambda \times \widehat\Sigma}^l: \msh_{\widehat\Lambda \times \widehat\Sigma}(\Lambda \times \widehat\Sigma) \xrightarrow{\sim} \Sh_{\widehat\Lambda_{\cup,\epsilon} \times \widehat\Sigma}(M \times N)$.
\end{proposition}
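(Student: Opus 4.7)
The plan is to obtain the first equivalence by combining three previously established results, and then verify that the abstract composite is realized by $m_{\Lambda\times\Sigma\times\bR}^l$. By the K\"unneth formula for microsheaves (Theorem \ref{thm: Kunneth-Fourier-Mukai-microsheaves}~(1)), I have $\msh_{\widehat\Lambda\times\widehat\Sigma}(\Lambda\times\Sigma)\simeq \msh_\Lambda(\Lambda)\otimes\msh_\Sigma(\Sigma)$, with $F\otimes G$ corresponding to $F\boxtimes G$. Next, Theorem \ref{thm: doubling} provides equivalences $m_\Lambda^l:\msh_\Lambda(\Lambda)\xrightarrow{\sim}\Sh_{\widehat\Lambda_{\cup,\epsilon}}(M)$ and $m_\Sigma^l:\msh_\Sigma(\Sigma)\xrightarrow{\sim}\Sh_{\widehat\Sigma_{\cup,\epsilon}}(N)$. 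Tensoring these and applying Theorem \ref{pd:g} yields $\Sh_{\widehat\Lambda_{\cup,\epsilon}}(M)\otimes\Sh_{\widehat\Sigma_{\cup,\epsilon}}(N)\simeq\Sh_{\widehat\Lambda_{\cup,\epsilon}\times\widehat\Sigma_{\cup,\epsilon}}(M\times N)$. The composition of the three is the sought equivalence.

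To show that this composite is the functor $m_{\Lambda\times\Sigma\times\bR}^l$, I would compare essential images inside the ambient category $\Sh_{\widehat\Lambda_{\pm\epsilon}\times\widehat\Sigma_{\pm\epsilon}}(M\times N)$. By Theorem \ref{thm: doubling-small-piece}, the functor $m_{\Lambda\times\Sigma\times\bR}^l$ is fully faithful, and its essential image was identified in the proof of Theorem \ref{thm: Kunneth-Fourier-Mukai-microsheaves}~(1) with the tensor product $\msh_\Lambda(\Lambda)\otimes\msh_\Sigma(\Sigma)$ as a subcategory of this ambient. On the other hand, since $\widehat\Lambda_{\cup,\epsilon}\subseteq\widehat\Lambda_{\pm\epsilon}$, the subcategory $\Sh_{\widehat\Lambda_{\cup,\epsilon}\times\widehat\Sigma_{\cup,\epsilon}}(M\times N)$ is a colimit-closed subcategory of the same ambient. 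The key point is that both subcategories are compactly generated by the same pure tensors $m_\Lambda^l(F)\boxtimes m_\Sigma^l(G)$ for $F\in\msh_\Lambda^c(\Lambda)$, $G\in\msh_\Sigma^c(\Sigma)$: on the microsheaf side this is Lemma \ref{pgc,PrLst} applied to the tensor product together with the doubling equivalence; on the sheaf side it follows from Theorem \ref{pd:g}, which identifies the compact generators of $\Sh_{\widehat\Lambda_{\cup,\epsilon}\times\widehat\Sigma_{\cup,\epsilon}}(M\times N)$ with pure tensors of compact objects in each factor, combined again with the doubling equivalence. Hence the two subcategories coincide.

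The second equivalence follows by the same template, replacing $\msh_\Sigma(\Sigma)$ with $\Sh_{\widehat\Sigma}(N)$ and Theorem \ref{thm: Kunneth-Fourier-Mukai-microsheaves}~(1) with Theorem \ref{thm: kunneth-sheaf-microsheaf}. Specifically, one chains $\msh_{\widehat\Lambda\times\widehat\Sigma}(\Lambda\times\widehat\Sigma)\simeq\msh_\Lambda(\Lambda)\otimes\Sh_{\widehat\Sigma}(N)\simeq\Sh_{\widehat\Lambda_{\cup,\epsilon}}(M)\otimes\Sh_{\widehat\Sigma}(N)\simeq\Sh_{\widehat\Lambda_{\cup,\epsilon}\times\widehat\Sigma}(M\times N)$, and identifies the composite with $m_{\Lambda\times\widehat\Sigma}^l$ (which is fully faithful by Theorem \ref{thm: doubling-product-piece}) by the analogous generator comparison inside $\Sh_{\widehat\Lambda_{\pm\epsilon}\times\widehat\Sigma}(M\times N)$.

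The main obstacle is the essential-image comparison: verifying that the image of $m_{\Lambda\times\Sigma\times\bR}^l$ coincides with the concrete microsupport-defined subcategory $\Sh_{\widehat\Lambda_{\cup,\epsilon}\times\widehat\Sigma_{\cup,\epsilon}}(M\times N)$. This rests on the compatibility $m_{\Lambda\times\Sigma\times\bR}^l(F\boxtimes G)\simeq m_\Lambda^l(F)\boxtimes m_\Sigma^l(G)$, a naturality statement about left adjoints under K\"unneth, but unwinding it through the localization sequences used in the proof of Theorem \ref{thm: Kunneth-Fourier-Mukai-microsheaves}~(1) and keeping track of the distinction between $\widehat\Lambda_{\pm\epsilon}$ (which equals $M\cup((\Lambda_{-\epsilon}\cup\Lambda_\epsilon)\times\bR_+)$) and the strictly smaller $\widehat\Lambda_{\cup,\epsilon}$ (whose zero-section part is only $\bigcup_s\pi(\Lambda_s)$) is the delicate point.
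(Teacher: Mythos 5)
Your proposal is correct and follows essentially the same route as the paper, which handles this proposition via Remark~\ref{rem:kunneth-microsheaf}: the K\"unneth equivalence of Theorem~\ref{thm: Kunneth-Fourier-Mukai-microsheaves}~(1) is constructed precisely as the composition $m_{\Lambda\times\Sigma}\circ(m_\Lambda^l\otimes m_\Sigma^l)$, with $m_\Lambda^l\otimes m_\Sigma^l$ landing (via Theorem~\ref{thm: doubling} and Theorem~\ref{pd:g}) as an equivalence onto $\Sh_{\widehat\Lambda_{\cup,\epsilon}\times\widehat\Sigma_{\cup,\epsilon}}(M\times N)$, forcing the other leg $m_{\Lambda\times\Sigma}$ and hence its fully faithful left adjoint $m_{\Lambda\times\Sigma\times\bR}^l$ to be an equivalence on/onto that subcategory. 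You recover the same conclusion through an essential-image comparison.

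One refinement: the compatibility $m_{\Lambda\times\Sigma\times\bR}^l(F\boxtimes G)\simeq m_\Lambda^l(F)\boxtimes m_\Sigma^l(G)$ that you flag as the ``delicate point'' does not need to be verified object by object. In the proof of Theorem~\ref{thm: Kunneth-Fourier-Mukai-microsheaves}~(1), both subcategories are already identified with the \emph{same} fiber inside $\Sh_{\widehat\Lambda_{\pm\epsilon}\times\widehat\Sigma_{\pm\epsilon}}(M\times N)$, but by two different routes: the image of $m_{\Lambda\times\Sigma\times\bR}^l$ is the fiber of the localization along $(\widehat\Lambda_\epsilon\times\widehat\Sigma_{\pm\epsilon})\cup(\widehat\Lambda_{\pm\epsilon}\times\widehat\Sigma_\epsilon)$ by Proposition~\ref{prop:stop-removal-microlocalization}, while $\msh_\Lambda(\Lambda)\otimes\msh_\Sigma(\Sigma)$, i.e.\ the image of $m_\Lambda^l\otimes m_\Sigma^l$, is that same fiber by Corollary~\ref{product-of-quotient-categories}, Proposition~\ref{prop:stopremoval-kunneth}, and Proposition~\ref{thm: doubling-mixed-piece}; and the latter image equals $\Sh_{\widehat\Lambda_{\cup,\epsilon}\times\widehat\Sigma_{\cup,\epsilon}}(M\times N)$ by Theorem~\ref{thm: doubling} and the K\"unneth formula for sheaves, which is compatible with inclusions by Proposition~\ref{prop:stopremoval-kunneth}. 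Thus the equality of essential images holds at the level of subcategories, and together with full faithfulness of $m_{\Lambda\times\Sigma\times\bR}^l$ (Theorem~\ref{thm: doubling-small-piece}) this already closes the argument. Keeping track of the distinction between $\widehat\Lambda_{\pm\epsilon}$ and the strictly smaller $\widehat\Lambda_{\cup,\epsilon}$ is indeed the essential bookkeeping, and you identify it correctly.
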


\begin{remark}\label{rem:kunneth-microsheaf}
The Proposition follows from the fact that the K\"unneth formula $\msh_\Lambda(\Lambda) \otimes \msh_\Sigma(\Sigma) \simeq \msh_{\Lambda \times \Sigma \times \bR}(\Lambda \times \Sigma)$ is induced by the functor
$$\msh_\Lambda(\Lambda) \otimes \msh_\Sigma(\Sigma) \xrightarrow{m_\Lambda^l \otimes \,m_\Sigma^l} \Sh_{\widehat\Lambda_{\cup,\epsilon} \times \widehat\Sigma_{\cup,\epsilon}}(M \times N) \xrightarrow{m_{\Lambda \times \Sigma}} \msh_{\Lambda \times \Sigma \times \bR}(\Lambda \times \Sigma).$$
\end{remark}

\section{Duality and the Fourier-Mukai property}\label{sec:VerdSerr}

In this section, we study dualizability, kernels and colimit preserving functors of sheaf categories with isotropic microsupport. We first exhibit an equivalence $\VD{\widehat\Lambda}: \Sh_{-\widehat\Lambda}^c(M)^{op} \xrightarrow{\sim} \Sh_{\widehat\Lambda}^c(M)$ for a manifold $M$ and a subanalytic conic isotropic $\widehat\Lambda \subseteq T^* M$, and, as a corollary, we obtain classification of colimit-preserving functors by sheaf kernels
$$ \Fun^L(\Sh_{\widehat\Lambda}(M),\Sh_{\widehat\Sigma}(N)) = \Sh_{-\widehat\Lambda \times \widehat\Sigma}(M \times N)$$
through convolutions for any such pairs $(M, \Lambda)$ and $(N,\Sigma)$. Although we first prove its existence through categorical methods, we will show that, in the case when $\hat{\Lambda} \supseteq 0_M$, it can also be obtained geometrically by using the model of wrapped sheaves for $\Sh_\Lambda^c(M)$ \cite{Kuo-wrapped-sheaves}. 

We also study the relation between this standard dual and the Verdier dual. Assume that $\widehat\Lambda$ has compact intersection with the zero section, in which case there is an inclusion $\Sh_{\widehat\Lambda}^b(M) \subseteq \Sh_{\widehat\Lambda}^c(M)$ of sheaves with perfect stalks into compact objects. We show that the classical Verdier duality $\VD{M}: \Sh_{-\widehat\Lambda}^b(M) \xrightarrow{\sim} \Sh_{\widehat\Lambda}^b(M)$, is related to the standard duality $\VD{\widehat\Lambda}$ by the following geometric construction. Let $\varphi_t$ be a sufficiently small positive contact flow on $S^*M$ such that $\alpha(\partial_t\varphi_t) > 0$, which defines an equivalence functor by Guillermou--Kashiwara--Schapira \cite{Guillermou-Kashiwara-Schapira} which we denote by 
$$K(\varphi_t) \circ (-): \Sh(M) \to \Sh(M), \;\;\; F \mapsto \varphi_t(F).$$
Denote by $S_{\widehat\Lambda}^+: \Sh_{\widehat\Lambda}(M) \rightarrow \Sh_{\widehat\Lambda}(M)$ the (positive) wrap-once functor $S_{\widehat\Lambda}^+(F) = \iota_{\widehat\Lambda}^* \circ \varphi_\epsilon(F)$ and by $S_{\widehat\Lambda}^-: \Sh_{\widehat\Lambda}(M) \rightarrow \Sh_{\widehat\Lambda}(M)$ the negative wrap-once functor $S_{\widehat\Lambda}^-(F) = \iota_{\widehat\Lambda}^! \circ \varphi_{-\epsilon}(F)$ (where the word wrapping comes from the interpretation of Equation (\ref{for: large-wrappings}) when $\widehat\Lambda$ contains the zero section, as shown by the first author in \cite{Kuo-wrapped-sheaves}). We will show that 
$$\VD{M}(F) =  S_{\widehat\Lambda}^- \circ \VD{\widehat\Lambda}(F) \otimes \omega_M$$ 
for $F \in \Sh_{-\widehat\Lambda}^b(M)$. 
We mention a similar question was previously studied, in the setting of Betti geometric Langlands program, in \cite{Arinkin-Gaitsgory-Kazhdan-Raskin-Rozenblyum-Varshavsky} (though the space they consider is a non-quasi-compact stack where it is hard to the duality).  
Assume $S_{\widehat\Lambda}^+$ is invertible, then we can extend the Verdier duality to $\Sh_{-\widehat\Lambda}^c(M) \to \Sh_{\widehat\Lambda}^c(M)$ by the formula one the right hand side. We show that the converse is also true in a sense (see Theorem \ref{converse-statement-Verdier} for a precise statement).

\subsection{Convolution of sheaves}

We recall the notion of convolution. 
Let $X_i$, $i = 1, 2, 3$, be locally compact Hausdorff topological spaces, and write $X_{ij} = X_i \times X_j$, for $i < j$,  $X_{123} = X_1 \times X_2 \times X_3 $, and $\pi_{ij}: X_{123} \rightarrow X_{ij}$ for the corresponding projections. 

\begin{definition}
For $F \in \Sh(X_{12})$, $G \in \Sh(X_{23})$, the \textit{convolution} is defined to be 
$$G \circ_{X_2} F \coloneqq {\pi_{13}}_! (\pi_{23}^* G \otimes \pi_{12}^* F ) \in \Sh(X_{13}).$$
\end{definition}

\begin{remark}
When there is no confusion what $X_2$ is, we will usually surpass the notation and simply write it as $G \circ F$. This is usually the case when $X_1 = \{*\}$, $X_2 = X$, and $X_3 = Y$ and we think of $X$ as the source and $Y$ as the target, $G \in \Sh(X \times Y)$ as a functor sending $F \in \Sh(X)$ to $G \circ F \in \Sh(Y)$. Note that from its expression, this functor is colimit-preserving. 
\end{remark}

\begin{lemma}[{\cite[Proposition 3.6.2]{KS}}]\label{convrad}
For a fixed $G \in \Sh(X_{23})$, the functor $G \circ (-) : \Sh(X_{12}) \rightarrow \Sh(X_{13})$ induced by convoluting with $G$ has a right adjoint, which we denote by $\sHom^\circ(G,-): \Sh(X_{13}) \rightarrow \Sh(X_{12})$, that is given by
\begin{equation}\label{cvr}
H \mapsto  {\pi_{12}}_* \sHom(\pi_{23}^* G  ,\pi_{13}^! H).
\end{equation}
\end{lemma}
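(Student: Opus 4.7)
The plan is to establish this by a straightforward chain of three standard adjunctions available in the six-functor formalism for sheaves: the proper pushforward/upper shriek adjunction $\pi_{13!} \dashv \pi_{13}^!$, the tensor-hom adjunction $(-\otimes A) \dashv \sHom(A,-)$, and the pullback/pushforward adjunction $\pi_{12}^* \dashv \pi_{12*}$. All three are known to hold at the level of the stable $\infty$-category $\Sh(-)$ with coefficients in $\cV$.

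First I would write down the mapping space computation
\begin{align*}
\Map\bigl(G \circ_{X_2} F,\, H\bigr)
&= \Map\bigl(\pi_{13!}(\pi_{23}^* G \otimes \pi_{12}^* F),\, H\bigr) \\
&= \Map\bigl(\pi_{23}^* G \otimes \pi_{12}^* F,\, \pi_{13}^! H\bigr) \\
&= \Map\bigl(\pi_{12}^* F,\, \sHom(\pi_{23}^* G,\, \pi_{13}^! H)\bigr) \\
&= \Map\bigl(F,\, \pi_{12*}\sHom(\pi_{23}^* G,\, \pi_{13}^! H)\bigr),
\end{align*}
where the first equality unfolds the definition of convolution, the second is $\pi_{13!} \dashv \pi_{13}^!$, the third is the internal hom adjunction applied with the fixed object $\pi_{23}^* G$, and the fourth is $\pi_{12}^* \dashv \pi_{12*}$. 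Naturality of each of these adjunction equivalences in $F$ and $H$ exhibits $H \mapsto \pi_{12*}\sHom(\pi_{23}^* G, \pi_{13}^! H)$ as the desired right adjoint.

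I do not expect any real obstacle here: the content of the lemma is essentially a bookkeeping exercise with the standard adjunctions, and the result is exactly Kashiwara--Schapira's \cite[Proposition 3.6.2]{KS} transcribed into the presentable stable setting. The only point to double-check is that the projections $\pi_{ij}$ are sufficiently nice (locally compact Hausdorff spaces admit $\pi_{13!}$ as a left adjoint to $\pi_{13}^!$), which is guaranteed by the hypothesis on $X_1, X_2, X_3$.
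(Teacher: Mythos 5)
Your argument is correct and is exactly the standard adjunction chain ($\pi_{13!} \dashv \pi_{13}^!$, tensor-hom, $\pi_{12}^* \dashv \pi_{12*}$) underlying the cited result; the paper itself gives no proof beyond the reference to \cite[Proposition 3.6.2]{KS}, whose argument this reproduces in the presentable stable setting. Nothing is missing.
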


We study the effect of convolving with sheaves with prescribed microsupport.

\begin{lemma}\label{mscomp}
Let $K \in \Sh(M \times N)$ and $Y$ be a conic closed subset of $T^* N$.
If the microsupport $\ms(K)$ is contained in $T^* M \times Y$, 
then $\ms(\pi_{2*} K)$ and  $\ms(\pi_{2!} K)$ are both contained in $Y$, where $\pi_1, \pi_2$ denote the projections from $M \times N$ to $M$ and $N$.
\end{lemma}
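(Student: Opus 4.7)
The plan is to verify both microsupport bounds directly from the definition of microsupport, reducing to a fiberwise vanishing statement that follows immediately from the hypothesis. First, I would fix a point $(n_0, \eta_0) \notin Y$ and, using that $Y$ is closed conic, choose an open conic neighborhood $\Omega \ni (n_0, \eta_0)$ in $T^*N$ disjoint from $Y$, so that $T^*M \times \Omega$ is disjoint from $\ms(K)$. By the definition of microsupport, it suffices to prove that for every $(n_1, \eta_1) \in \Omega$ and every $C^1$ function $\phi$ defined near $n_1$ with $\phi(n_1) = 0$ and $d\phi(n_1) = \eta_1$, we have $(R\Gamma_{\{\phi \geq 0\}} \pi_{2?}K)_{n_1} = 0$ for $? \in \{!, *\}$.

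The key step is a fiberwise vanishing of $G := R\Gamma_{\{\tilde\phi \geq 0\}}K$, where $\tilde\phi := \phi \circ \pi_2$. At each $(m, n_1)$ on the fiber $M \times \{n_1\}$ we have $d\tilde\phi(m, n_1) = (0, \eta_1)$ and $\tilde\phi(m, n_1) = 0$, and since $(n_1, \eta_1) \in \Omega$ lies outside $Y$, the covector $(m, n_1, 0, \eta_1)$ is outside $T^*M \times Y \supseteq \ms(K)$. By the defining property of microsupport applied to $K$, this forces $G_{(m, n_1)} = 0$ for every $m \in M$, so $G|_{M \times \{n_1\}} = 0$ as a sheaf on the fiber.

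To propagate this vanishing to the stalks of $\pi_{2!}K$ and $\pi_{2*}K$, I would use base change. The standard identification of $R\Gamma_Z$ under $!$- and $*$-pushforward along the closed embedding $i \colon \{\phi \geq 0\} \hookrightarrow N$ yields $R\Gamma_{\{\phi \geq 0\}}\pi_{2!}K \simeq \pi_{2!}G$ and $R\Gamma_{\{\phi \geq 0\}}\pi_{2*}K \simeq \pi_{2*}G$. Then taking the stalk at $n_1$: proper base change for $\pi_{2!}$ along $\{n_1\} \hookrightarrow N$ gives $(\pi_{2!}G)_{n_1} \simeq R\Gamma_c(M, G|_{M \times \{n_1\}})$, and smooth base change for $\pi_{2*}$, valid because $\pi_2$ is a smooth submersion, gives $(\pi_{2*}G)_{n_1} \simeq R\Gamma(M, G|_{M \times \{n_1\}})$. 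Both right-hand sides vanish by the preceding step, completing the argument.

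The hard part I expect is the $\pi_{2*}$ case, where no properness of $\pi_2$ is available. The essential rescue is that $\pi_2$ is a smooth submersion, which supplies the smooth base change identity at the stalk level. A second essential point is that the hypothesis $\ms(K) \subseteq T^*M \times Y$ is used \emph{uniformly}: the $T^*M$-factor being unconstrained guarantees that microsupport avoidance holds over the entire fiber $M \times \{n_1\}$ at once, upgrading pointwise vanishing to the sheaf-level vanishing of $G|_{M \times \{n_1\}}$ that drives the argument.
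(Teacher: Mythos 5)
Your reduction to showing $(R\Gamma_{\{\phi\ge 0\}}\pi_{2?}K)_{n_1}=0$ for $?\in\{!,*\}$, and your observation that $G:=R\Gamma_{\{\phi\circ\pi_2\ge 0\}}K$ has vanishing stalks along the fiber $M\times\{n_1\}$, are both correct; but the two propagation steps are not, and they fail exactly where the lemma is delicate, namely that $\pi_2$ is not proper. For the $!$-pushforward, the identification $R\Gamma_{\{\phi\ge0\}}(\pi_{2!}K)\simeq \pi_{2!}R\Gamma_{\{\phi\circ\pi_2\ge0\}}K$ is not a formal base-change fact: it amounts to commuting $\pi_{2!}$ past $j_*$ for the open set $\{\phi<0\}$, and in general one only has a natural map $\pi_{2!}R\Gamma_{\pi_2^{-1}Z}K\to R\Gamma_Z(\pi_{2!}K)$, which points the wrong way for your vanishing. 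Concretely, for $M=N=\RR$, $K=1_W$ with $W=\{(x,y):y>0,\ 0\le x\le 1/y\}$ and $Z=\{y\le 0\}$, one has $\pi_{2!}K\simeq 1_{\{y>0\}}$, so $R\Gamma_Z(\pi_{2!}K)\simeq 1_{\{0\}}[-1]\neq 0$, while $R\Gamma_{\pi_2^{-1}Z}K\simeq 1_{\{y=0,\,x\ge 0\}}[-1]$ and hence $\pi_{2!}R\Gamma_{\pi_2^{-1}Z}K=0$ because $R\Gamma_c([0,\infty);1_\cV)=0$.

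For the $*$-pushforward the first step is a genuine identity, but the ``smooth base change'' you invoke, $(\pi_{2*}G)_{n_1}\simeq R\Gamma(M,G|_{M\times\{n_1\}})$, is false for topological sheaves when the fibers are non-compact: take $G=1_{\{xy=1\}}$, whose fiber over $0$ is empty while $(\pi_{2*}G)_0\neq 0$, since sections over $M\times(-\e,\e)$ live on the two branches of the hyperbola escaping to $x=\pm\infty$. Vanishing of the stalks of $G$ along the single fiber $M\times\{n_1\}$ does not control sections over $M\times V$ that escape to infinity in the $M$-direction, and excluding exactly this escape is the content of the lemma; your argument uses the hypothesis $\ms(K)\subseteq T^*M\times Y$ only pointwise, and never uses that $Y$ is closed beyond choosing $\Omega$. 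The paper instead reduces to the proper case: it exhausts $M$ by relatively compact opens $U_i$, applies the Kashiwara--Schapira proper pushforward estimate to $K_{U_i\times N}$ (resp.\ to $\Gamma_{\overline{U_i}\times N}(K)$ for the $*$-case), and bounds $\ms$ of the resulting colimit (resp.\ limit) by the closure of the union of the individual bounds, which is where closedness and conicness of $Y$ enter. If you want to keep a fiberwise argument, you would need a genuinely microlocal input (e.g.\ a non-characteristic deformation/propagation argument over $M\times V$ using the full hypothesis $\ms(K)\cap(T^*M\times\Omega)=\varnothing$), not formal base change.
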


\begin{proof}
Standard microsupport estimation for pushforward requires the proper support condition \cite[Proposition 5.4.4.]{KS}.
We thus pick an increasing sequence of relative compact open set $\{U_i\}_{i \in \NN}$ of $M$ such that 
$M = \bigcup_{ i \in \NN} U_i$ and notice that the canonical map $\operatorname{colim}_{i \in \NN} K_{M \times U_i} \rightarrow K$
is an isomorphism.
Denote by ${\pi_2}_\pi$ the projection to the second component on the cotangent bundle and compute that 
\begin{align*}
\ms({p_2}_! H)
&=  \ms\big(\mathrm{colim}_{i \in \NN} \, \pi_{2!} K_{U_i \times N}\big) \subseteq \overline{ \bigcup{}_{i \in \NN} \ms(\pi_{2!} K_{U_i \times N})  } 
\subseteq \overline{ \bigcup{}_{i \in \NN} \ {\pi_2}_\pi (  \ms(K_{U_i \times N}) \cap M \times T^* N)  } \\
&\subseteq \overline{ \bigcup{}_{i \in \NN} \ {\pi_2}_\pi ( T^* M \times Y \cap M \times T^* N)  } \subseteq \overline{ \bigcup{}_{i \in \NN} \ {\pi_2}_\pi ( M \times  Y)  } \subseteq \overline{ \bigcup{}_{i \in \NN} Y  } = Y.
\end{align*}
To prove the case for $\pi_{2*}$ we further require that $U_i \subseteq \overline{U_i} \subseteq {U_{i+1}}$
and apply the same computation to the limit $L = \lim_{i \in \NN}  \Gamma_{\overline{U_i} \times N}(K)$. 
\end{proof}

\begin{proposition}\label{conv-ms}
Let $K \in \Sh_{T^* M \times Y}(M \times N)$. Then the assignment $F \mapsto K \circ F$ defines a functor
$$K \circ (-): \Sh(M) \rightarrow \Sh_Y(N).$$
\end{proposition}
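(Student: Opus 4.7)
The approach is to reduce the claim to the microsupport pushforward estimate already established in Lemma \ref{mscomp}. Writing $\pi_M, \pi_N$ for the two projections out of $M \times N$, the convolution unpacks as $K \circ F = \pi_{N!}(K \otimes \pi_M^* F)$. Since $\pi_M$ is a submersion, it is non-characteristic for every sheaf and the pullback bound gives $\ms(\pi_M^*F) \subseteq \ms(F) \times 0_N$. If one can establish the tensor bound $\ms(K \otimes \pi_M^*F) \subseteq T^*M \times Y$ --- which fiberwise is simply the identity $(T^*_xM \times Y_y) + (\ms(F)_x \times \{0\}) = T^*_x M \times Y_y$ --- then Lemma \ref{mscomp} applies to the $\pi_{N!}$-pushforward and yields $\ms(K \circ F) \subseteq Y$.

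The main obstacle lies precisely in this tensor product step. The standard Kashiwara--Schapira bound $\ms(H_1 \otimes H_2) \subseteq \ms(H_1) + \ms(H_2)$ requires a non-characteristic condition of the form $\ms(H_1) \cap \ms(H_2)^a \subseteq T^*_{M \times N}(M \times N)$, which can fail here since nothing constrains the direction of $\ms(F)$: for a point $(x,y,\xi,\eta)$ in the intersection one gets $\eta = 0$ (forced by $\ms(\pi_M^*F)^a$) but any $\xi \in -\ms(F)_x$ is permitted as long as $0 \in Y_y$. To circumvent this, I would exhaust $F$ by compactly supported pieces: pick an increasing family of relatively compact opens $\{U_i\}_{i \in \NN}$ with $\bigcup_i U_i = M$, write $F = \colim_i F_{U_i}$ where $F_{U_i}$ is the restriction extended by zero, and note that convolution with $K$ preserves this colimit. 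For each $i$, $\supp(\pi_M^* F_{U_i} \otimes K) \subseteq \overline{U_i} \times N$, so $\pi_N$ is proper on the support of the integrand, and one may invoke instead the kernel composition microsupport estimate of Kashiwara--Schapira, which requires only this properness hypothesis rather than the non-characteristic condition. This yields $\ms(K \circ F_{U_i}) \subseteq Y$ for each $i$, and since $\ms(\colim_i G_i) \subseteq \overline{\bigcup_i \ms(G_i)}$ and $Y$ is closed, we obtain $\ms(K \circ F) \subseteq Y$ as desired.
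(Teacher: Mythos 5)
Your reduction to Lemma \ref{mscomp} and your diagnosis of the problematic step are both on target: the only real issue is bounding $\ms(K \otimes \pi_M^* F)$, and the standard tensor estimate \cite[Proposition 5.4.14]{KS} is unavailable because the non-characteristic condition can fail at points $(x,\xi,y,0)$ with $(y,0)\in Y$ and $\xi \in -\ms(F)_x$. But your proposed repair does not close this gap. There is no ``kernel composition microsupport estimate'' in Kashiwara--Schapira that yields the sharp bound under a properness hypothesis alone: the standard composition estimate requires, in addition to properness over the middle factor, exactly the same transversality/non-characteristic condition ($\ms(K)\cap \ms(\pi_M^*F)^a$ contained in the zero section, in the present geometry), and that condition fails here for the reason you yourself identified. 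Properness only controls the pushforward step (\cite[Proposition 5.4.4]{KS}), which was never the obstruction --- indeed Lemma \ref{mscomp} already handles the non-proper pushforward by the same exhaustion trick, so exhausting $F$ by $F_{U_i}$ buys nothing new; if anything it enlarges $\ms(F_{U_i})$ near $\partial U_i$ without making the tensor step any more non-characteristic.

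The fix the paper uses is the unconditional estimate with the operation $\widehat{+}$, namely \cite[Corollary 6.4.5]{KS}: without any transversality hypothesis, $\ms(K \otimes \pi_M^* F) \subseteq \ms(K)\,\widehat{+}\,(\ms(F)\times 0_N) \subseteq (T^*M \times Y)\,\widehat{+}\,(T^*M \times 0_N)$. One then checks directly from the definition of $\widehat{+}$ that any limit of sums of covectors $(x_n,\xi_n,y_n,\eta_n)$ with $(y_n,\eta_n)\in Y$ and $(x_n',\xi_n',y_n',0)$ still has its $N$-component in the closed conic set $Y$, so $\ms(K\otimes\pi_M^*F)\subseteq T^*M\times Y$, and Lemma \ref{mscomp} concludes. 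If you replace your appeal to the properness-only composition estimate by this $\widehat{+}$ bound together with that short limit-point argument, your proof becomes complete (and the exhaustion of $F$ can be dropped).
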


\begin{proof}
We recall that, for $F \in \Sh(M)$, $K \circ F \coloneqq {\pi_N}_! (K \otimes \pi_M^* F)$.
The non-characteristic microsupport estimation \cite[Corollary 6.4.5]{KS} implies that
$$\ms(K \otimes \pi_M^* F) \subseteq \ms(K) \,\widehat{+}\, (\ms(F) \times M)
\subseteq (T^* M \times Y) \,\widehat{+}\, (T^* M \times 0_N).$$
Now the description of $\widehat{+}$ implies that if $(x,\xi,y,\eta)$ is a point on the right hand side, then it comes from a limiting point of a sum from $(x_n,\xi_n,y_n,\eta_n) \in T^* M \times Y$ and $(x^\prime_n,\xi^\prime_n,y^\prime_n,0) \in T^* M \times N$. Thus $(y,\eta) \in Y$, $\ms(K \otimes \pi_M^* F) \subseteq T^* M \times Y$, and the statement is implied by the last lemma.
\end{proof}

We will see in the next section that the above integral transform classifies all colimiting preserving
functors between categories of sheaves with isotropic microsupport.
Before we leave this section, we notice that for a conic closed subset $\widehat X \subseteq T^* M$,
we can take any $K \in \Sh(M \times M)$
and obtain a universal integral kernel $\iota_{-\widehat X \times \widehat X}^* (K) \in \Sh_{-\widehat X \times \widehat X}(M \times M)$.
By the above proposition, $\iota_{-\widehat X \times \widehat X}^*(K)$ defines a functor
$\iota_{-\widehat X \times \widehat X}^*(K) \circ (-) : \Sh(M) \rightarrow \Sh_{\widehat X}(M).$
On the other hand, we can consider similarly functors $\Sh(M) \rightarrow \Sh_{\widehat X}(M)$
which are defined by $F \mapsto \iota_{-\widehat X \times \widehat X}^*(K) \circ \iota_{\widehat X}^*(F)$
or $F \mapsto \iota_{\widehat X}^*( K \circ F)$.
The claim is that they are all the same.

\begin{lemma}\label{lad&ker}
The following functors $\Sh(M) \rightarrow \Sh_{\widehat X}(M)$ are equivalent to each other:
\begin{enumerate}
\item $F \mapsto \iota_{\widehat X}^*( K \circ \iota_{\widehat X}^* (F) )$,
\item $F \mapsto \iota_{-\widehat X \times \widehat X}^*(K) \circ F$,
\item $F \mapsto \iota_{-\widehat X \times \widehat X}^*(K) \circ \iota_{\widehat X}^*(F)$.
\end{enumerate}
In particular, $\iota_{\widehat X}^* (F) = \iota_{-\widehat X \times \widehat X}^* (1_{\Delta}) \circ F.$
\end{lemma}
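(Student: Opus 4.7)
The plan is to prove (2) $=$ (3) and (3) $=$ (1) by showing that the relevant ``error terms'' are annihilated by the appropriate localizations, so that each of the three functors factors through both the kernel-side localization $\iota_{-\widehat X \times \widehat X}^*$ and the target-side localization $\iota_{\widehat X}^*$. The key technical inputs will be the standard Kashiwara--Schapira microsupport estimates $\ms(\sHom(F_1, F_2)) \subseteq \ms(F_1)^a \,\widehat{+}\, \ms(F_2)$ and the projection estimate for direct image used in Lemma \ref{mscomp}.

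For (2) $=$ (3), the strategy is to show that the colimit-preserving functor $T : \Sh(M) \to \Sh_{\widehat X}(M)$ sending $F$ to $\iota_{-\widehat X \times \widehat X}^*(K) \circ F$ annihilates $\ker(\iota_{\widehat X}^*) = \Sh_{\widehat X}(M)^\perp$, and therefore factors through the Bousfield localization $\iota_{\widehat X}^*$. By Lemma \ref{convrad}, the right adjoint of $T$, viewed as a functor $\Sh_{\widehat X}(M) \to \Sh(M)$, is $R(G) = p_{1*}\sHom(\iota_{-\widehat X \times \widehat X}^*(K), p_2^! G)$. For $G \in \Sh_{\widehat X}(M)$, combining $\ms(\iota_{-\widehat X \times \widehat X}^*(K)) \subseteq -\widehat X \times \widehat X$ with $\ms(p_2^! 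G) \subseteq 0_M \times \widehat X$ and the two microsupport estimates above yields $\ms(R(G)) \subseteq \widehat X$, so $R$ lands in $\Sh_{\widehat X}(M)$; the adjunction then forces $\Hom(T(H), G) = \Hom(H, R(G)) = 0$ for every $H \in \Sh_{\widehat X}(M)^\perp$ and $G \in \Sh_{\widehat X}(M)$, hence $T(H) = 0$.

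For (3) $=$ (1), I plan to run the dual microsupport argument on the kernel side. For fixed $F' \in \Sh_{\widehat X}(M)$, the right adjoint of the convolution $(-) \circ F': \Sh(M \times M) \to \Sh(M)$ is $G \mapsto \sHom(p_1^* F', p_2^! G)$, and the estimate $\ms(\sHom(p_1^* F', p_2^! G)) \subseteq (-\widehat X) \times 0_M \,\widehat{+}\, 0_M \times \widehat X \subseteq -\widehat X \times \widehat X$ shows that this right adjoint sends $\Sh_{\widehat X}(M)$ into $\Sh_{-\widehat X \times \widehat X}(M \times M)$. Hence if $K''$ denotes the fiber of the unit $K \to \iota_{-\widehat X \times \widehat X *}\iota_{-\widehat X \times \widehat X}^*(K)$, which lies in $\Sh_{-\widehat X \times \widehat X}(M \times M)^\perp$, we deduce $K'' \circ F' \in \Sh_{\widehat X}(M)^\perp$; applying $\iota_{\widehat X}^*$ to the fiber triangle $K'' \circ F' \to K \circ F' \to \iota_{-\widehat X \times \widehat X *}\iota_{-\widehat X \times \widehat X}^*(K) \circ F'$ kills the first term and delivers the equivalence $\iota_{\widehat X}^*(K \circ F') = \iota_{-\widehat X \times \widehat X}^*(K) \circ F'$; specializing $F' = \iota_{\widehat X}^*(F)$ gives (3) $=$ (1). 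The final identity $\iota_{\widehat X}^*(F) = \iota_{-\widehat X \times \widehat X}^*(1_\Delta) \circ F$ is then immediate: substitute $K = 1_\Delta$ into either chain of equivalences and use $1_\Delta \circ G = G$ together with $\iota_{\widehat X}^*\iota_{\widehat X *} = \id$. The only real obstacle is clean bookkeeping: tracking the $\widehat{+}$-sum and antipode conventions in the microsupport estimates, and handling $p_{1*}$ in the non-proper case via the exhaustion/sequence trick of Lemma \ref{mscomp}; beyond this the argument is a routine adjunction chase.
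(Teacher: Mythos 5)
Your proposal is correct and follows essentially the same route as the paper's proof: both arguments rest on the two microsupport facts that $\sHom^\circ(\iota_{-\widehat X \times \widehat X}^*(K),G)$ lies in $\Sh_{\widehat X}(M)$ and that $\sHom(\pi_1^* F',\pi_2^! G)$ lies in $\Sh_{-\widehat X \times \widehat X}(M \times M)$, and then conclude by the adjunctions of Lemma \ref{convrad}, your localization/fiber-sequence packaging being a reformulation of the paper's Hom computations. The only substantive difference is that you verify explicitly, via the $\widehat{+}$ estimate and the pushforward estimate of Lemma \ref{mscomp}, the containment that the paper simply cites as the right adjoint version of Proposition \ref{conv-ms}.
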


\begin{proof}

We note all three of the expressions on the right hand side are in $\Sh_{\widehat X}(M)$ by the previous Proposition \ref{conv-ms},
and we can see directly that, by Lemma \ref{convrad} and the right adjoint version of Proposition \ref{conv-ms} that
\begin{align*}
\Hom(\iota_{-\widehat X \times \widehat X}^*(K) \circ F,G)
&= \Hom \big( F, \sHom^\circ( \iota_{-\widehat X \times \widehat X}^*(K), G)  \big) \\
&= \Hom \big(\iota_{\widehat X}^*(F),\sHom^\circ( \iota_{-\widehat X \times \widehat X}^*(K), G )\big) \\
&= \Hom\big(\iota_{-\widehat X \times \widehat X}^*(K) \circ \iota_{\widehat X}^*(F),G\big)
\end{align*}
for $F \in \Sh(M)$, $G \in \Sh_{\widehat X}(M)$.
Thus (ii) and (iii) are the same. 

Now we show that $\Hom(\iota_{-\widehat X \times \widehat X}^*(K) \circ F,G) = \Hom(\iota_{\widehat X}^*( K \circ \iota_{\widehat X}^* F),G)$ 
for $F \in \Sh(M)$, $G \in \Sh_{\widehat X}(M)$. 
We've seen that the left hand side is the same as $\Hom(F,\sHom^\circ(\iota_{-\widehat X \times \widehat X}^* (K),G) )$
and the second term in the Hom is in $\Sh_{\widehat X}(M)$. 
A similar computation will imply that the right hand side is the same as $\Hom(F, \iota_{\widehat X}^! \sHom^\circ(K,G) )$
and the second term in the Hom is again in $\Sh_{\widehat X}(M)$. 
This means that we can evaluate at $F \in \Sh_{\widehat X}(M)$ and prove the equality only for this case.
Assume such a case, so tautologically $F = \iota_{\widehat X}^* F$, and we compute that
\begin{align*}
\Hom(\iota_{-\widehat X \times \widehat X}^*(K) \circ F,G)  
&= \Hom \big(\iota_{-\widehat X \times \widehat X}^* (K), \sHom(\pi_1^* F, \pi_2^! G) \big) \\
&= \Hom \big( K, \sHom(\pi_1^* F, \pi_2^! G) \big) 
= \Hom \big(K \circ F,G\big) \\
&= \Hom \big(\iota_{\widehat X}^*( K \circ \iota_{\widehat X}^* (F)), G\big).
\end{align*}
Note that for the third equality, we use the fact that $\sHom(\pi_1^* F, \pi_2^! G) \in \Sh_{-\widehat X \times \widehat X}(M \times M)$ by \cite[Proposition 5.4.14]{KS}.
\end{proof}

\subsection{Dualizability of sheaves} \label{sec: dual}

Let $(\SC,\otimes,1_\SC)$ be a symmetric monoidal ($\infty$-)category.
We recall the notion of dualizability which we will use later.
\begin{definition}[{\cite[Definition 4.6.1.1]{Lurie-HA}}]\label{smdual}
An object $X$ in $\SC$ is dualizable if there exists $Y \in \SC$ and a unit and a counit
$\eta: 1_\SC \rightarrow Y \otimes X$ and $\epsilon: X \otimes Y \rightarrow 1_\SC$
such that the pair $(\eta,\epsilon)$ satisfies the standard triangle equality that the following compositions are identities
$$X \xrightarrow{\id_X \otimes \eta} X \otimes Y \otimes X \xrightarrow{\epsilon \otimes \id_X} X,$$
$$Y \xrightarrow{\eta \otimes \id_Y} Y \otimes X \otimes Y \xrightarrow{\id_Y \otimes \epsilon} Y.$$
\end{definition}
We note that these conditions uniquely classifies $(Y,\eta,\epsilon)$, so, when confusion is unlikely to occur, we will use the notation $(X^\vee, \eta_X, \epsilon_X)$ for ``the dual" of a dualizable $X$.

\begin{proposition}[{\cite[Chapter 1, Section 4.1.5]{Gaitsgory-Rozenblyum}}]\label{dual-gives-inHom}
When $X$ is dualizable, the functor $X^\vee \otimes (-)$ is both the right and left adjoint of $X \otimes (-)$ since $(X^\vee)^\vee =X$.
In particular, for any $Y \in \SC$, we have $\underline{Hom}(X,Y) = Y \otimes X^\vee$.
\end{proposition}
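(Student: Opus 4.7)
The plan is to use the unit $\eta : 1_\SC \to X^\vee \otimes X$ and counit $\epsilon : X \otimes X^\vee \to 1_\SC$ directly to exhibit $X^\vee \otimes (-) \dashv X \otimes (-)$ is backwards---more carefully, to exhibit $X \otimes (-)$ as the left adjoint of $X^\vee \otimes (-)$, with adjunction unit and counit produced by tensoring $\eta$ and $\epsilon$ with the identity on an arbitrary object $Z \in \SC$.

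First I would define, for each $Z \in \SC$, the morphisms
\[
\eta_Z : Z \xrightarrow{\eta \otimes \id_Z} X^\vee \otimes X \otimes Z, \qquad \epsilon_Z : X \otimes X^\vee \otimes Z \xrightarrow{\epsilon \otimes \id_Z} Z,
\]
and then verify the triangle identities for the candidate adjunction $(X \otimes (-)) \dashv (X^\vee \otimes (-))$ by tensoring the two defining triangle equalities of Definition \ref{smdual} with $\id_Z$ on the right. This exhibits $X^\vee \otimes (-)$ as the right adjoint of $X \otimes (-)$. Because the datum $(X, \epsilon \circ \sigma, \sigma \circ \eta)$ (using the symmetry $\sigma$ of $\otimes$) exhibits $X$ as a dual of $X^\vee$, the identification $(X^\vee)^\vee = X$ is immediate, and applying the same construction with the roles reversed shows that $X \otimes (-)$ is also the right adjoint of $X^\vee \otimes (-)$; equivalently, $X^\vee \otimes (-)$ is also a left adjoint of $X \otimes (-)$.

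For the last assertion, the internal hom $\underline{\Hom}(X, Y)$ is by definition the object corepresenting $Z \mapsto \Hom(Z \otimes X, Y)$. Using the adjunction just established together with the symmetry of $\otimes$, for every $Z$ we have natural equivalences
\[
\Hom(Z \otimes X, Y) \;=\; \Hom(Z, X^\vee \otimes Y) \;=\; \Hom(Z, Y \otimes X^\vee),
\]
and by Yoneda this forces $\underline{\Hom}(X, Y) = Y \otimes X^\vee$.

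There is no real obstacle here; the proof is formal manipulation of the triangle identities. The only mild care is to keep track of the symmetry isomorphisms when passing between $X \otimes X^\vee$ and $X^\vee \otimes X$ and when rewriting $X^\vee \otimes Y$ as $Y \otimes X^\vee$, which is harmless in a symmetric monoidal category.
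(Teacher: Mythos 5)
Your argument is correct and is essentially the standard proof of this fact, which the paper does not prove itself but simply cites from Gaitsgory--Rozenblyum: tensoring the triangle identities of Definition \ref{smdual} with $\id_Z$ exhibits the adjunction $(X \otimes (-)) \dashv (X^\vee \otimes (-))$, the symmetry of $\otimes$ makes $X$ the dual of $X^\vee$ and hence yields the adjunction in the other direction, and the identification $\underline{\Hom}(X,Y) = Y \otimes X^\vee$ follows from the resulting natural equivalences $\Hom(Z \otimes X, Y) = \Hom(Z, Y \otimes X^\vee)$ and Yoneda. The only point worth being aware of is that in the $(\infty,1)$-categorical setting the triangle identities need only hold up to homotopy, which suffices because they provide homotopy inverses for the induced maps on mapping spaces (cf.\ \cite[Lemma 4.6.1.6]{Lurie-HA}), so your formal manipulation is legitimate as written.
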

\begin{remark}[{\cite[Lemma 4.6.1.6]{Lurie-HA} \cite[Chapter 1, Section 4.3.2--4.3.3]{Gaitsgory-Rozenblyum} \cite[Equation (2.1) \& (2.2)]{Brav-Dyckerhoff2}}]\label{rem:dual-functor}
When $X$ and $Y$ are both dualizable, we thus have isomorphisms
$$\Hom(1_\SC, X^\vee \otimes Y) = \Hom(X, Y) = \Hom(X \otimes Y^\vee, 1_\SC),$$
where the morphism $f: X \to Y$ corresponds to 
\begin{align*}
\varphi_f: 1_\SC \xrightarrow{\eta_X} X \otimes X^\vee \xrightarrow{f \otimes \Id_{X^\vee}} Y \otimes X^\vee, \;\;
\psi_f: Y^\vee \otimes X \xrightarrow{\Id_{Y^\vee} \otimes f} Y^\vee \otimes Y \xrightarrow{\epsilon_Y} 1_\SC.
\end{align*}
In particular, under the equivalence 
$$\Fun^L(1_\SC, X \otimes X^\vee) = \Fun^L(X, X) = \Fun^L(X^\vee \otimes X, 1_\SC),$$
$\Id_X$ always corresponds to $\eta_X$ under the first isomorphism and $\epsilon_X$ under the second isomorphism.
\end{remark}

\begin{definition}
For a morphism $f: X \rightarrow Y$ between dualizable objects, the dual morphism $\ND{M}(F): Y^\vee \rightarrow X^\vee$ is defined to be the composition
$$Y^\vee \xrightarrow{\eta_X \otimes \Id_{Y^\vee}} X^\vee \otimes X \otimes Y^\vee \xrightarrow{\Id_{X^\vee} \otimes f \otimes\, \Id_{Y^\vee}} X^\vee \otimes Y \otimes Y^\vee \xrightarrow{\Id_{X^\vee} \otimes \,\epsilon_Y} X^\vee.$$
\end{definition}

The following lemma states that there is induced dualizability on retractions.
 
\begin{lemma}[{\cite[Lemma 2.2]{Kuo-Shende-Zhang-Hochschild-Tamarkin}}] \label{lem: duality-of-retraction}
For a duality pair $(X,X^\vee,\epsilon_X,\eta_X)$ in $\SC$,
let $e: X \rightarrow X$ be an idempotent which can be written as $X \xrightarrow{r} Y \xrightarrow{i} X$
for some inclusion $i$ and some retraction $r$. 
Assume that the dual idempotent $e^\vee: X^\vee \rightarrow X^\vee$ also splits to $X^\vee \xrightarrow{s} Z \xrightarrow{j} X^\vee$. 
Then the pair
\[\eta_Y \coloneqq (s \otimes r) \circ \eta_X : 1_{\SC} \rightarrow Z \otimes Y,\;\; \epsilon_Y \coloneqq \epsilon_X \circ (i \otimes j) : Y \otimes Z \rightarrow 1_{\SC}\]
exhibits $Z$ as the dual of $Y$. 
\end{lemma}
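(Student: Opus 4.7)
The proof amounts to verifying the two triangle identities for the proposed duality data $(\eta_Y, \epsilon_Y)$. For the first identity, $(\epsilon_Y \otimes \id_Y) \circ (\id_Y \otimes \eta_Y) = \id_Y$, I will substitute the definitions and simplify using functoriality of $\otimes$. The middle composition $(i \otimes j \otimes \id_Y) \circ (\id_Y \otimes s \otimes r)$ collapses to $i \otimes (j \circ s) \otimes r = i \otimes e^\vee \otimes r$, since $j \circ s = e^\vee$ by the given splitting of the dual idempotent. Moving $i$ and $r$ to the outside (they act on tensor factors disjoint from those touched by $\eta_X$ and $\epsilon_X$), the identity reduces to
\[
r \circ \left[ (\epsilon_X \otimes \id_X) \circ (\id_X \otimes e^\vee \otimes \id_X) \circ (\id_X \otimes \eta_X) \right] \circ i = \id_Y.
\]

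The bracketed ``sandwich'' expression is the standard formula that recovers an endomorphism of a dualizable object from that of its dual: applied to any $g \in \End(X^\vee)$ it returns the unique $f \in \End(X)$ with $f^\vee = g$. In particular, substituting $g = e^\vee$ gives back $e$ itself. This sandwich identity is a formal consequence of the triangle identities for $(\eta_X, \epsilon_X)$ via a standard double zigzag (``snake'') simplification; I expect writing out this short diagram-chase cleanly to be the one technical point of the argument. Once the bracket is identified with $e = i \circ r$, the whole composite becomes $r \circ e \circ i = r \circ i \circ r \circ i = \id_Y$, using the splitting relation $r \circ i = \id_Y$ twice.

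The second triangle identity $(\id_Z \otimes \epsilon_Y) \circ (\eta_Y \otimes \id_Z) = \id_Z$ is handled by an entirely symmetric argument: swapping the roles $(X, Y, i, r) \leftrightarrow (X^\vee, Z, j, s)$, the same sandwich identity forces the analogous composite to reduce to $s \circ e^\vee \circ j = s \circ j \circ s \circ j = \id_Z$, now invoking the splitting relation $s \circ j = \id_Z$. Everything outside of the sandwich identity is routine bookkeeping with functoriality of $\otimes$ and the two splitting relations, so the proof should be short once the snake simplification is made explicit.
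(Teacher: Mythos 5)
Your argument is correct. Note that this paper does not prove the lemma itself — it is quoted from \cite[Lemma 2.2]{Kuo-Shende-Zhang-Hochschild-Tamarkin} — so there is no in-paper proof to compare against; your direct verification of the two triangle identities is the standard argument and it is complete once the one step you flag is written out. Concretely: the ``sandwich'' identity $(\epsilon_X \otimes \id_X) \circ (\id_X \otimes e^\vee \otimes \id_X) \circ (\id_X \otimes \eta_X) = e$ follows from the interchange law together with the triangle identity $(\epsilon_X \otimes \id_X)\circ(\id_X \otimes \eta_X) = \id_X$ applied twice (substitute the zig-zag formula defining $e^\vee$ and slide the disjoint cups and caps past each other), so only routine bookkeeping remains. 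Two small refinements: first, in your reduction the factor acted on by $i$ is in fact touched by $\epsilon_X$ — what justifies moving $i$ to the front is that it commutes with $\id_Y \otimes \eta_X$, and symmetrically $r$ commutes with $\epsilon_X \otimes \id_Y$; second, for the triangle identity on $Z$ you do not even need the sandwich lemma, since with the paper's convention for dual morphisms the composite $(\id_{X^\vee} \otimes \epsilon_X) \circ (\id_{X^\vee} \otimes e \otimes \id_{X^\vee}) \circ (\eta_X \otimes \id_{X^\vee})$ is literally the definition of $e^\vee$, so that side reduces immediately to $s \circ e^\vee \circ j = \id_Z$. Finally, since $\SC$ is an $(\infty,1)$-category, the triangle identities only need to hold up to homotopy, and your diagrammatic verification in the homotopy category suffices for exhibiting duality data.
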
 

The relevant proposition concerning dualizability which we need is the following:
Let $\sC \in \PrLcs$ be compactly generated.
Denote by $C = \sC^c$ its compact objects and by $\sC^\vee \coloneqq \Ind(C^{op})$ the Ind-completion of its opposite category.
We first mention that the proof of the Proposition \ref{cg:d} below implies that
$\sC^\vee \otimes \sC = \Fun^{ex}(C^{op} \otimes C,\cV) = \Fun^L(\sC^\vee \otimes \sC,\cV)$.
Here the superscript `ex' means exact functors and the `L' means colimit-preserving functors.
As a result, the Hom-pairing $\Hom_{C}: C^{op} \otimes C \rightarrow \cV$ induces a functor
$$\epsilon_\sC: \sC^\vee \otimes \sC \rightarrow \cV$$
by extending $\Hom_{C}$ to the Ind-completion.
On the other hand, as a functor from $C^{op} \otimes C$ to $\cV$,
it also defines an object in $\sC \otimes \sC^\vee $ by the above identification, which is equivalent to a functor
$$ \eta_\sC: \cV \rightarrow \sC \otimes \sC^\vee.$$

\begin{proposition}[{\cite[Proposition 4.10]{Hoyois-Scherotzke-Sibilla}}]\label{cg:d}
If $\sC \in \PrLst$ is compactly generated by $C$, then it is dualizable with respect to the tensor product $\otimes$ on $\PrLst$,
and the triple $(\sC^\vee,\eta_\sC,\epsilon_\sC)$ exhibits $\sC^\vee \coloneqq \Ind(C^{op})$ as a dual of it.
\end{proposition}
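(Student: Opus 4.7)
The setup before the proposition already supplies the candidate unit $\eta_\sC$ and counit $\epsilon_\sC$; what remains is to verify the triangle identities. The plan is to reduce both identities to computations on compact generators, where they follow from the co-Yoneda lemma.

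First, I would record the explicit descriptions of the relevant tensor products. By Lemma \ref{pgc,PrLst}, $\sC^\vee \otimes \sC$ is compactly generated by objects $c_1^\vee \otimes c_2$ with mapping spaces
$$\Hom_{\sC^\vee \otimes \sC}(c_1^\vee \otimes c_2, d_1^\vee \otimes d_2) = \Hom_C(d_1, c_1) \otimes \Hom_C(c_2, d_2),$$
which identifies $\sC^\vee \otimes \sC$ with $\Ind(C^{op} \otimes C) = \Fun^{ex}(C \otimes C^{op}, \cV)$, and symmetrically $\sC \otimes \sC^\vee = \Ind(C \otimes C^{op})$. Under these identifications, $\epsilon_\sC$ is the unique colimit-preserving extension of $\Hom_C: C^{op} \otimes C \to \cV$, while $\eta_\sC(1_\cV)$ is the same exact functor $\Hom_C$ regarded as an object of $\Fun^{ex}(C^{op} \otimes C, \cV) = \sC \otimes \sC^\vee$.

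Second, to check the first triangle identity, I would use that both $\id_\sC$ and $(\epsilon_\sC \otimes \id_\sC) \circ (\id_\sC \otimes \eta_\sC)$ are colimit-preserving endofunctors of $\sC$, so it suffices to compare their values on generators $c \in C$. Tracing through the construction, the composition produces the filtered colimit
$$\clmi{c' \in C} \bigl( \Hom_C(c', c) \otimes c' \bigr) \in \sC,$$
indexed by the Yoneda resolution of $c$; the co-Yoneda lemma identifies this colimit with $c$ itself. The second triangle identity then follows either by a symmetric argument (using co-Yoneda in $\sC^\vee$) or by invoking the symmetry of the monoidal structure and the observation that $\eta_\sC$ and $\epsilon_\sC$ are transposes of one another under the swap $\sC \otimes \sC^\vee \simeq \sC^\vee \otimes \sC$.

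The main obstacle is notational rather than conceptual: one has to carefully unwind the identifications of $\sC \otimes \sC^\vee$ and $\sC^\vee \otimes \sC$ as Ind-completions, keeping track of how the single object $\eta_\sC(1_\cV)$ resolves as a formal colimit of simple tensors $c_i^\vee \otimes c_i$ so that the pairing with $\epsilon_\sC$ produces the Yoneda colimit above. Once this bookkeeping is in place, the verification is formal, and the identification $\Hom(X, Y) = Y \otimes X^\vee$ noted in Proposition \ref{dual-gives-inHom} applies uniformly.
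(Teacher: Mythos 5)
The paper does not actually prove this proposition: it is quoted from Hoyois--Scherotzke--Sibilla, and the surrounding text only unpacks the duality data $(\eta_\sC,\epsilon_\sC)$ that your argument also starts from. So the relevant comparison is with the standard argument behind the cited result, and your proposal is essentially that argument: identify $\sC^\vee\otimes\sC$ and $\sC\otimes\sC^\vee$ with $\Ind$ of the tensor of the small categories via Lemma \ref{pgc,PrLst}, take $\epsilon_\sC$ to be the Ind-extension of $\Hom_C$ and $\eta_\sC(1_\cV)$ to be the diagonal bimodule, and verify the triangle identities by a co-Yoneda computation. This is correct in outline.

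Two points deserve sharpening. First, the colimit you write, $\clmi{c'\in C}\bigl(\Hom_C(c',c)\otimes c'\bigr)$, is not a filtered colimit indexed by $C$: the co-Yoneda lemma expresses $c$ as the coend $\int^{c'\in C}\Hom_C(c',c)\otimes c'$, i.e.\ a colimit over the twisted arrow category $\mathrm{Tw}(C)$ (equivalently a bar construction), and the same coend presentation is what lets you resolve the single object $\eta_\sC(1_\cV)=\Hom_C$ by simple tensors $(c')^\vee\otimes c'$ inside $\Ind(C^{op}\otimes C)$ in the first place. Second, "both composites are colimit-preserving, so compare values on generators" is slightly too quick: objectwise agreement of two colimit-preserving functors does not by itself produce the required equivalence with the identity. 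The honest version of the step is to compute the composite $(\epsilon_\sC\otimes\id)\circ(\id\otimes\eta_\sC)$ as convolution against the bimodule obtained by pairing the two kernels, and to identify that bimodule with the identity bimodule $\Hom_C$ in $\Fun^{ex}(C^{op}\otimes C,\cV)$ via the coend manipulation; the equivalence of functors then follows, and evaluation on generators is only used to recognize the coend. Both issues are exactly the bookkeeping you flag at the end, so the argument goes through once it is carried out at the level of kernels rather than objectwise.
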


\begin{remark}
We note that when $C$ contains only one object, $\Fun^{ex}(C^{op} \otimes C,\cV)$ recover the classical notion of bi-modules.
As a result, the diagonal bimodule $C_\Delta$ is also often referred to as the identity bimodule $\Id_C$.
\end{remark}


Classifying colimiting-preserving functors shares a close relation with the notion of duality in Definition \ref{smdual}.
In the algebraic geometric setting, this is usually referred as Fourier-Mukai \cite{Ben-Zvi-Francis-Nadler}.
One strategy to prove such a theorem, inspired by an earlier result in the derived algebro-geometric setting \cite[Section 9]{Gaitsgory-IndCoh}, 
is that the evaluation and coevaluation should be given by some sort of diagonals geometrically.
The equivalence between such geometric diagonals and the categorical diagonals discussed in Proposition \ref{cg:d}, 
which is implied by the uniqueness of duals, will provide such a classification.

In our case, we denote by $\Delta: M \hookrightarrow M \times M$ the inclusion of the diagonal 
and by $p: M \rightarrow \pt$ the projection to a point.
By Theorem \ref{pd:g}, there is an identification 
$\Sh_{\widehat\Lambda}(M) \otimes \Sh_{-\widehat\Lambda}(M) = \Sh_{\widehat\Lambda \times -\widehat\Lambda}(M \times M)$.
Under this identification, we propose a duality data $(\eta,\epsilon)$ 
between $\Sh_{\widehat\Lambda}(M)$ and $\Sh_{-\widehat\Lambda}(M)$ in $\PrLst$ which is given by

\begin{equation}\label{formula: standard_duality_data}
\begin{split}
\epsilon &= p_! \Delta^* : \Sh_{-\widehat\Lambda \times \widehat\Lambda}(M \times M) \rightarrow \cV \\
\eta &= \iota_{\widehat\Lambda \times -\widehat\Lambda}^* \Delta_* p^*
: \cV \rightarrow \Sh_{\widehat\Lambda \times -\widehat\Lambda}(M \times M).
\end{split}
\end{equation}

Recall that we use
$\iota_{\widehat\Lambda \times -\widehat\Lambda}^*: \Sh(M \times M) \rightarrow \Sh_{\widehat\Lambda \times -\widehat\Lambda}(M \times M)$
to denote the left adjoint of the inclusion 
$\Sh_{\widehat\Lambda \times -\widehat\Lambda}(M \times M) \hookrightarrow \Sh(M \times M)$.
Note also that since $\cV$ is compactly generated by $1_\cV$,
the colimit-preserving functor $\eta$ is determined by its value on $1_\cV$ 
so we will abuse the notation and identify it with $\eta$.
In order to check the triangle equalities, we first identify $\id \otimes \,\epsilon$.

\begin{lemma}\label{ev}
Under the identification $$\Sh_{\widehat\Lambda}(M) \otimes \Sh_{-\widehat\Lambda \times \widehat\Lambda}(M \times M)
= \Sh_{\widehat\Lambda \times -\widehat\Lambda \times \widehat\Lambda}(M \times M \times M),$$ the functor 
$$\id \otimes \,\epsilon: \Sh_{\widehat\Lambda}(M) \otimes \Sh_{-\widehat\Lambda \times \widehat\Lambda}(M \times M) 
\rightarrow \Sh_{\widehat\Lambda}(M) \otimes \cV = \Sh_{\widehat\Lambda}(M)$$ 
is identified as the functor 
$$\pi_{1!} (\id \times \Delta)^*: \Sh_{\widehat\Lambda \times -\widehat\Lambda \times \widehat\Lambda}(M \times M \times M)
\rightarrow \Sh_{\widehat\Lambda}(M).$$
\end{lemma}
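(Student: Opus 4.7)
The plan is to reduce to a computation on pure tensors $F \boxtimes G$ using the K\"unneth formula. By Theorem \ref{pd:g} together with Lemma \ref{pgc,PrLst}, the category $\Sh_{\widehat\Lambda \times -\widehat\Lambda \times \widehat\Lambda}(M \times M \times M)$ is compactly generated by exterior products $F \boxtimes G$ with $F \in \Sh_{\widehat\Lambda}^c(M)$ and $G \in \Sh_{-\widehat\Lambda \times \widehat\Lambda}^c(M \times M)$, and these correspond under the K\"unneth identification to the pure tensors $F \otimes G$ on the categorical side. Both $\id \otimes \epsilon$ and $\pi_{1!}(\id \times \Delta)^*$ preserve colimits --- the former tautologically, the latter because the $*$-pullback along the closed embedding $\id \times \Delta$ and the $!$-pushforward along $\pi_1$ are both colimit-preserving and the fully faithful inclusions into unrestricted sheaf categories also preserve colimits --- so it suffices to produce a natural equivalence on pure tensors.

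On such a generator, the categorical side gives $(\id \otimes \epsilon)(F \otimes G) = F \otimes_\cV p_! \Delta^* G$, which under the $\cV$-linear structure on $\Sh_{\widehat\Lambda}(M)$ unfolds to $F \otimes p^* p_! \Delta^* G$. For the geometric side, I would first use the compatibility of $*$-pullback with exterior products to identify $(\id \times \Delta)^*(F \boxtimes G) = F \boxtimes \Delta^* G$, and then apply the projection formula for $\pi_1 : M \times M \to M$ together with proper base change along the Cartesian square with sides $\pi_1, \pi_2, p, p$ to compute
$$\pi_{1!}(F \boxtimes \Delta^* G) \;=\; F \otimes \pi_{1!}\pi_2^* \Delta^* G \;=\; F \otimes p^* p_! \Delta^* G.$$
The two expressions visibly agree, and the microsupport of $F \otimes p^* p_! \Delta^* G$ is contained in $\widehat\Lambda$ because the second tensor factor is locally constant, so the image does land in $\Sh_{\widehat\Lambda}(M)$ as required.

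Naturality of the projection formula and base change isomorphisms in $F$ and $G$ promotes this pointwise identification to a natural equivalence of colimit-preserving functors, which by the colimit generation statement above extends uniquely to the whole tensor product. I do not anticipate a serious technical obstacle; the lemma is essentially a bookkeeping statement recasting the categorical counit $\epsilon$ in the convenient geometric form $\pi_{1!}(\id \times \Delta)^*$ that will be used subsequently to check the triangle identities, and the only nontrivial input is the projection formula applied to a pure tensor.
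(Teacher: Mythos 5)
Your proposal is correct and follows essentially the same route as the paper: reduce to compact generators $F \boxtimes G$ via colimit-preservation and the K\"unneth identification (Lemma \ref{pgc,PrLst}), then compute $\pi_{1!}(\id \times \Delta)^*(F \boxtimes G) = F \otimes \pi_{1!}\pi_2^*\Delta^* G = F \otimes p^*p_!\Delta^* G = F \otimes_\cV p_!\Delta^* G$ using compatibility of $*$-pullback with exterior products, the projection formula, and base change. The paper's proof is exactly this computation, so there is nothing to add.
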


\begin{proof}
Since both of the functors are colimit-preserving and the categories are compactly generated,
it is sufficient to check that $\pi_{1!} (\id \times \Delta)^* \circ \boxtimes = \id \otimes (p_! \Delta^*)$
on pairs $(F,G)$ for $F \in \Sh_{\widehat\Lambda}^c(M)$ and $G \in \Sh_{-\widehat\Lambda \times \widehat\Lambda}^c(M \times M)$
by Lemma \ref{pgc,PrLst}.

Note that we do not need the compactness assumption for the following computation.
Let $\tilde\pi_1: M^3 \rightarrow M$ and $\tilde\pi_{23}: M^3 \rightarrow M^2$ denote the projections
$\tilde\pi_1(x,y,z) = x$ and $\tilde\pi_{23}(x,y,z) = (y,z)$.
We note that $\tilde\pi_1 \circ (\id \times \Delta) = \pi_1$ and $\tilde\pi_{23} \circ (\id \times \Delta) = \Delta \circ \pi_2$.
Thus,
\begin{align*}
\pi_{1!} (\id \times \Delta)^* (F \boxtimes G)
&= \pi_{1!} (\id \times \Delta)^*  (\tilde\pi_1^* F \otimes \tilde\pi_{23}^* G) = \pi_{1!}  (\pi_1^* F \otimes \pi_2^* \Delta^* G) \\
&=  F \otimes (\pi_{1!} \pi_2^* \Delta^* G) = F \otimes ( p^* p_!\Delta^* G) = F \otimes_{\cV} (p_! \Delta^* G).
\end{align*}
Here, we use the fact that $*$-pullback is compatible with $\otimes$ for the second equality, the projection formula for the third, and base change for the forth. The last equality is by definition the action of the coefficient category $\cV$ on $\Sh(M)$.
\end{proof}

\begin{remark}
A similar computation will imply that $\eta \otimes \id$ can be identify with
$$\iota_{\widehat\Lambda \times -\widehat\Lambda \times \widehat\Lambda}^* (1_\Delta \boxtimes -):
\Sh_{\widehat\Lambda}(M) \rightarrow \Sh_{\widehat\Lambda \times -\widehat\Lambda \times \widehat\Lambda}(M \times M \times M).$$
\end{remark}

Now we check the triangle equality $(\id \otimes \epsilon) \circ (\eta \otimes \id) = \id$.
In other words, we check that the composition of the following functors 
\begin{equation}\label{eq:duality}
\begin{tikzpicture}
\node at (0,1.5) {$ \Sh_{\widehat\Lambda}(M)$};
\node at (7,1.5) {$\Sh_{\widehat\Lambda \times -\widehat\Lambda}(M \times M) \otimes \Sh_\Lambda(M)$};
\node at (7,0) {$\Sh_{\widehat\Lambda \times -\widehat\Lambda \times \widehat\Lambda}(M \times M \times M)$};
\node at (13,0) {$\Sh_{\widehat\Lambda}(M)$};
\draw [->, thick] (0.9,1.5) -- (4.3,1.5) node [midway, above] 
{$(\iota_{\widehat\Lambda \times -\widehat\Lambda}^* \Delta_* p^*)\otimes \id$};
\draw [->, thick] (9.3,0) -- (12.1,0) node [midway, above] {${p_1}_! (\id \times \Delta)^*$};
\draw [->, thick] (7,1.2) -- (7,0.3) node [midway, right] {$\boxtimes$};
\end{tikzpicture}
\end{equation}
is the identity. The other triangle equality can be checked symmetrically.

\begin{proposition}\label{teholds}
The above equality Equation (\ref{eq:duality}) holds.
\end{proposition}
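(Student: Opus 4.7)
The plan is to evaluate the composition in Equation (\ref{eq:duality}) explicitly on a sheaf $F \in \Sh_{\widehat\Lambda}(M)$ and recognize the output as $F$ via Lemma \ref{lad&ker}. Since $\eta$ is colimit-preserving and $\cV$ is compactly generated by $1_\cV$, it suffices to identify the image of $1_\cV$, which by definition is $\eta(1_\cV) = \iota^*_{\widehat\Lambda \times -\widehat\Lambda}(\Delta_* p^* 1_\cV) = \iota^*_{\widehat\Lambda \times -\widehat\Lambda}(1_\Delta)$. Consequently $(\eta \otimes \id)(F) = \iota^*_{\widehat\Lambda \times -\widehat\Lambda}(1_\Delta) \boxtimes F$ viewed in $\Sh_{\widehat\Lambda \times -\widehat\Lambda \times \widehat\Lambda}(M^3)$.

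Next, by Lemma \ref{ev} the remaining functor $\id \otimes \epsilon$ is $\pi_{1!}(\id \times \Delta)^*$, where with coordinates $(x_1, x_2, x_3) \in M^3$ one has $\id \times \Delta : (x_1,x_2) \mapsto (x_1,x_2,x_2)$ and $\pi_1 : M \times M \to M$ is the first projection. A routine base-change-and-projection-formula computation (pulling the external product through $(\id \times \Delta)^*$) will give
\begin{equation*}
\pi_{1!}(\id \times \Delta)^*\big(\iota^*_{\widehat\Lambda \times -\widehat\Lambda}(1_\Delta) \boxtimes F\big) = \pi_{1!}\big(\iota^*_{\widehat\Lambda \times -\widehat\Lambda}(1_\Delta) \otimes \pi_2^* F\big),
\end{equation*}
which is precisely the convolution of the kernel $\iota^*_{\widehat\Lambda \times -\widehat\Lambda}(1_\Delta)$ with $F$ when one regards the integrated variable $x_2$ as source and the remaining variable $x_1$ as target.

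To conclude, one reconciles the factor ordering with the convention of Lemma \ref{lad&ker} via the swap diffeomorphism $\sigma(x_1,x_2) = (x_2,x_1)$. Since $\sigma$ fixes the diagonal one has $\sigma^*(1_\Delta) = 1_\Delta$, and $\sigma^*$ exchanges the microsupport conditions $\widehat\Lambda \times -\widehat\Lambda$ and $-\widehat\Lambda \times \widehat\Lambda$; combined with $\pi_1 \circ \sigma = \pi_2$, this rewrites the expression as $\iota^*_{-\widehat\Lambda \times \widehat\Lambda}(1_\Delta) \circ F$. Lemma \ref{lad&ker} then identifies this convolution with $\iota^*_{\widehat\Lambda}(F)$, and since $F$ already lies in $\Sh_{\widehat\Lambda}(M)$ we conclude that the composition returns $F$, as desired. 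The symmetric triangle equality will follow by the analogous computation after interchanging the roles of $\widehat\Lambda$ and $-\widehat\Lambda$.

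The main potential obstacle is purely bookkeeping: one must keep careful track of which factor of $M^2$ (resp.\ $M^3$) carries which copy of $\widehat\Lambda$ or $-\widehat\Lambda$, of the orientation reversal under $\sigma$, and of the source/target convention used in the convolution formula of Lemma \ref{lad&ker}. Once these identifications are unambiguously fixed, every remaining step is a formal consequence of Lemmas \ref{ev} and \ref{lad&ker} together with standard base change and projection formulae.
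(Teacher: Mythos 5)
Your proof is correct, and the reduction to the formula
\[
\pi_{1!}\bigl(\iota^*_{\widehat\Lambda\times-\widehat\Lambda}(1_\Delta)\otimes\pi_2^*F\bigr)=F
\]
matches the paper exactly. Where you diverge is in how that last identity is established. The paper pairs against $\Hom(-,H)$ for $H\in\Sh_{\widehat\Lambda}(M)$ and uses the microsupport estimate $\ms(\sHom(\pi_2^*F,\pi_1^!H))\subseteq(\ms(H)\times M)+(M\times-\ms(F))$ from \cite[Proposition 5.4.14]{KS} to drop the $\iota^*$ in the first slot — essentially repeating the computation that already appeared in the proof of Lemma \ref{lad&ker}. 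You instead apply the swap $\sigma$ to rewrite $\pi_{1!}(K\otimes\pi_2^*F)$ as $\pi_{2!}(\sigma^*K\otimes\pi_1^*F)=\iota^*_{-\widehat\Lambda\times\widehat\Lambda}(1_\Delta)\circ F$ (using $\pi_1\circ\sigma=\pi_2$, $\sigma^*1_\Delta=1_\Delta$, and $\sigma^*\iota^*_{\widehat\Lambda\times-\widehat\Lambda}=\iota^*_{-\widehat\Lambda\times\widehat\Lambda}\sigma^*$), and then quote the ``in particular'' clause of Lemma \ref{lad&ker} as a black box. This is a genuine simplification: the KS microsupport argument is done once, inside Lemma \ref{lad&ker}, rather than twice, and the triangle identity becomes a purely formal bookkeeping of projections and the swap. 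Both arguments are sound; yours is more modular.

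One small point worth spelling out when you write this up: the identity $\sigma^*\iota^*_{\widehat\Lambda\times-\widehat\Lambda}=\iota^*_{-\widehat\Lambda\times\widehat\Lambda}\sigma^*$ is not literal functoriality but the statement that $\sigma^*$ restricts to an equivalence $\Sh_{\widehat\Lambda\times-\widehat\Lambda}(M^2)\to\Sh_{-\widehat\Lambda\times\widehat\Lambda}(M^2)$ (because $d\sigma^*$ exchanges $\widehat\Lambda\times-\widehat\Lambda$ and $-\widehat\Lambda\times\widehat\Lambda$), together with uniqueness of left adjoints to the inclusions. Stating that explicitly removes any ambiguity in the ``bookkeeping'' you flagged as a potential obstacle.
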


\begin{proof}
Let $F \in \Sh_\Lambda(M)$.
The composition of the first two arrows sends $(1_\cV,F)$ to 
$$\big((\iota_{\widehat\Lambda \times -\widehat\Lambda}^* \Delta_* p^*) \boxtimes \id \big) (1_\cV,F)
= (\iota_{\widehat\Lambda \times -\widehat\Lambda}^* \Delta_* 1_M) \boxtimes F.$$
Apply $\pi_{1!} (\id \times \Delta)^*$ and we obtain
$$\pi_{1!} (\id \times \Delta)^* 
\big( (\iota_{\widehat\Lambda \times -\widehat\Lambda}^* \Delta_* 1_M) \boxtimes F \big)
= \pi_{1!} \big( (\iota_{\widehat\Lambda \times -\widehat\Lambda}^* \Delta_* 1_M) \otimes \pi_2^* F \big).$$
To see that $\pi_{1!} \big( (\iota_{\widehat\Lambda \times -\widehat\Lambda}^* \Delta_* 1_M) \otimes \pi_2^* F \big) = F$,
we use the Yoneda lemma to evaluate at $\Hom(-,H)$ for $H \in \Sh_\Lambda(M)$
and compute that
\begin{align*}
\Hom\big(\pi_{1!} \big( (\iota_{\widehat\Lambda \times -\widehat\Lambda}^* \Delta_* 1_M) \otimes \pi_{1!} F \big),H\big)
&= \Hom\big(\iota_{\widehat\Lambda \times -\widehat\Lambda}^* \Delta_* 1_M, \sHom( \pi_2^* F,\pi_1^! H) \big)\\
&= \Hom\big( \Delta_* 1_M, \sHom( \pi_2^* F,\pi_1^! H)  \big) \\
&= \Hom\big( 1_M,  \Delta^! \sHom( \pi_2^* F,\pi_1^! H)  \big) \\
&= \Hom\left( 1_M,   \sHom( F,  H)  \right) = \Hom(F,H).
\end{align*}
For the second equality, we use the fact that $\ms(\pi_2^* F) = M \times \ms(F)$ and
$\ms(\pi_1^! H) = \ms(H) \times M$, which further implies the microsupport estimation
$$ \ms( \sHom( \pi_2^* F,\pi_1^! H) ) \subseteq  (\ms(H) \times M) + (M \times -\ms(F))$$
by \cite[Proposition 5.4.14]{KS}.
\end{proof}

Because duals are unique, there is an equivalence  $\Sh_{-\widehat\Lambda}(M) = \Sh_{\widehat\Lambda}(M)^\vee$.
By passing to compact objects, we obtain an equivalence on small categories:
\begin{definition}\label{def: standard_daulity}
We denote by $\SD{\widehat\Lambda}: \Sh_{-\widehat\Lambda}^c(M) \xrightarrow{\sim} \Sh_{\widehat\Lambda}^c(M)^{op}$ the equivalence whose Ind-completion induces the equivalence $\Sh_{-\widehat\Lambda}(M) = \Sh_\Lambda(M)^\vee$
associated to the duality data in Equation~(\ref{formula: standard_duality_data}) and call it the \textit{standard duality} associated to $\widehat\Lambda \subseteq T^*M$.
\end{definition}
Thus, there is a commutative diagram given by the counits:

$$
\begin{tikzpicture}
\node at (0,2.5) {$\Sh_{-\widehat\Lambda \times \widehat\Lambda}(M \times M)$};
\node at (5,2.5) {$\cV$};
\node at (0,1.5) {$\Sh_{-\widehat\Lambda}(M) \otimes \Sh_{\widehat\Lambda}(M)$};
\node at (0,0) {$\Sh_{\widehat\Lambda}(M)^\vee \otimes \Sh_{\widehat\Lambda}(M)$};
\node at (5,0) {$\cV$};

\draw [->, thick] (1.6,2.5) -- (4.6,2.5) node [midway, above] {$p_! \Delta^* $};
\draw [->, thick] (1.9,0) -- (4.6,0) node [midway, below] {$ \Hom(-,-)$};

\draw [double equal sign distance, thick] (0,2.2) -- (0,1.8) node [midway, left] {$ $}; 
\draw [double equal sign distance, thick] (0,1.2) -- (0,0.3) node [midway, left] {$\Ind(\SD{\widehat\Lambda}) \otimes \id$}; 
\draw [double equal sign distance, thick] (5,2.2) -- (5,0.3) node [midway, right] {$ $};
\end{tikzpicture}
$$
Here we abuse the notation and use $\Hom(-,-)$ to denote the functor induced by its Ind-completion.
In particular, for $F \in \Sh_{-\widehat\Lambda}^c(M)$ and $G \in \Sh_{\widehat\Lambda}(M)$, there is an identification
\begin{equation} \label{eq: duality-as-pairing}
\Hom(\SD{\widehat\Lambda} F, G) = p_! (F \otimes G).
\end{equation}
A consequence of this identification is that colimit-preserving functors are given by integral transforms, i.e.,
Theorem \ref{thm: fmt} discussed in the introduction.
We mention the following proof is adapted from \cite{Ben-Zvi-Francis-Nadler} where they study a similar
statement in the setting of algebraic geometry.

\begin{proof}[Proof of Theorem \ref{thm: fmt}]
The identification is a composition which follows from K\"unneth formula Theorem \ref{pd:g}, the duality formula Definition--Theorem \ref{def-thm: canonical-duality} and Remark \ref{rem:dual-functor}
$$ \Sh_{-\widehat\Lambda \times \widehat\Sigma}(M \times N) = \Sh_{-\widehat\Lambda}(M) \otimes \Sh_{\widehat\Sigma}(N) 
= \Sh_{\widehat\Lambda}(M)^\vee \otimes \Sh_{\widehat\Sigma}(N) = \Fun^L(\Sh_{\widehat\Lambda}(M),\Sh_{\widehat\Sigma}(N)).$$
By passing to left adjoint with respect to $\Fun^L$ as in Remark \ref{rem:dual-functor}, the map corresponds to a map of the form
$$ \Sh_{\widehat\Lambda}(M) \otimes \Sh_{-\widehat\Lambda \times \widehat\Sigma}(M \times N) = \Sh_{\widehat\Lambda \times -\widehat\Lambda \times \Sigma}(M \times M \times N) \rightarrow \Sh_{\widehat\Sigma}(N)$$
where the second arrow is given by the co-unit.
Thus, write $\tilde{\pi}_1: M \times M \times N \to M$ the projection to the first factor and $\tilde{\pi}_{23}: M \times M \times N \to M \times N$ the projection to the second and third factor. For $F \in \Sh_\Lambda(M)$ and $K \in \Sh_{-\widehat\Lambda \times \widehat\Sigma}(M \times N)$, the image in $\Sh_{\widehat\Sigma}(N)$ is given by
\begin{equation*}
(\epsilon_\Lambda \times \id) (F \boxtimes K) \coloneqq {\pi_2}_! (\Delta \times \id)^* ( {\tilde{\pi}_{23}}^* K \otimes \tilde{\pi}_1^* F) 
= {\pi_2}_! (K \otimes \pi_1^* F) \eqqcolon K \circ F. \qedhere
\end{equation*}
\end{proof}

\begin{corollary} \label{cor: dual-morphism-versus-kernel}
Denote by $v:M \times N \xrightarrow{\sim} N \times N$ the coordinate swapping map $v(x,y) = (y,x)$.
Then under the equivalence $\Fun^L(\Sh_\Lambda(M),\Sh_\Sigma(N)) = \Sh_{-\Lambda \times \Sigma}(M \times N)$ of Theorem \ref{thm: fmt}, passing to dual functors
$$(-)^\vee: \Fun^L(\Sh_\Lambda(M),\Sh_\Sigma(N)) = \Fun^L(\Sh_{-\Sigma}(N),\Sh_{-\Lambda}(M))$$
is realized by
$$ v^*: \Sh_{-\Lambda \times \Sigma}(M \times N) = \Sh_{\Sigma \times -\Lambda}(N \times N).$$
\end{corollary}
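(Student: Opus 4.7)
The plan is to reduce the claim to a general categorical compatibility between the dual functor operation and the tensor symmetry, and then verify this geometrically through the K\"unneth formula. The underlying abstract input is: for dualizable objects $\sC, \sD$ in $\PrLst$, under the identifications $\Fun^L(\sC, \sD) = \sC^\vee \otimes \sD$ and $\Fun^L(\sD^\vee, \sC^\vee) = \sD \otimes \sC^\vee$ (using $(\sD^\vee)^\vee = \sD$), the dual-functor map $(-)^\vee$ is intertwined with the symmetry isomorphism $\sigma: \sC^\vee \otimes \sD \xrightarrow{\sim} \sD \otimes \sC^\vee$ of the symmetric monoidal structure. This is standard and can be checked by unwinding the definitions in Remark \ref{rem:dual-functor}: both $\varphi_{f^\vee}$ and $\sigma(\varphi_f)$ represent the same element in $\Hom(1, \sD \otimes \sC^\vee)$, once one invokes the triangle identities of Definition \ref{smdual}.

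Next, I specialize $\sC = \Sh_\Lambda(M)$ and $\sD = \Sh_\Sigma(N)$, so that $\sC^\vee = \Sh_{-\Lambda}(M)$ and $\sD^\vee = \Sh_{-\Sigma}(N)$ by Definition-Theorem \ref{def-thm: canonical-duality}. Combining with the K\"unneth formula, Theorem \ref{pd:g}, gives two equivalences
\begin{align*}
\Sh_{-\Lambda}(M) \otimes \Sh_\Sigma(N) &\xrightarrow{\sim} \Sh_{-\Lambda \times \Sigma}(M \times N), \quad F \otimes G \mapsto F \boxtimes G,\\
\Sh_\Sigma(N) \otimes \Sh_{-\Lambda}(M) &\xrightarrow{\sim} \Sh_{\Sigma \times -\Lambda}(N \times M), \quad G \otimes F \mapsto G \boxtimes F.
\end{align*}
The abstract symmetry $\sigma$ takes $F \otimes G$ to $G \otimes F$; under these two K\"unneth identifications it therefore corresponds to the map $F \boxtimes G \mapsto G \boxtimes F$. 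Since the two exterior products $F \boxtimes G$ on $M \times N$ and $G \boxtimes F$ on $N \times M$ are related precisely by the coordinate-swap homeomorphism $v$, this geometric swap realizes $\sigma$ on the level of sheaf kernels. Composing with the abstract statement from the first paragraph then yields the corollary.

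The main obstacle is the categorical input in the first paragraph, namely verifying that passage to the dual functor corresponds to the tensor symmetry. Although this is standard, it demands careful bookkeeping with the two presentations $f \leftrightarrow \varphi_f$ and $f \leftrightarrow \psi_f$ of Remark \ref{rem:dual-functor}. An alternative route avoids this abstract step: one may instead verify directly, for any $F \in \Sh_\Lambda(M)$ and $G \in \Sh_{-\Sigma}(N)$, that the two pairings
\[
\epsilon_\Sigma(K \circ F,\, G) \quad\text{and}\quad \epsilon_\Lambda(F,\, v^*(K) \circ G)
\]
both reduce, by the projection formula and base change, to the single integral $q_!\bigl(K \otimes \pi_M^* F \otimes \pi_N^* G\bigr)$ on $M \times N$, where $q: M\times N \to \{\ast\}$. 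By equation (\ref{eq: duality-as-pairing}) and the uniqueness of dual functors, this forces the kernel $v^*(K)$ to represent $(\Phi_K)^\vee$, completing the proof.
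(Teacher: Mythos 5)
Your proposal is correct. The paper itself offers no argument beyond the phrase ``standard exercise of six-functor formalism,'' and your second route is precisely that exercise made explicit: using the projection formula and $p_!\,\pi_{N!}=q_!$, both pairings collapse to $q_!\bigl(K\otimes \pi_M^*F\otimes \pi_N^*G\bigr)$, and the compatibility $\epsilon_\Sigma(K\circ F, G)\simeq \epsilon_\Lambda(F, v^*K\circ G)$ does characterize the dual functor (this follows from the triangle identity, exactly the ``uniqueness'' you invoke), after which Theorem \ref{thm: fmt} pins down the kernel. Your first route is a genuinely different and arguably cleaner packaging: identify $(-)^\vee$ with the symmetry constraint of $\otimes$ on $\PrLst$ abstractly, then observe that the K\"unneth equivalence intertwines the symmetry with the geometric swap $v$ because both agree on the generators $F\boxtimes G$; this has the advantage of giving the statement functorially in $K$ at once, whereas the pairing computation identifies the dual functor kernel one $K$ at a time and leaves naturality in $K$ implicit. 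Two small bookkeeping remarks: in your first paragraph $\varphi_{f^\vee}$ naturally lives in $\Hom(1,\sC^\vee\otimes\sD)$, so it is $\sigma(\varphi_f)$ rather than an element of $\Hom(1,\sD\otimes\sC^\vee)$, and one should note that the paper's duality data (\ref{formula: standard_duality_data}) is manifestly swap-symmetric, which is what makes the implicit identification $(\Sh_\Sigma(N)^\vee)^\vee=\Sh_\Sigma(N)$ the canonical one; also the target of $v$ in the corollary's statement should read $N\times M$, as you correctly use.
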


\begin{proof}
This is a standard exercise of six-functor formalism. 
\end{proof}

Using the doubling construction for microsheaves supported on isotropic subsets, we can immediately show that microsheaves are also dualizable.

\begin{corollary}
Let $\Lambda \subseteq S^*M$ be a compact subanalytic isotropic subset and $\widehat\Lambda_{\cup,\epsilon} \subseteq T^* M$ be the doubling defined in Theorem \ref{thm: doubling}. Then the triple $(\Sh_{-\widehat\Lambda_{\cup,\epsilon}}(M), \epsilon, \eta)$ where
\begin{align*}
\epsilon &= p_!\Delta^*: \Sh_{-\widehat\Lambda_{\cup,\epsilon} \times \widehat\Lambda_{\cup,\epsilon}}(M \times M) \to \cV, \\
\eta &= \iota_{\widehat\Lambda_{\cup,\epsilon} \times -\widehat\Lambda_{\cup,\epsilon}}^* \Delta_*p^*: \cV \to  \Sh_{\widehat\Lambda_{\cup,\epsilon} \times -\widehat\Lambda_{\cup,\epsilon}}(M \times M)
\end{align*}
exhibits $\Sh_{-\widehat\Lambda_{\cup,\epsilon}}(M)$ as the dual of $\Sh_{\widehat\Lambda_{\cup,\epsilon}}(M)$. Therefore, under the equivalence of Theorem \ref{thm: doubling}, $\msh_{-\Lambda}(-\Lambda)$ is the dual of $\msh_\Lambda(\Lambda)$.
\end{corollary}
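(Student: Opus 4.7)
The plan is to derive this corollary as a direct consequence of Definition--Theorem \ref{def-thm: canonical-duality} combined with the doubling equivalence of Theorem \ref{thm: doubling}, with no new geometric input required.

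For the first assertion, I would simply invoke Definition--Theorem \ref{def-thm: canonical-duality} applied to the specific closed conic subanalytic isotropic subset $\widehat\Lambda_{\cup,\epsilon} \subseteq T^*M$ in place of the generic $\widehat\Lambda$ appearing there. The only point requiring a check is that $\widehat\Lambda_{\cup,\epsilon}$ is itself a closed conic subanalytic isotropic: the piece $(\Lambda_{-\epsilon} \cup \Lambda_\epsilon) \times \bR_+$ is the conification of a finite union of subanalytic Legendrians, while $\bigcup_{-\epsilon \leq s \leq \epsilon} \pi(\Lambda_s)$ lies in the zero section and is therefore automatically isotropic. Subanalyticity is preserved by real analytic contact flows, so the whole set is subanalytic. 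Granting this, Definition--Theorem \ref{def-thm: canonical-duality} yields the stated $(\epsilon,\eta)$ as a duality datum verbatim.

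For the second assertion, I would transport the first across the equivalence furnished by Theorem \ref{thm: doubling}. The fully faithful functor $m_\Lambda^l$ has essential image exactly $\Sh_{\widehat\Lambda_{\cup,\epsilon}}(M)$, so it induces an equivalence in $\PrLst$
\[
m_\Lambda^l \colon \msh_\Lambda(\Lambda) \xrightarrow{\sim} \Sh_{\widehat\Lambda_{\cup,\epsilon}}(M).
\]
Applying the same theorem with $-\Lambda$ replacing $\Lambda$, and using that under the antipodal map the positive and negative contact push-offs of $\Lambda$ become the negative and positive contact push-offs of $-\Lambda$, so that $\widehat{(-\Lambda)}_{\cup,\epsilon} = -\widehat\Lambda_{\cup,\epsilon}$, one obtains $\msh_{-\Lambda}(-\Lambda) \simeq \Sh_{-\widehat\Lambda_{\cup,\epsilon}}(M)$. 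Since dualizability and duality data are preserved by any symmetric monoidal equivalence in $\PrLst$ (and the K\"unneth formula of Theorem \ref{pd:g} intertwines the tensor product of microsheaf categories with the sheaf category on $M \times M$ in the obvious way), transporting the duality data of the first assertion along these equivalences exhibits $\msh_{-\Lambda}(-\Lambda)$ as the dual of $\msh_\Lambda(\Lambda)$.

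The only potential subtlety, and therefore the \emph{main obstacle}, is the identification $\widehat{(-\Lambda)}_{\cup,\epsilon} = -\widehat\Lambda_{\cup,\epsilon}$, which requires choosing the push-offs for $-\Lambda$ compatibly with those for $\Lambda$. However, Theorem \ref{thm: doubling} is independent (up to equivalence in $\PrLst$) of the choice of displacing push-offs, so one simply takes the antipodes of the chosen push-offs for $\Lambda$ as the push-offs for $-\Lambda$, after which the identification is tautological.
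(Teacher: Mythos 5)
Your proposal is correct and matches the paper's (implicit, one-line) argument: apply Definition--Theorem~\ref{def-thm: canonical-duality} to the conic subanalytic isotropic $\widehat\Lambda_{\cup,\epsilon}$, then transport the duality data across the doubling equivalences of Theorem~\ref{thm: doubling} using the identification $\widehat{(-\Lambda)}_{\cup,\epsilon} = -\widehat\Lambda_{\cup,\epsilon}$ obtained by taking antipodal push-offs. You correctly flag and resolve the only real subtlety (the compatibility of push-off choices under the antipodal map); the parenthetical appeal to the K\"unneth formula is harmless but not actually needed, since the second assertion follows simply from the uniqueness of duals once the two equivalences are in place.
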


\begin{remark}\label{rem:dual-microsheaf}
Using the K\"unneth formula for microsheaves and Remark \ref{rem:kunneth-microsheaf}, we can write down the duality data for $\msh_\Lambda(\Lambda)$ directly:
\begin{align*}
\epsilon &= p_!\Delta^*m_{\Lambda \times -\Lambda}^l: \msh_{-\Lambda \times \Lambda \times \bR}(-\Lambda \times \Lambda) \to \cV, \\
\eta &= m_{\Lambda \times -\Lambda} \iota_{\widehat\Lambda_{\cup,\epsilon} \times -\widehat\Lambda_{\cup,\epsilon}}^* \Delta_*p^*: \cV \to \msh_{\Lambda \times -\Lambda \times \bR}(\Lambda \times -\Lambda).
\end{align*}
\end{remark}

\begin{proof}[Proof of Theorem \ref{thm: Kunneth-Fourier-Mukai-microsheaves}~(2)]
By the K\"unneth formula and duality formula for microsheaves, we have
$$\Fun^L\left(\msh_\Lambda(\Lambda), \msh_\Sigma(\Sigma) \right) = \msh_\Lambda(\Lambda)^\vee \otimes \msh_\Sigma(\Sigma) = \msh_{-\Lambda}(-\Lambda) \otimes \msh_\Sigma(\Sigma) = \msh_{-\Lambda \times \Sigma \times \bR}(-\Lambda \times \Sigma).$$
We can write down the identification by Remark \ref{rem:dual-microsheaf}. For $F \in \msh_\Lambda(\Lambda)$ and $K \in \msh_{-\Lambda \times \Sigma \times \bR}(-\Lambda \times \Sigma)$, the image in $\msh_\Sigma(\Sigma)$ is given by
\begin{equation*}
m_\Sigma \pi_{2!}(m_{-\Lambda \times \Sigma}^l(K) \otimes \pi_1^* m_\Lambda^l(F)),
\end{equation*}
where $m_{-\Lambda \times \Sigma}^l: \msh_{-\Lambda \times \Sigma \times \bR}(-\Lambda \times \Sigma) \to \Sh_{-\widehat\Lambda_{\cup,\epsilon} \times \widehat\Sigma_{\cup,\epsilon}}(M \times N)$ is given in Proposition \ref{prop:double-product-equiv} and $m_\Lambda^l: \msh_{\Lambda}(\Lambda) \to \Sh_{\widehat\Lambda_{\cup,\epsilon}}(M)$ is given by the doubling functor in Theorem \ref{thm: doubling}.
\end{proof}

\subsection{Standard duality through wrappings} \label{sec: geometric-dual} 
In this section we discuss a symplecto-geometric way to construct the standard duality, defined in Definition \ref{def: standard_daulity}. For this discussion, we will fix a conic isotropic $\widehat\Lambda \subseteq T^* M$ that contains the zero section and its intersection with the cosphere bundle $\Lambda \subseteq S^*M$ and restrict ourselves to this case. In this setting, a category of wrapped sheaves $\wsh_\Lambda(M)$ is defined in \cite[Definition 4.1]{Kuo-wrapped-sheaves} via a construction parallel to that of a wrapped Fukaya category. Furthermore, it is shown that there is a canonical equivalence \cite[Theorem 1.3]{Kuo-wrapped-sheaves}
\begin{equation}\label{for: wrapped-equivalence}
\wrap_\Lambda^+: \wsh_\Lambda(M) \xrightarrow{\sim} \Sh_\Lambda(M)^c
\end{equation}
induced from the wrapping functors defined in Equation (\ref{for: large-wrappings}).
The main goal of this section is to show that the naive duality 
\begin{align}
\ND{M}: \Sh(M) &\rightarrow \Sh(M)^{op} \label{for: naive-dual} \\
F &\mapsto \ND{M}(F) \coloneqq \sHom(F,1_M) \notag
\end{align} 
induces a dual on $\msh_\Lambda(M)$ and it corresponds to the standard duality via (\ref{for: wrapped-equivalence}). 
 
We begin with recalling that, for any contact isotopy $\Phi: S^* M \times I \rightarrow S^* M$, where $I$ is an open interval containing $0$, there exists a unique Guillermou--Kashiwara--Schapira sheaf kernel $K(\Phi) \in \Sh(M \times M \times I)$, by \cite[Theorem 3.7]{Guillermou-Kashiwara-Schapira}, such that
\begin{enumerate}
\item $K(\Phi) |_{t = 0} = 1_\Delta$, and
\item $\msif(K(\Phi)) \subseteq \Lambda_\Phi$, the contact movie of $\Phi$.
\end{enumerate}
Furthermore, when $\Phi$ is positive, there exists a continuation map $K(\Phi) |_s \rightarrow K(\Phi) |_t$ for $s \leq t$ in $I$. Such GKS kernels then provides the notion of isotopies of sheaves via convolution, by setting, for $F \in \Sh(M)$, $\Phi(F) \coloneqq K(\Phi) \circ F \in \Sh(M \times I)$ and $F_t \coloneqq \Phi(F) |_t \in \Sh(M)$. Similarly, when $\Phi$ is positive, the continuation maps of $K(\Phi)$ induces those for $\Phi(F)$. We also use the notation $F^\Phi$ or $F^w$ when the exact isotopy is not important.

In principle, we would like to take the category, 
$$\{F \in \Sh(M) \big| \, \supp(F) \ \text{is compact,} \, \ms(F)   \ \text{is Lagrangian disjoint from} \ \Lambda, \text{and}\,  F_x \in \cV_0, \, \forall x \in M\},$$
and invert along continuation maps to obtain wrapped sheaves. However, for technical reason, essentially because of \cite[Theorem 8.4.2]{KS}, we have to further restrict to those $F$ such that $\ms(F)$ is subanalytic up to an isotopy, and we denote the resulting category by $\widetilde{\wsh_M}(M)$.
 
\begin{definition}
The category of wrapped sheaves $\wsh_\Lambda(M)$  away from $\Lambda$ is defined by
$$\wsh_\Lambda(M) \coloneqq \widetilde{\wsh_M}(M)/ \sC_\Lambda(M)$$
where $\sC_\Lambda(M)$ is the (small) stable category generated by cofibers of continuation maps
$$C \coloneqq \mathrm{Cofib}( F \xrightarrow{c} F^\varphi).$$
\end{definition} 

\begin{lemma} \label{lem: dual-GKS}
For a contact isotopy, $\Phi: S^* M \times I \rightarrow S^* M$, denote by $\Phi^a: S^* M \times I \rightarrow S^* M$ its conjugation with the antipodal map, i.e., $\Phi^a([x,\xi],t) = - \Phi ([x, -\xi], t)$. Then, we have
$$\sHom( K(\Phi), \omega_M \boxtimes 1_{M \times I}) = K(\Phi^a).$$
In particular, at each time-$t$ slice, we have $K(\Phi^a) |_t = \sHom(K(\Phi) |_t, p_1^* \omega_M)$ where
$p_1: M \times M \rightarrow M$ is given by $p_1(x,y) = x$.
\end{lemma}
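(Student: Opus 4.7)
The plan is to invoke the uniqueness characterization of GKS kernels from \cite[Theorem 3.7]{Guillermou-Kashiwara-Schapira}: any object $K \in \Sh(M \times M \times I)$ with $K|_{t=0} = 1_\Delta$ and $\ms(K) \subseteq \Lambda_{\Phi^a}$ (away from the zero section) must agree with $K(\Phi^a)$. So I would reduce the lemma to verifying these two properties for
$$K' \coloneqq \sHom(K(\Phi), \omega_M \boxtimes 1_{M \times I}).$$

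First I would control the microsupport. By \cite[Proposition 5.4.14]{KS},
$$\ms(K') \subseteq \ms(K(\Phi))^a \,\widehat{+}\, \ms(\omega_M \boxtimes 1_{M \times I}),$$
where $(-)^a$ denotes the fiberwise antipode in $T^*(M \times M \times I)$. Since $\omega_M$ is locally a shift of a constant sheaf, $\ms(\omega_M \boxtimes 1_{M \times I})$ is contained in the zero section, so $\ms(K') \subseteq \Lambda_\Phi^a$. A direct check from the definition $\Phi_t^a = a \circ \Phi_t \circ a$ identifies $\Lambda_\Phi^a$ with $\Lambda_{\Phi^a}$: a point $(x, \xi; y, -\eta; t, \tau) \in \Lambda_\Phi$ encoding $\Phi_t(x, \xi) = (y, \eta)$ is mapped by the antipode to $(x, -\xi; y, \eta; t, -\tau)$, which is precisely the datum of the graph of $\Phi_t^a$.

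Next I would check the initial condition. Since $K(\Phi)|_{t=0} = \Delta_* 1_M$, the closed-embedding adjunction gives
$$K'|_{t=0} = \sHom(\Delta_* 1_M, \omega_M \boxtimes 1_M) = \Delta_* \Delta^!(\omega_M \boxtimes 1_M).$$
Writing $\omega_M \boxtimes 1_M = p_1^* \omega_M = p_1^! \omega_M \otimes p_2^* \omega_M^{-1}$, and using that $\Delta^!$ commutes with tensoring by a local system together with $(p_1 \circ \Delta)^! = \id$, this simplifies to $\Delta^!(\omega_M \boxtimes 1_M) = \omega_M \otimes \omega_M^{-1} = 1_M$. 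Hence $K'|_{t=0} = \Delta_* 1_M = 1_\Delta$, and uniqueness delivers the main identity. For the ``in particular'' clause I would apply $i_t^*$ along $i_t \colon M \times M \hookrightarrow M \times M \times I$; since $\omega_M \boxtimes 1_{M \times I}$ has microsupport in the zero section, $i_t^*$ commutes with the internal Hom against it, and $i_t^*(\omega_M \boxtimes 1_{M \times I}) = p_1^* \omega_M$.

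The main obstacle is the sign and orientation bookkeeping needed to confirm the equality $\Lambda_\Phi^a = \Lambda_{\Phi^a}$ at the level of full conic isotropics (including the $(t, \tau)$-direction coming from the Hamiltonian); once the antipode conventions for $\Phi^a$ and for the cotangent bundles of $M \times M \times I$ are pinned down, the rest is a routine application of the GKS uniqueness theorem together with the standard microsupport estimate for $\sHom$.
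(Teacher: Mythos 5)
Your overall strategy coincides with the paper's: characterize $\sHom(K(\Phi),\omega_M \boxtimes 1_{M\times I})$ via the GKS uniqueness theorem by bounding its microsupport by $\Lambda_{\Phi^a}$ and computing its $t=0$ slice to be $1_\Delta$. The microsupport half is fine: only a containment is needed, and the estimate $\ms(\sHom(F,G)) \subseteq \ms(F)^a \,\widehat{+}\, \ms(G)$ together with $\ms(\omega_M\boxtimes 1_{M\times I})$ lying in the zero section gives $\ms(K')\subseteq -\Lambda_\Phi=\Lambda_{\Phi^a}$; the paper instead invokes constructibility with perfect stalks and \cite[Exercise V.13]{KS} to get an equality, but either route works, and your final identification $-\Lambda_\Phi=\Lambda_{\Phi^a}$ is the same observation the paper makes. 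Your computation $\sHom(1_\Delta,\omega_M\boxtimes 1_M)=\Delta_*\Delta^!(\omega_M\boxtimes 1_M)=1_\Delta$ also matches the paper's.

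The genuine gap is the step where you pass the restriction $|_{t=0}$ (and, in the ``in particular'' clause, $|_t$) through $\sHom$. In the main computation this exchange is simply asserted, and the justification you offer later, that $\omega_M\boxtimes 1_{M\times I}$ has microsupport in the zero section, is not sufficient: the comparison map $i^*\sHom(F,G)\to\sHom(i^*F,i^*G)$ can fail even when $G$ is constant. For instance on $\RR$, with $F=1_{(0,\infty)}$, $G=1_{\RR}$ and $i_0$ the inclusion of the origin, one has $i_0^*\sHom(F,G)=1_\cV$ while $\sHom(i_0^*F,i_0^*G)=0$; the obstruction sits in the microsupport of the \emph{source}, not the target. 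What makes the exchange legitimate here is that the time slice $M\times M\times\{t\}$ is non-characteristic for $K(\Phi)$ (equivalently for $K'$), since $\Lambda_\Phi$ and $\Lambda_{\Phi^a}$ contain no covectors conormal to a slice. This is exactly how the paper argues: it uses $\Lambda_{\Phi^a}\cap N^*(M\times M\times\{0\})=\varnothing$ and \cite[Proposition 5.4.13]{KS} (trading $*$- for $!$-restriction and using $i^!\sHom(F,G)=\sHom(i^*F,i^!G)$) to move the restriction inside. Once you supply this non-characteristicity argument, the rest of your proof goes through and is essentially the paper's.
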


\begin{proof}
We would like to check that the left hand side satisfies the same uniqueness conditions as $K(\Phi^a)$.
As explained in \cite{Gui} that $K(\Phi)$ is constructible with perfect stalks and thus, by \cite[Exercise V.13]{KS}, $\ms(\sHom( K(\Phi), \omega_M \boxtimes 1_{M \times I})) = - \ms(K(\Phi))$ and (2) of the uniqueness condition for $K(\Phi^a)$ follows from the observation that $- \Lambda_\Phi = \Lambda_{\Phi^a}$. To check the first condition, we compute that
$$
 \sHom( K(\Phi), \omega_M \boxtimes 1_{M \times I} ) \big|_0 = \sHom ( K(\Phi) \big|_0, (\omega_M \boxtimes 1_{M \times I}) \big|_0 )  = \sHom(1_\Delta, \omega_M \boxtimes 1_M)
$$
where we use the fact that $\Lambda_{\Phi^a} \cap N^*(M \times M \times \{0\}) = \varnothing$ to pass the $*$-restriction over $\sHom$ by \cite[Proposition 5.4.13]{KS}.
Denote by $p_i: M \times M \rightarrow M$ the projection to the $i$-th component, since the $p_i$'s are smooth, we have the base change $\omega_M \boxtimes 1_M = p_2^* \omega_M = p_1^! 1_M$, and we conclude that
$$
 \sHom( K(\Phi), \omega_M \boxtimes 1_{M \times I} ) = \sHom(1_\Delta, p_1^! 1_M) = \Delta_* \Delta^! p_1^!  1_M = 1_\Delta,
$$
which is the uniqueness condition (1).
\end{proof}

\begin{lemma} \label{lem: dual-versus-convolution-constructible}
Denote $\Phi$, $\Phi^a$ as the above Lemma \ref{lem: dual-GKS}. Let $F \in \Sh_{\RR-c}(M)^b_0$ be a (real) constructible sheaf with compact support and perfect stalk, then we have 
$$ \ND{M \times I}(K(\Phi) \circ F) = K(\Phi^a) \circ \ND{M}(F).$$
In particular, $\ND{M}$ sends continuation maps to continuation maps. 
\end{lemma}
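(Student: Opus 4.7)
The plan is to expand both sides via the definition of convolution, apply the standard six-functor adjunctions, and then invoke Lemma~\ref{lem: dual-GKS}. Denote by $q \colon M \times M \times I \to M \times I$ the projection forgetting the first $M$ and by $p_1 \colon M \times M \times I \to M$ the projection to the first $M$, so that by definition $K(\Phi) \circ F = q_! \bigl( K(\Phi) \otimes p_1^* F \bigr)$, and similarly on the dual side. Since $F$ has compact support, $p_1^* F$ is supported on $\supp(F) \times M \times I$, which is $q$-proper, so $q_! = q_*$ on $K(\Phi) \otimes p_1^* F$.

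Using the adjunction $\sHom(q_!(-), 1_{M \times I}) = q_* \sHom(-, q^! 1_{M \times I})$ together with $q^! 1_{M \times I} = p_1^* \omega_M$ (as $q$ is a smooth projection with $M$-fiber), and then tensor--hom adjunction, one obtains $\ND{M \times I}(K(\Phi) \circ F) = q_* \sHom\bigl( K(\Phi),\, \sHom(p_1^* F,\, p_1^* \omega_M) \bigr)$. The perfect-stalks hypothesis on $F$ lets me exchange $\sHom$ with $p_1^*$, giving $\sHom(p_1^* F, p_1^* \omega_M) = p_1^* \ND{M}(F) \otimes p_1^* \omega_M$ since $\omega_M$ is invertible. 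Because $K(\Phi)$ is itself $\RR$-constructible with perfect stalks by \cite{Gui}, one may then pull $p_1^* \ND{M}(F)$ out of the outer $\sHom$, yielding $\sHom(K(\Phi), p_1^* \omega_M) \otimes p_1^* \ND{M}(F) = K(\Phi^a) \otimes p_1^* \ND{M}(F)$, where the last step is precisely Lemma~\ref{lem: dual-GKS}. Hence $\ND{M \times I}(K(\Phi) \circ F) = q_*\bigl( K(\Phi^a) \otimes p_1^* \ND{M}(F) \bigr)$.

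The remaining task is to check that $q_! = q_*$ on $K(\Phi^a) \otimes p_1^* \ND{M}(F)$ as well, which will identify it with $K(\Phi^a) \circ \ND{M}(F)$. This relies on the elementary observation $\supp \sHom(F, G) \subseteq \supp F$, which forces $\ND{M}(F)$ to have compact support since $F$ does, so $p_1^* \ND{M}(F)$ is $q$-properly supported, and the conclusion for the tensor product follows exactly as on the other side. The main obstacle will be recognizing and cleanly handling this $q_! = q_*$ step on the $\ND{M}(F)$ side; once that is in place, the rest is routine six-functor bookkeeping. The ``in particular'' statement then follows from the naturality of the established identification in $F$: under $\ND{M \times I}$, the continuation map $F \to F^\varphi$ for positive $\Phi$ (arising from the canonical map $1_\Delta \to K(\Phi)$) is transported to the corresponding structural morphism for $K(\Phi^a)$, which is a continuation map for the antipodal flow (with the direction reversed, as the antipodal map reverses the sign of $\alpha$).
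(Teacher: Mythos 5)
Your argument follows the same skeleton as the paper's proof: expand $\ND{M \times I}(K(\Phi) \circ F)$ via the $(q_!,q_*)$ adjunction and $q^! 1_{M \times I} = p_1^* \omega_M$, apply tensor--hom, invoke Lemma \ref{lem: dual-GKS} to produce $K(\Phi^a)$, and finish by trading $q_*$ back for $q_!$ using $\supp \ND{M}(F) \subseteq \supp(F)$. The one step that is not justified as written is pulling $p_1^* \ND{M}(F)$ out of the outer $\sHom$: the isomorphism $\sHom\bigl(K(\Phi),\, p_1^*\ND{M}(F) \otimes p_1^*\omega_M\bigr) \simeq \sHom\bigl(K(\Phi), p_1^*\omega_M\bigr) \otimes p_1^*\ND{M}(F)$ does \emph{not} follow from $K(\Phi)$ being $\RR$-constructible with perfect stalks alone. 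For a cohomologically constructible $A$ the natural map $\sHom(A,1)\otimes B \to \sHom(A,B)$ can fail to be an isomorphism: on $\RR$, with $A = 1_{\{0\}}$ and $B = 1_{(0,1)}$, the left-hand side is zero while the right-hand side has stalk $1_\cV[-1]$ at the origin. The correct tool is \cite[Proposition 5.4.14]{KS}, which requires, in addition to perfect stalks and constructibility of the first argument, that the two microsupports meet only in the zero section.

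The gap is repairable, because the required disjointness does hold in your situation: $\msif(K(\Phi)) \subseteq \Lambda_\Phi$, on which the covector components in \emph{both} $M$-factors are nonzero, whereas $\ms\bigl(p_1^*\ND{M}(F) \otimes p_1^*\omega_M\bigr) \subseteq \ms(\ND{M}(F)) \times 0_{M \times I}$ has vanishing covectors in the second $M$-factor and in the $I$-direction; hence the intersection of the two microsupports lies in the zero section and \cite[Proposition 5.4.14]{KS} applies. This is exactly the microlocal input the paper uses, except that the paper performs the hom--tensor exchange on the other factor, keeping $q_x^* F$ outside the $\sHom$ and invoking constructibility and perfect stalks of $F$ together with the disjointness of $\ms(q_x^* F)$ from $\Lambda_{\Phi^a}$; either order works once the disjointness is stated. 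Two minor points: the identity $\sHom(p_1^* F, p_1^* \omega_M) = p_1^* \sHom(F,\omega_M)$ needs no perfect-stalk hypothesis (it follows from $\sHom(p_1^* F, p_1^! G) = p_1^! \sHom(F,G)$ and the invertibility of the relative dualizing complex of the submersion $p_1$), and your opening observation that $q_! = q_*$ on $K(\Phi) \otimes p_1^* F$ is never used, since the adjunction is applied to $q_!$ directly. The final $q_! = q_*$ step and the ``in particular'' clause are fine.
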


\begin{proof}
Denote by $q_x$ and $q_{yt}$ the projections from $M \times M \times I$ to $M$ and $M \times I$ by $q_x(x,y,t) = x$ and $q_{yt}(x,y,t) = (y,t)$ and so $K(\Phi) \circ F \coloneqq {q_{yt}}_! ( K(\Phi) \otimes q_x^*F)$. We first compute that
\begin{align*}
 \ND{M \times I}(K(\Phi) \circ F)  
 &= \sHom({q_{yt}}_! ( K(\Phi) \otimes q_x^*F), 1_{M \times I}) = {q_{yt}}_* \sHom(K(\Phi) \otimes q_x^*F, q_{yt}^! 1_{M \times I}) \\
 &= {q_{yt}}_* \sHom(K(\Phi) \otimes q_x^*F, q_x^* \omega_M) =  {q_{yt}}_* \sHom(q_x^*F, \sHom(K(\Phi) , q_x^* \omega_M) ). 
\end{align*}
We remark that we have not yet used any assumption on $F$. Now, we apply the above Lemma \ref{lem: dual-GKS}, and conclude that $ \ND{M \times I}(K(\Phi) \circ F)  = {q_{yt}}_* \sHom(q_x^*F, K(\Phi^a))$. Since $\msif(K(\Phi^a)) \subseteq \Lambda_{\Phi^a}$ and $\ms(q_x^* F) = \ms(F) \times {M \times I}$, their microsupports do not intersect, and we can apply \cite[Proposition 5.4.14]{KS} to conclude that
$$  \sHom(q_x^*F, K(\Phi^a)) = \sHom(q_x^*F, 1_{M \times M \times I}) \otimes K(\Phi^a) = K(\Phi^a) \otimes q_x^* \ND{M}(F)$$ 
since $F$ is constructible with perfect stalks. Lastly, since $\supp(F)$ is compact, we have 
\begin{equation*}
\ND{M \times I}(K(\Phi) \circ F)  = {q_{yt}}_* \left( K(\Phi^a) \otimes q_x^* \ND{M}(F) \right) = {q_{yt}}_! \left( K(\Phi^a) \otimes q_x^* \ND{M}(F) \right) \eqqcolon K(\Phi^a) \circ \ND{M}(F). \qedhere
\end{equation*}
\end{proof}


\begin{proposition}
The naive duality $\ND{M}: \Sh(M) \rightarrow \Sh(M)^\vee$ defined in (\ref{for: naive-dual}) induces an anti-equivalence, \begin{align*}
\WD{\Lambda}: \wsh_\Lambda(M) &\xrightarrow{\sim} \wsh_\Lambda(M)^{op} \\
F &\mapsto \ND{M}(F).
\end{align*}
\end{proposition}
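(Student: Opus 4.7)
The plan is to show that $\ND{M}$ descends from $\widetilde{\wsh_M}(M)$ to the localization $\wsh_\Lambda(M)$ by checking that it preserves the defining subcategory and sends the class $\sC_\Lambda(M)$ of cofibers of continuation maps to itself, and then to conclude that the descended functor is an anti-equivalence by virtue of biduality on the ambient category of $\RR$-constructible sheaves with compact support and perfect stalks.

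First I would verify that $\ND{M}(F) \in \widetilde{\wsh_M}(M)$ for $F \in \widetilde{\wsh_M}(M)$, possibly after replacing by a wrapped representative. The compact support and perfect-stalk conditions are preserved because $\sHom(-, 1_M)$ commutes with restriction to open sets and with taking stalks on $\RR$-constructible sheaves with perfect stalks. For the microsupport, we have $\ms(\ND{M}(F)) = -\ms(F)$ by \cite[Exercise V.13]{KS}, so disjointness from $\Lambda$ is inherited by $\ND{M}(F)$ after wrapping: if $\ms(F)$ can be displaced from $\Lambda$ by a positive contact isotopy $\Phi$, then $-\ms(F)$ is displaced by the antipodally-related isotopy $\Phi^a$, and in the localization these two routes produce equivalent objects.

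Next I would invoke Lemma \ref{lem: dual-versus-convolution-constructible} to show that $\ND{M}$ sends continuation maps to continuation maps. Explicitly, for a positive isotopy $\Phi$ and $F \in \widetilde{\wsh_M}(M)$, the lemma identifies
$$\ND{M \times I}(K(\Phi) \circ F) = K(\Phi^a) \circ \ND{M}(F),$$
so the continuation map $c: F \to F^\Phi$ is sent to a map $\ND{M}(c): \ND{M}(F)^{\Phi^a} \to \ND{M}(F)$, which, viewed in $\wsh_\Lambda(M)^{op}$ after reversing source and target, is again a continuation map. Consequently $\ND{M}$ sends the generators of $\sC_\Lambda(M)$ into $\sC_\Lambda(M)$, and the universal property of Dwyer--Kan localization furnishes a descent
$$\WD{\Lambda}: \wsh_\Lambda(M) \to \wsh_\Lambda(M)^{op}, \qquad F \mapsto \ND{M}(F).$$

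Finally, anti-equivalence follows from the biduality isomorphism $\sHom(\sHom(F, 1_M), 1_M) = F$ on $\RR$-constructible sheaves with compact support and perfect stalks, which descends to $\WD{\Lambda}^{op} \circ \WD{\Lambda} = \id$ on $\wsh_\Lambda(M)$. The main obstacle will be the microsupport bookkeeping in the first step: one must check that ``Lagrangian disjoint from $\Lambda$ up to isotopy'' is symmetric under the antipodal involution on $S^*M$, which reduces to noting that $\Phi \leftrightarrow \Phi^a$ interchanges the two relevant classes of isotopies in a way compatible with localization, using the cofinality properties of positive wrappings established in \cite{Kuo-wrapped-sheaves}.
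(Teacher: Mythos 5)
Your proof takes essentially the same route as the paper's: restrict $\ND{M}$ to $\RR$-constructible sheaves with compact support and perfect stalks (where biduality holds by \cite[Proposition 3.4.3]{KS}), use Lemma \ref{lem: dual-versus-convolution-constructible} to show $\ND{M}$ carries continuation maps to continuation maps and hence $\sC_\Lambda(M)$ into $\sC_{-\Lambda}(M)$, and descend through the localization. One sign point to tidy up: since $\ms(\ND{M}(F)) = -\ms(F)$, what is inherited is disjointness from $-\Lambda$ (not $\Lambda$), so the descended equivalence is $\WD{\Lambda}\colon \wsh_\Lambda(M) \xrightarrow{\sim} \wsh_{-\Lambda}(M)^{op}$, consistent with the diagram in Theorem \ref{thm: standard-dual-by-wrappings}; the target $\wsh_\Lambda(M)^{op}$ in the proposition statement (which you reproduce) should read $\wsh_{-\Lambda}(M)^{op}$.
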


\begin{proof}
By \cite[Theorem 8.4.2]{KS}, objects in $\widetilde{\wsh}_\Lambda(M) \subseteq \Sh_{\RR-c}(M)^b$, (real) constructible sheaves with perfect stalks, where the naive duality restricts to an equivalence 
$$\ND{M}: \Sh_{\RR-c}(M)^b \xrightarrow{\sim} \Sh_{\RR-c}(M)^{b,op}$$
by \cite[Proposition 3.4.3]{KS}.
Thus, but the above Lemma \ref{lem: dual-versus-convolution-constructible} implies that $\ND{M}$ sends $\widetilde{\wsh}_\Lambda(M)$ to $\widetilde{\wsh}_{-\Lambda}(M)$ and $\sC_\Lambda(M)$ to $\sC_{-\Lambda}(M)$, and thus it descends to an equivalence
\begin{equation*}
\WD{\Lambda}: \wsh_\Lambda(M) \xrightarrow{\sim} \wsh_\Lambda(M)^{op}. \qedhere
\end{equation*}
\end{proof}
 
\begin{theorem} \label{thm: standard-dual-by-wrappings}
There is a commutative diagram, consisting of equivalences:
$$
\begin{tikzpicture}
\node at (0,2) {$\wsh_\Lambda(M)$};
\node at (5,2) {$\Sh_\Lambda(M)^c$};
\node at (0,0) {$\wsh_{-\Lambda}(M)^{op}$};
\node at (5,0) {$\Sh_{-\Lambda}(M)^{c,op}$};

\draw [->, thick] (1.1,2) -- (3.9,2) node [midway, above] {$\wrap_\Lambda^+$};
\draw [->, thick] (1.1,0) -- (3.9,0) node [midway, above] {$(\wrap_\Lambda^+)^{op}$};

\draw [->, thick] (0,1.7) -- (0,0.3) node [midway, right] {$\WD{\Lambda}$}; 
\draw [->, thick] (5,1.7) -- (5,0.3) node [midway, right] {$D_\Lambda$};
\end{tikzpicture}
$$ 

\end{theorem}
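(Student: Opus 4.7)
My plan is to verify the diagram commutes by checking both composites agree via the universal characterization of the standard duality. Write $D_\Lambda = \SD{-\widehat\Lambda}: \Sh_{\widehat\Lambda}^c(M) \to \Sh_{-\widehat\Lambda}^c(M)^{op}$, so that Equation~(\ref{eq: duality-as-pairing}) reads $\Hom(D_\Lambda(H), G) = p_!(H \otimes G)$ for $H \in \Sh_\Lambda^c(M)$ and $G \in \Sh_{-\widehat\Lambda}(M)$. Given $F$ in a representative $\widetilde{\wsh}_\Lambda(M)$ of a class in $\wsh_\Lambda(M)$, commutativity of the square therefore reduces, by Yoneda, to producing a natural equivalence
$$\Hom(\wrap_{-\Lambda}^+(\sHom(F, 1_M)), G) \simeq p_!(\wrap_\Lambda^+(F) \otimes G)$$
for every $G \in \Sh_{-\widehat\Lambda}(M)$.

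For the left-hand side, the adjunction $\wrap_{-\Lambda}^+ = \iota_{-\Lambda}^* \dashv \iota_{-\Lambda *}$ immediately reduces it to $\Hom(\sHom(F, 1_M), G)$. Since $F$ is compactly supported and constructible with perfect stalks, and since its microsupport is Lagrangian disjoint from $\Lambda$ at infinity while $\msif(G) \subseteq -\Lambda$, the microsupports of $F$ and $G$ meet only in the zero section at infinity. The tensor-Hom formula \cite[Proposition 5.4.14]{KS} combined with biduality of constructible sheaves then yields $\sHom(\sHom(F, 1_M), G) \simeq F \otimes G$, and applying global sections, with $p_* = p_!$ by compactness of $\supp F$, delivers $p_!(F \otimes G)$.

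For the right-hand side, I would expand $\wrap_\Lambda^+(F) = \colim_\varphi F^\varphi$ via Equation~(\ref{for: large-wrappings}) and commute $p_!$ past the colimit, reducing to the claim that each continuation map induces an isomorphism $p_!(F \otimes G) \xrightarrow{\sim} p_!(F^\varphi \otimes G)$. The strategy is a family argument: extend $\varphi$ to a positive contact isotopy $\Phi$ on an interval $I \ni 0$, and set $\mathcal{H} := (K(\Phi) \circ F) \otimes \pi_M^* G \in \Sh(M \times I)$. Writing $\sigma: M \times I \to I$ for the projection, the fiber of $\sigma_! \mathcal{H}$ at $t$ is $p_!(F^{\varphi_t} \otimes G)$, so it suffices to show $\sigma_! \mathcal{H}$ is locally constant on $I$, i.e.\ that $\ms(\sigma_! \mathcal{H})$ has vanishing $\tau$-component. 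By the non-characteristic pushforward estimate \cite[Proposition 5.4.4]{KS}, this reduces to analyzing where $\ms(\mathcal{H})$ meets $0_{T^*M} \times T^*I$; since $\Phi$ is supported away from $\Lambda$, the wrapped microsupport $\varphi_t(\msif(F))$ remains disjoint from $\Lambda \supseteq -\msif(G)$, ruling out contributions at infinity, while zero-section contributions carry $\tau = 0$ because the generating Hamiltonian of $\Phi$ is conic and vanishes on $0_M$.

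The final step is to check naturality in $F$ and that both sides send cofibers of continuation maps in $\widetilde{\wsh}_\Lambda(M)$ to zero---the left side tautologically via the $\wrap_{-\Lambda}^+$ adjunction, the right side by the same family argument applied to the continuation map itself. This descends the equivalences to $\wsh_\Lambda(M)$, and by the characterization of $\SD{-\widehat\Lambda}$ via Equation~(\ref{eq: duality-as-pairing}) yields the desired commutative square. The main technical difficulty I anticipate lies in the microsupport analysis of $\mathcal{H}$: one must simultaneously control the contact movie of $K(\Phi) \circ F$ at infinity and over the zero section, and the hypothesis that $\Phi$ is compactly supported in the complement of $\Lambda$ is the essential geometric input that rules out interaction with $\ms(G)$.
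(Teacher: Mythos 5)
Your proposal is correct, and half of it coincides with the paper's own argument: both reduce the square, via Yoneda and the pairing characterization $\Hom(\SD{\widehat\Lambda}H,G)=p_!(H\otimes G)$, to identifying $\Hom(\wrap_{-\Lambda}^+\ND{M}(F),G)$ with $p_!(\wrap_\Lambda^+(F)\otimes G)$, and both compute the first expression by the adjunction $\iota_{-\Lambda}^*\dashv\iota_{-\Lambda*}$ followed by biduality and \cite[Proposition 5.4.14]{KS} to land on $p_*(F\otimes G)=p_!(F\otimes G)$. Where you genuinely diverge is in handling the wrapping on the other side: you expand $\wrap_\Lambda^+(F)=\colim_\varphi F^\varphi$, commute $p_!(-\otimes G)$ past the (filtered) colimit, and prove each continuation map becomes an isomorphism by a GKS family argument (pushing $(K(\Phi)\circ F)\otimes\pi_M^*G$ to the interval and killing the $\tau$-component of the microsupport). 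The paper never touches the colimit formula at all: it pairs $p_!(\iota_\Lambda^*(F)\otimes G)$ against $V\in\cV$, observes that $\sHom(G,p^!V)$ lies in $\Sh_\Lambda(M)$ because $p^!V$ is a local system and $\ms(\sHom(G,p^!V))\subseteq-\ms(G)\subseteq\widehat\Lambda$, and then simply drops $\iota_\Lambda^*$ by the adjunction $\iota_\Lambda^*\dashv\iota_{\Lambda*}$, making this step purely categorical. Your route works, but it carries the technical load you yourself flag: properness of the support over $I$, the fact that the wrapping category is filtered with the identity as a cone point, and the analysis that zero-section contributions have vanishing $\tau$ (which holds only in the limit, since the degree-one homogeneous Hamiltonian tends to zero toward the zero section rather than being defined there); the paper's adjunction trick buys you exemption from all of this microsupport bookkeeping, at the price of being less geometrically explicit about how the wrapping acts. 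Your extra step checking that cofibers of continuation maps are killed re-proves well-definedness of $\WD{\Lambda}$, which the paper has already established in the preceding proposition, so it is redundant but harmless.
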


\begin{proof} 
We've explained that all the functors in the diagrams are equivalences. Since $\WD{\Lambda}$ is induced from the restriction of $\ND{M}$ to $\widetilde{\wsh}_\Lambda(M)$, it is sufficient to exhibit commutativity for the the following diagram:

$$
\begin{tikzpicture}
\node at (0,2) {$\wsh_\Lambda(M)$};
\node at (5,2) {$\Sh_\Lambda(M)^c$};
\node at (0,0) {$\wsh_{-\Lambda}(M)^{op}$};
\node at (5,0) {$\Sh_{-\Lambda}(M)^{c,op}$};

\draw [->, thick] (1.1,2) -- (3.9,2) node [midway, above] {$\iota_\Lambda^*$};
\draw [->, thick] (1.1,0) -- (3.9,0) node [midway, above] {$(\iota_\Lambda^*)^{op}$};

\draw [->, thick] (0,1.7) -- (0,0.3) node [midway, right] {$\ND{M}$}; 
\draw [->, thick] (5,1.7) -- (5,0.3) node [midway, right] {$D_\Lambda$};
\end{tikzpicture}
$$ 

We remark that, although $\iota_\Lambda^* = \wrap_\Lambda^+$, we use the first expression to emphasize that the rest of the proof is completely categorical. Let $F \in \wsh_\Lambda(M)$ and $G \in \Sh_{-\Lambda}(M)^c$. Since $\Hom(\SD{\Lambda} \iota_\Lambda^*(F) , G) = p_!\left( \iota_\Lambda^*(F) \otimes G \right)$, we will pair the latter with $V \in \cV$, and compute that
\begin{align*}
\Hom(p_!\left( \iota_\Lambda^*(F) \otimes G \right), V) 
&= \Hom( \iota_\Lambda^*(F)  , \sHom(G, p^! V) ) = \Hom(F, \sHom(G,p^! V)) \\
&= \Hom(p_! (F \otimes G), V) = \Hom(p_*(F \otimes G), V).
\end{align*}
Here, we use the fact that, since $p^! V$ is a local system, $\ms(\sHom(G,p^!(V)) \subseteq - \ms(G) \subseteq \Lambda$ for the second equality.  Note also that we use the fact that $\supp(F)$ is compact in the last equality. But then, we recall that $\ms(F) \cap \Lambda = \varnothing$ and thus $\ms(\ND{M}(F)) \cap \ms(G) \subseteq \ms(\ND{M}(F)) \cap \left(-\Lambda \right) =\varnothing$, and \cite[Proposition 5.4.14]{KS} applies. Thus, we can compute that
\begin{align*}
\Hom(\SD{\Lambda}\iota_\Lambda^*(F), G) &= p_*(F \otimes G) = \Hom(1_M, \sHom(\ND{M}(F),1_M) \otimes  G) \\
&=  \Hom(1_M, \sHom(\ND{M}(F), G)  ) 
= \Hom(\iota_\Lambda^*(\ND{M}(F)),G). \qedhere
\end{align*}
\end{proof}

\subsection{Verdier duality and Serre functor}\label{sec:verdierdual}
In this section, we assume that $\widehat\Lambda \subseteq T^* M$ has compact intersection with the zero section.
We will compare the duality $\SD{\widehat\Lambda}: \Sh_{-\widehat\Lambda}^c(M) \xrightarrow{\sim} \Sh_{\widehat\Lambda}^c(M)^{op}$ we obtain from the last subsection with the more classical Verdier duality.
Recall that, for a locally compact Hausdorff space $X$, the Verdier duality is a functor
\begin{align*}
\VD{M}: \Sh(X) &\rightarrow \Sh(X)^{op} \\
F &\mapsto \sHom(F,\omega_X)
\end{align*}
where $\omega_X \coloneqq p^! 1_\cV$ is the dualizing sheaf of $X$.
We note that when $X = M$ is a $C^1$-manifold of dimension $n$, $\omega_M$ is an invertible local system. We will discuss this in more details in \cite{Kuo-Li-Calabi-Yau} following the formulation of Volpe \cite{Volpe-six-operations}.

We also note that $\VD{M}$ is not an equivalence on the (large) category $\Sh(M)$ so we have to restrict to smaller categories.
Recall that we use the notation $\Sh_{\widehat\Lambda}^b(M)$ to denote the full subcategory of $\Sh_{\widehat\Lambda}(M)$ consisting of sheaves with perfect stalks.
In this case, the Verdier dual
\begin{align*}
\VD{M}: \Sh_{\widehat\Lambda}^b(M)^{op} &\xrightarrow{\sim} \Sh_{-\widehat\Lambda}^b(M) \\
F &\mapsto \VD{M}(F) \coloneqq \sHom(F,\omega_M)
\end{align*}
is an equivalence since the double dual $F \rightarrow \VD{M} (\VD{M} (F))$ is an isomorphism by \cite[Proposition 3.4.3]{KS}.
From now on, we assume $M$ is compact for the rest of the subsection. Then by Proposition \ref{mc=cc} and Proposition \ref{prop:stopremoval}, we have $\Sh_{\widehat\Lambda}^b(M) \subseteq \Sh_{\widehat\Lambda}^c(M)$. One can then ask what is the relation between $\SD{\widehat\Lambda}$ and $\VD{M}$.
We will explicitly use the following consequence from the rigidity assumption on $\cV$ in the computation.

\begin{lemma}[{\cite[Proposition 4.9]{Hoyois-Scherotzke-Sibilla}}]
Assume $\cV_0$ is a rigid symmetric monoidal category.
Then there is a canonical equivalence of symmetric monoidal $\infty$-categories
$$\cV_0 \rightarrow \cV_0^{op}, \, X \mapsto X^\vee \coloneqq \Hom(X,1_{\cV}).$$
In particular, $(X^\vee)^\vee = X$.
\end{lemma}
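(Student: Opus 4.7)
The plan is to build $(-)^\vee$ as a symmetric monoidal equivalence by exploiting uniqueness of duals in a rigid symmetric monoidal category. First I would observe that rigidity of $\cV_0$ is precisely the condition that every object is dualizable in the sense of Definition \ref{smdual}. Thus for each $X \in \cV_0$ there is a distinguished triple $(X^\vee, \eta_X, \epsilon_X)$, and the assignment $X \mapsto X^\vee$ extends to a contravariant functor $(-)^\vee: \cV_0 \to \cV_0^{op}$ by sending $f: X \to Y$ to the dual morphism $f^\vee: Y^\vee \to X^\vee$ defined in Section \ref{sec: dual}. Functoriality at the $(\infty,1)$-level is encoded by the fact that the space of duality data on a given object is contractible, so $(-)^\vee$ is well-defined up to coherent equivalence.

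Next I would check involutivity $(X^\vee)^\vee \simeq X$. The point is that the pair $(\sigma \circ \eta_X, \epsilon_X \circ \sigma)$, where $\sigma$ is the braiding of the symmetric monoidal structure, exhibits $X$ as a dual of $X^\vee$; invoking uniqueness of duals then gives a canonical equivalence $(X^\vee)^\vee \simeq X$, and these assemble into a natural equivalence $(-)^{\vee\vee} \simeq \id_{\cV_0}$. In particular $(-)^\vee$ is an equivalence $\cV_0 \xrightarrow{\sim} \cV_0^{op}$.

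To upgrade this to a symmetric monoidal equivalence, I would verify $(X \otimes Y)^\vee \simeq X^\vee \otimes Y^\vee$ canonically. This follows because the data $(\eta_X \otimes \eta_Y, \epsilon_X \otimes \epsilon_Y)$, after inserting the appropriate braiding isomorphisms, exhibits $X^\vee \otimes Y^\vee$ as a dual of $X \otimes Y$. Uniqueness of duals then produces the required comparison, and one checks compatibility with associator, unitor, and braiding coherence diagrams. The cleanest way to organize this, following \cite{Hoyois-Scherotzke-Sibilla} and Lurie, is to extract the symmetric monoidal structure from the self-duality data on the identity functor in the $(\infty,1)$-category of symmetric monoidal categories, where coherence is automatic.

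Finally, the identification $X^\vee = \Hom(X, 1_\cV)$ is a direct consequence of Proposition \ref{dual-gives-inHom}: for a dualizable object $X$, one has the canonical equivalence $\underline{\Hom}(X, Y) \simeq Y \otimes X^\vee$ realized by the adjunction, and specializing to $Y = 1_\cV$ gives $\Hom(X, 1_\cV) \simeq X^\vee$. The main obstacle here is not the algebraic content, which is classical, but rather keeping track of the higher coherences needed to promote all of the above isomorphisms to a morphism in the $(\infty,1)$-category of symmetric monoidal $(\infty,1)$-categories; this is precisely the work carried out in \cite[Proposition 4.9]{Hoyois-Scherotzke-Sibilla}, to which we defer.
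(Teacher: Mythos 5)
This lemma is cited in the paper from \cite[Proposition 4.9]{Hoyois-Scherotzke-Sibilla} without a proof, so there is no in-paper argument to compare against. Your sketch is a correct account of the classical argument underlying the cited result: rigidity supplies a duality triple $(X^\vee,\eta_X,\epsilon_X)$ for every object, the braiding produces duality data exhibiting $X$ as a dual of $X^\vee$ (hence involutivity), and the same device applied to $\eta_X\otimes\eta_Y$ and $\epsilon_X\otimes\epsilon_Y$ gives the compatibility $(X\otimes Y)^\vee\simeq X^\vee\otimes Y^\vee$; the identification $X^\vee\simeq\underline{\Hom}(X,1_\cV)$ then follows from Proposition \ref{dual-gives-inHom} with $Y=1_\cV$.

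Two small remarks. First, you correctly flag that the genuine content of the cited proposition is the $\infty$-categorical coherence: the pointwise argument you give is enough for the homotopy category, but building $(-)^\vee$ as a morphism in the $\infty$-category of symmetric monoidal $\infty$-categories requires more care (for instance realizing it through the internal hom functor $\underline{\Hom}(-,1_\cV)$, which is a priori a functor $\cV_0^{op}\to\cV_0$, and then checking rigidity makes it an equivalence with a canonical symmetric monoidal structure). Starting from the internal-hom formula rather than ending with it would streamline the coherence bookkeeping, but both routes reach the same place. Second, note that $\Hom(X,1_\cV)$ in the statement should be read as the internal hom $\underline{\Hom}(X,1_\cV)$, as you do implicitly; otherwise the formula does not define an object of $\cV_0$.
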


First, we recall the perturbation lemma \cite[Section 4.1]{Kuo-Li-spherical} which explains the effect of a positive contact isotopy, which will be crucial later.

\begin{proposition}[Perturbation lemma]\label{prop:perturbation}
Let $\widehat\Lambda \subseteq T^*M$ be a subanalytic isotropic that has compact intersection with the zero section. Let $T_\epsilon$ be any positive contact push-off displacing $\Lambda$ from itself. Then for $F, G \in \Sh_{\Lambda}(M)$ such that $\mathrm{supp}(F) \cap \mathrm{supp}(G)$ is compact in $M$,
$$\Hom(F, G) \simeq \Hom({F}, T_\epsilon{G}).$$
\end{proposition}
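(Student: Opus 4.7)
The strategy is to interpolate between $G$ and $T_\epsilon G$ with the Guillermou--Kashiwara--Schapira kernel and reduce the statement to the constancy of a sheaf on $I = [0, \epsilon]$ whose stalks compute $\Hom(F, T_t G)$. Concretely, let $K(\hat T) \in \Sh(M \times M \times I)$ be the GKS kernel of a positive contact isotopy $\hat T$ implementing $T_\epsilon$ (chosen so that $\hat T_t(\Lambda) \cap \Lambda = \emptyset$ for all $t \in (0, \epsilon]$), set $T(G) := K(\hat T) \circ G \in \Sh(M \times I)$ so that $T(G)|_{M \times \{t\}} = T_t G$, and consider
\[
\mathcal{K} := \sHom(\pi_M^* F, T(G)) \in \Sh(M \times I), \qquad \mathcal{H} := \pi_{I*}\mathcal{K} \in \Sh(I),
\]
where $\pi_M, \pi_I$ are the two projections. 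Arranging $\hat T$ to be supported in a small enough neighborhood of $\Lambda$ at infinity keeps $\supp(T_t G)$ within a fixed neighborhood of $\supp(G)$, so the compact support hypothesis on $\supp(F) \cap \supp(G)$ makes $\pi_I$ proper on $\supp(\mathcal{K})$; base change then identifies $\mathcal{H}|_t$ with $\Hom(F, T_t G)$, and the restriction maps of $\mathcal{H}$ encode the continuation maps induced by the positive isotopy.

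The main step is to show $\mathcal{H}$ is a constant sheaf on $[0,\epsilon]$ via a microsupport estimate. Combining the non-characteristic formula for $\sHom$ (\cite[Proposition 5.4.14]{KS}) with the pushforward estimate (\cite[Proposition 5.4.4]{KS}), it suffices to prove that $\ms(\mathcal{K})$ contains no point of the form $(x, 0; t, \tau)$ with $\tau \neq 0$ for $t \in (0,\epsilon]$, and only outward-pointing covectors $(x, 0; 0, \tau)$ with $\tau \leq 0$ at $t = 0$. The GKS description places $\ms(T(G))$ at infinity on the contact suspension Legendrian in $S^*(M \times I)$, so such a hypothetical point would force either $\xi = 0$ to lie on the suspension (impossible since that Legendrian is disjoint from the zero section of $T^*M$) or a common $\xi \neq 0$ in $\ms(F) \cap \ms(T_t G) \subseteq \Lambda \cap \hat T_t(\Lambda)$; the displacement hypothesis excludes the latter for $t \in (0,\epsilon]$. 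At $t = 0$, where $\hat T_0 \Lambda = \Lambda$, such a common $\xi$ exists, but one exploits that $\hat T$ is generated by a contact Hamiltonian strictly positive on $\Lambda$, so the suspension Legendrian over $\Lambda$ has $\tau = -H < 0$, which is precisely the outward conormal direction at the left endpoint of $[0,\epsilon]$. A one-sided propagation argument in the style of \cite[Proposition 2.7.2]{KS} then forces $\mathcal{H}$ to be constant on $[0, \epsilon]$, so the restriction $\mathcal{H}|_0 \xrightarrow{\sim} \mathcal{H}|_\epsilon$ is an isomorphism, which unravels to the continuation map $\Hom(F, G) \to \Hom(F, T_\epsilon G)$.

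The main obstacle is the careful handling of the boundary $t = 0$, where the displacement hypothesis degenerates and pure local constancy of $\mathcal{H}$ fails; the fix is to use the sign of the contact Hamiltonian to pin the $\tau$-component of $\ms(\mathcal{K})|_{t=0}$ to the outward direction and replace local constancy with one-sided propagation. A secondary technical point is verifying the non-characteristic hypothesis for the $\sHom$ estimate, i.e.\ $\ms(\pi_M^*F) \cap \ms(T(G)) \subseteq 0_{M\times I}$: since $\ms(\pi_M^*F)$ lies in the slice $\tau = 0$ while the suspension Legendrian has $\tau = -H \neq 0$ over $\Lambda$ by positivity of $H$, the only intersection is in the zero section, so the estimate applies. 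Finally, the properness of $\pi_I$ on $\supp(\mathcal{K})$ is ensured by shrinking $\epsilon$ so that $\hat T$ only moves $\supp(G)$ by a small bounded amount.
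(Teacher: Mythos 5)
First, note that this paper does not actually prove Proposition \ref{prop:perturbation}: it is recalled from the companion paper \cite[Section 4.1]{Kuo-Li-spherical}, so there is no internal proof to compare against line by line. Your strategy (interpolate with the GKS kernel, push the $\sHom$-family forward to the time interval, use positivity to control the $\tau$-component of the microsupport and displacement to get local constancy for $t>0$) is exactly the standard approach and surely the same genre as the cited argument. However, as written your endgame has a genuine gap, in two places. First, the identification ``base change identifies $\mathcal{H}|_t$ with $\Hom(F,T_tG)$'' is only justified at those $t$ where the slice $M\times\{t\}$ is non-characteristic for $\mathcal{K}$, i.e.\ for $t\in(0,\epsilon]$; at $t=0$ the slice is characteristic (this is precisely where $\Lambda\cap T_0\Lambda=\Lambda\neq\varnothing$), so equating $\mathcal{H}_0$ with $\Hom(F,G)$ is not base change --- it is essentially the content of the lemma and needs its own argument (for instance by extending the isotopy to be the identity for $t\le 0$ and working with sections over intervals rather than the stalk at $0$).

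Second, and more seriously, the propagation step does not follow from the estimate you establish. You correctly get $\tau=-H\le 0$ on the suspension, hence $\ms(\mathcal{H})\subseteq\{\tau\le 0\}$ at $t=0$ and $\ms(\mathcal{H})\subseteq 0_I$ over $(0,\epsilon]$. But ``$\tau\le 0$ at the left endpoint plus locally constant on $(0,\epsilon]$'' does not force $\mathcal{H}$ to be constant on $[0,\epsilon]$, nor $\mathcal{H}_0\xrightarrow{\sim}\mathcal{H}_\epsilon$: the sheaf $k_{(-\delta,0]}$ on the interval has exactly this microsupport profile (its only nonzero covector at $0$ has $\tau<0$, and it is locally constant, indeed zero, on $(0,\epsilon]$), yet its stalk at $0$ is $k$ and its stalk at $\epsilon$ is $0$. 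What the non-characteristic deformation lemma gives you under $\{\tau\le 0\}$ is independence of $R\Gamma(\{t<b\},\mathcal{H})$ in $b$; the statement you need --- that restriction to a right subinterval, equivalently the generization map out of $t=0$, is an isomorphism --- is of the opposite chirality and would require the absence of $\tau<0$ covectors at $t=0$, which is not what positivity of the isotopy gives for the family $\sHom(\pi_M^*F,T(G))$. So the argument must be restructured at the junction $t=0$: one has to identify \emph{both} $\Hom(F,G)$ and $\Hom(F,T_\epsilon G)$ with sections of the family over suitable intervals (using, e.g., an isotopy extended by the identity for $t\le 0$) and then supply the comparison across $t=0$, rather than deduce a stalk-to-stalk isomorphism from the $\{\tau\le0\}$ bound alone. (The strict positivity of $H$ on $\Lambda$ that you invoke is a minor additional assumption and does not repair this, since it only sharpens $\tau\le0$ to $\tau<0$ on the same side.)
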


Recall from the introduction of this section that the wrap-once functors are defined by
$$S_{\widehat\Lambda}^+(F) = \iota_{\widehat\Lambda}^* \circ \varphi_\epsilon(F), \;\;\; S_{\widehat\Lambda}^-(F) = \iota_{\widehat\Lambda}^! \circ \varphi_{-\epsilon}(F).$$
When $\widehat\Lambda$ contains the zero section, they agree with the functors $S_\Lambda^+$ and $S_\Lambda^-$ which are introduced in \cite[Section 4.2]{Kuo-Li-spherical}. In general, we have the following relation:

\begin{corollary} \label{cor: inverse-Serre}
Let $\widehat\Lambda \subseteq T^*M$ be a subanalytic isotropic. Denote by $\iota_{0*}: \Sh_{\widehat\Lambda}(M) \to \Sh_{M \cup\widehat\Lambda}(M)$ the inclusion functor with left adjoint $\iota_0^*$ and right adjoint $\iota_0^!$.
Then the wrap-once functor and negative wrap-once functor on $\Sh_{\widehat\Lambda}(M)$ is given by
\begin{align*}
S_{\widehat\Lambda}^+(F) = \iota_0^* S_\Lambda^+(F), \; S_{\widehat\Lambda}^-(F) = \iota_0^! S_\Lambda^-(F).
\end{align*} 
\end{corollary}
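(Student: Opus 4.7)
The plan is to show this reduces to a formal adjunction identity: the localization functors compose along inclusions of microsupport strata, so the wrap-once functors factor accordingly. The essential point is that the inclusion $\iota_{\widehat\Lambda *}: \Sh_{\widehat\Lambda}(M) \hookrightarrow \Sh(M)$ factors as
$$\Sh_{\widehat\Lambda}(M) \xhookrightarrow{\iota_{0*}} \Sh_{M \cup \widehat\Lambda}(M) \xhookrightarrow{\iota_{M \cup \widehat\Lambda *}} \Sh(M),$$
and each of these inclusions preserves both limits and colimits by Proposition \ref{prop:stopremoval}, so they admit left and right adjoints.

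The first step is to invoke uniqueness of adjoints: since the left (resp.\ right) adjoint of a composite of functors is the composite of the left (resp.\ right) adjoints in reverse order, we have
$$\iota_{\widehat\Lambda}^* = \iota_0^* \circ \iota_{M \cup \widehat\Lambda}^*, \;\; \iota_{\widehat\Lambda}^! = \iota_0^! \circ \iota_{M \cup \widehat\Lambda}^!.$$
The Guillermou--Kashiwara--Schapira kernel defines the same autoequivalence $\varphi_\epsilon$ on all of $\Sh(M)$, independent of the subcategory in which $F$ is viewed. Thus, for $F \in \Sh_{\widehat\Lambda}(M)$, regarded inside $\Sh(M)$ via $\iota_{M \cup \widehat\Lambda*} \circ \iota_{0*}$, we have $\varphi_\epsilon(\iota_{0*}F) = \varphi_\epsilon(F)$.

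The proof concludes by unwinding the definitions:
\begin{align*}
S_{\widehat\Lambda}^+(F) &= \iota_{\widehat\Lambda}^* \varphi_\epsilon(F) = \iota_0^* \bigl(\iota_{M \cup \widehat\Lambda}^* \varphi_\epsilon(\iota_{0*}F)\bigr) = \iota_0^* S_\Lambda^+(F), \\
S_{\widehat\Lambda}^-(F) &= \iota_{\widehat\Lambda}^! \varphi_{-\epsilon}(F) = \iota_0^! \bigl(\iota_{M \cup \widehat\Lambda}^! \varphi_{-\epsilon}(\iota_{0*}F)\bigr) = \iota_0^! S_\Lambda^-(F),
\end{align*}
where in the last equality of each line we have used the definitions of $S_\Lambda^{\pm}$ from \cite{Kuo-Li-spherical}, which apply because $M \cup \widehat\Lambda$ contains the zero section. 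There is essentially no obstacle here; the statement is a routine compatibility, and the content lies entirely in identifying the two descriptions of $\iota_{\widehat\Lambda}^*$ and $\iota_{\widehat\Lambda}^!$ via the factorization through $\Sh_{M \cup \widehat\Lambda}(M)$. The only subtlety to flag is that $\iota_{0*}$ is suppressed from the notation on the right-hand side of the corollary, but this is harmless since $\iota_{0*}$ is fully faithful.
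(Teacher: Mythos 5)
Your proof is correct and matches what the paper leaves implicit: the paper states this corollary without proof, treating it as immediate from the factorization $\iota_{\widehat\Lambda*} = \iota_{M\cup\widehat\Lambda*}\circ\iota_{0*}$, the compatibility of adjoints with composition, and the fact that the GKS equivalence $\varphi_{\pm\epsilon}$ acts on $\Sh(M)$ independently of microsupport constraints. The flagging of the suppressed $\iota_{0*}$ in the corollary's notation is a good observation; you might also note explicitly that the relevant independence of $S_\Lambda^{\pm}$ from the choice of contact push-off (Remark \ref{rmk: wrap-once}) guarantees the two sides can be computed with the same $\varphi_\epsilon$.
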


\begin{remark} \label{rmk: wrap-once}
The functor $S_\Lambda^+$ does not depend on the choice of the push-off. In fact, the authors give a characterization of $S_\Lambda^+$, in terms of spherical adjunctions,
as the dual cotwist associated to the spherical adjunction of microlocalization $\Sh_\Lambda(M) \rightarrow \msh_\Lambda(\Lambda)$. See \cite[Section 5]{Kuo-Li-spherical} for a detailed discussion.
We note also that its right adjoint $S_\Lambda^-$ admits a similar definition and characterization.
\end{remark}

We also recall the result on the Serre duality induced by wrapping around once functor $S_{\widehat\Lambda}^+$ or negative wrap-once functor $S_{\widehat\Lambda}^-$. We will further investigate the wrap-once functor later on and show that this is the inverse dualizing functor \cite{Kuo-Li-Calabi-Yau}.

\begin{proposition}[Sabloff--Serr{e} duality {\cite[Proposition 4.10]{Kuo-Li-spherical}}]\label{prop: sab-serre}
Let $\widehat\Lambda \subseteq T^*M$ be a subanalytic isotropic that has compact intersection with the zero section. Let $T_\epsilon$ be any small positive contact push-off displacing $\Lambda$ from itself and $T_{-\epsilon}$ the inverse negative contact push-off. Then for $F \in \Sh_{\Lambda}^b(M), G \in \Sh_{\Lambda}(M)$ such that $\mathrm{supp}(F) \cap \mathrm{supp}(G)$ is compact in $M$,
$$\Hom(T_\epsilon{F}, {G} \otimes \omega_M) \simeq \Hom({F}, T_{-\epsilon}{G} \otimes \omega_M) \simeq p_!(\VD{M}(F) \otimes G) \simeq \Hom({G, F})^\vee,$$
where we use the notation $p: M \rightarrow \{*\}$. In particular, 
$$\Hom(S_{\widehat\Lambda}^+({F}), {G} \otimes \omega_M) \simeq \Hom({F}, S_{\widehat\Lambda}^-({G} \otimes \omega_M)) \simeq p_!(\VD{M}(F) \otimes G) \simeq \Hom({G, F})^\vee.$$
\end{proposition}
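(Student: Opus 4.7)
The plan is to establish the three equivalences in the chain
$$A := \Hom(T_\epsilon F, G \otimes \omega_M) \simeq \Hom(F, T_{-\epsilon} G \otimes \omega_M) \simeq p_!(\VD{M}(F) \otimes G) \simeq \Hom(G, F)^\vee =: D$$
(call the middle two terms $B$ and $C$) by three essentially independent arguments, and then deduce the variant with the wrap-once functors $S_{\widehat\Lambda}^{\pm}$ as a formal consequence.

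The first equivalence $A \simeq B$ is pure adjunction. Uniqueness of Guillermou--Kashiwara--Schapira kernels forces $K(\varphi_\epsilon) \circ K(\varphi_{-\epsilon}) \simeq 1_\Delta$, so $T_\epsilon$ and $T_{-\epsilon}$ are mutually inverse equivalences on $\Sh(M)$, in particular mutually adjoint. Since $\omega_M$ is a local system with zero microsupport away from the zero section, it commutes past $T_{\pm \epsilon}$, and the identification is immediate.

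The central equivalence $C \simeq D$ I would prove by a direct Verdier-duality calculation, using that $F$ has perfect stalks so that biduality $\VD{M}\VD{M}(F) \simeq F$ holds:
\begin{equation*}
\Hom(G, F) \simeq \Hom(G, \sHom(\VD{M}(F), \omega_M)) \simeq \Hom(G \otimes \VD{M}(F), \omega_M).
\end{equation*}
The adjunction $p_! \dashv p^!$ together with $p^!1_\cV = \omega_M$ then rewrites the right-hand side as $(p_!(\VD{M}(F) \otimes G))^\vee$, yielding $D \simeq C$ with no wrapping input whatsoever.

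The remaining equivalence $A \simeq C$ is the heart of the proof and is where the microlocal geometry enters. Since $T_\epsilon$ displaces $\Lambda$ from itself and $\widehat\Lambda \cap M$ is compact, one has $\ms(T_\epsilon F) \cap \ms(G) \subseteq 0_M$ in a neighborhood of the relevant supports, and so \cite[Proposition 5.4.14]{KS} applied to $T_\epsilon F$ (which retains perfect stalks because $T_\epsilon$ is an equivalence) yields
$$\sHom(T_\epsilon F, G \otimes \omega_M) \simeq \VD{M}(T_\epsilon F) \otimes G.$$
The support of this tensor product lies in $\supp(T_\epsilon F) \cap \supp(G) = \supp(F) \cap \supp(G)$, which is compact, so $p_* = p_!$ here and $A \simeq p_!(\VD{M}(T_\epsilon F) \otimes G)$. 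The calculation of the previous paragraph, applied with $T_\epsilon F$ in place of $F$, rewrites this as $\Hom(G, T_\epsilon F)^\vee$; the perturbation lemma (Proposition \ref{prop:perturbation}) identifies it with $\Hom(G, F)^\vee = D$, hence with $C$. The in-particular statement for $S_{\widehat\Lambda}^{\pm}$ then follows from $S_{\widehat\Lambda}^+ = \iota_{\widehat\Lambda}^* \circ T_\epsilon$, $S_{\widehat\Lambda}^- = \iota_{\widehat\Lambda}^! \circ T_{-\epsilon}$, and the adjoint triple $\iota_{\widehat\Lambda}^* \dashv \iota_{\widehat\Lambda *} \dashv \iota_{\widehat\Lambda}^!$. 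The main obstacle is precisely the microsupport check for \cite[Proposition 5.4.14]{KS}: one must verify that the displacement by $T_\epsilon$, which is datum at infinity, remains effective throughout the compact region supporting $F \otimes G$ -- and this is exactly what the compactness of $\widehat\Lambda \cap M$ together with the compactness of $\supp(F) \cap \supp(G)$ provides.
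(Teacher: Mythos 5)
This proposition is not proved in the paper at all: it is imported verbatim from \cite{Kuo-Li-spherical}*{Proposition 4.10}, so I can only judge your argument on its own terms. Your first link (mutual invertibility, hence adjointness, of the GKS functors $T_{\pm\epsilon}$) and your microsupport check for \cite{KS}*{Proposition 5.4.14} are fine: the hypothesis needed there is the plain intersection condition, and $T_\epsilon\Lambda\cap\Lambda=\varnothing$ supplies it. The genuine gap is in how you produce the two right-hand links, and hence the central equivalence $\Hom(T_\epsilon F,G\otimes\omega_M)\simeq p_!(\VD{M}(F)\otimes G)$, for arbitrary $G\in\Sh_\Lambda(M)$. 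Your ``$C\simeq D$'' computation actually proves $\Hom(G,F)\simeq\bigl(p_!(\VD{M}(F)\otimes G)\bigr)^\vee$; reversing this into $p_!(\VD{M}(F)\otimes G)\simeq\Hom(G,F)^\vee$ requires the $\cV$-object involved to be dualizable, which fails for general $G$: already for $M=\{*\}$, $F=1_\cV$, $G=\bigoplus_{i\in I}1_\cV$ with $I$ infinite, the left-hand side is $G$ while $\Hom(G,F)^\vee=\bigl(\prod_{I}1_\cV\bigr)^\vee$. Your proof of $A\simeq C$ inherits this twice: rewriting $p_!(\VD{M}(T_\epsilon F)\otimes G)$ as $\Hom(G,T_\epsilon F)^\vee$ is the same unjustified reversal, and the closing ``hence with $C$'' invokes $C\simeq D$, i.e.\ the very link you only established in reflected form. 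Since $\Hom(T_\epsilon F,G\otimes\omega_M)\simeq p_!(\VD{M}(F)\otimes G)$ is exactly what the paper later uses for arbitrary $G$ (Proposition \ref{identifying_Verdier_with_standard}), your route does not deliver it in the required generality. To repair it, either restrict to $G$ with perfect stalks (so everything in sight is dualizable) and then extend to all $G$ by a naturality/colimit argument, or avoid dualizing in $\cV$ altogether: after $A\simeq p_*(\VD{M}(T_\epsilon F)\otimes G)$, use that $\VD{M}(T_\epsilon F)$ is the push-off of $\VD{M}(F)$ by the antipodally conjugated isotopy (Lemma \ref{lem: dual-GKS}, Remark \ref{rem:kernelinverse}) and run a constancy-in-the-isotopy argument for the pairing $p_*\bigl((-)\otimes G\bigr)$ in the spirit of Proposition \ref{prop:perturbation}; the fourth term $\Hom(G,F)^\vee$ should then be read either via the pairing statement you did prove or under an extra dualizability hypothesis on $G$, since as literally quoted that last link has the same caveat.

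Two smaller inaccuracies: $\supp(T_\epsilon F)=\supp(F)$ is not literally true (one only has containment in a small neighbourhood), so the trade $p_*=p_!$ needs a word using the compactness hypotheses; and ``$\omega_M$ commutes past $T_{\pm\epsilon}$'' deserves a justification (for a small push-off the kernel is supported near the diagonal, where $\pi_1^*\omega_M\simeq\pi_2^*\omega_M$), or one can simply read the statement with the push-off applied to $G\otimes\omega_M$, as in the displayed $S^-_{\widehat\Lambda}(G\otimes\omega_M)$. Likewise, $T_\epsilon F$ retains perfect stalks not because $T_\epsilon$ is an equivalence but because the GKS kernel is constructible with perfect stalks.
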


The duality in terms of positive Hamiltonian push-off was first established in the context of Legendrian contact homology by Sabloff and Ekholm--Etnyre--Sabloff \cite{Sabduality,EESduality}. Here, $S_{\widehat\Lambda}^+$ plays the role of the inverse Serre functor while $S_{\widehat\Lambda}^-$ plays the role of the Serre functor. We emphasize however that it is not always true that $S_{\widehat\Lambda}^\pm$ sends $\Sh_{\widehat\Lambda}^b(M)$ to $\Sh_{\widehat\Lambda}^b(M)$; see \cite[Section 4.2 \& 5.5]{Kuo-Li-spherical}.

The following proposition shows that $\SD{\widehat\Lambda}$ and $\VD{M}$ are related by the inverse Serr{e} functor in Corollary \ref{cor: inverse-Serre}. Recall that $\ND{M}(F) \coloneqq \sHom(F, 1_M)$.

\begin{proposition}\label{identifying_Verdier_with_standard}
For $F \in \Sh_{-\widehat\Lambda}^b(M)$, $\SD{\widehat\Lambda}(F) = S_{\widehat\Lambda}^+ \circ \ND{M}(F) = S_{\widehat\Lambda}^+\circ  \VD{M}(F) \otimes \omega_M^{-1}$.
\end{proposition}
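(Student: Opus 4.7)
The plan is to derive both equalities by a direct Yoneda argument against the defining pairing of the standard dual. Recall from Equation (\ref{eq: duality-as-pairing}) that for any $F \in \Sh_{-\widehat\Lambda}^c(M)$, the object $\SD{\widehat\Lambda}(F) \in \Sh_{\widehat\Lambda}^c(M)$ is characterized by
\[
\Hom\bigl(\SD{\widehat\Lambda}(F),G\bigr) = p_!(F \otimes G) \qquad \text{naturally in } G \in \Sh_{\widehat\Lambda}(M).
\]
Since $M$ is compact and $\Sh_{\widehat\Lambda}(M)$ is compactly generated, the Yoneda lemma reduces the proposition to verifying that each of the two candidate objects $S_{\widehat\Lambda}^+ \circ \ND{M}(F)$ and $S_{\widehat\Lambda}^+ \circ \VD{M}(F) \otimes \omega_M^{-1}$ satisfies the same pairing formula. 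Both candidates lie in $\Sh_{\widehat\Lambda}(M)$: $\VD{M}$ and $\ND{M}$ swap $\Sh_{-\widehat\Lambda}^b(M)$ with $\Sh_{\widehat\Lambda}^b(M)$, $S_{\widehat\Lambda}^+$ preserves $\Sh_{\widehat\Lambda}$, and twisting by the invertible local system $\omega_M^{\pm 1}$ is an autoequivalence which preserves singular support.

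For the second equality, I would apply Sabloff--Serre duality (Proposition \ref{prop: sab-serre}) to $\VD{M}(F) \in \Sh_{\widehat\Lambda}^b(M)$ and to $G \in \Sh_{\widehat\Lambda}(M)$, using biduality $\VD{M}\VD{M}(F) = F$ on $\Sh^b$:
\[
\Hom\bigl(S_{\widehat\Lambda}^+(\VD{M}(F)),\, G \otimes \omega_M\bigr) \;\simeq\; p_!\bigl(\VD{M}\VD{M}(F) \otimes G\bigr) \;=\; p_!(F \otimes G).
\]
Invertibility of $\omega_M$ lets me rewrite the left-hand side as $\Hom(S_{\widehat\Lambda}^+(\VD{M}(F)) \otimes \omega_M^{-1}, G)$, so Yoneda yields $\SD{\widehat\Lambda}(F) = S_{\widehat\Lambda}^+ \circ \VD{M}(F) \otimes \omega_M^{-1}$.

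For the first equality, I would instead apply Sabloff--Serre to $\ND{M}(F) = \VD{M}(F) \otimes \omega_M^{-1} \in \Sh_{\widehat\Lambda}^b(M)$. A bookkeeping computation using $\VD{M}(X \otimes L) = \VD{M}(X) \otimes L^{-1}$ for an invertible $L$ and biduality gives $\VD{M}(\ND{M}(F)) = F \otimes \omega_M$. Hence
\[
\Hom\bigl(S_{\widehat\Lambda}^+(\ND{M}(F)),\, G \otimes \omega_M\bigr) \;\simeq\; p_!\bigl(F \otimes \omega_M \otimes G\bigr).
\]
Substituting $G \otimes \omega_M^{-1}$ for $G$ (which remains in $\Sh_{\widehat\Lambda}(M)$), the $\omega_M$ on each side cancels against $\omega_M^{-1}$, producing $\Hom(S_{\widehat\Lambda}^+(\ND{M}(F)), G) = p_!(F \otimes G)$, and Yoneda concludes.

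The main obstacle is not conceptual but a matter of careful bookkeeping of the invertible local system $\omega_M$: one must confirm that all the manipulations commute with tensoring by $\omega_M^{\pm 1}$ and preserve the relevant $\widehat\Lambda$-microsupport condition so that Yoneda is being applied within $\Sh_{\widehat\Lambda}(M)$, and that the support hypotheses in Proposition \ref{prop: sab-serre} are met (which they are, since $M$ is compact). No new microlocal input beyond Sabloff--Serre duality and the definition of $\SD{\widehat\Lambda}$ is required; notably, one never needs to establish that $S_{\widehat\Lambda}^+$ commutes with tensoring by $\omega_M^{-1}$ separately, as this commutation is a consequence of the two equalities proved independently by Yoneda.
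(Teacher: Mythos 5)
Your proof is correct and follows the same approach as the paper: apply Sabloff--Serre duality (Proposition \ref{prop: sab-serre}) to $\ND{M}(F)$ (resp.~$\VD{M}(F)$), invoke biduality $\ND{M}\ND{M}(F) = F$ for perfect-stalk $F$, and untwist $\omega_M$ to match the pairing $\Hom(\SD{\widehat\Lambda}(F),G)=p_!(F\otimes G)$. The paper's version is terser, proving only the $\ND{M}$ equality directly and leaving the $\omega_M$-bookkeeping implicit, whereas you spell it out and derive both equalities independently; the content is the same.
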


\begin{proof}
Since $F$ has perfect stalk, we have $\ND{M} (\ND{M}F) \otimes G = F \otimes G$ by \cite[Proposition 3.4.4]{KS}. Let $G \in \Sh_{\widehat\Lambda}(M)$. Then by Proposition \ref{prop: sab-serre} 
\begin{equation*}
    \Hom(S_{\widehat\Lambda}^+ \circ \ND{M}(F), G) = p_*(\ND{M} (\ND{M}F) \otimes G) = p_* (F \otimes G) = \Hom(\SD{\widehat\Lambda}(F),G). \qedhere
\end{equation*}
\end{proof}

\begin{remark}\label{rem:kernelinverse}
We remark that, for a contact isotopy $\Phi: S^* M \times I \rightarrow S^* M$ with time-1 flow $\varphi: S^*M \to S^*M$, using exactly the same argument in Lemma \ref{lem: dual-GKS}, we can compute that
$$K(\Phi^{-1}) \circ|_I\, \sHom( K(\Phi), 1_{M \times M \times I}) = \Delta_*\omega_M^{-1} \boxtimes 1_I.$$
Since this sheaf is microsupported in $T^*(M \times M) \times I$, we can see using the same argument in Lemma \ref{lem: dual-versus-convolution-constructible} that $K(\varphi) \circ \ND{M}(F) = \ND{M}(K(\varphi^{-1}) \circ F)$.
As a consequence, $S^+_{\widehat\Lambda} \circ \ND{M}(F) = \ND{M}(S^-_{\widehat\Lambda} F)$ and $\VD{M}(F) = \ND{M}(F) \otimes \omega_M = S_{\widehat\Lambda}^- \circ \SD{\widehat\Lambda}(F) \otimes \omega_M$. 
\end{remark}

Note that if we assume $S_{\widehat\Lambda}^+$ is invertible, or equivalently its right adjoint $S_{\widehat\Lambda}^-$ is its inverse, then Proposition \ref{identifying_Verdier_with_standard} implies that the equivalence $\VD{M}: \Sh_{-\widehat\Lambda}^b(M)^{op} \xrightarrow{\sim} \Sh_{\widehat\Lambda}^b(M)$ can be extended to $\Sh_{\widehat\Lambda}^c(M)^{op} \xrightarrow{\sim} \Sh_{\widehat\Lambda}^c(M)$ as 
$$\left(F \mapsto S_{\widehat\Lambda}^- \circ \SD{\widehat\Lambda}(F) \otimes \omega_M\right).$$
Taking Ind-completion and we obtain an identification $\Sh_{-\widehat\Lambda}(M)^\vee  \cong \Sh_{\widehat\Lambda}(M)$, which further provides a duality pair $(\epsilon^V,\eta^V)$ as in Formula (\ref{formula: standard_duality_data}). 

\begin{lemma} \label{lem: Verdier-versus-canonical-dual}
Assume $S_{\widehat\Lambda}^+$ is invertible so that $(S_{\widehat\Lambda}^+)^{-1} = S_{\widehat\Lambda}^-$.
Then the co-unit $\epsilon^V$ is given by
$$ \epsilon^V = p_* \Delta^!: \Sh_{-\widehat\Lambda \times \widehat\Lambda}(M \times M) \rightarrow \cV.$$
\end{lemma}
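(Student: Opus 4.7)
The plan is to verify $\epsilon^V = p_*\Delta^!$ as colimit-preserving functors on $\Sh_{-\widehat\Lambda \times \widehat\Lambda}(M \times M)$. By Theorem \ref{pd:g} and Lemma \ref{pgc,PrLst}, this category is compactly generated by pure tensors $F \boxtimes G$ with $F \in \Sh_{-\widehat\Lambda}^c(M)$ and $G \in \Sh_{\widehat\Lambda}^c(M)$; both $\epsilon^V$ and $p_*\Delta^!$ preserve colimits (using $p_* = p_!$ on compact $M$), so it suffices to check agreement on such generators.

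First, by Proposition \ref{cg:d} and Remark \ref{rem:dual-functor}, the counit for the extended Verdier duality satisfies
\[\epsilon^V(F \boxtimes G) = \Hom\bigl(\VD{M}^{\mathrm{ext}}F,\ G\bigr),\]
where $\VD{M}^{\mathrm{ext}}F := S_{\widehat\Lambda}^-\SD{\widehat\Lambda}F \otimes \omega_M$ extends the classical Verdier dual from $\Sh_{-\widehat\Lambda}^b(M)$ (Proposition \ref{identifying_Verdier_with_standard}, made possible by the invertibility hypothesis). Chaining the two-sided adjunction $S_{\widehat\Lambda}^- \dashv S_{\widehat\Lambda}^+$ (valid since $(S_{\widehat\Lambda}^+)^{-1} = S_{\widehat\Lambda}^-$), the tensor-hom adjunction for the invertible local system $\omega_M$, and the standard duality pairing in Equation~(\ref{eq: duality-as-pairing}) yields
\[\epsilon^V(F \boxtimes G) = \Hom\bigl(\SD{\widehat\Lambda}F,\ S_{\widehat\Lambda}^+G \otimes \omega_M^{-1}\bigr) = p_*\bigl(F \otimes S_{\widehat\Lambda}^+G \otimes \omega_M^{-1}\bigr).\]
On the other hand, since $\Delta: M \hookrightarrow M \times M$ is a closed embedding with relative dualizing sheaf $\omega_{M/M \times M} = \omega_M^{-1}$, we have $p_*\Delta^!(F \boxtimes G) = p_*(F \otimes G \otimes \omega_M^{-1})$. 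The lemma thus reduces to the wrap-once identity
\[p_*(F \otimes G) = p_*\bigl(F \otimes S_{\widehat\Lambda}^+G\bigr) \qquad \text{for } F \in \Sh_{-\widehat\Lambda}^c(M),\; G \in \Sh_{\widehat\Lambda}^c(M).\]

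When $F \in \Sh_{-\widehat\Lambda}^b(M)$ has perfect stalks, this is immediate: the same adjunction chain applied to the classical $\VD{M}$ (which coincides with $\VD{M}^{\mathrm{ext}}$ on such $F$) together with the double-dual identity \cite[Proposition 3.4.4]{KS} shows that both sides (after the $\omega_M^{-1}$ twist) equal $\Hom(\VD{M}F, G)$. To extend to arbitrary compact $F$, I plan to verify the inclusion $\Sh_{-\widehat\Lambda}^c(M) \subseteq \Sh_{-\widehat\Lambda}^b(M)$. Fixing a Whitney triangulation $\cS$ with $-\widehat\Lambda \subseteq N^*\cS$, Proposition \ref{prop:stopremoval} and Proposition \ref{mc=cc} identify $\Sh_{-\widehat\Lambda}^c(M)$ as generated (under finite colimits and retracts) by $\iota_{-\widehat\Lambda}^*F'$ with $F' \in \Sh_{N^*\cS}^b(M)$; in the fiber sequence $\mathrm{fib} \to F' \to \iota_{-\widehat\Lambda}^*F'$ in $\Sh_{N^*\cS}(M)$, the fiber is compact in $\sD^\mu(N^*\cS, -\widehat\Lambda)$ (Proposition \ref{wdstopr}), hence a retract of a finite colimit of microstalk corepresentatives with perfect stalks (cutoff sheaves of Morse functions); since $\cV_0$ is closed under finite colimits and retracts, the fiber and therefore $\iota_{-\widehat\Lambda}^*F'$ have perfect stalks. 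This inclusion reduces every compact $F$ to the perfect-stalk case, completing the verification.

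The main technical subtlety is the inclusion $\Sh_{-\widehat\Lambda}^c(M) \subseteq \Sh_{-\widehat\Lambda}^b(M)$: compact objects of the localized category must inherit perfect stalks from their preimages in $\Sh_{N^*\cS}(M)$, which depends on the compactness of the stop-removal fibers in $\sD^\mu$ together with the explicit perfect-stalk form of their generating microstalk corepresentatives.
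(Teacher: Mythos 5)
The reduction via the adjunction chain and Equation~(\ref{eq: duality-as-pairing}) is sound in spirit, but the extension step contains a fatal gap. Your key claim is the inclusion $\Sh_{-\widehat\Lambda}^c(M) \subseteq \Sh_{-\widehat\Lambda}^b(M)$, and this is false. The paper states only the opposite inclusion $\Sh_{\widehat\Lambda}^b(M) \subseteq \Sh_{\widehat\Lambda}^c(M)$, which is generically strict. A concrete counterexample: take $M = S^1$ and $\widehat\Lambda = 0_{S^1}$, so $\Sh_{\widehat\Lambda}(M) = \Loc(S^1) \cong \Mod(\cV[t,t^{-1}])$. The object $\iota_{\widehat\Lambda}^* 1_{\str(a)}$ (the corepresentative of the stalk at a vertex $a$ of a triangulation) is the rank-one free module $\cV[t,t^{-1}]$, which is compact but whose stalks are the infinite-dimensional $\cV$-module $\cV[t,t^{-1}]$. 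Your argument that the fiber of $F' \to \iota_{-\widehat\Lambda*}\iota_{-\widehat\Lambda}^* F'$ is compact in $\sD^\mu(N^*\cS, -\widehat\Lambda)$ fails precisely here: $\iota_{-\widehat\Lambda*}$ is not compact-preserving (its right adjoint $\iota_{-\widehat\Lambda}^!$ is not colimit-preserving), so $\iota_{-\widehat\Lambda*}\iota_{-\widehat\Lambda}^* F'$ is not compact in $\Sh_{N^*\cS}(M)$ and neither is the fiber. In the $S^1$ example, the fiber inherits the infinite-dimensional stalks of $\cV[t,t^{-1}]$.

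A secondary issue: your identity $p_*\Delta^!(F \boxtimes G) = p_*(F \otimes G \otimes \omega_M^{-1})$ needs a non-characteristic hypothesis (that $\ms(F \boxtimes G)$ avoids the conormal of $\Delta$ away from the zero section) that fails on the nose for $F \boxtimes G$ with $F, G$ supported on $\widehat\Lambda$. For instance $\Delta^!(\Delta_* 1_M) = 1_M$ while $\Delta^*(\Delta_* 1_M) \otimes \omega_M^{-1} = \omega_M^{-1}$. The paper resolves this by first applying a small positive perturbation $G \mapsto T_\epsilon G$ — Proposition~\ref{prop:perturbation} gives constancy of $p_*\Delta^!(F \boxtimes T_t G)$ in $t$ near $0$, and after displacement \cite[Proposition 5.4.13]{KS} applies. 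This perturbation-lemma mechanism is also exactly what bridges $S_{\widehat\Lambda}^+ G$ back to $G$ in the Hom computation \emph{for arbitrary compact} $F$, without any appeal to perfect stalks. The paper's argument thus avoids the reduction you attempt entirely: it trades $\iota_{\widehat\Lambda}^*$ away using the fact that the test object $\sHom(F, p^! A \otimes \omega_M)$ already lies in $\Sh_{\widehat\Lambda}(M)$, and then invokes constancy under perturbation. You would need to import this idea — there is no shortcut via a perfect-stalk reduction, because the relevant inclusion goes the wrong way.
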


\begin{proof}
It is sufficient to show, for $F \in \Sh_{-\widehat\Lambda}^c(M)^{op}$ and $G \in \Sh_{\widehat\Lambda}(M)$, there is an identification
$$\Hom\big(S_{\widehat\Lambda}^- \circ \SD{\widehat\Lambda}(F)  \otimes \omega_M,G \big) = p_* \Delta^!(F \boxtimes G).$$
Since $S_{\widehat\Lambda}^+$ is the inverse of $S_{\widehat\Lambda}^-$, the left hand side is given by
\begin{align*}
\Hom(S_{\widehat\Lambda}^- \circ \SD{\widehat\Lambda}(F) \otimes \omega_M,G ) 
= \Hom( \SD{\widehat\Lambda}(F) \otimes \omega_M, S_{\widehat\Lambda}^+ (G) )
= p_* (F \otimes S_{\widehat\Lambda}^+ (G) \otimes \omega_M^{-1} ).
\end{align*}

We first simplify the last expression to $p_* (F \otimes T_\epsilon G \otimes \omega_M^{-1})$, where $T_\epsilon$ means some small positive contact push-off. Indeed, for any $A \in \cV$, one computes that
\begin{align*}
&\Hom(p_* (F \otimes S_{\widehat\Lambda}^+ (G) \otimes \omega_M^{-1} ), A) = \Hom (S_{\widehat\Lambda}^+ (G), \sHom(F, p^! A \otimes \omega_M) ) \\
&= \Hom (T_\epsilon G, \sHom(F, p^!  A \otimes \omega_M ) ) = \Hom(p_* (F \otimes T_\epsilon G \otimes \omega_M^{-1} ), A)
\end{align*}
where we use the fact that $F$ is compactly supported in $-\widehat\Lambda$ for the second equality, so that $\iota_{\widehat\Lambda*}\sHom(F, p^! A \otimes \omega_M ) = \sHom(F, p^!  A \otimes \omega_M )$.

Since $\msif((T_\epsilon G)^\varphi) \cap \Lambda = \varnothing$,  we use \cite[Proposition 5.4.13]{KS} and \cite[Lemma 2.41]{Kuo-wrapped-sheaves} and show that
$$p_*(\Delta^*(F \boxtimes T_\epsilon G) \otimes \omega_M^{-1}) = p_*\Delta^! \left( F \boxtimes G^w \right) = \Hom(1_\Delta, F \boxtimes T_\epsilon G).$$
But then, we see the right hand side is $\Hom(1_\Delta, F \boxtimes G)$, 
since we have the constancy condition for the perturbation trick Proposition \ref{prop:perturbation}.
Thus, 
\begin{equation*}
    \Hom(S_{\widehat\Lambda}^- \circ \SD{\widehat\Lambda}(F \otimes \omega_M^{-1}),G ) 
= \Hom(1_\Delta, F \boxtimes G) = p_* \Delta^! (F \boxtimes G). \qedhere
\end{equation*}
\end{proof}

The main theorem of this section is that the converse is also true:
\begin{proof}[Proof of Theorem \ref{converse-statement-Verdier}]
The functor $p_* \Delta^!$ is colimit-preserving since we can displace the microsupport uniformly from $S^*_\Delta(M \times M)$ and trade $\Delta^!$ with $\Delta^*$:
$$p_*\Delta^!(F \boxtimes G) = p_*\Delta^!(F \boxtimes T_\epsilon G) = p_*(\Delta^*(F \boxtimes T_\epsilon G) \otimes \omega_M^{-1}).$$

The pair $(\epsilon, \eta) \coloneqq (\epsilon^V,\eta^V)$ gives a duality data if $(\epsilon \otimes \id) \circ (\id \otimes \eta) = \id$ and $(\id \otimes \epsilon) \circ (\eta \otimes \id) = \id$.
Let $K(T_\epsilon)$ (resp.~$K(T_{-\epsilon})$) be the sheaf quantization of any positive contact push-off $T_\epsilon$ (resp.~inverse contact push-off $T_{-\epsilon}$) at a sufficiently small time $\epsilon > 0$, in the sense of Guillermou--Kashiwara--Schapira \cite{Guillermou-Kashiwara-Schapira}. We note that the functor $(\epsilon \otimes \id): \Sh_{-\widehat\Lambda \times \widehat\Lambda}(M \times M)  \otimes \Sh_{-\widehat\Lambda}(M)$ under the identification $\Sh_{-\widehat\Lambda \times \widehat\Lambda}(M \times M) \otimes \Sh_{-\widehat\Lambda}(M) = \Sh_{-\widehat\Lambda \times \widehat\Lambda \times -\widehat\Lambda}(M \times M \times M)$ is given by
\begin{align*}
\Sh_{-\widehat\Lambda \times \widehat\Lambda \times -\widehat\Lambda}(M \times M \times M) &\rightarrow \Sh_{-\widehat\Lambda}(M) \\
A &\mapsto \tilde\pi_{3*}\left(\sHom\left(\tilde\pi_{12}^* K(\Phi^{-1}),  A \right) \right). 
\end{align*}
Indeed, it is sufficient to check for $A = H \boxtimes F$ for $H \in \Sh_{-\widehat\Lambda \times \widehat\Lambda}(M \times M)$ and $F \in \Sh_{-\widehat\Lambda}(M)$ by Theorem \ref{pd:g}, 
since the expression on the right is colimit preserving for a similar reason why $p_* \Delta^!$ is.
That is, $(\epsilon \otimes \id)$ sends the pair $(H,F)$ to
\begin{align*}
\left( p^* \Hom(K(T_{-\epsilon}),H)\right) \otimes  F
&= \left( \tilde\pi_{3*} \tilde\pi_{12}^* \sHom\left(K(T_{-\epsilon}),H\right) \right)  \otimes  F \\
&= \tilde\pi_{3*} \left(  \sHom\left(\tilde\pi_{12}^* K(T_{-\epsilon}),\tilde\pi_{12}^* H\right)  \otimes \tilde\pi_3^* F \right) \\
&= \tilde\pi_{3*} \left(   \sHom\left(\tilde\pi_{12}^* K(T_{-\epsilon}),  H \boxtimes F \right)  \right).
\end{align*}
Here, we use base change for the second equality, the projection formula for the second equality, and the fact that $K(T_{-\epsilon})$ has perfect stalks and is microsupported away from $H$ at infinity for the last equality \cite[Proposition 5.4.14]{KS}. Now to compute the endo-functor $(\epsilon \otimes \id) \circ (\id \otimes \eta)$ on $\Sh_{-\widehat\Lambda}(M)$, we set $A = F \boxtimes \eta$ for $F \in \Sh_{-\widehat\Lambda}(M)$. Thus,
\begin{align*}
(\epsilon \otimes \id) \circ (\id \otimes \eta)(F)
&=  \tilde\pi_{3*} \left(   \sHom\left(\tilde\pi_{12}^* K(T_{-\epsilon}),  F \boxtimes \eta \right)  \right) \\
&= \tilde\pi_{3*} \left( \sHom \left(\tilde\pi_{12}^* K(T_{-\epsilon}), 1_{M^3} \right) \otimes   (F \boxtimes \eta) ) \right) \\
&= \tilde\pi_{3*} \left( \tilde\pi_{12}^* \sHom \left( K(T_{-\epsilon}), 1_{M \times M} \right) \otimes   \tilde\pi_{23}^* \eta \otimes  \tilde\pi_1^* F) ) \right)
\end{align*}
Here we use \cite[Proposition 5.4.14]{KS} again to turn $\sHom$ into a $\otimes$ and then expand $\boxtimes$ by the definition. Our goal now is to organize the pull/push functors associated to the projections into the form of convolutions. This process is a special case of the proof for \cite[Proposition 3.6.4]{KS}.
\begin{align*}
(\epsilon \otimes \id) \circ (\id \otimes \eta)(F)
&= \tilde\pi_{3*} \left( \tilde\pi_{12}^* \sHom \left( K(T_{-\epsilon}), 1_{M \times M} \right) \otimes   \tilde\pi_{23}^* \eta \otimes  \tilde\pi_1^* F) ) \right) \\
&= \pi_{2*} \tilde\pi_{13*} \left( \big( \tilde\pi_{12}^* \sHom \left(K(T_{-\epsilon}), 1_{M \times M} \right) \otimes   \tilde\pi_{23}^*  \eta \big)
\otimes  \tilde\pi_{13}^* \pi_1^* F  \right) \\
&= \pi_{2*} \left( \tilde\pi_{13*} \big( \tilde\pi_{12}^* \sHom \left( K(T_{-\epsilon}), 1_{M \times M} \right) \otimes   \tilde\pi_{23}^* \eta \big)
\otimes  \pi_1^* F  \right) \\
&= \pi_{2*} \left( \left( \eta \circ_{M}  \sHom \left( K(T_{-\epsilon}), 1_{M \times M} \right) \right)
\otimes  \pi_1^* F  \right)  \\
&= \left( \eta \circ_{M}  \sHom ( K(T_{-\epsilon}), 1_{M \times M} ) \right) \circ F.
\end{align*}
By Lemma \ref{lad&ker}, the last expression is 
$$\left( \eta \circ_{M}  \sHom ( K(T_{-\epsilon}), 1_{M \times M} ) \right) \circ F = \big( \eta \circ_{M} \iota_{\widehat\Lambda \times -\widehat\Lambda}^* \left(  \sHom ( K(T_{-\epsilon}), 1_{M \times M} ) \right) \big) \circ F.$$
Thus, the requirement $(\epsilon \otimes \id) \circ (\id \otimes \eta) = \id$ implies that 
$$\eta \circ_{M} \iota_{\widehat\Lambda \times -\widehat\Lambda}^*  \left(  \sHom ( K(T_{-\epsilon}), 1_{M \times M} ) \right)=  \iota_{\widehat\Lambda \times -\widehat\Lambda}^* (1_\Delta)$$
since convolution kernels are determined by their effect on $\Sh_{-\widehat\Lambda}(M)$ by Theorem \ref{thm: fmt}. But, as a functor on $\Sh_{-\widehat\Lambda}(M)$, by Lemma \ref{lad&ker} and Remark \ref{rem:kernelinverse}, we have
$$\iota_{\widehat\Lambda \times -\widehat\Lambda}^* \sHom ( K(T_{-\epsilon}), 1_{M \times M} ) \circ F = \iota_{\widehat\Lambda}^*(\sHom ( K(T_{-\epsilon}), 1_{M \times M} ) \circ F) = \iota_{\widehat\Lambda}^* (K(T_{\epsilon})\circ F) \otimes \omega_M^{-1}$$
so we conclude that $S_{\widehat\Lambda}^*$ has a left inverse.

On the other hand, a similar computation will shows that $(\id \otimes \epsilon ) \circ (\eta \otimes \id) = \id$ on $\Sh_{\widehat\Lambda}(M)$ implies that 
$$G \circ \big( \iota_{\widehat\Lambda \times -\widehat\Lambda}^* (  \sHom ( K(\Phi^{-1}), 1_{M \times M} ) ) \circ_{M} \eta \big) = G$$
for all $G \in \Sh_{\widehat\Lambda}(M)$, and thus $\iota_{\widehat\Lambda \times -\widehat\Lambda}^* (  \sHom ( K(T_{-\epsilon}), 1_{M \times M} ) ) \circ_{M} \eta = \iota_{\widehat\Lambda \times -\widehat\Lambda}^* (1_\Delta)$. Now, the trick is that we can view this equality as in $\Sh_{\widehat\Lambda}(M)$ by convoluting from the left instead. Thus, we conclude that $S_{\widehat\Lambda}^+$ has a right inverse as well. In particular, $S_{\widehat\Lambda}^+$ is invertible with inverse $S_{\widehat\Lambda}^-$.
\end{proof} 

\begin{remark}
In \cite[Section 5.3]{Kuo-Li-spherical}, the authors show that $S_\Lambda^+$ is an equivalence when $\Lambda$ is either swappable or full stop. 
Thus, the Verdier dual $\VD{M}: \Sh_\Lambda^b(M)^{op} \xrightarrow{\sim} \Sh_\Lambda^b(M)$ extends to an equivalence $\Sh_\Lambda^c(M)^{op} \xrightarrow{\sim} \Sh_\Lambda(M)^c$ on all compact objects for those cases.
\end{remark}

\bibliographystyle{amsplain}
\bibliography{ref_KuoLi}

\end{document}